\documentclass[aos]{imsart}
\RequirePackage{amsthm,amsmath,amsfonts,amssymb}
\RequirePackage[numbers,sort&compress]{natbib}
\RequirePackage[colorlinks,citecolor=blue,urlcolor=blue]{hyperref}
\RequirePackage{graphicx}

\usepackage{amsfonts,amsmath,amssymb,amsthm}
\usepackage{mathtools,bm,mathrsfs,mleftright}
\usepackage{xcolor,enumerate,autobreak}
\allowdisplaybreaks[4]
\usepackage{wrapfig,subfigure,multirow}
\usepackage{zref-xr,zref-user,hyperref,prettyref}
\usepackage{natbib}
\usepackage{comment}
\usepackage{algorithm,algpseudocode}

\startlocaldefs
\theoremstyle{plain}

\newtheorem{theorem}{Theorem}[section]
\newtheorem{lemma}[theorem]{Lemma}
\newtheorem{proposition}[theorem]{Proposition}
\newtheorem{corollary}[theorem]{Corollary}
\newtheorem{assumption}[theorem]{Assumption}
\theoremstyle{definition}
\newtheorem{definition}[theorem]{Definition}
\newtheorem{remark}[theorem]{Remark}

\newrefformat{eq}{\textup{(\ref{#1})}}
\newrefformat{lem}{Lemma~\textup{\ref{#1}}}
\newrefformat{thm}{Theorem~\textup{\ref{#1}}}
\newrefformat{cor}{Corollary~\textup{\ref{#1}}}
\newrefformat{prop}{Proposition~\textup{\ref{#1}}}
\newrefformat{assu}{Assumption~\textup{\ref{#1}}}
\newrefformat{cond}{Condition~\textup{\ref{#1}}}
\newrefformat{remark}{Remark~\textup{\ref{#1}}}
\newrefformat{example}{Example~\textup{\ref{#1}}}
\newrefformat{claim}{Claim~\textup{\ref{#1}}}
\newrefformat{def}{Definition~\textup{\ref{#1}}}
\newrefformat{alg}{Algorithm~\textup{\ref{#1}}}
\newrefformat{sec}{Section~\textup{\ref{#1}}}
\newrefformat{subsec}{Subsection~\textup{\ref{#1}}}
\newrefformat{subsubsec}{Subsection~\textup{\ref{#1}}}
\newrefformat{para}{Paragraph~\textup{\ref{#1}}}

\newcommand{\ep}{\varepsilon}
\newcommand{\R}{\mathbb{R}}
\newcommand{\E}{\mathbb{E}}
\newcommand{\T}{\top}

\newcommand{\hf}{\frac{1}{2}}

\newcommand{\bbS}{\mathbb{S}}

\newcommand{\bbP}{\mathbb{P}}

\DeclareMathOperator*{\argmin}{arg\,min}

\DeclareMathOperator{\Tr}{Tr}

\newcommand{\mr}{\mathrm}
\newcommand{\mf}{\mathsf}
\newcommand{\mca}{\mathcal}

\newcommand{\xk}[1]{\left(#1\right)}
\newcommand{\zk}[1]{\left[#1\right]}
\newcommand{\dk}[1]{\left\{#1\right\}}
\newcommand{\xkx}[1]{(#1)}

\newcommand{\dkx}[1]{\{#1\}}
\providecommand{\ceil}[1]{\left\lceil{#1}\right\rceil}
\providecommand{\ang}[1]{\left\langle{#1}\right\rangle}

\providecommand{\abs}[1]{\left\lvert{#1}\right\rvert}
\providecommand{\norm}[1]{\left\lVert{#1}\right\rVert}

\providecommand{\normx}[1]{\lVert{#1}\rVert}
\providecommand{\dd}{\,\mathrm{d}}

\newcommand{\dkm}[1]{\mleft\{#1\mright\}}
\newcommand{\normsch}[2][q]{\norm{#2}_{\mca{S}_{#1}}}

\newcommand{\ind}[1]{\bm{1}\dkm{#1}}

\newcommand{\caD}{\mathcal{D}}

\providecommand{\cref}{\prettyref}
\newcommand{\suppref}[1]{the supplementary material}

\endlocaldefs

\date{\today}
\begin{document}

\begin{frontmatter}
  \title{Minimax and Adaptive Covariance Matrix Estimation under Differential Privacy}
  \runtitle{Covariance Matrix Estimation under Differential Privacy}

  \begin{aug}
    \author[A]{\fnms{T. Tony} \snm{Cai}\ead[label=e1]{tcai@wharton.upenn.edu}}
    \and
    \author[B]{\fnms{Yicheng} \snm{Li}\ead[label=e2]{ycli@sfs.ecnu.edu.cn}}

    \address[A]{Department of Statistics and Data Science, \\
    \printead[presep={The Wharton School, University of Pennsylvania,\ }]{e1}}

    \address[B]{KLATASDS-MOE, School of Statistics, East China Normal University\\
    \printead[presep={\ }]{e2}}

  \end{aug}

  \begin{abstract}
    Estimating covariance matrices is fundamental to a wide range of statistical applications.
This paper studies minimax and adaptive estimation of high-dimensional covariance matrices under \(\rho\)-zero-concentrated differential privacy (\(\rho\)-zCDP) over three nested classes: the pointwise-decay class \(\mca{H}_\alpha\), the row-tail class \(\mca{G}_\alpha\), and the separated-block class \(\mca{F}_\alpha\). We consider both squared operator norm loss and normalized squared Frobenius norm loss.

For \(\mca{H}_\alpha\) and \(\mca{G}_\alpha\), we develop center--outer dyadic estimators tailored to the refined geometry of the two classes, while for \(\mca{F}_\alpha\), we develop a blockwise tridiagonal estimator. The resulting minimax-optimal rates reveal a nontrivial interplay among the smoothness \(\alpha\), the loss, the geometry of the covariance class, and the privacy constraint. In contrast to the non-private setting, privacy distinguishes covariance classes that share the same leading non-private rate and induces a polynomial dependence on the ambient dimension.

We further develop procedures that adapt to the unknown decay parameter over all three covariance classes under both losses, at the cost of at most polylogarithmic factors.
To establish minimax lower bounds, we develop a novel differentially private van Trees inequality that connects Fisher information with the \(\rho\)-zCDP constraint and may be useful for other private estimation problems.
We also construct carefully designed prior distributions to obtain matching minimax lower bounds.
  \end{abstract}

  \begin{keyword}[class=MSC]
    \kwd[Primary ]{62H12}
    \kwd[; secondary ]{62F12}
  \end{keyword}

  \begin{keyword}
    \kwd{Adaptive estimation}
    \kwd{Covariance matrix}
    \kwd{Differential privacy}
    \kwd{Minimax rate of convergence}
  \end{keyword}

\end{frontmatter}

\section{Introduction}

Modern statistical analysis increasingly relies on individual-level data that are both high dimensional and highly sensitive.
In applications such as biomedical research, genomics, electronic health records, finance, social science, and federated data analysis, each observation may contain detailed information about a person or institution.
Covariance estimation is often a first step in analyzing such data, because covariance structure underlies principal component analysis, discriminant analysis, clustering, regression, graphical modeling, and many other multivariate methods.
At the same time, releasing or using covariance estimates without privacy protection can reveal sensitive information about individuals. Improper disclosure can lead to harms such as identity theft, discrimination, loss of confidentiality, or loss of public trust. These risks are particularly acute in healthcare and biomedical studies, where genetic and medical records enable important scientific advances but are highly sensitive \citep{hawes2020_ImplementingDifferential,adnan2022_FederatedLearning}.
Thus, modern covariance estimation must balance statistical accuracy with rigorous privacy guarantees.

Differential privacy (DP) \citep{dwork2006_CalibratingNoise,dwork2014_AlgorithmicFoundations} has emerged as a leading mathematical framework for privacy-preserving data analysis.
It provides a guarantee that the inclusion or exclusion of any single individual's data has only a limited effect on the output of an analysis, thereby protecting against inference attacks even when an adversary has auxiliary information.
This robustness makes DP especially attractive for statistical procedures intended for sensitive data.
However, privacy is not free.
A private estimator must inject sufficient randomness, or otherwise limit the information released, in order to protect individuals. This added randomization can substantially degrade statistical accuracy. A central question in private statistics, therefore, is to characterize the optimal privacy--utility trade-off: how much accuracy must be sacrificed to guarantee privacy, and how should estimators be designed to achieve the best possible balance?

This paper studies that question for high-dimensional covariance matrix estimation under differential privacy.
Suppose we observe independent and identically distributed random vectors
\(x_1,\dots,x_n \in \R^d\) with population covariance matrix \(\Sigma\).
In classical low-dimensional settings, the sample covariance matrix performs well.
Modern datasets, however, often operate in the high-dimensional regime where \(d\) is comparable to, or much larger than, \(n\).
In this regime the sample covariance matrix is unstable or singular, and structural assumptions are needed for accurate estimation.
A widely studied structural assumption is that of \emph{bandable} covariance matrices, where off-diagonal entries decay as they move away from the diagonal.
Such matrices arise naturally in temporal, spatial, longitudinal, and other ordered data settings, reflecting the weakening of correlations with increasing separation.
Optimal rates and adaptive procedures for estimating bandable covariance matrices in the non-private setting have been established under both the operator and Frobenius norms.
In particular, \citet{cai2010_OptimalRates} derived the minimax rate of convergence under the operator norm and proposed an optimal-rate tapering estimator.
Later, \citet{cai2012_AdaptiveCovariance} introduced a block-thresholding estimator that adaptively attains the optimal rate over a collection of parameter spaces without requiring knowledge of the decay parameter.
See \citet{cai2016_EstimatingStructured} for a comprehensive survey of structured covariance estimation.

The interaction between bandable structure and differential privacy is subtle.
In non-private covariance estimation, bandability reduces the effective complexity of the parameter space and leads to estimators that outperform the unstructured sample covariance matrix.
Under privacy constraints, however, the estimator must also control the sensitivity of the released object.
The geometry of the covariance class can therefore affect not only the statistical bias--variance trade-off, but also the amount and placement of privacy noise.
Consequently, it is not enough to privatize a non-private estimator in a black-box fashion.
One must understand how the structural decay of the covariance matrix, the loss function, and the privacy constraint jointly determine the minimax risk.

We work primarily with the \(\rho\)-zero-concentrated differential privacy (\(\rho\)-zCDP) framework
\citep{dwork2016_ConcentratedDifferential,bun2016_ConcentratedDifferential}.
While \((\epsilon,\delta)\)-DP remains the most common privacy formulation, zCDP is based on R\'enyi divergence and provides a single-parameter privacy budget with tight composition properties.
This is particularly useful for the multiscale and blockwise procedures developed in this paper.
Recall first that a randomized algorithm \(M\) satisfies \((\epsilon,\delta)\)-DP if, for any adjacent datasets \(S,S'\in\mathcal{D}\) differing by one observation and any measurable output set \(A\),
\begin{equation*}
  \bbP\dk{M(S) \in A} \leq e^\epsilon \bbP\dk{M(S') \in A} + \delta.
\end{equation*}
For \(\alpha>1\), the \(\alpha\)-R\'enyi divergence between random variables \(X\) and \(Y\), associated with probability measures \(P\) and \(Q\), is defined as
\[
D_{\alpha}(X\|Y)
=
\frac{1}{\alpha-1}
\log \E \left( \frac{\dd P}{\dd Q}(X) \right)^{\alpha}.
\]
The definition of \(\rho\)-zCDP is as follows.

\begin{definition}[$\rho$-zCDP]
  \label{def:rho-zCDP}
  A randomized algorithm \(M : \caD \to \mca{A}\) satisfies \(\rho\)-zero-concentrated DP (\(\rho\)-zCDP) if for all adjacent \(S,S' \in \mathcal{D}\),
  \begin{equation}
    D_{\alpha}(M(S) \| M(S')) \leq \rho \alpha,\quad \forall \alpha \in (1, \infty),
  \end{equation}
  where \(D_{\alpha}(M(S)\|M(S'))\) is taken only over the randomness of \(M\) conditioned on the data.
\end{definition}

The connection between \(\rho\)-zCDP and \((\epsilon,\delta)\)-DP is well established
\citep{dwork2014_AlgorithmicFoundations,bun2016_ConcentratedDifferential}.

\begin{lemma}
  \label{lem:DPMechanism_Relation}
  If \( M \) is \((\epsilon,0)\)-DP, then \( M \) is \((\epsilon^2/2)\)-zCDP.
  Conversely, if \( M \) is \(\rho\)-zCDP, then \( M \) is
  \(\left( \rho + 2 \sqrt{\rho \log (1/\delta)}, \delta \right)\)-DP for every \(\delta \in (0,1)\).
  In particular, for \(\epsilon \in (0,1]\) and \(\delta \in (0,e^{-1}]\), \(\rho\)-zCDP with
  \(\rho \leq \epsilon^2 (8 \log (1/\delta))^{-1}\) implies \((\epsilon,\delta)\)-DP.
\end{lemma}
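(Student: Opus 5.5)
The plan has three parts, matching the three claims of \cref{lem:DPMechanism_Relation}: a direct R\'enyi divergence computation for pure DP, a tail bound on the privacy loss random variable for the converse, and elementary algebra for the final specialization.

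\textbf{From pure DP to zCDP.} Fix adjacent \(S,S'\) and write \(P,Q\) for the laws of \(M(S),M(S')\). Let \(L=\log\frac{\dd P}{\dd Q}\) be the privacy loss. Pure \((\epsilon,0)\)-DP applied in both directions gives \(|L|\le\epsilon\) almost surely. We have
\[
D_\alpha(P\|Q)=\frac{1}{\alpha-1}\log \E_Q e^{\alpha L}.
\]
The constraint \(\E_Q e^{L}=1\) restricts the admissible laws of \(L\) under \(Q\). The extremal case is a two-point law on \(\{-\epsilon,\epsilon\}\) with the weights forced by this constraint, namely \(Q(L=\epsilon)=1/(1+e^{\epsilon})\). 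This reduces the claim to the scalar inequality
\[
\frac{1}{\alpha-1}\log\frac{e^{\alpha\epsilon}+e^{\epsilon}e^{-\alpha\epsilon}}{1+e^{\epsilon}}\le \frac{\alpha\epsilon^2}{2}.
\]
I would prove this via Hoeffding's lemma applied to the centered variable, as in Bun--Steinke.

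This step is the main obstacle. A naive bound using Hoeffding alone only gives \(\alpha\epsilon^2/2+\epsilon\)-type constants. Getting exactly \(\epsilon^2/2\) requires the refined argument of Bun--Steinke: bound \(\E_P[e^{(\alpha-1)L}]\) using the convexity of \(x\mapsto e^{tx}\) on \([-\epsilon,\epsilon]\), together with \(\E_P L=\mathrm{KL}(P\|Q)\le \epsilon(e^\epsilon-1)/(e^\epsilon+1)\le\epsilon^2/2\). If I cannot close this cleanly, I would cite \citet{bun2016_ConcentratedDifferential} (Proposition~1.4) directly.

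\textbf{From zCDP to approximate DP.} With \(L\) now measured under \(P\), the zCDP condition says \(\E_P e^{(\alpha-1)L}\le e^{(\alpha-1)\alpha\rho}\). By Markov's inequality,
\[
P(L>\epsilon)\le \exp\{(\alpha-1)(\alpha\rho-\epsilon)\}.
\]
Optimize at \(\alpha=(\epsilon+\rho)/(2\rho)\) and set \(\epsilon=\rho+2\sqrt{\rho\log(1/\delta)}\). Then the exponent equals \(-(\epsilon-\rho)^2/(4\rho)=-\log(1/\delta)\), so \(P(L>\epsilon)\le\delta\). For any measurable \(A\), split on the event \(\{L\le\epsilon\}\):
\[
P(A)\le P(A\cap\{L\le\epsilon\})+\delta\le e^{\epsilon}Q(A)+\delta.
\]

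\textbf{The specialization.} Let \(\ell=\log(1/\delta)\ge1\) and \(\rho\le\epsilon^2/(8\ell)\). Then \(2\sqrt{\rho\ell}\le\epsilon/\sqrt2\). Also \(\rho\le\epsilon^2/8\le\epsilon/8\), using \(\epsilon\le1\). Hence
\[
\rho+2\sqrt{\rho\ell}\le(1/8+1/\sqrt2)\epsilon<\epsilon.
\]
The resulting privacy loss level is therefore at most \(\epsilon\), and monotonicity of \((\epsilon,\delta)\)-DP in \(\epsilon\) finishes the proof.
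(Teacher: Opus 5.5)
Your proposal is correct. The paper gives no proof of this lemma; it states it as known and cites \citet{dwork2014_AlgorithmicFoundations} and \citet{bun2016_ConcentratedDifferential}. Your sketch is therefore a self-contained reconstruction of the cited results rather than an alternative to an argument in the paper. Citing is economical and delegates the sharp constant to Bun--Steinke, while your derivation shows where each constant comes from.

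\textbf{zCDP to approximate DP, and the specialization.} These are the standard computations: Markov on the privacy loss at \(\alpha=(\epsilon+\rho)/(2\rho)\), then splitting on \(\{L\le\epsilon\}\). Your constants check out, including \((1/8+1/\sqrt2)\epsilon<\epsilon\).

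\textbf{Pure DP to zCDP.} The step you hedge on closes with no new idea. Hoeffding is not the weak link; the only lossy input in the ``naive'' bound is the crude mean estimate \(\E_P L\le\epsilon\). Under \(P\) you have \(L\in[-\epsilon,\epsilon]\), so Hoeffding's lemma gives \(D_\alpha(P\|Q)\le \E_P L+(\alpha-1)\epsilon^2/2\). With \(\E_P L=\mathrm{KL}(P\|Q)\le\epsilon\tanh(\epsilon/2)\le\epsilon^2/2\), this is exactly \(\alpha\epsilon^2/2\).

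The KL bound, like your two-point extremality claim, rests on a chord argument you left implicit. Let \(Y=\dd P/\dd Q\in[e^{-\epsilon},e^{\epsilon}]\), so \(\E_Q Y=1\). Any convex function of \(Y\) then has \(Q\)-expectation at most its value under the two-point law you wrote down. Applied to \(y^\alpha\), this gives your reduction; applied to \(y\log y\), it gives \(\epsilon\tanh(\epsilon/2)\).

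\textbf{A more direct route.} Alternatively, you can prove your scalar inequality without Hoeffding. Multiply both sides by \(e^{-\epsilon/2}\) and set \(t=\alpha-1/2\), so that \(\alpha(\alpha-1)=t^2-1/4\). The inequality becomes \(\log\cosh(t\epsilon)-(t\epsilon)^2/2\le\log\cosh(\epsilon/2)-(\epsilon/2)^2/2\). This holds because \(x\mapsto\log\cosh x-x^2/2\) has derivative \(\tanh x-x\le0\) on \([0,\infty)\) and \(t\epsilon>\epsilon/2\).
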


Although \((\epsilon,\delta)\)-DP is widely used, the divergence-based formulation of zCDP often simplifies the analysis of complex algorithms and yields sharper composition bounds.
For this reason, we adopt \(\rho\)-zCDP as the primary privacy notion in this paper; the corresponding \((\epsilon,\delta)\)-DP guarantees follow from the lemma above.

In this paper, we develop minimax and adaptation theory for estimating bandable covariance matrices under differential privacy. We consider three nested covariance classes: the pointwise-decay class \(\mca{H}_\alpha\), the row-tail class \(\mca{G}_\alpha\), and the separated-block class \(\mca{F}_\alpha\), where \(\alpha\) denotes the smoothness parameter. The precise definitions of these classes are provided in Section \ref{subsec:three-classes}. They satisfy the inclusion \(\mca{H}_\alpha \subset \mca{G}_\alpha \subset \mca{F}_\alpha\), up to a constant factor in the class radius.

\subsection{Main Contribution}
\label{subsec:Contribution}

We first characterize the minimax risks under squared operator loss and normalized squared Frobenius loss over all three classes. The principal finding is a geometric separation created by privacy. Under operator loss, the classes have the same leading non-private rate, but the privacy cost over the separated-block class \(\mca{F}_\alpha\) is algebraically larger than the costs over the pointwise-decay and row-tail classes. Under Frobenius loss, the non-private rates already separate \(\mca{F}_\alpha\) from \(\mca{G}_\alpha\) and \(\mca{H}_\alpha\), and privacy creates a further separation between the separated-block class and the two smaller classes. Thus privacy depends not only on the decay exponent \(\alpha\), but also on the local geometry through which that decay is imposed. This distinction is invisible in the leading non-private operator rate and is one of the central phenomena identified by the paper.

\begin{table}[H]
  \caption{
    Leading algebraic terms under squared operator and normalized squared Frobenius losses in the high-dimensional regime.
    Logarithmic factors suppressed.
    Denote \(x=d/(\rho n^2)\).
    The marker ``(log)'' indicates a fixed polylogarithmic gap between the current upper and lower bounds.
  }
  \label{tab:Intro_Rates}
  \centering
  \small
  \setlength{\tabcolsep}{12pt}
  \begin{tabular}{@{}lcc|cc@{}}
    \hline
    & \multicolumn{2}{c|}{Non-private rate}
    & \multicolumn{2}{c}{Cost of privacy}\\
    \cline{2-5}
    Class
    & Operator
    & Frobenius
    & Operator
    & Frobenius\\
    \hline
    \(\mca{F}_\alpha\)
    & \(n^{-\frac{2\alpha}{2\alpha+1}}\)
    & \(n^{-\frac{2\alpha}{2\alpha+1}}\)
    & \(x^{\frac{\alpha}{\alpha+1}}\)
    & \(x^{\frac{\alpha}{\alpha+1}}\)
    \\[1mm]
    \(\mca{G}_\alpha\)
    & \(n^{-\frac{2\alpha}{2\alpha+1}}\)
    & \(n^{-\frac{2\alpha+1}{2\alpha+2}}\)
    & \(\displaystyle
        x^{\frac{2\alpha}{2\alpha+1}}
        \vee
        x^{\frac{2\alpha+1}{2\alpha+3}}
      \)  (log)
    & \(x^{\frac{2\alpha+1}{2\alpha+3}}\)  (log)
    \\[1mm]
    \(\mca{H}_\alpha\)
    & \(n^{-\frac{2\alpha}{2\alpha+1}}\)
    & \(n^{-\frac{2\alpha+1}{2\alpha+2}}\)
    & \(\displaystyle
        x^{\frac{2\alpha}{2\alpha+1}}
        \vee
        x^{\frac{2\alpha+1}{2\alpha+3}}
      \)  (log)
    & \(x^{\frac{2\alpha+1}{2\alpha+3}}\)
    \\
    \hline
  \end{tabular}
\end{table}

Moreover, the role of covariance geometry depends on the choice of loss. In the non-private setting, the operator norm does not distinguish among the three classes in terms of rate of convergence, whereas the Frobenius norm separates \(\mca{F}_\alpha\) from the two smaller classes \(\mca{G}_\alpha\) and \(\mca{H}_\alpha\). Under privacy, both losses distinguish \(\mca{F}_\alpha\) from \(\mca{G}_\alpha\) and \(\mca{H}_\alpha\), but they do so in qualitatively different ways. For \(\mca{G}_\alpha\) and \(\mca{H}_\alpha\), the privacy cost under the operator norm exhibits a phase transition: the first operator term in Table~\ref{tab:Intro_Rates} dominates when \(\alpha < 1/2\), the second dominates when \(\alpha > 1/2\), and the two terms coincide at \(\alpha = 1/2\). No such transition occurs under the Frobenius loss, where the privacy cost follows a single algebraic regime.

We then construct private estimators tailored to the geometry of each class.
For \(\mca{F}_\alpha\), we introduce a blockwise tridiagonal estimator to balance the statistical cost, privacy cost and truncation bias.
For \(\mca{G}_\alpha\) and \(\mca{H}_\alpha\), the useful local constraints occur at every distance from the diagonal, and a single block scale does not directly expose this multiscale structure while keeping the privacy noise under control.
We therefore organize the off-diagonal entries into dyadic increments and refine them into square blocks that separate the distance scales and support procedures tailored to each class.
This construction yields dyadic profile and nonlinear peeling estimators for \(\mca{G}_\alpha\), and a dyadic greedy procedure for \(\mca{H}_\alpha\).
More specifically, pointwise decay makes each dyadic outer block small in Frobenius norm, which enables a greedy rank-one release under operator loss.
The row-tail condition instead provides a local row--column \(\ell^1\) constraint, leading to profile blocks under operator loss and peeling blocks under Frobenius loss.
For \(\mca{H}_\alpha\) under Frobenius loss, the blockwise tridiagonal construction already exploits the available entrywise decay.
These constructions attain the corresponding minimax rates up to logarithmic factors in certain cases.

The methodological novelty is therefore not a single privatized tapering estimator, but a collection of releases tailored to the local geometry of each class: tridiagonal blocks for separated-block control, greedy low-rank releases for pointwise decay, and profile or peeling releases for row-tail decay. A common dyadic decomposition integrates these mechanisms while preserving the correct polynomial dependence on \(d\), \(n\), and \(\rho\).

Another key contribution is the DP van Trees inequality in \cref{thm:VanTrees__zCDP}.
Under the stated regularity conditions, we establish a contraction of Fisher information implied by zCDP and combine it with the classical Bayesian Cramér--Rao inequality. Let \(I_x(\theta)\) denote the Fisher information in one observation, let \(J_\pi\) denote the Fisher information of the prior \(\pi\), and define \(C_\rho=(e^{2\rho}-1)/\rho\). The denominator in the resulting Bayes risk bound contains
\[
  \min\left\{
    n\int\Tr I_x(\theta)\,\dd\pi(\theta),\,
    C_\rho\rho n^2\int\norm{I_x(\theta)}\,\dd\pi(\theta)
  \right\}
  +\Tr J_\pi.
\]
The first term inside the minimum is the classical contribution from the sample information, governed by the trace of the Fisher information, whereas the second is the information limit imposed by privacy, governed by its operator norm. This distinction between trace and operator norm yields the correct polynomial dependence on the dimension in the private lower bounds without introducing an additional logarithmic loss.

Using the DP van Trees inequality, we derive minimax lower bounds for all three covariance classes.
In particular, we construct a blockwise diagonal prior with random blocks whose operator norms are controlled for \(\mca{F}_\alpha\);
a banded entrywise prior for the Frobenius lower bound over \(\mca{H}_\alpha\);
and block priors of intermediate rank that lie simultaneously in \(\mca{G}_\alpha\) and \(\mca{H}_\alpha\) for their common operator norm lower bound.
These constructions isolate the different geometric obstructions created by the separated-block, row-tail, and pointwise-decay constraints.
Combined with the upper bounds, these results show precisely which distinctions among the three classes are induced by privacy.

The inequality is particularly well suited to structured matrix problems because, unlike approaches that rely on priors with independent coordinates or specially constructed testing packings, it accommodates dependent matrix-valued priors. Because it requires only suitable control of the likelihood Fisher information and the prior Fisher information, the lower-bound method may also be of independent interest for other high-dimensional estimation problems under differential privacy.

Finally, for each specified covariance class and loss, we construct private estimators that adapt to the unknown smoothness parameter \(\alpha\).
The unknown \(\alpha\) enters the blockwise tridiagonal and center--outer procedures differently, requiring two adaptive constructions.
For \(\mca{F}_\alpha\), thresholding the dyadic tridiagonal increments yields operator norm adaptation and, consequently, normalized Frobenius adaptation, while a Frobenius norm thresholded version handles \(\mca{H}_\alpha\) under Frobenius loss.
These procedures preserve the known smoothness statistical terms while incurring only logarithmic factors in the privacy terms.
For operator loss over \(\mca{G}_\alpha\) and \(\mca{H}_\alpha\), and Frobenius loss over \(\mca{G}_\alpha\), both the local radius and the preferred release branch vary with \(\alpha\) and scale.
We combine public radius grids, deterministic selection among zero, structured, and dense branches, and Lepski comparisons across scales to match the known smoothness benchmarks uniformly over compact smoothness intervals, up to fixed polylogarithmic factors.

In particular, adaptation introduces no additional algebraic loss: the adaptive procedures retain the powers of \(n\), \(d\), and \(\rho\) corresponding to known smoothness, with only fixed polylogarithmic overhead.

\subsection{Related Work}

Existing work on private covariance estimation has largely focused on unstructured matrices. Under appropriate conditions, calibrated Gaussian perturbation of the sample covariance attains the benchmark under squared operator loss
\[
\frac{d}{n}+\frac{d^3}{\rho n^2}
\]\citep{amin2019_DifferentiallyPrivate,dong2022_DifferentiallyPrivate,portella2024_LowerBounds,narayanan2024_BetterSimpler}. Related work studies Gaussian covariance estimation under Mahalanobis loss, including ill-conditioned models\citep{biswas2020_CoinpressPractical,kamath2022_PrivateComputationallyEfficient,ashtiani2022_PrivatePolynomial,alabi2023_PrivatelyEstimating}, as well as private estimation of spiked covariance matrices \citep{cai2024_OptimalDifferentially}. These benchmarks do not exploit ordered off-diagonal decay and can therefore be substantially suboptimal for the structured covariance classes considered here. This paper quantifies the resulting minimax improvement and shows that it depends on both the precise form of the decay constraint and the loss function.

In contrast, there is a rich literature on structured covariance estimation in the non-private setting.
For bandable covariance matrices, regularization methods including banding~\citep{bickel2008_RegularizedEstimation}, tapering~\citep{cai2010_OptimalRates}, thresholding~\citep{bickel2008_CovarianceRegularization},
and adaptive block-thresholding~\citep{cai2012_AdaptiveCovariance} have been proposed to exploit the structural decay of correlations.
Optimal rates of convergence under both operator and Frobenius norms have been established~\citep{cai2010_OptimalRates}.
Other commonly studied structures include sparse covariance matrices~\citep{cai2011_AdaptiveThresholding,cai2012_OptimalRates},
Toeplitz covariance matrices~\citep{cai2013_OptimalRates}, and sparse precision matrices~\citep{cai2011_ConstrainedL1}.
See \citet{cai2016_EstimatingStructured} for a comprehensive overview of this literature.

Privacy constraints typically introduce a ``cost of privacy'' in minimax rates.
Several techniques have been developed for proving minimax lower bounds under differential privacy. These include private analogues of classical information-theoretic tools such as Fano's inequality and Le Cam's method\citep{duchi2014_LocalPrivacy,duchi2018_MinimaxOptimal,acharya2020_DifferentiallyPrivate}, fingerprinting arguments\citep{bassily2014_DifferentiallyPrivate,kamath2022_NewLower,peter2024_SmoothLower}, score attacks\citep{cai2021_CostPrivacy,cai2023_ScoreAttack}, and van Trees inequalities\citep{cai2024_OptimalFederated,xue2024_OptimalEstimation}. More recently, \citet{narayanan2024_BetterSimpler,portella2024_LowerBounds} used Wishart constructions to establish minimax lower bounds for private estimation of general unstructured covariance matrices.

Within this literature, the differentially private van Trees inequality
developed here is especially useful because its trace--operator separation
yields the correct dimension dependence for the structured covariance lower
bounds; its methodological contribution is described in Section \ref{subsec:Contribution}  and
developed in \cref{sec:LowerBound}.

\subsection{Organization}

The rest of the paper is organized as follows. We conclude this section with basic notation.
\cref{sec:ModelResults} introduces the statistical model and the three covariance classes and states the main minimax results with discussions.
\cref{sec:UpperBounds} develops the private estimators: a blockwise tridiagonal estimator for the separated-block class, a greedy center--outer procedure for the pointwise-decay class under operator loss, and profile and peeling procedures for the row-tail class under operator and Frobenius losses, respectively.
The pointwise-decay class under Frobenius loss is handled by the blockwise tridiagonal construction.
\cref{sec:LowerBound} presents the DP van Trees inequality and derives minimax lower bounds for bandable covariance matrix estimation under DP.
In \cref{sec:Adaptive}, we develop estimators adaptive to smoothness and analyze their performance under both losses.
\cref{sec:Precision} considers extensions of our methodology to precision matrix estimation and \cref{sec:Conclusion} concludes with a discussion of our findings and directions for future research.

\subsection{Notation}\label{subsec:Notations}

We use $C$ and $c$ to denote generic positive constants that may vary from line to line.
Write $a \lesssim b$ if there exists a constant $C>0$ such that $a \le Cb$, and write $a \asymp b$ if $a \lesssim b$ and $b \lesssim a$.
Write \(a\lesssim_{\log} b\) if \(a\leq C\log^A(edn/\rho)b\) for fixed \(C,A>0\), and write \(a\asymp_{\log} b\) if \(a\lesssim_{\log} b\) and \(b\lesssim_{\log} a\).
We use the notations $a \wedge b$ and $a \vee b$ to denote $\min(a,b)$ and $\max(a,b)$, respectively.
The indicator function is denoted by $\ind{\cdot}$, and $|S|$ denotes the cardinality of a set $S$.
We write $[d] = \{1,2,\dots,d\}$.
For \(R>0\), define the radial clipping map on \(\R^m\) by \(\chi_R(0)=0\) and \(\chi_R(z)=z\min\{1,R/\norm{z}_2\}\) for \(z\neq0\).

For a matrix $A$, $\norm{A}$ denotes the spectral (operator) norm and $\norm{A}_F$ the Frobenius norm.
We write $\ang{A,B} = \Tr(A^\T B)$ for the trace inner product.
The matrix $\ell^r$ norm is defined as
\begin{math}
  \norm{A}_{\ell^r} = \max_{\norm{x}_{\ell^r}=1} \norm{Ax}_{\ell^r}.
\end{math}
In particular,
$\norm{A}_{\ell^2} = \norm{A}$ is the spectral norm,
$\norm{A}_{\ell^\infty} = \max_i \sum_j |A_{ij}|$ is the maximum absolute row sum,
and $\norm{A}_{\ell^1} = \max_j \sum_i |A_{ij}|$ is the maximum absolute column sum.
The matrix Schatten-$q$ norm is defined as
\begin{math}
  \normsch{A} = \xk{\sum_i \sigma_i(A)^q}^{1/q},
\end{math}
where $\sigma_i(A)$ are the singular values of $A$.
In particular, $\normsch[2]{A} = \norm{A}_F$ is the Frobenius norm and $\normsch[\infty]{A} = \norm{A}$ is the spectral norm.
Write \( \Pi_{M_0}(A) \) for the spectral projection of a symmetric matrix to \( [0,M_0] \).
For a matrix $M \in \R^{d\times d}$ and an index set $B \subseteq [d]^2$,
we denote by $M_B$ or $M[B]$ the matrix obtained by setting all entries outside $B$ to zero.
A similar notation $v_I$ applies to vectors.
We call $B$ a block if $B = I \times J$ for some $I,J \subseteq [d]$,
and say that it has size $k$ if $|I| = |J| = k$.

The sub-Gaussian norm~\citep{vershynin2018_HighdimensionalProbability} for a random variable $\xi$ is defined as
\begin{math}
  \norm{\xi}_{\psi_2} = \inf \{ t > 0 : \E (e^{\xi^2/t^2}) \le 2 \},
\end{math}
and for a random vector $X$ as
\begin{math}
  \norm{X}_{\psi_2} \coloneqq \sup_{v:\norm{v}_2=1} \norm{\ang{X,v}}_{\psi_2}.
\end{math}

\section{Model, Covariance Classes and Results}
\label{sec:ModelResults}

\subsection{Statistical model}

Let \(X_1,\ldots,X_n\) be independent copies of a random vector \(X\in\R^d\), with mean \(\mu=\E X\) and covariance matrix
\[
   \Sigma=\E\zk{(X-\mu)(X-\mu)^\T}.
\]
For fixed constants \(K,M_0<\infty\), we assume
\begin{equation}
  \norm{X-\mu}_{\psi_2} \leq K,
  \qquad
  0\preceq\Sigma\preceq M_0 I_d.
  \label{eq:ModelAssumption}
\end{equation}

We consider the pairing symmetrization \( W_i=(X_{2i}-X_{2i-1})/\sqrt{2}, \) for \( 1\leq i\leq \lfloor n/2\rfloor \).
Then, the paired observations are independent and satisfy \(\E W_i=0\), \(\operatorname{Cov}(W_i)=\Sigma\) and \(\norm{W_i}_{\psi_2} \leq CK\).
Moreover, replacing one original record changes at most one \(W_i\).
Consequently, composing any \(\rho\)-zCDP mechanism for the paired data recovers a \(\rho\)-zCDP mechanism for the original data.
Therefore, since our focus is the optimal rates, we assume that \( \mu = 0 \) without loss of generality throughout the paper.

In this paper, for an estimator \( \hat{\Sigma} \), we consider the squared operator norm loss
\( \normx{\hat{\Sigma} - \Sigma}^2 \)
and the normalized squared Frobenius norm loss
\( d^{-1} \normx{\hat{\Sigma} - \Sigma}_F^2 \).
The normalization makes the two losses comparable due to the relation
\( d^{-1} \norm{A}_F^2 \leq\norm{A}^2 \).

Throughout,
we consider the multidimensional regime \( d \gtrsim \log n \).
We focus on the nondegenerate privacy regime \(d/(\rho n^2)=o(1)\);
otherwise, the minimax lower bounds show that the rates degenerate.
In terms of privacy, we consider the meaningfully private case \(0<\rho\lesssim1\), where the implicit upper bound is fixed and the constants below may depend on it.
Hence, \(d=o(n^2)\).
All asymptotic statements below are understood along sequences satisfying these conditions.

\subsection{Three covariance classes}
\label{subsec:three-classes}
We consider the following classes of bandable covariance matrices.
The first two are standard in the non-private covariance estimation literature
\citep{bickel2008_CovarianceRegularization,bickel2008_RegularizedEstimation,
cai2010_OptimalRates,cai2012_AdaptiveCovariance}.
The third class is introduced in this paper and turns out to possess especially favorable geometric properties for both non-private and private estimation.
The first class imposes a power-law decay condition on the covariance entries as their distance from the main diagonal increases.
\begin{definition}[Pointwise-decay class]
  \label{def:Class_H}
  For \(\alpha,M_0,M>0\), let \(\mca{H}_\alpha(M_0,M)\) be the set of covariance matrices satisfying \(0\preceq\Sigma\preceq M_0 I_d\) and
  \begin{equation}
    \abs{\Sigma_{ij}}
    \leq M\abs{i-j}^{-(\alpha+1)},
    \qquad i\neq j.
    \label{eq:Cov_Bandable_PointwiseDecay}
  \end{equation}
\end{definition}

Another closely related class controls the tail sum of each row~\citep{cai2010_OptimalRates,cai2012_AdaptiveCovariance}:

\begin{definition}[Row-tail class]
  \label{def:Class_G}
  For \(\alpha,M_0,M>0\), let \(\mca{G}_\alpha(M_0,M)\) be the set of covariance matrices satisfying \(0\preceq\Sigma\preceq M_0 I_d\) and
  \begin{equation}
    \max_{j\in[d]}
    \sum_{i:\,\abs{i-j}>k}\abs{\Sigma_{ij}}
    \leq Mk^{-\alpha},
    \qquad k\geq1.
    \label{eq:Cov_Bandable_RowTail}
  \end{equation}
\end{definition}

These bandable covariance classes encode the decay of covariance away from the main diagonal.
Another natural way to express this structure is to control blocks separated from the diagonal.
We call a block \(R\) \(k\)-off-diagonal if it is contained in \(\{(i,j):j\geq i+k\}\) or, symmetrically, in \(\{(i,j):i\geq j+k\}\).
For example, \([1,\dots,i_0]\times[i_0+k,\dots,d]\) is a \(k\)-off-diagonal block.
This motivates the following definition.

\begin{definition}[Separated-block class]
  \label{def:Class_F}
  For \(\alpha,M_0,M>0\), let \(\mca{F}_\alpha(M_0,M)\) be the set of covariance matrices satisfying \(0\preceq\Sigma\preceq M_0 I_d\) and
  \begin{equation}
    \norm{\Sigma_R}
    \leq Mk^{-\alpha},
    \qquad
    \text{for every }k\geq1\text{ and every }k\text{-off-diagonal block }R.
    \label{eq:Cov_Bandable_Norm}
  \end{equation}
\end{definition}

Alternatively, the class can be described in terms of the operator norm of covariances between two $k$-separated
index sets $I,J\subseteq[d]$, i.e.\  $\min_{i\in I,j\in J}|i-j|\ge k$:
\begin{equation*}
  \sup_{k\text{-separated } I,J}
  \norm{\mr{Cov}(x_I, x_J)} \leq C_1 k^{-\alpha}.
\end{equation*}
From this perspective, the class $\mca{F}_\alpha$ captures the decay of long-range dependencies in the random vector $x$.
Thus, it is a natural model for various applications where such decay occurs, e.g., time series analysis and spatial statistics.

Although formulated at different levels, the three classes are closely related.
Combining $\norm{A}^2 \leq {\norm{A}_{\ell^1} \norm{A}_{\ell^\infty}}$ with tail summation yields the following inclusion relation, up to a constant adjustment of the class radius.

\begin{proposition}[Class inclusions]
  \label{prop:ClassInclusions}
  For every fixed \(\alpha,M_0,M>0\), there is a constant \(C_\alpha>0\), depending only on \(\alpha\), such that
  \[
    \mca{H}_\alpha(M_0,M)
    \subseteq
    \mca{G}_\alpha(M_0,C_\alpha M)
    \subseteq
    \mca{F}_\alpha \bigl(M_0,C_\alpha(M+M_0)\bigr).
  \]
  Moreover, we have reverse inclusions \( \mca{F}_{\alpha+1/2}\subseteq\mca{G}_{\alpha} \) and \( \mca{F}_{\alpha+1}\subseteq\mca{H}_{\alpha} \).
\end{proposition}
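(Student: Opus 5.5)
The plan is to verify each inclusion directly from the definitions. The forward chain comes from summing tails and from the bound \(\norm{A}^2\le\norm{A}_{\ell^1}\norm{A}_{\ell^\infty}\). The reverse inclusions come from testing the separated-block condition on thin (row-vector or \(1\times1\)) blocks.

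\emph{Step 1: \(\mca{H}_\alpha\subseteq\mca{G}_\alpha\).} Fix \(j\) and \(k\ge1\). The pointwise bound \eqref{eq:Cov_Bandable_PointwiseDecay} gives
\[
  \sum_{i:\abs{i-j}>k}\abs{\Sigma_{ij}}\le 2M\sum_{m>k}m^{-(\alpha+1)}\le 2M\int_k^\infty x^{-(\alpha+1)}\dd x=\tfrac{2}{\alpha}Mk^{-\alpha}.
\]
So the inclusion holds with \(C_\alpha\ge 2/\alpha\).

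\emph{Step 2: \(\mca{G}_\alpha(M_0,M')\subseteq\mca{F}_\alpha\).} Let \(R\) be a \(k\)-off-diagonal block. I treat the cases \(k=1\) and \(k\ge2\) separately.
\begin{itemize}
  \item For \(k=1\): \(\Sigma_R\) is a submatrix of \(\Sigma\), padded with zeros, so \(\norm{\Sigma_R}\le\norm{\Sigma}\le M_0\). This is where the \(M_0\) in the radius \(C_\alpha(M+M_0)\) comes from.
  \item For \(k\ge2\): every entry \((i,j)\in R\) has \(\abs{i-j}\ge k>k-1\). Hence each row sum and each column sum of \(\abs{\Sigma_R}\) is at most the corresponding row-tail sum at level \(k-1\), which is at most \(M'(k-1)^{-\alpha}\) (the column sum by symmetry of \(\Sigma\)). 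Then \(\norm{\Sigma_R}\le\sqrt{\norm{\Sigma_R}_{\ell^1}\norm{\Sigma_R}_{\ell^\infty}}\le M'(k-1)^{-\alpha}\le 2^\alpha M'k^{-\alpha}\).
\end{itemize}
Taking \(M'=C_\alpha M\) from Step 1 and enlarging \(C_\alpha\) gives the stated chain.

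\emph{Step 3: reverse inclusions.} For \(\mca{F}_{\alpha+1}\subseteq\mca{H}_\alpha\), note that a single entry \(\{(i,j)\}\) with \(i\ne j\) is an \(\abs{i-j}\)-off-diagonal \(1\times1\) block. Therefore \(\abs{\Sigma_{ij}}=\norm{\Sigma_{\{(i,j)\}}}\le M\abs{i-j}^{-(\alpha+1)}\).

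For \(\mca{F}_{\alpha+1/2}\subseteq\mca{G}_\alpha\), fix \(j\) and \(k\) and split the tail \(\{i:i-j>k\}\) into dyadic shells \(S_l=\{i:2^lk<i-j\le2^{l+1}k\}\), \(l\ge0\). Each set \(\{j\}\times S_l\), restricted to valid indices, is a \(2^lk\)-off-diagonal block that is a row vector. Its operator norm equals its \(\ell^2\) norm, so it is at most \(M(2^lk)^{-\alpha-1/2}\). Cauchy--Schwarz over \(\abs{S_l}\le2^lk\) entries then gives \(\sum_{i\in S_l}\abs{\Sigma_{ij}}\le M(2^lk)^{-\alpha}\). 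Summing the geometric series in \(l\), and doing the same for \(i<j-k\), yields a row-tail bound \(\frac{2}{1-2^{-\alpha}}Mk^{-\alpha}\). As before, these reverse inclusions hold up to an \(\alpha\)-dependent adjustment of the radius.

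No step is deep. The only points needing care are the bookkeeping that \(k\)-off-diagonal means \(\abs{i-j}\ge k\) while the row-tail condition uses a strict inequality (this forces the shift to \(k-1\)), and the degenerate case \(k=1\), which is why the \(\mca{F}_\alpha\) radius must involve \(M_0\). I expect Step 2 to be the main place where constants have to be tracked.
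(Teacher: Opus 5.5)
Your proof is correct and follows the paper's argument essentially step for step. Both use tail summation for \(\mca{H}_\alpha\subseteq\mca{G}_\alpha\), the bound \(\norm{\Sigma_R}^2\le\norm{\Sigma_R}_{\ell^1}\norm{\Sigma_R}_{\ell^\infty}\) with the shift to level \(k-1\) when \(k\ge2\), dyadic shells plus Cauchy--Schwarz for \(\mca{F}_{\alpha+1/2}\subseteq\mca{G}_\alpha\), and singleton blocks for \(\mca{F}_{\alpha+1}\subseteq\mca{H}_\alpha\). The one difference is the case \(k=1\): you use the compression bound \(\norm{\Sigma_R}\le\norm{\Sigma}\le M_0\) directly, whereas the paper splits off the distance-one entries (each bounded by \(\sqrt{\Sigma_{ii}\Sigma_{jj}}\le M_0\)) and applies the row-tail condition to the rest. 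Your version is slightly shorter and yields the same \(M+M_0\) radius.
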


\subsection{Results overview}

For \(\mca{J}\in\{\mca{F},\mca{G},\mca{H}\}\), we write the class of distributions
\[
  \mca{P}_{\alpha}^{\mca{J}}(K,M_0,M)
  =
  \left\{
  P:
  P\text{ satisfies \cref{eq:ModelAssumption} and }
  \Sigma(P)\in\mca{J}_\alpha(M_0,M)
  \right\}.
\]
When \(K,M_0,M\) are fixed, we suppress them and write \(\mca{P}_\alpha^{\mca{J}}\) for readability.
Let \(\mca{M}_\rho\) be the collection of all \(\rho\)-zCDP estimators based on \(n\) observations.
To uniformly characterize the optimal rates for the classes \( \mca{G}_\alpha \) and  \( \mca{H}_\alpha \), define
\begin{equation}
  \mca{Q}_{\alpha,d}(u)
  =
  \sup_{\substack{1\leq k\leq\lfloor d/4\rfloor\\1\leq r\leq k}}
  \min\xk{
    \frac{k^{-2\alpha}}{r},
    ukr
  },
  \label{eq:Rate_Q}
\end{equation}
where \(u>0\) and \(k,r\) are integers.
Under \(d/(\rho n^2)=o(1)\), direct optimization in \cref{eq:Rate_Q} gives the following explicit form.
If \(\alpha\geq1/2\), then
\begin{equation}
  \mca{Q}_{\alpha,d} \left(\frac{d}{\rho n^2}\right)
  \asymp_\alpha
  \begin{cases}
    \displaystyle\frac{d^3}{\rho n^2},
    &
    d\lesssim(\rho n^2)^{1/(2\alpha+4)},
    \\[4pt]
    \displaystyle
    \left(\frac{d}{\rho n^2}\right)^{\frac{2\alpha+1}{2\alpha+3}},
    &
    d\gtrsim(\rho n^2)^{1/(2\alpha+4)}.
  \end{cases}
  \label{eq:Rate_Q_alpha_ge_half}
\end{equation}
If \(0<\alpha<1/2\), then
\begin{equation}
  \mca{Q}_{\alpha,d} \left(\frac{d}{\rho n^2}\right)
  \asymp_\alpha
  \begin{cases}
    \displaystyle\frac{d^3}{\rho n^2},
    &
    d\lesssim(\rho n^2)^{1/(2\alpha+4)},
    \\[4pt]
    \displaystyle\frac{d^{1-\alpha}}{\sqrt{\rho}\,n},
    &
    (\rho n^2)^{1/(2\alpha+4)}
    \lesssim d
    \lesssim(\rho n^2)^{1/(2\alpha+2)},
    \\[6pt]
    \displaystyle
    \left(\frac{d}{\rho n^2}\right)^{\frac{2\alpha}{2\alpha+1}},
    &
    d\gtrsim(\rho n^2)^{1/(2\alpha+2)}.
  \end{cases}
  \label{eq:Rate_Q_alpha_lt_half}
\end{equation}

The first result gives matching minimax rates over the separated-block class under both losses.

\begin{theorem}[Optimal rates under \( \mca{F}_\alpha \)]
  \label{thm:Overview_F_Rates}
  Fix \(\alpha,K,M_0,M>0\).
  If \(\rho n^2/d\gtrsim(\log d)^{2(\alpha+1)}\), then
  \begin{equation}
    \begin{gathered}
    \inf_{\hat{\Sigma}\in\mca{M}_\rho}
    \sup_{P\in\mca{P}_\alpha^{\mca{F}}}
    \E_P \norm{\hat{\Sigma}-\Sigma(P)}^2
     \asymp
    \inf_{\hat{\Sigma}\in\mca{M}_\rho}
    \sup_{P\in\mca{P}_\alpha^{\mca{F}}}
    \frac{1}{d}\E_P \norm{\hat{\Sigma}-\Sigma(P)}_F^2 \\
    \asymp
    n^{-\frac{2\alpha}{2\alpha+1}}
    \wedge\frac{d}{n}
    +
    \left(\frac{d}{\rho n^2}\right)^{\frac{\alpha}{\alpha+1}}
    \wedge\frac{d^3}{\rho n^2}.
    \end{gathered}
    \label{eq:Overview_F_Rates}
  \end{equation}
\end{theorem}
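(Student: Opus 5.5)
The proof splits into an upper bound, obtained from a single private estimator, and a lower bound, obtained from the DP van Trees inequality (\cref{thm:VanTrees__zCDP}). For the upper bound it is enough to control the squared operator loss, because \(d^{-1}\norm{A}_F^2\le\norm{A}^2\). The Frobenius lower bound needs its own argument, but one prior can serve both losses.

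\emph{Upper bound.} In the regime \(d^3/(\rho n^2)\) (small \(d\)) and \(d/n\), I would use the Gaussian-perturbed clipped sample covariance. Each \(W_i\) is clipped at radius \(R\asymp K\sqrt{d\log n}\), so the sensitivity is \(R^2/n\). A symmetric Gaussian (Wigner-type) noise matrix with entrywise standard deviation \(\asymp R^2/(n\sqrt\rho)\) then gives \(\rho\)-zCDP and operator error \(\lesssim d^3/(\rho n^2)\) up to logarithms. The matching non-private term \(d/n\) comes from standard sub-Gaussian concentration, and the final output is projected by \(\Pi_{M_0}\).

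Otherwise I use the blockwise tridiagonal estimator with block size \(k\). Partition \([d]\) into consecutive blocks \(B_1,\dots,B_m\) of size \(k\) and keep only the diagonal and adjacent blocks \(\Sigma_{B_a\times B_b}\) with \(|a-b|\le1\). The terms are controlled as follows.
\begin{itemize}
\item \emph{Bias.} The discarded part splits into two families of \(k\)-off-diagonal blocks, upper and lower. Each family is handled by a ``staircase'' decomposition: I write it as a sum over dyadic scales \(2^jk\) of block-diagonal collections of \(2^jk\)-off-diagonal blocks. Each collection has operator norm at most \(M(2^jk)^{-\alpha}\), so the sum is \(\lesssim k^{-\alpha}\) and the squared bias is \(\lesssim k^{-2\alpha}\).
\item \emph{Privacy noise.} The paired blocks of \(W_i\) are clipped blockwise at radius \(\sqrt{k\log n}\). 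Each record then has Frobenius sensitivity \(\lesssim k\sqrt{d/k}/n\) across all blocks, so adding noise of entrywise scale \(\sqrt{d k}/(n\sqrt\rho)\) gives \(\rho\)-zCDP. A \(k\times k\) Gaussian block has operator norm \(\sqrt{k}\) times this scale, and the block-tridiagonal structure keeps the norm within a constant factor (up to a \(\log d\) from the maximum over blocks). The squared privacy error is therefore \(\lesssim d k^2/(\rho n^2)\), possibly with a \(\log d\).
\item \emph{Statistical error.} This is the usual \((k+\log d)/n\).
\end{itemize}
Balancing \(k^{-2\alpha}\) against \(k/n\) gives \(n^{-2\alpha/(2\alpha+1)}\), and balancing it against \(dk^2/(\rho n^2)\) gives \((d/(\rho n^2))^{\alpha/(\alpha+1)}\). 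Taking \(k\) equal to the smaller of the two optimizers achieves the sum, and the assumption \(\rho n^2/d\gtrsim(\log d)^{2(\alpha+1)}\) is what absorbs the logarithm. I still need to check carefully that the clipping bias is negligible under sub-Gaussianity.

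\emph{Lower bound.} The non-private term \(n^{-2\alpha/(2\alpha+1)}\wedge d/n\) is classical (\citet{cai2010_OptimalRates}). It holds for \(\mca{F}_\alpha\) because \(\mca{G}_\alpha\subseteq\mca{F}_\alpha\) (\cref{prop:ClassInclusions}), and the Frobenius version follows from the same block-diagonal construction. For the privacy term I take Gaussian data with \(\Sigma=I+\Delta\), where \(\Delta\) is block diagonal with \(m=d/k\) random \(k\times k\) symmetric blocks \(\Delta_a=\epsilon Z_a\). Each \(Z_a\) has a smooth compactly supported prior density (for example, a product of cosine-squared densities), conditioned on or truncated to \(\norm{Z_a}\lesssim\sqrt{k}\). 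Membership in \(\mca{F}_\alpha\) is arranged as follows.
\begin{itemize}
\item Any \(j\)-off-diagonal block with \(j\ge k\) meets no nonzero entry, so it is zero.
\item For \(j<k\), its operator norm is at most \(\max_a\norm{\Delta_a}\le\epsilon\sqrt{k}\le Mk^{-\alpha}\le Mj^{-\alpha}\). This requires \(\epsilon\asymp k^{-\alpha-1/2}\).
\end{itemize}
Positive definiteness is automatic for small \(\epsilon\). The parameter dimension is \(p\asymp dk\). The per-observation Fisher information has \(\norm{I_x}\asymp1\) and \(\Tr I_x\asymp dk\), while the prior Fisher information is \(\Tr J_\pi\asymp dk/\epsilon^2\). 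The van Trees inequality then bounds the Bayes Frobenius risk below by
\[
\frac{(dk)^2}{\rho n^2+dk/\epsilon^2}.
\]
Dividing by \(d\) gives \(\gtrsim dk^2/(\rho n^2)\) once \(dk/\epsilon^2\lesssim\rho n^2\), that is, once \(k^{2\alpha+2}\lesssim\rho n^2/d\). Choosing \(k\asymp(\rho n^2/d)^{1/(2\alpha+2)}\wedge d\) yields \((d/(\rho n^2))^{\alpha/(\alpha+1)}\wedge d^3/(\rho n^2)\) for the normalized Frobenius loss. The operator loss dominates this, so both lower bounds follow.

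\emph{Main obstacle.} I expect the hardest step to be the prior: it must have finite Fisher information \emph{and} enforce the operator-norm constraint \(\norm{Z_a}\lesssim\sqrt k\) almost surely, without inflating \(\Tr J_\pi\) beyond order \(p/\epsilon^2\). My first attempt is a smooth radial cutoff on \(\norm{Z_a}_F\) combined with entrywise bounded coordinates. Entrywise bounds alone give only \(\norm{Z_a}\le k\), so some extra control is needed. A fallback is to condition the independent cosine prior on the high-probability event \(\norm{Z_a}\le C\sqrt k\) through a smooth multiplier \(\phi(\norm{Z_a}/\sqrt k)\). Its gradient contributes only \(O(1/k)\) per coordinate, hence \(O(k)\) per block, which is negligible relative to \(k^2/\epsilon^2\).
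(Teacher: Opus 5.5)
Your architecture matches the paper's. The upper bound is the blockwise tridiagonal estimator with the dyadic off-band bias bound; one global Gaussian mechanism is equivalent to the paper's per-block budget \(\rho/(2N_k)\). The lower bound is the DP van Trees inequality with independent \(k\times k\) blocks drawn from a product density tilted by an operator-norm cutoff. This gives \(\Tr J_\pi\asymp dk^{2\alpha+2}\), the same order as the paper's \(ck^{-\alpha}W_l\) with \(W_l\sim\Upsilon_k\). Putting the random blocks on the diagonal instead of on \(B_{k;2l,1}\) is harmless. The non-private Frobenius term does come from the same prior through the \(n\Tr I_x\) branch.

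\textbf{The gap: the clipping radius.} The step that fails as written is the clipping radius itself, not the clipping bias you flagged. It is exactly what separates the claimed \(\asymp\) from an \(\asymp_{\log}\) statement. You clip at \(\sqrt{k\log n}\) (and at \(\sqrt{d\log n}\) in the dense regime), but then use the sensitivity \(k\sqrt{d/k}/n\), which presumes \(R^2\asymp k\). With \(R^2\asymp k\log n\):
\begin{itemize}
\item the sensitivity is \(\asymp\sqrt{dk}\,\log n/n\);
\item the noise variance picks up a factor \(\log^2 n\), so the privacy term becomes \(dk^2\log^2 n/(\rho n^2)\);
\item optimizing over \(k\) yields \((d\log^2 n/(\rho n^2))^{\alpha/(\alpha+1)}\), not the claimed rate.
\end{itemize}

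The hypothesis \(\rho n^2/d\gtrsim(\log d)^{2(\alpha+1)}\) cannot absorb this. Its only role is to make the privacy-optimal block size \((\rho n^2/d)^{1/(2\alpha+2)}\) at least of order \(\log d\). Then \(k\asymp k^\ast\vee\log d\) does not change the rate, and the additive \(\log d\) in \(k+\log d\) is dominated by \(k\). That additive term comes from the maximum of the \(\asymp d/k\) Gaussian block norms and from the union bound in the sample concentration. A multiplicative logarithm in the noise scale survives.

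The fix is to clip at \(R=CK\sqrt k\), with no logarithm.
\begin{itemize}
\item Radial clipping never increases \(\abs{\langle u,z\rangle}\), so the clipped vectors remain sub-Gaussian and the \((k+\log d)/n\) concentration is unchanged.
\item The sub-Gaussian tail of \(\norm{X_I}_2\) gives a truncation bias \(\norm{\E\chi_R(X_I)\chi_R(X_J)^\top-\Sigma_B}\lesssim ke^{-C_0k}\). Its square is \(\lesssim k^{-2\alpha}\), so it is absorbed into the bias term.
\item In the dense regime, take \(R=CK\sqrt d\) and use the standing condition \(d\gtrsim\log n\).
\end{itemize}

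\textbf{A smaller point about the prior.} The map \(Z\mapsto\norm{Z}\) is not differentiable where the top singular value is repeated. So \(\phi(\norm{Z_a}/\sqrt k)\) yields only a Lipschitz density, while the regularity behind the van Trees inequality here requires a continuously differentiable prior. Moreover, \((\phi'/\phi)^2\) is unbounded near the edge of the support unless \((\phi')^2/\phi\) is bounded. The paper handles both issues in two steps. It replaces the squared operator norm with the soft maximum \(\tau\log\Tr\exp(X^\top X/\tau)\), with \(\tau=1/(2\log k)\) for a \(k\times k\) block; this lies within an additive \(1/2\) of \(\norm{X}^2\). It also uses a cosine-squared cutoff, for which \((\varphi')^2/\varphi\le\pi^2\). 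With these choices your accounting goes through. The extra score term is in fact of order \(1/k\) per block, not \(k\).
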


The stronger local structure of the row-tail and pointwise-decay classes leads to a common and generally smaller operator-norm rate.

\begin{theorem}[Operator norm; \( \mca{G}_\alpha \), \( \mca{H}_\alpha \)]
  \label{thm:Overview_GH_Operator}
  Fix \(\alpha,K,M_0,M>0\).
  For each \(\mca{J}\in\{\mca{G},\mca{H}\}\),
  \begin{equation}
    \inf_{\hat{\Sigma}\in\mca{M}_\rho}
    \sup_{P\in\mca{P}_\alpha^{\mca{J}}}
    \E_P \norm{\hat{\Sigma}-\Sigma(P)}^2
    \asymp_{\log}
    n^{-\frac{2\alpha}{2\alpha+1}}
    \wedge \frac{d}{n}
    +
    \mca{Q}_{\alpha,d} \left(\frac{d}{\rho n^2}\right).
    \label{eq:Overview_GH_Operator}
  \end{equation}
\end{theorem}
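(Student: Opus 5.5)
We prove the two directions separately, uniformly over \(\mca{J}\in\{\mca{G},\mca{H}\}\). By \cref{prop:ClassInclusions}, \(\mca{H}_\alpha(M_0,M)\subseteq\mca{G}_\alpha(M_0,C_\alpha M)\). It therefore suffices to prove the upper bound over \(\mca{G}_\alpha\). For the lower bound, it suffices to build priors supported in \(\mca{H}_\alpha\cap\mca{G}_\alpha\), and in fact in \(\mca{H}_\alpha\) with a suitably reduced radius.

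\textbf{Non-private term.} For the upper bound, the term \(n^{-2\alpha/(2\alpha+1)}\wedge d/n\) comes from the statistical part of a center--outer dyadic estimator with bandwidth \(k\asymp n^{1/(2\alpha+1)}\wedge d\). This is the non-private tapering argument of \citet{cai2010_OptimalRates}, with concentration of sub-Gaussian sample covariance blocks. For the lower bound, this term is the classical non-private rate and applies verbatim, since \(\mca{M}_\rho\) is contained in the set of all estimators.

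\textbf{Upper bound for the privacy term.} First clip each \(W_i\) radially at level \(\asymp K\sqrt{\log(dn)}\); this changes the truncation bias only negligibly. Then decompose \(\Sigma\) into a central band of width \(k_0\) and dyadic outer increments at distances \(2^\ell\), each tiled into square blocks of side \(2^\ell\).

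The central band is released by blockwise Gaussian mechanisms. A tridiagonal arrangement of \(k_0\times k_0\) blocks has per-record \(\ell^2\)-sensitivity \(\lesssim k_0/n\) per block, while the row sensitivity summed over \(d/k_0\) blocks is controlled. Its operator-norm noise is then \(\lesssim k_0\, d/(\rho n^2)\), up to logs.

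For each outer scale \(k=2^\ell\), the row-tail condition gives a local \(\ell^1\) row/column budget \(k^{-\alpha}\) on each block. This lets us release a profile, a rank-\(r\) approximation per block, with error \(\min(k^{-2\alpha}/r,\; u k r)\) for \(u=d/(\rho n^2)\); for \(\mca{H}_\alpha\), greedy rank-one peeling achieves the same. Each scale then contributes at most \(\sup_r\min(k^{-2\alpha}/r, ukr)\), up to logs. Summing the \(O(\log d)\) scales under zCDP composition, with budget \(\rho/\log d\) per scale, yields \(\mca{Q}_{\alpha,d}(u)\) up to polylog factors. Finally, \(k\le d/4\) together with the dense benchmark \(d^3/(\rho n^2)\) produces the first cases of \cref{eq:Rate_Q_alpha_ge_half,eq:Rate_Q_alpha_lt_half}.

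\textbf{Lower bound for the privacy term.} Apply \cref{thm:VanTrees__zCDP} with Gaussian data. Fix the maximizing pair \((k,r)\) in \cref{eq:Rate_Q}. Place disjoint off-diagonal blocks of size \(k\) at distance \(\asymp k\), about \(d/k\) of them. In each block, take a random rank-\(r\) matrix \(\theta\, UV^\T\) with a smooth compactly supported prior on the coefficients, scaled so that the block entries are \(\lesssim k^{-\alpha-1}\). This scaling enforces both the pointwise decay and the row-tail sums \(k\cdot k^{-\alpha-1}\). Each block then has operator norm of order \(k^{-\alpha}/\sqrt r\), and the total parameter dimension is \(p\asymp (d/k)\,kr=dr\).

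Near \(\Sigma=I\) the Fisher information satisfies \(\norm{I_x}\lesssim 1\) per coordinate, so the privacy denominator is \(\rho n^2\). The prior Fisher information is \(\Tr J_\pi\asymp p/\text{(scale)}^2\). Van Trees lower-bounds the summed squared parameter error by \(p^2/(\rho n^2+\Tr J_\pi)\). This must be converted to operator loss: we lower-bound \(\E\norm{\hat\Sigma-\Sigma}^2\) by the squared error on a single block (the average over blocks of \(\norm{\cdot}_F^2/r\)). That gives \(\min(k^{-2\alpha}/r,\; ukr)\), where \(u=d/(\rho n^2)\) arises because the \(d/k\) blocks share the privacy budget.

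\textbf{Main obstacle.} Two steps are delicate. The first is the profile and peeling upper bound: showing that the \(\ell^1\) row constraint permits a rank-\(r\) release whose privacy noise scales like \(ukr\) and not like the dense \(uk^2\). This requires careful sensitivity control of a data-dependent low-rank selection; I would try a private power iteration with per-block clipping. The second is passing from Frobenius-type van Trees bounds to operator loss with the \(1/r\) factor, which needs the random-subspace prior to spread the error evenly across directions.
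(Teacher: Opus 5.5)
Your lower bound is essentially the paper's rank-interpolating Gaussian submodel. The $\mathcal{H}_\alpha$ half of your upper bound, greedy rank-one releases on center--outer dyadic blocks, is also the paper's.

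The genuine gap is the upper bound over $\mathcal{G}_\alpha$, which is exactly the case you reduced everything to. You release a rank-$r$ approximation of each scale-$k$ outer block, with squared error $k^{-2\alpha}/r$ plus privacy cost $xkr$, where $x=d/(\rho n^2)$. The approximation part fails under the row-tail geometry. The local constraint $A\in\mathcal{K}_k(a)$ with $a\asymp k^{-\alpha}$ bounds the $\ell^1$ row and column sums, but it only gives $\lVert A\rVert_F\le\sqrt{k}\,a$ and no low-rank approximability in operator norm. For example, take a $\Sigma\in\mathcal{G}_\alpha$ whose scale-$k$ outer blocks are $ck^{-\alpha}$ times permutation matrices. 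Each such block has all $k$ singular values equal to $ck^{-\alpha}$, so every output of rank $r<k$ has squared error $\gtrsim k^{-2\alpha}$ on it. Correspondingly, the greedy bound $\lVert A\rVert_F^2/r+xkr$ becomes $ka^2/r+xkr$. Its minimum $\asymp ka\sqrt{x}$ is the geometric mean of the zero-branch error $a^2$ and the dense-branch error $xk^2$. Within your own accounting, the per-scale risk is therefore $\asymp\min(k^{-2\alpha},xk^2)$. Its supremum over $k$ is $x^{\alpha/(\alpha+1)}$ in the high-dimensional regime, which is the $\mathcal{F}_\alpha$ rate and algebraically larger than $\mathcal{Q}_{\alpha,d}(x)$. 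So the obstacle is approximation error, not sensitivity control of a private power iteration, and no noise calibration repairs it.

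The missing idea is the paper's dual profile release, which has no rank parameter.
\begin{enumerate}
\item Every unit vector splits into $D=O(\log k)$ dyadic pieces that are $2^h$-sparse with entries at most $\sqrt{2/2^h}$. Hence $\lVert E\rVert\le D^2\max_m\sup_{H\in\mathcal{H}_m}\langle E,H\rangle_F$, where $\mathcal{H}_m$ are flat sparse rank-one probes.
\item A profile-$m$ probe has dual norm at most $\sqrt{2/m}$ against the $\ell^1$ row/column geometry. The net of such probes has log-cardinality $\lesssim_{\log}m$.
\item For each profile, the paper selects probes privately with the exponential mechanism. It fits a center in $\mathcal{K}_k(a)$ by entropy-regularized mirror descent over $T_m\asymp an\sqrt{k\rho_B}/m^{3/2}$ rounds, up to logarithms.
\item It then reconciles the profiles through a common feasible set, whose operator diameter is controlled by the same dual-probe inequality.
\end{enumerate}
The resulting per-block squared error is $\lesssim_{\log}k/n+ak/(n\sqrt{\rho_B})$. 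Up to logarithms this is $k/n+a\sqrt{xk}$, since $\rho_B\asymp\rho k/d$ up to the number of scales. The term $a\sqrt{xk}$ is the middle branch $\sqrt{AB}$ of $Q_k(A,B)\asymp A\wedge\sqrt{AB}\wedge Bk$ with $A=k^{-2\alpha}$ and $B=xk$. This is what the active scales between $k_0$ and $k_+$ require.

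Smaller points:
\begin{itemize}
\item In your lower bound, the rank reduction needs no random subspace. Pinching and right-multiplication by the projector onto the fixed column space of the right factor are operator contractions that fix the truth. So the projected error has rank at most $2Nr$, with $N\asymp d/k$ blocks, and $\lVert E\rVert^2\ge\lVert E\rVert_F^2/(2Nr)$.
\item Read literally, radially clipping the whole $W_i\in\mathbb{R}^d$ at level $\sqrt{\log(dn)}$ destroys the signal. Clip per block at radius $\asymp\sqrt{k+\log n}$ and clip scalar scores at $\asymp\log n$.
\item The center's squared operator noise is $xk_0^2$, not $xk_0$.
\item A release attains the sum $\min_r(a^2/r+xkr)$, not a minimum. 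This matches the max--min defining $\mathcal{Q}_{\alpha,d}$ only through the three-regime reduction, together with zero output beyond a terminal scale $k_+$.
\end{itemize}
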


Under squared Frobenius loss, the two smaller classes share the same algebraic rate, with a fixed polylogarithmic gap only for the row-tail class.

\begin{theorem}[Frobenius norm; \( \mca{G}_\alpha \), \( \mca{H}_\alpha \)]
  \label{thm:Overview_GH_Frobenius}
  Fix \(\alpha,K,M_0,M>0\).
  For each \(\mca{J}\in\{\mca{G},\mca{H}\}\),
  \begin{equation}
    \inf_{\hat{\Sigma}\in\mca{M}_\rho}
    \sup_{P\in\mca{P}_\alpha^{\mca{J}}}
    \frac{1}{d}\E_P \norm{\hat{\Sigma}-\Sigma(P)}_F^2
    \asymp_{\log}
    n^{-\frac{2\alpha+1}{2\alpha+2}}
    \wedge\frac{d}{n}
    +
    \left(\frac{d}{\rho n^2}\right)^{\frac{2\alpha+1}{2\alpha+3}}
    \wedge\frac{d^3}{\rho n^2}.
    \label{eq:Overview_GH_Frobenius}
  \end{equation}
  For \(\mca{J}=\mca{H}\), the relation can be strengthened by replacing \(\asymp_{\log}\) with \(\asymp\).
\end{theorem}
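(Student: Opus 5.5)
The theorem has an upper and a lower bound; we prove them separately. For the two pieces of each bound, we handle the non-private term and the privacy term on their own.

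\textbf{Upper bound.} The trivial regimes $d/n$ and $d^3/(\rho n^2)$ come from the clipped sample covariance plus symmetric Gaussian noise. Its operator-norm error is at most $d/n+d^2\cdot d/(\rho n^2)$ up to logarithms, so the Frobenius error is at most $d^3/(\rho n^2)$.

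For $\mca{H}_\alpha$ the claimed rate is sharp. By \cref{prop:ClassInclusions}, a Frobenius bound over $\mca{G}_\alpha$ controls $\mca{H}_\alpha$ only up to constants in the radius. So I would run a dedicated blockwise tridiagonal estimator over $\mca{H}_\alpha$:
\begin{itemize}
\item Partition $[d]$ into consecutive blocks of size $k$.
\item Keep only the diagonal and first off-diagonal blocks of the clipped sample covariance.
\item Add Gaussian noise calibrated to zCDP; the sensitivity of $x x^\T$ restricted to a band of width about $k$ is about $k/n$ after clipping $\norm{x_I}_2^2\lesssim k\log n$ per block.
\end{itemize}
Each entry then carries privacy variance about $k^2/(\rho n^2)$, up to logs, because the band has $dk$ entries and each row clipping costs $k$. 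The normalized Frobenius error splits into three terms:
\begin{itemize}
\item statistical error $k/n$;
\item privacy error $dk\cdot k^2/(\rho n^2)/d$. This calibration needs care to get exactly $k^3\cdot d/(\rho n^2)\cdot d^{-1}\cdot d$.
\item truncation bias $d^{-1}\sum_{|i-j|>k}\Sigma_{ij}^2\lesssim k^{-2\alpha-1}$.
\end{itemize}
Balancing bias against statistical error gives $n^{-(2\alpha+1)/(2\alpha+2)}$. Balancing the bias $k^{-2\alpha-1}$ against a privacy term $(d/(\rho n^2))k^2$ gives $k\asymp(\rho n^2/d)^{1/(2\alpha+3)}$ and hence the rate $(d/(\rho n^2))^{(2\alpha+1)/(2\alpha+3)}$.

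The clipping must be arranged so that the per-entry privacy variance, summed over the band, is $d\cdot k^2\cdot d/(\rho n^2)$ divided appropriately. Concretely, one adjacent change alters the banded matrix by at most $k\log n/n$ in Frobenius norm per block, summed over $d/k$ blocks. The total $\ell_2$ sensitivity is about $\sqrt{d/k}\cdot k/n$ if we clip the whole vector at $\sqrt{d}$. The noise per entry is then about $dk/(\rho n^2)$, and the total noise is $dk\cdot dk/(\rho n^2)$; dividing by $d$ gives $dk^2/(\rho n^2)$, which matches.

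For $\mca{G}_\alpha$, the same estimator has bias controlled by $d^{-1}\sum_{|i-j|>k}\Sigma_{ij}^2\le d^{-1}\sum_j(\max|\Sigma_{ij}|)\sum_i|\Sigma_{ij}|$. This is not as good, so I would instead use the dyadic peeling estimator of \cref{sec:UpperBounds}. It thresholds dyadic outer blocks using the row $\ell^1$ tail, and it should lose only logarithmic factors; this is what produces the $\asymp_{\log}$.

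\textbf{Lower bound.} Because $\mca{H}_\alpha\subseteq\mca{G}_\alpha$ up to radius, it suffices to prove the lower bound over $\mca{H}_\alpha$. The non-private part is the classical bound of \citet{cai2010_OptimalRates} under Frobenius loss. For the privacy part I would apply the DP van Trees inequality \cref{thm:VanTrees__zCDP} with Gaussian $X$ and the banded entrywise prior:
\[
\Sigma_{ij}=\ind{1\le|i-j|\le k}\,\tau\,\theta_{ij},
\]
with independent smooth compactly supported $\theta_{ij}$ and $\tau\asymp k^{-(\alpha+1)}$ so that $\Sigma\in\mca{H}_\alpha$. The diagonal is fixed at $M_0/2$ and $\tau k$ is small, so positive definiteness holds by diagonal dominance.

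The parameter has dimension $p\asymp dk$. The Fisher information per observation has operator norm $O(1)$ and trace $O(dk)$, and $\Tr J_\pi\asymp dk/\tau^2$. The van Trees bound then gives a total squared Frobenius risk of at least
\[
\frac{(dk)^2}{\rho n^2+dk\,\tau^{-2}},
\]
accounting for the scaling of $\tau$ by the reparametrization $\Sigma=\tau\theta$. Dividing by $d$ gives $k^2d/(\rho n^2)$ in the privacy-dominated regime. The constraint $\tau^2\gtrsim dk/(\rho n^2)$ yields $k^{-2\alpha-2}\asymp dk/(\rho n^2)$, which is balanced at the same $k$ as the upper bound, so the lower bound is $(d/(\rho n^2))^{(2\alpha+1)/(2\alpha+3)}$. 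Capping $k\le d$ gives the $d^3/(\rho n^2)$ branch.

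\textbf{Main obstacle.} The hardest step is matching the Fisher-information operator-norm bound uniformly over the prior support. This is also where the trace-versus-operator-norm separation in \cref{thm:VanTrees__zCDP} becomes essential to avoid logarithmic losses.
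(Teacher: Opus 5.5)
Your overall route is the paper's: the banded tridiagonal estimator for $\mca{H}_\alpha$; the peeling center--outer estimator of Theorem~\ref{thm:DPCov_G_Frobenius} for $\mca{G}_\alpha$, which selects the $kr$ largest entries per outer block by repeated exponential mechanisms rather than thresholding blocks; and, for the lower bound, the DP van Trees inequality with the banded product prior $\Sigma_{ij}=ck^{-(\alpha+1)}u_{ij}$ plus class inclusion. The lower-bound half is fine as written.

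The gap is in the one claim that singles out $\mca{H}_\alpha$: that $\asymp_{\log}$ can be replaced by $\asymp$. You clip each block at $\norm{x_I}_2^2\lesssim k\log n$ and carry ``up to logs'' through every term. With that radius the per-block sensitivity is $\asymp k\log n/n$, the per-entry noise variance is $\asymp dk(\log n)^2/(\rho n^2)$, and the privacy term becomes $dk^2(\log n)^2/(\rho n^2)$. So you only get $\lesssim_{\log}$. The $\mca{G}_\alpha$ estimator already delivers that on $\mca{H}_\alpha$ through the inclusion, which costs only constants in the radius, so your dedicated estimator gains nothing. Its whole purpose is to remove logarithms. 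For that you must clip at $R=CK\sqrt{k}$ with no $\log n$, and then do two things:
\begin{enumerate}
\item Bound the clipping bias by $\norm{\E\chi_R(X_I)\chi_R(X_J)^\T-\Sigma_B}\lesssim ke^{-C_0k}$ (Proposition~\ref{prop:SubGVectorCovTruncation}). Its normalized Frobenius contribution is $O(e^{-2k})$, which is $\lesssim_\alpha k^{-(2\alpha+1)}$ for every $k\geq1$.
\item Bound the sampling error in expectation, block by block: $\E\norm{\tilde\Sigma_B-\E\tilde\Sigma_B}_F^2\leq k\,\E\norm{\tilde\Sigma_B-\E\tilde\Sigma_B}^2\lesssim k^2/n$, using that clipping does not increase the $\psi_2$ norm. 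Then sum over the $\asymp d/k$ blocks, so no union bound and no $\log d$ enters.
\end{enumerate}
This is Lemma~\ref{lem:BlockExpectationBound}, and it yields Theorem~\ref{thm:DPCov_Frobenius} with constants only.

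There are two smaller corrections.
\begin{itemize}
\item The sensitivity $\sqrt{d/k}\cdot k/n$ requires clipping each block $x_{I_l}$ separately, or equivalently composing the $\asymp 2d/k$ block releases at budget $\rho/(2N_k)$ each. Clipping the whole vector at $\sqrt d$, as you say at one point, only gives sensitivity $\asymp d/n$, because all the mass may sit in one block. That inflates the privacy term by a factor $d/k$.
\item The endpoint minima should not be delegated to the full-block estimator. When $n^{1/(2\alpha+2)}\leq d<(\rho n^2/d)^{1/(2\alpha+3)}$, the target is $n^{-(2\alpha+1)/(2\alpha+2)}+d^3/(\rho n^2)$, while the full-block estimator pays $d/n$. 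The single choice $k=\min\{n^{1/(2\alpha+2)},(\rho n^2/d)^{1/(2\alpha+3)},d\}$ in the banded estimator covers all four combinations of the minima.
\end{itemize}
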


In the following sections,
\cref{sec:UpperBounds} constructs DP estimators attaining the upper bounds when the smoothness \(\alpha\) is known,
\cref{sec:LowerBound} gives the minimax lower bounds,
and \cref{sec:Adaptive} establishes procedures adapting to unknown smoothness.

\subsection{Discussions}
\label{subsec:upper-discussions}

The three preceding theorems show how the covariance class, loss function, privacy budget, and ambient dimension jointly determine the minimax risk.
We discuss how local geometry changes the optimal procedure and rate, how dimension determines the active regime, and where logarithmic gaps remain.
The structured rates below should be read together with their finite-dimensional minima against the unstructured terms.

\subsubsection{Geometry, loss, and estimator design}
\label{subsubsec:discussion-geometry}

Under squared operator loss, the three classes share the same non-private term \(n^{-2\alpha/(2\alpha+1)}\), but their privacy costs differ.
For the separated-block class \(\mca{F}_\alpha\), the privacy term is \(\xk{d/(\rho n^2)}^{\alpha/(\alpha+1)}\), whereas the row-tail and pointwise-decay classes have the smaller benchmark \(\mca{Q}_{\alpha,d}\xk{d/(\rho n^2)}\).
Privacy therefore distinguishes structures whose leading non-private operator rates coincide.
The gain for \(\mca{G}_\alpha\) and \(\mca{H}_\alpha\) comes from stronger local constraints that permit less costly distance-specific releases.

Under normalized squared Frobenius loss, both the non-private and private terms separate \(\mca{F}_\alpha\) from the two smaller classes.
The former retains the operator-loss exponents, while the latter have the minimax rate
\[
  n^{-\frac{2\alpha+1}{2\alpha+2}}
  +
  \left(\frac{d}{\rho n^2}\right)^{\frac{2\alpha+1}{2\alpha+3}},
\]
before the finite-dimensional minima.
Frobenius loss averages entrywise errors, whereas operator loss is governed by the worst spectral direction.
Consequently, for \(\mca{G}_\alpha\) and \(\mca{H}_\alpha\), the Frobenius privacy term is smaller in the structured high-dimensional regime when \(0<\alpha<1/2\) and has the same algebraic order when \(\alpha\geq1/2\).

The estimator constructions mirror these geometric distinctions.
For \(\mca{F}_\alpha\), blockwise tridiagonal estimation balances truncation bias, sampling fluctuation, and Gaussian privacy noise under both losses.
For \(\mca{H}_\alpha\) and \(\mca{G}_\alpha\), a center--outer dyadic decomposition supports class-specific releases.
A small blockwise Frobenius radius enables greedy rank-one releases for pointwise decay under operator loss; local row--column \(\ell^1\) control leads to profile releases under operator loss and peeling releases under Frobenius loss for row-tail decay.
For pointwise decay under Frobenius loss, the blockwise tridiagonal estimator already uses the entrywise structure.

\medskip
\noindent\emph{Practical implication.}
The center--outer procedures are primarily theoretical constructions: they require searches over finite nets and reconciliation of feasible sets to attain the sharper rates for individual classes, whereas the blockwise tridiagonal estimator is the computationally practical procedure used in our experiments.
For routine implementation, the blockwise tridiagonal estimator remains the natural default because it uses fixed local releases and avoids the candidate searches required by those procedures.
The center--outer procedures are chiefly useful for establishing the sharper rates for individual classes.

\subsubsection{Finite-dimensional regimes and the cost of privacy}
\label{subsubsec:discussion-phases}

Each minimax risk is the sum of a non-private statistical contribution and a privacy contribution.
For example, over \(\mca{F}_\alpha\) under operator loss, privacy dominates when \(\rho\lesssim d n^{-2\alpha/(2\alpha+1)}\); sampling error dominates on the other side.
The results are stated under \(\rho\)-zCDP.
Standard conversion to \((\epsilon,\delta)\)-DP introduces the corresponding \(\log(1/\delta)\) dependence in the upper bounds.

Dimension also determines whether the structured rate or the unstructured endpoint is active.
The unstructured statistical and privacy terms are \(d/n\) and \(d^3/(\rho n^2)\).
For \(\mca{F}_\alpha\) under either loss, the statistical and privacy transitions occur at \(d\asymp n^{1/(2\alpha+1)}\) and \(d\asymp(\rho n^2)^{1/(2\alpha+3)}\).
The operator statistical threshold for \(\mca{G}_\alpha\) and \(\mca{H}_\alpha\) is the same.
Under Frobenius loss, their thresholds are \(d\asymp n^{1/(2\alpha+2)}\) and \(d\asymp(\rho n^2)^{1/(2\alpha+4)}\).

The operator privacy term over \(\mca{G}_\alpha\) and \(\mca{H}_\alpha\) has a finer structure.
With \(u=d/(\rho n^2)\), the exact benchmark is defined in \cref{eq:Rate_Q}, and its explicit regimes are given in \cref{eq:Rate_Q_alpha_ge_half} and \cref{eq:Rate_Q_alpha_lt_half}.
The two terms are, respectively, the squared operator signal permitted at distance scale \(k\) and effective rank \(r\), and the corresponding private information capacity.
Balancing them over \(r\) gives \(u^{1/2}k^{(1-2\alpha)/2}\), explaining the change at \(\alpha=1/2\).
For \(\alpha\geq1/2\), the two regimes are those in \cref{eq:Rate_Q_alpha_ge_half}.
Here the optimizer is first dimension-limited and then satisfies \(r\asymp k\asymp(\rho n^2/d)^{1/(2\alpha+3)}\).
For \(0<\alpha<1/2\), the three regimes are those in \cref{eq:Rate_Q_alpha_lt_half}.
In the middle branch, \(k\asymp d\) while the effective rank decreases from order \(d\) to order one.
In the last branch, \(r\asymp1\) and the optimal scale becomes interior.
Thus the three branches correspond to both variables being dimension-limited, only the scale being dimension-limited, and the rank reaching its lower endpoint.

Unlike the residual non-private term \((\log d)/n\), every privacy term depends polynomially on \(d\).
In particular, when \(\rho\asymp1\), the lower bounds imply that consistency requires \(d=o(n^2)\).

\subsubsection{Sharpness, adaptation, and extensions}
\label{subsubsec:discussion-adaptation}

For known smoothness, the upper and lower bounds match up to constants over \(\mca{F}_\alpha\) under both losses and over \(\mca{H}_\alpha\) under Frobenius loss.
Over \(\mca{G}_\alpha\) under both losses and over \(\mca{H}_\alpha\) under operator loss, the bounds match up to fixed polylogarithmic factors.
The condition \(\rho n^2/d\gtrsim(\log d)^{2(\alpha+1)}\) for the constant-factor \(\mca{F}_\alpha\) comparison excludes a near-degenerate boundary at which the lower bound decreases only logarithmically.

The adaptive procedures incur no algebraic loss relative to their known-smoothness benchmarks.
For \(\mca{F}_\alpha\) under both losses and \(\mca{H}_\alpha\) under Frobenius loss, they preserve the non-private terms and add logarithmic factors only to the privacy terms.
For \(\mca{G}_\alpha\) under both losses and \(\mca{H}_\alpha\) under operator loss, they attain fixed-polylogarithmic oracle ratios over compact smoothness intervals.
Whether these adaptation costs are intrinsic remains open.

The blockwise analysis also extends to covariance classes with exponential off-diagonal decay.  If every block \(R_k\) separated by distance \(k\) satisfies \(\norm{\Sigma_{R_k}}\leq C_1e^{-\gamma k}\), then choosing \(k\asymp(\log n)\wedge\log(\rho n^2/d)\) gives squared operator risk:
\[
  \frac{\log n}{n}
  +
  \frac{d}{\rho n^2}
  \xk{
    \log\left(\frac{\rho n^2}{d}\right)+\log d
  }^2.
\]
The corresponding row-tail and pointwise-decay results follow from the class inclusions.

\section{DP Estimators and Upper Bounds}
\label{sec:UpperBounds}

In this section, we construct private estimators and establish their upper bounds with the smoothness parameter \(\alpha\) known.
The separated-block class $\mca{F}_\alpha$ uses a single blockwise-tridiagonal core, whereas $\mca{G}_\alpha$ and $\mca{H}_\alpha$ use its dyadic square-block refinement with class-specific profile and greedy local procedures.
Under Frobenius loss, $\mca{G}_\alpha$ uses a nonlinear peeling estimator, $\mca{H}_\alpha$ uses the blockwise tridiagonal estimator, and $\mca{F}_\alpha$ follows from the Schatten consequence.

\subsection{Basic Privacy Mechanisms}

We use the Gaussian mechanism and the composition and post-processing properties of zCDP summarized below \citep{dwork2014_AlgorithmicFoundations,bun2016_ConcentratedDifferential}.

\begin{lemma}[Gaussian Mechanism]
  \label{lem:DPMechanism_Gaussian_zCDP}
  Let \( f: \caD \to \R^p \) be an algorithm such that
  \begin{equation*}
    \sup_{D, D' \text{ adjacent}} \norm{f(D) - f(D')}_2 \leq \Delta.
  \end{equation*}
  Then, the algorithm \( M_f(D) = f(D) + \sigma w \) is \(\rho\)-zCDP,
  where \( w \sim N(0, I_p) \) and \( \sigma^2 = \frac{\Delta^2}{2 \rho} \).
\end{lemma}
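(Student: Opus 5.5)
The plan is to compute the Rényi divergence between two Gaussians with the same covariance and different means in closed form, and then use the sensitivity bound. Fix adjacent datasets \(D,D'\) and write \(\mu=f(D)\), \(\mu'=f(D')\). Conditionally on the data, \(M_f(D)\sim N(\mu,\sigma^2 I_p)\) and \(M_f(D')\sim N(\mu',\sigma^2 I_p)\). So it suffices to show that for every \(\alpha\in(1,\infty)\),
\[
  D_\alpha\bigl(N(\mu,\sigma^2 I_p)\,\|\,N(\mu',\sigma^2 I_p)\bigr)=\frac{\alpha\norm{\mu-\mu'}_2^2}{2\sigma^2}.
\]
Given this identity, the sensitivity bound \(\norm{\mu-\mu'}_2\le\Delta\) and the choice \(\sigma^2=\Delta^2/(2\rho)\) give \(D_\alpha\le \alpha\Delta^2/(2\sigma^2)=\rho\alpha\). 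This is exactly the requirement of \cref{def:rho-zCDP}.

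To prove the identity, I would first reduce to one dimension. Both the Gaussian laws and the Rényi divergence are invariant under the same bijective affine map applied to both arguments. Applying a translation and a rotation, I may assume \(\mu'=0\) and \(\mu=t e_1\) with \(t=\norm{\mu-\mu'}_2\). The likelihood ratio then depends only on the first coordinate, and the remaining coordinates have identical laws under both measures. This leaves the scalar case \(P=N(t,\sigma^2)\), \(Q=N(0,\sigma^2)\).

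In the scalar case the likelihood ratio is \(\frac{\dd P}{\dd Q}(x)=\exp\bigl((2tx-t^2)/(2\sigma^2)\bigr)\). I would evaluate \(\E_{X\sim P}\bigl(\frac{\dd P}{\dd Q}(X)\bigr)^{\alpha-1}\). Under the definition in the paper, \(\E\bigl(\frac{\dd P}{\dd Q}(X)\bigr)^\alpha\) with \(X\sim P\) should be read as \(\E_Q(\dd P/\dd Q)^\alpha=\E_P(\dd P/\dd Q)^{\alpha-1}\); this is the standard convention. Writing \(X=t+\sigma Z\) with \(Z\sim N(0,1)\), the exponent becomes \((\alpha-1)\bigl(t^2/(2\sigma^2)+tZ/\sigma\bigr)\). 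The Gaussian moment generating function \(\E e^{sZ}=e^{s^2/2}\) then gives
\[
  \log \E_P\Bigl(\frac{\dd P}{\dd Q}\Bigr)^{\alpha-1}=\frac{(\alpha-1)t^2}{2\sigma^2}+\frac{(\alpha-1)^2t^2}{2\sigma^2}=\frac{\alpha(\alpha-1)t^2}{2\sigma^2}.
\]
Dividing by \(\alpha-1\) yields the claimed formula.

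No step here is a real obstacle; the argument is a direct computation. The only point needing care is the convention in the Rényi divergence definition, namely which measure the expectation is taken under. Reading it the standard way makes the algebra come out exactly, and I would state this reading explicitly when writing the proof.
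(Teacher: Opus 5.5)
Your proof is correct and is the standard argument the paper relies on: the paper does not prove this lemma but cites Bun and Steinke (2016), who obtain it from the same closed form $D_\alpha\bigl(N(\mu,\sigma^2 I_p)\,\|\,N(\mu',\sigma^2 I_p)\bigr)=\alpha\norm{\mu-\mu'}_2^2/(2\sigma^2)$, computed as you do. You are right to fix the convention: read literally with the expectation under $P$, the paper's definition would give $\frac{\alpha(\alpha+1)}{\alpha-1}\cdot\frac{t^2}{2\sigma^2}$, which exceeds $\rho\alpha$ for every $\alpha>1$ (for $t=\Delta$), so the standard reading $\E_Q(\dd P/\dd Q)^\alpha$ must be the intended one.
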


\begin{lemma}[Composition]
  \label{lem:DPMechanism_Composition}
  If \( M: \caD \to \mca{Y} \) is \(\rho\)-zCDP and \( f: \mca{Y} \to \mca{Z} \) is an arbitrary algorithm, then \( f \circ M \) is also \(\rho\)-zCDP\@.
  If \( M_1, \ldots, M_T \) are \(\rho_1, \ldots, \rho_T\)-zCDP and \( M \) is a function of \( M_1, \ldots, M_T \),
  then \( M \) is \(\rho\)-zCDP for \(\rho = \sum_t \rho_t\).
\end{lemma}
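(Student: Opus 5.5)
The composition statement splits into two independent parts, and in both I would work directly from Definition~\ref{def:rho-zCDP}, taking adjacent datasets \(S,S'\) and an order \(\alpha\in(1,\infty)\) as fixed throughout.

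\emph{Post-processing.} First I would reduce to the case of a deterministic measurable \(f\). A randomized \(f\) can be written as \(f(y,U)\) with auxiliary randomness \(U\) that is independent of the data. The laws of \((M(S),U)\) and \((M(S'),U)\) are product measures sharing the same \(U\)-marginal, so the Rényi divergence between them equals \(D_\alpha(M(S)\|M(S'))\). With \(f\) deterministic, the claim is exactly the data-processing inequality for Rényi divergence, \(D_\alpha(P\circ f^{-1}\|Q\circ f^{-1})\le D_\alpha(P\|Q)\). I would prove it by writing the pushforward density ratio as a conditional expectation, \(\frac{\dd (P f^{-1})}{\dd (Qf^{-1})}\circ f=\E_Q[\frac{\dd P}{\dd Q}\mid f]\). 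Conditional Jensen applied to the convex map \(t\mapsto t^\alpha\) then gives \(\E_Q[(\E_Q[L\mid f])^\alpha]\le \E_Q L^\alpha\). Because \(\alpha>1\), the prefactor \(1/(\alpha-1)\) is positive and the logarithm is monotone, so the inequality carries over to \(D_\alpha\). Absolute-continuity issues cause no trouble: if \(P\not\ll Q\), the divergence is \(+\infty\) and the bound holds trivially.

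\emph{Composition.} By the post-processing part, it suffices to treat the joint release \((M_1,\dots,M_T)\). I read the statement in the standard adaptive sense, where \(M_t\) may depend on the earlier outputs \(y_{<t}\) and is \(\rho_t\)-zCDP for each fixed \(y_{<t}\). I would argue by induction on \(T\), and the core case is \(T=2\). Write the joint densities as \(p_1(y_1)p_2(y_2\mid y_1)\) and \(q_1(y_1)q_2(y_2\mid y_1)\). The key step is the chain rule for the Rényi moment:
\[
\E_{Q}\Bigl[\Bigl(\tfrac{\dd P}{\dd Q}\Bigr)^{\alpha}\Bigr]
=\int q_1^{1-\alpha}p_1^{\alpha}\Bigl(\int q_2^{1-\alpha}p_2^{\alpha}\,\dd y_2\Bigr)\dd y_1.
\]
The inner integral equals \(\exp((\alpha-1)D_\alpha(M_2(S,y_1)\|M_2(S',y_1)))\), and by hypothesis this is at most \(e^{(\alpha-1)\rho_2\alpha}\) uniformly in \(y_1\). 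Pulling that bound out, the remaining outer integral is at most \(e^{(\alpha-1)\rho_1\alpha}\). Taking the logarithm and dividing by \(\alpha-1\) gives \(D_\alpha\le(\rho_1+\rho_2)\alpha\), and the induction then yields \(\rho=\sum_t\rho_t\).

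\emph{Main obstacle.} The only delicate point is measure-theoretic: to write densities I need a common dominating measure, and for the adaptive case I need regular conditional distributions. The general outputs \(\mca{Y}\) should be standard Borel (Polish) spaces, which guarantees disintegration, and I will assume that. An alternative is to pass to the non-adaptive product case, where the factorization is immediate. Once this technicality is settled, the rest is the short Jensen and chain-rule computation above.
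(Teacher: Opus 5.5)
Your proof is correct and is the standard argument. The paper states this lemma without proof, deferring to \citet{dwork2014_AlgorithmicFoundations} and \citet{bun2016_ConcentratedDifferential}; there, post-processing is the R\'enyi data-processing inequality (your conditional-Jensen step), and composition is the chain-rule bound on the order-$\alpha$ R\'enyi moment, which you carry out for $T=2$ and then iterate, with your adaptive reading being the one the paper needs for its greedy, profile, and peeling releases.
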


\subsection{Dyadic block structure}
\label{subsec:DyadicBlockStructure}

We first introduce the block notation used by the estimators below and their adaptive counterparts.
Without loss of generality, for each construction we append deterministic zero coordinates until \(d=2^J k_0\), where \(k_0\) is its initial block size (with \(k_0=k\) for a fixed-bandwidth estimator), since this preserves the model and privacy and changes the dimension by less than a factor of two.
For an integer block size \(k\geq1\), let \(N_k=d/k\) and define
\(   I_{k,\ell} = [1+(\ell-1)k,\ell k], \) \( \ell\in[N_k]. \)
For \(r\geq0\) and \(\ell+r\leq N_k\), put \( B_{k;\ell,r} = I_{k,\ell} \times I_{k,\ell+r}.\)
Only \(r=0,1,2,3\) will be used, and lower-triangular blocks are represented by transposes.
Blocks with invalid boundary indices are always omitted.
The blockwise tridiagonal region is
\begin{equation}
  \mca{T}_k
  =
  \bigsqcup_{\ell=1}^{N_k} B_{k;\ell,0}
  \sqcup
  \bigsqcup_{\ell=1}^{N_k-1}
  \left(B_{k;\ell,1} \sqcup B_{k;\ell,1}^{\T} \right).
  \label{eq:BlockwiseTridiagonalRegion}
\end{equation}

The regions \(\mca{T}_k\) are nested along a doubling sequence.
For \(k<d\) and \(1\leq\ell<N_{2k}\), define the upper L-shaped increment
\begin{math}
  \Gamma_{k,\ell}
  =
  B_{2k;\ell,1}\setminus B_{k;2\ell,1}.
\end{math}
The corresponding lower increment is its transpose, and hence
\begin{equation}
  \mca{T}_{2k}
  =
  \mca{T}_k
  \sqcup
  \bigsqcup_{\ell=1}^{N_{2k}-1}
  \left(\Gamma_{k,\ell} \sqcup\Gamma_{k,\ell}^{\T} \right).
  \label{eq:DyadicTridiagonalIncrement}
\end{equation}
For the dyadic construction initialized at \(k_0\), write \(k_m=2^m k_0\), \(N_m=N_{k_m}\), and abbreviate
\begin{math}
  \Gamma_{m,\ell}
  =
  \Gamma_{k_{m-1},\ell},
  m\geq1.
\end{math}
Once \(k_m \geq d/2\), \(\mca{T}_{k_m}=[d]^2\) and all subsequent increments are empty.
The upper-triangular part of the initial region \(\mca{T}_{k_0}\) consists of
\(B_{k_0;l,r}\) for \(l\in[N_{k_0}]\) and
\(r\in\{0,1\}\) satisfying \(l+r\leq N_{k_0}\).
Each increment then decomposes as
\begin{math}
  \Gamma_{k,\ell}
  =
  B_{k;2\ell-1,2}
  \sqcup
  B_{k;2\ell-1,3}
  \sqcup
  B_{k;2\ell,2}.
\end{math}
For each \(k\in\{k_0,2k_0,\ldots,d/4\}\), let \(\mca{B}_k\) be the collection of the three blocks on the right-hand side as \(1\leq\ell<N_{2k}\), which partitions the upper-triangular part of \(\mca{T}_{2k} \setminus\mca{T}_k\).
Iterating \cref{eq:DyadicTridiagonalIncrement} therefore makes the initial blocks \(B_{k_0;l,r}\) together with the families \(\mca{B}_k\) a disjoint cover of the upper-triangular entries.
For budget allocation, write \( J(k_0) = 1\vee \xk{\log_2(d/k_0)-1} \) for the number of active bands.

\begin{figure}[htp]
  \centering
  \includegraphics[height=4cm]{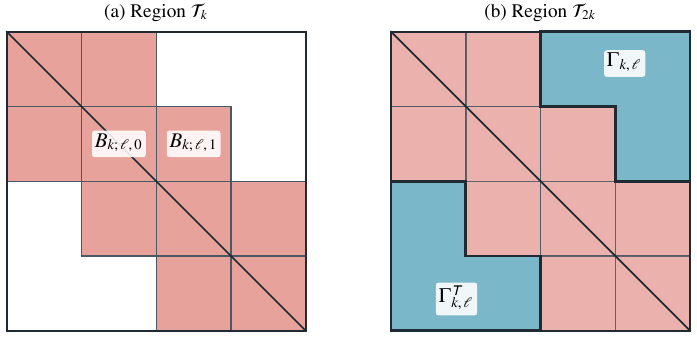}
  \caption{
    Dyadic block structure at bandwidths \(k\) and \(2k\).
  }
  \label{fig:DyadicBlockStructure}
\end{figure}

\subsection{Blockwise Tridiagonal Estimator}

We first introduce the simplest blockwise tridiagonal estimator.
\cref{alg:DP_Cov_Helper_Block} gives a private estimate of a single covariance block.
\cref{alg:DP_Cov_BlockDiagonal} applies this primitive to form a blockwise tridiagonal structure.
The block size parameter $k$ will be chosen later to balance the bias--variance--privacy trade-off.
For the results below, we use \(R=CK\sqrt{k}\), where \(C\) is a sufficiently large numerical constant.
By the Gaussian mechanism and composition, \cref{alg:DP_Cov_Helper_Block} is \(\rho_B\)-zCDP and \cref{alg:DP_Cov_BlockDiagonal} is \(\rho\)-zCDP.

\begin{algorithm}
  \caption{DP Covariance Block: \( \mf{DPCovBlock}(X,B;\rho_B,R) \)}
  \label{alg:DP_Cov_Helper_Block}
  \begin{algorithmic}
    \State \textbf{Input:} \(X\), square block $B = I\times J$, privacy budget $\rho_B$, clipping radius $R$.
    \State Let $\widetilde{X}_{i,I}=\chi_R(X_{i,I})$ and $\widetilde{X}_{i,J}=\chi_R(X_{i,J})$.
    \State Let
    \begin{math}
      \tilde{\Sigma}_{B} = \frac{1}{n} \sum_{i \in [n]} \widetilde{X}_{i,I} \widetilde{X}_{i,J}^\T,
    \end{math}
    \State \Return \( \hat{\Sigma}^{\mf{DP}}_{B} = \tilde{\Sigma}_{B} + \sigma_B M_{B} \), where \( M_{B} = \mr{GUE}(d)_{B}\) and \( \sigma_B^2 = 18R^4/(\rho_B n^2) \).
  \end{algorithmic}
\end{algorithm}

\begin{algorithm}
  \caption{DP Blockwise Tridiagonal Estimator:
    \(\mf{DPCovTri}(X;k,\rho,R)\)}
  \label{alg:DP_Cov_BlockDiagonal}
  \begin{algorithmic}
    \State \textbf{Input:} \(X\), block size $k$, privacy parameter $\rho$, truncation radius $R$.
    \State \textbf{Output:} $\rho$-zCDP estimator $\hat{\Sigma}^{\mf{DP}}$.
    \State Take $\rho_0 = \rho / (2N_k)$.
    \State Compute
    \begin{math}
      \hat{\Sigma}^{\mf{DP}}_{B_{k;l,r}}
      =
      \mf{DPCovBlock}\left(X,B_{k;l,r};\rho_0,R\right)
    \end{math}
    for all \(r\in\{0,1\}\) and \(l+r\leq N_k\).
    \State Fill the lower triangular part by symmetry, leaving other entries to zero.
    \State \textbf{return} \( \hat{\Sigma}^{\mf{DP}} \).
  \end{algorithmic}
\end{algorithm}

Compared with the tapering estimator of \citet{cai2010_OptimalRates}, the proposed blockwise tridiagonal construction is simpler and better suited for the private setting, as it streamlines both the calibration of privacy noise and the subsequent statistical analysis.
Moreover, because the estimator focuses only on a limited subset of covariance entries, the amount of noise required to guarantee privacy is substantially reduced, enabling optimal estimation accuracy under privacy constraints.
We next derive upper bounds for the blockwise tridiagonal estimator under the operator and Frobenius norms.

\begin{theorem}
  \label{thm:DPCov_Operator}
  Let $\Sigma \in \mca{F}_\alpha$ for some $\alpha > 0$.
  Consider the blockwise tridiagonal estimator $\hat{\Sigma}^{\mf{DP}}$ given in \cref{alg:DP_Cov_BlockDiagonal} with block size $k \leq c n$ for some small constant $c > 0$.
  Then, we have
  \begin{equation}
    \label{eq:DPCov_Operator}
    \E \norm{\hat{\Sigma}^{\mf{DP}}  - \Sigma}^2
    \lesssim \frac{k+ \log d}{n} + \frac{d k(k + \log d)}{\rho n^2} + k^{-2\alpha}.
  \end{equation}
\end{theorem}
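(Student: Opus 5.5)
We split the error as $\hat{\Sigma}^{\mf{DP}}-\Sigma = (\hat{\Sigma}^{\mf{DP}} - \tilde\Sigma_{\mca{T}_k}) + (\tilde\Sigma_{\mca{T}_k}-\E\tilde\Sigma_{\mca{T}_k}) + (\E\tilde\Sigma_{\mca{T}_k}-\Sigma_{\mca{T}_k}) + (\Sigma_{\mca{T}_k}-\Sigma)$. Here $\tilde\Sigma_{\mca{T}_k}$ is the clipped empirical covariance restricted to the tridiagonal region, with blocks from \cref{alg:DP_Cov_Helper_Block}. We then bound the squared operator norm of each piece separately; these pieces are privacy noise, sampling fluctuation, clipping bias and truncation bias.

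\emph{Truncation bias.} The complement $[d]^2\setminus\mca{T}_k$ in the upper triangle is the union of the blocks $B_{k;\ell,r}$ with $r\ge 2$. Each such block lies at distance at least $k$ from the diagonal. The upper part is a disjoint union of the "staircase" sets $U_\ell=I_{k,\ell}\times \bigcup_{r\ge2}I_{k,\ell+r}$. Each $U_\ell$ is a $k$-off-diagonal block, so $\norm{\Sigma_{U_\ell}}\le Mk^{-\alpha}$. We group the $\ell$ by parity, or by a few residue classes. Within a group, the $U_\ell$ have disjoint row sets but overlapping column sets, so we cannot use block-diagonal orthogonality directly. Instead we use the standard dyadic trick: write the strict upper part as $\sum_{j\ge1}$ of block-anti-diagonal pieces at scale $2^jk$. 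Each piece is a direct sum of $2^jk$-off-diagonal blocks with disjoint rows and columns, hence has norm at most $M(2^jk)^{-\alpha}$. Summing the geometric series gives $\norm{\Sigma-\Sigma_{\mca{T}_k}}\lesssim k^{-\alpha}$, which yields the $k^{-2\alpha}$ term.

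\emph{Clipping bias and sampling error.} Since $\norm{X_I}_2$ is sub-Gaussian around $\sqrt{k}K$, taking $R=CK\sqrt k$ makes the probability of clipping at most $e^{-ck}$. By Cauchy--Schwarz, each block's clipping bias is then at most $e^{-ck}$ in operator norm, and a union over $N_k$ blocks stays negligible; we absorb it into $1/n$ if $k\gtrsim\log d$, and otherwise use a cruder argument or larger $C$. For the fluctuation, we bound the operator norm of a block-tridiagonal matrix by a constant times $\max_\ell$ of the norms of its blocks. This follows by splitting into diagonal blocks plus upper and lower off-diagonal block sums, each of which is a direct sum of blocks with disjoint row and column sets. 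Each $2k\times 2k$ principal submatrix of clipped bounded sub-Gaussian vectors satisfies $\norm{\hat\Sigma-\E\hat\Sigma}\lesssim \sqrt{(k+t)/n}+(k+t)/n$ with probability $1-e^{-t}$, using matrix Bernstein or a covering argument. A union bound with $t=\log d$, integrated over tails with $k\le cn$, gives $\E\max_\ell\norm{\cdot}^2\lesssim (k+\log d)/n$.

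\emph{Privacy noise.} Adjacent datasets change each block by at most $2R^2/n$ in Frobenius norm. With budget $\rho_0=\rho/(2N_k)$, the noise level is $\sigma_B^2=18R^4/(\rho_0 n^2)\asymp K^4k^2N_k/(\rho n^2)= K^4 dk/(\rho n^2)$. The noise matrix is a block-tridiagonal pattern of GUE blocks. Again its norm is at most a constant times the maximum over $2N_k$ $k\times k$ Gaussian blocks. Each block has norm $\lesssim\sigma_B(\sqrt k+\sqrt{\log d})$ with the required probability, so $\E\max^2\lesssim \sigma_B^2(k+\log d)=dk(k+\log d)/(\rho n^2)$, matching the middle term.

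We expect the main obstacle to be the truncation-bias step. Controlling the operator norm of the off-tridiagonal part using only the norm constraint on separated blocks, and not entrywise decay, needs the dyadic anti-diagonal decomposition to be set up so that every piece is a genuine direct sum of $k$-off-diagonal blocks; the padding $d=2^Jk$ is what makes this clean.
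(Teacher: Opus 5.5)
Your proposal is essentially the paper's proof: the in-band error is handled by the blockwise tridiagonal norm bound together with per-block sampling and Gaussian-noise concentration and a union bound over blocks, and the off-band bias by exactly the dyadic construction you anticipate, which the paper realizes with the L-shaped increments $\Gamma_{k_{m-1},\ell}=B_{k_{m-1};2\ell-1,2}\sqcup B_{k_{m-1};2\ell-1,3}\sqcup B_{k_{m-1};2\ell,2}$ (each a union of two $k_{m-1}$-off-diagonal rectangles, with disjoint rows and columns across $\ell$) summed geometrically; integrating tails instead of the paper's good event plus Cauchy--Schwarz on its complement is an immaterial difference. Two small corrections: first, the squared clipping bias $e^{-2ck}$ cannot be absorbed into $1/n$ when $k$ is small, and enlarging $C$ does not help, but it is always $\lesssim_\alpha k^{-2\alpha}$, which is how the paper absorbs it; second, the sharp $(k+\log d)/n$ fluctuation term requires the covering argument, since matrix Bernstein loses a $\log k$ factor.
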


\cref{thm:DPCov_Operator} characterizes the estimation error under operator norm with arbitrary block size $k$.
Optimizing over $k$ yields the following corollary.

\begin{corollary}
  \label{cor:DPCov_Operator}
  Under the same setting as in \cref{thm:DPCov_Operator},
  suppose that \(\rho n^2/d\gtrsim(\log d)^{2(\alpha+1)}\) and
  \(d\gtrsim n^{1/(2\alpha+1)} \wedge(\rho n^2)^{1/(2\alpha+3)}\).
  Take
  \[
    k
    \asymp
    \min\xk{
      n^{1/(2\alpha+1)},
      (\rho n^2/d)^{1/(2\alpha+2)}
    }
    \vee\log d
  \]
  and let \(\hat{\Sigma}_{\mca{F},\mathrm{op}}\) be the resulting blockwise tridiagonal estimator.
  Then
  \begin{equation}
    \label{eq:DPCov_Operator_OptimalRate}
    \E \norm{\hat{\Sigma}_{\mca{F},\mathrm{op}}-\Sigma}^2
    \lesssim
    n^{-\frac{2\alpha}{2\alpha+1}}
    +
    \left(\frac{d}{\rho n^2}\right)^{\frac{\alpha}{\alpha+1}}.
  \end{equation}
\end{corollary}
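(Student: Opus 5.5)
The corollary follows by plugging the stated choice of $k$ into the bound of \cref{thm:DPCov_Operator} and checking each of the three terms
\[
\frac{k+\log d}{n},\qquad \frac{dk(k+\log d)}{\rho n^2},\qquad k^{-2\alpha}.
\]
First I check that the theorem applies, i.e.\ $k\le cn$. Since $k\lesssim n^{1/(2\alpha+1)}\vee\log d$ and $\log d\lesssim \log n$ (because $d=o(n^2)$), this holds for large $n$.

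Next, set $k_*=\min\{n^{1/(2\alpha+1)},(\rho n^2/d)^{1/(2\alpha+2)}\}$ and $k=k_*\vee\log d$. I split into the two cases $k_*\ge \log d$ and $k_*<\log d$.

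\emph{Case $k_*\ge\log d$.} Here $k\asymp k_*$ and $k+\log d\asymp k$, so the bound reduces to $k/n + dk^2/(\rho n^2) + k^{-2\alpha}$.
\begin{itemize}
\item The bias satisfies $k^{-2\alpha}\lesssim n^{-2\alpha/(2\alpha+1)} + (d/(\rho n^2))^{\alpha/(\alpha+1)}$, because $k_*$ is a minimum and hence $k_*^{-2\alpha}$ is the maximum of the two powers.
\item The statistical term satisfies $k/n\le n^{1/(2\alpha+1)}/n=n^{-2\alpha/(2\alpha+1)}$.
\item The privacy term satisfies $dk^2/(\rho n^2)\le (d/(\rho n^2))\cdot(\rho n^2/d)^{1/(\alpha+1)}=(d/(\rho n^2))^{\alpha/(\alpha+1)}$.
\end{itemize}
So all three terms are dominated by the right-hand side of \cref{eq:DPCov_Operator_OptimalRate}.

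\emph{Case $k_*<\log d$.} Here $k\asymp\log d$, and this is where the two assumptions are used; it is the main, if modest, obstacle.
\begin{itemize}
\item The bias is $(\log d)^{-2\alpha}\le k_*^{-2\alpha}$, which is bounded exactly as in the first case.
\item For the privacy term $d(\log d)^2/(\rho n^2)$, the assumption $\rho n^2/d\gtrsim(\log d)^{2(\alpha+1)}$ gives $(\log d)^2\lesssim(\rho n^2/d)^{1/(\alpha+1)}$. Hence $d(\log d)^2/(\rho n^2)\lesssim (d/(\rho n^2))^{\alpha/(\alpha+1)}$.
\item The same assumption forces $(\rho n^2/d)^{1/(2\alpha+2)}\gtrsim\log d$. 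So $k_*<\log d$ can only occur through $n^{1/(2\alpha+1)}\lesssim\log d$.
\item For the statistical term $\log d/n$, we need it to be $\lesssim n^{-2\alpha/(2\alpha+1)}$, i.e.\ $\log d\lesssim n^{1/(2\alpha+1)}$, which is the opposite of what the previous point gives. In this sub-regime I would bound $\log d/n\lesssim (\log d)^{-2\alpha}$ instead. This holds because $(\log d)^{2\alpha+1}\lesssim\log^{2\alpha+1}n\ll n$. Combined with the bias bound, this again gives at most $k_*^{-2\alpha}$.
\end{itemize}

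The role of the hypothesis $d\gtrsim n^{1/(2\alpha+1)}\wedge(\rho n^2)^{1/(2\alpha+3)}$ is mainly to ensure $k_*\lesssim d$, so that the block size is admissible. Where $(\rho n^2)^{1/(2\alpha+3)}\lesssim d$, the inequality $(\rho n^2/d)^{1/(2\alpha+2)}\lesssim d$ is equivalent to it. Otherwise $n^{1/(2\alpha+1)}\lesssim d$. If $k$ nonetheless exceeds $d/2$, I would simply cap it at $d$, where the estimator has zero truncation bias. Summing the three terms in each case yields \cref{eq:DPCov_Operator_OptimalRate}.
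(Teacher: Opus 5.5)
Your proposal is correct and takes the same approach as the paper, which obtains the corollary by substituting the stated $k$ into the bound of \cref{thm:DPCov_Operator}; your case split on whether $\min\{n^{1/(2\alpha+1)},(\rho n^2/d)^{1/(2\alpha+2)}\}\ge\log d$ only makes explicit where the condition $\rho n^2/d\gtrsim(\log d)^{2(\alpha+1)}$ is used. In the second case the statistical term needs no special handling: since $\log d\lesssim\log n\ll n^{1/(2\alpha+1)}$, you get $\log d/n\lesssim n^{-2\alpha/(2\alpha+1)}$ directly, and this bound does not conflict with anything derived earlier in that case.
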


Under Frobenius loss, the pointwise-decay condition defining \(\mca{H}_\alpha\) gives the sharper entrywise truncation bias required by the same estimator.

\begin{theorem}
  \label{thm:DPCov_Frobenius}
  Let $\Sigma \in \mca{H}_\alpha$ for some $\alpha > 0$.
  Consider the blockwise tridiagonal estimator $\hat{\Sigma}^{\mf{DP}}$ given in \cref{alg:DP_Cov_BlockDiagonal} with block size $k \leq c n$ for some small constant $c > 0$.
  We have
  \begin{equation}
    \label{eq:DPCov_Frobenius}
    \frac{1}{d} \E \norm{\hat{\Sigma}^{\mf{DP}} - \Sigma}_F^2
    \lesssim \frac{k}{n} + \frac{d k^2}{\rho n^2} + k^{-(2\alpha+1)}.
  \end{equation}
\end{theorem}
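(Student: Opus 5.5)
We decompose the error entrywise over the support of the estimator and its complement. Since \(\hat{\Sigma}^{\mf{DP}}\) vanishes outside \(\mca{T}_k\),
\[
  \norm{\hat{\Sigma}^{\mf{DP}}-\Sigma}_F^2
  =\norm{(\tilde{\Sigma}-\Sigma)[\mca{T}_k]+\sigma_B M[\mca{T}_k]}_F^2+\norm{\Sigma[\mca{T}_k^c]}_F^2 .
\]
Here \(\tilde\Sigma\) is the collection of clipped block sample covariances. We handle the bias term, the sampling term and the privacy term separately. The noise is independent of the data and has mean zero, so the cross term vanishes in expectation.

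\emph{Truncation bias.} Every \((i,j)\notin\mca{T}_k\) satisfies \(\abs{i-j}>k\), because blocks at block-distance at least \(2\) are separated by at least \(k\) indices. The pointwise decay in \(\mca{H}_\alpha\) then gives
\[
  \norm{\Sigma[\mca{T}_k^c]}_F^2\le \sum_i\sum_{\abs{i-j}>k}M^2\abs{i-j}^{-2\alpha-2}\lesssim d\,k^{-(2\alpha+1)} .
\]
After normalization by \(d\) this is the third term.

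\emph{Privacy noise.} The support \(\mca{T}_k\) has at most \(3N_kk^2=3dk\) entries. Each entry of \(\sigma_B M_B\) has variance of order \(\sigma_B^2=18R^4/(\rho_0n^2)\), with \(\rho_0=\rho/(2N_k)\) and \(R^2\asymp K^2k\). The noise contribution is therefore
\[
  \frac1d\,\E\norm{\sigma_BM[\mca{T}_k]}_F^2\lesssim \frac{1}{d}\cdot dk\cdot\frac{k^2N_k}{\rho n^2}=\frac{dk^2}{\rho n^2}.
\]
This is the second term. The symmetric filling changes it by at most a factor of \(2\).

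\emph{Sampling and clipping.} For each entry \((a,b)\) of a block \(B=I\times J\), write \(\E(\tilde\Sigma_{ab}-\Sigma_{ab})^2=\mathrm{Var}(\tilde\Sigma_{ab})+(\E\tilde\Sigma_{ab}-\Sigma_{ab})^2\). The variance is at most \(n^{-1}\E X_a^2X_b^2\lesssim K^4/n\) by sub-Gaussianity, since clipping only shrinks entries in absolute value. Summing over the \(\lesssim dk\) entries and dividing by \(d\) gives \(k/n\).

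The main obstacle is the clipping bias. We have \(\E\tilde X_a\tilde X_b-\E X_aX_b=\E[X_aX_b-\tilde X_a\tilde X_b]\), which is supported on the event \(\{\norm{X_I}_2>R\}\cup\{\norm{X_J}_2>R\}\). Since \(X_I\) is a \(k\)-dimensional \(K\)-sub-Gaussian vector and \(R=CK\sqrt k\) with \(C\) large, this event has probability at most \(2e^{-c'k}\). One cannot simply bound each entry's bias by Cauchy--Schwarz and add, because we need a blockwise bound. We therefore treat each block as a whole:
\[
  \norm{\E[X_IX_J^\T]-\E[\tilde X_I\tilde X_J^\T]}_F
  \le \E\bigl[\norm{X_I}\norm{X_J}\,\ind{\norm{X_I}>R \text{ or } \norm{X_J}>R}\bigr]
  \lesssim K^2k\,e^{-c'k/2} .
\]
The last step uses Cauchy--Schwarz together with \(\E\norm{X_I}^4\lesssim K^4k^2\). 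Summed over the \(O(N_k)\) blocks, the squared bias is \(\lesssim N_kk^2e^{-c'k}=dk\,e^{-c'k}\lesssim dk/n\) whenever \(k\gtrsim\log n\). If instead \(k\lesssim \log n\), we rely on the entrywise bound. Clipping only touches coordinates whose block norm is large, so the squared entrywise bias is at most \(\E[X_a^2X_b^2]\,\bbP(\text{clip})\lesssim K^4e^{-c'k}\) with \(C\) large. This is bounded by a constant times \(K^4\), and it becomes \(\lesssim 1/n\) only if \(C^2\) is large relative to \(\log n/k\). To avoid this issue I would check whether the clipping bias can be absorbed: \(e^{-c'C^2k}\le 1/n\) requires \(C^2k\gtrsim\log n\). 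My plan is to take \(C\) large enough to get \(e^{-c'C^2 k}\lesssim k^{-(2\alpha+1)}+k/n\) in the relevant regime, or else to note that the rate is only claimed up to constants with \(k\ge 1\) and argue the bias is dominated. Combining the three pieces then gives \cref{eq:DPCov_Frobenius}.
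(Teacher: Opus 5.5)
Your decomposition into truncation bias, Gaussian noise, sampling fluctuation and clipping bias matches the paper's, and your entrywise handling of the noise and sampling terms is sound. It is a slightly more elementary variant of the paper's route, which bounds each block's expected operator-norm error via Lemma~\ref{lem:BlockExpectationBound} and then uses $\norm{A}_F^2\le k\norm{A}^2$. Your per-entry variance bound $K^4/n$ does not even need $k\le cn$.

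The gap is the final step, which you leave as an unresolved plan built on the wrong comparison. After normalizing by $d$, the clipping bias you computed is
\[
  \frac1d\sum_{B}\norm{\E[\tilde X_I\tilde X_J^\T]-\E[X_IX_J^\T]}_F^2\lesssim K^4\,k\,e^{-c'k},
\]
where $c'>0$ is a fixed constant determined by the numerical constant in $R=CK\sqrt k$. You try to make this $\lesssim k/n$, which forces a case split at $k\asymp\log n$.

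The missing observation is that this term is absorbed by the third term for every $k\ge1$. Since $\sup_{k\ge1}k^{2\alpha+2}e^{-c'k}<\infty$, we have $k e^{-c'k}\lesssim_\alpha k^{-(2\alpha+1)}$. This is exactly how the paper closes the proof: its per-block clipping contribution $k e^{-2k}$ sums over the $N_k$ blocks to $d e^{-2k}$, and $e^{-2k}$ is dominated by $k^{-(2\alpha+1)}$.

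You do list $k^{-(2\alpha+1)}$ as a possible target, but no tuning of $C$ is needed to reach it. The $n$-dependent choice $C^2\gtrsim\log n/k$ you contemplate would actually break the bound. It makes $R^2\asymp K^2\log n$ instead of $R^2\asymp K^2k$, which inflates $\sigma_B^2\propto R^4$, and hence the privacy term, by a factor $(\log n/k)^2$ when $k\ll\log n$. So $C$ must stay a fixed numerical constant.

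A smaller point: your claim that one cannot bound each entry's clipping bias by Cauchy--Schwarz and add is mistaken. The entrywise bound $\E[X_a^2X_b^2]\,\bbP(\text{clip})\lesssim K^4e^{-c'k}$, summed over the $\lesssim dk$ entries of $\mca{T}_k$ and divided by $d$, gives the same $K^4ke^{-c'k}$. The blockwise computation is therefore unnecessary; either way, the only remaining step is the comparison with $k^{-(2\alpha+1)}$ above.
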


\begin{corollary}
  \label{cor:DPCov_Frobenius}
  Under the same setting as in \cref{thm:DPCov_Frobenius}, suppose that
  \(d\gtrsim n^{1/(2\alpha+2)} \wedge(\rho n^2)^{1/(2\alpha+4)}\).
  Take
  \[
    k
    =
    \min\xk{
      n^{1/(2\alpha+2)},
      (\rho n^2/d)^{1/(2\alpha+3)}
    }
  \]
  and let \(\hat{\Sigma}_{\mca{H},\mathrm{F}}\) be the resulting blockwise tridiagonal estimator.
  Then
  \begin{equation}
    \frac{1}{d}\E\norm{\hat{\Sigma}_{\mca{H},\mathrm{F}}-\Sigma}_F^2
    \lesssim
    n^{-\frac{2\alpha+1}{2\alpha+2}}
    +
    \left(\frac{d}{\rho n^2}\right)^{\frac{2\alpha+1}{2\alpha+3}}.
  \end{equation}
\end{corollary}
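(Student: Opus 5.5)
The plan is to plug the stated choice of \(k\) into the bound of \cref{thm:DPCov_Frobenius},
\[
  \frac{1}{d}\E\norm{\hat{\Sigma}^{\mf{DP}}-\Sigma}_F^2\lesssim \frac{k}{n}+\frac{dk^2}{\rho n^2}+k^{-(2\alpha+1)},
\]
after first confirming that this choice is admissible. Write \(k_1=n^{1/(2\alpha+2)}\) and \(k_2=(\rho n^2/d)^{1/(2\alpha+3)}\), so that \(k=k_1\wedge k_2\).

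\textbf{Admissibility.} We need \(1\le k\le cn\) and \(k\le d\).
\begin{itemize}
\item Since \(k_1\le n^{1/2}\), we have \(k\le cn\) for large \(n\).
\item Since \(\rho n^2/d\to\infty\) in the nondegenerate regime, \(k_2\to\infty\) and \(k\ge1\).
\item The hypothesis \(d\gtrsim n^{1/(2\alpha+2)}\wedge(\rho n^2)^{1/(2\alpha+4)}\) gives \(k\lesssim d\). If \(d\gtrsim k_1\) this is immediate. Otherwise \(d\gtrsim(\rho n^2)^{1/(2\alpha+4)}\), which is equivalent to \(d^{2\alpha+3}\gtrsim\rho n^2/d\), that is, \(d\gtrsim k_2\).
\end{itemize}
Rounding \(k\) to an integer and adjusting by constants only changes constants. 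Padding to \(d=2^Jk\) as in \cref{subsec:DyadicBlockStructure} changes \(d\) by at most a factor two.

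\textbf{Bounding the three terms.} Because \(k\le k_1\) and \(k\le k_2\):
\[
  \frac{k}{n}\le\frac{k_1}{n}=n^{-\frac{2\alpha+1}{2\alpha+2}},
  \qquad
  \frac{dk^2}{\rho n^2}\le\frac{dk_2^2}{\rho n^2}=\Bigl(\frac{d}{\rho n^2}\Bigr)^{1-\frac{2}{2\alpha+3}}=\Bigl(\frac{d}{\rho n^2}\Bigr)^{\frac{2\alpha+1}{2\alpha+3}}.
\]
For the bias term, \(k^{-(2\alpha+1)}=k_1^{-(2\alpha+1)}\vee k_2^{-(2\alpha+1)}\). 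The first piece is \(n^{-(2\alpha+1)/(2\alpha+2)}\). The second is \((d/(\rho n^2))^{(2\alpha+1)/(2\alpha+3)}\).

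Summing the three terms gives the claimed rate. There is no real obstacle here, since the substance is in \cref{thm:DPCov_Frobenius}. The only point needing care is the admissibility check above, which is where the dimension condition is used; it also ensures the truncation bias is not overstated when \(k\) exceeds \(d\).
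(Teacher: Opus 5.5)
Your proposal is correct and takes the same route as the paper: substitute $k=k_1\wedge k_2$ into the three-term bound of Theorem~\ref{thm:DPCov_Frobenius} and bound each term by the matching structured rate. Your admissibility check is a welcome addition to the paper's one-line remark; it uses the dimension hypothesis to get $k\lesssim d$, and uses $k\le n^{1/2}\le cn$ and $k_2\to\infty$ for the remaining conditions.
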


The block size \(k\) balances truncation bias, sampling variance, and privacy noise.
Under operator loss, balancing the bias with the latter two terms gives \(k\asymp n^{1/(2\alpha+1)}\) and \(k\asymp(\rho n^2/d)^{1/(2\alpha+2)}\), respectively, yielding \cref{cor:DPCov_Operator}.
The Frobenius choice follows analogously from \cref{eq:DPCov_Frobenius} with loss-specific exponents.

\subsection{Center--Outer Dyadic Estimator}
\label{subsec:DyadicSquareEstimators}

Now we turn to the two smaller pointwise-decay class \( \mca{H}_\alpha \) and row-tail class \( \mca{G}_\alpha \).
If there were no privacy requirement, the blockwise tridiagonal estimator, with a properly tuned choice of \( k \),
can achieve the minimax optimal rate thanks to the class inclusions and the identical non-private minimax rate, see \cref{tab:Intro_Rates}.
However, under privacy, a direct application of the same construction cannot attain the sharper privacy rates for \(\mca{H}_\alpha\) and \(\mca{G}_\alpha\) under operator loss, because releasing each dense block incurs too much privacy noise to exploit their local geometry.
Likewise, naively privatizing classical estimators such as truncation or tapering does not resolve this issue.

The key difference lies in the geometry of the two smaller bandable covariance classes.
There are intermediate regions where we can exploit class geometry to pay less privacy cost than using the dense block estimator in \cref{alg:DP_Cov_Helper_Block}.
The insight is that, if the class geometry admits low-complexity approximation, then we can often construct private procedures whose cost of privacy scales with the intrinsic complexity instead of parameter dimension.

To this end, we split the matrix into three parts: a dense center \(\mca{T}_{k_0}\), dyadic outer shells between \(k_0\) and a terminal scale \(k_+\), and a tail beyond \(k_+\).
The center is estimated by the blockwise-tridiagonal procedure, each outer shell is divided into the square blocks in \(\mca{B}_k\), and the tail is set to zero.
The procedure is given in \cref{alg:DP_Cov_DyadicTemplate}.
In the following subsections, we will give the different choices of \(\mf{BlockRelease}\) for both classes \( \mca{H}, \mca{G} \) under different losses, and provide corresponding tuning parameters \( k_0, k_+ \).
The resulting architecture and the three local release modules are illustrated in \cref{fig:CenterOuterLocalReleases}.
The prescribed budget allocation, zCDP composition, and post-processing make every estimator constructed from this template \(\rho\)-zCDP.
The resulting estimators are denoted by \(\widehat{\Sigma}_{\mca{H}}\), \(\widehat{\Sigma}_{\mca{G}}\) and \( \widehat{\Sigma}_{\mca{G},\mathrm{F}} \) respectively.

\begin{figure}[htp]
  \centering
  \begin{minipage}[c]{0.3\textwidth}
    \centering
    \includegraphics[width=\linewidth]{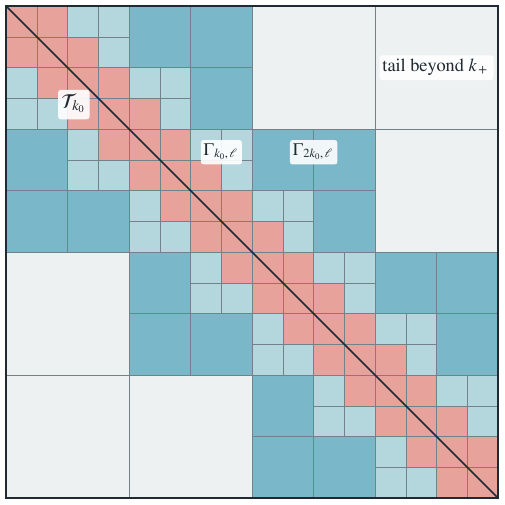}\\
    \vspace{0.5cm}
    \includegraphics[width=0.7\linewidth]{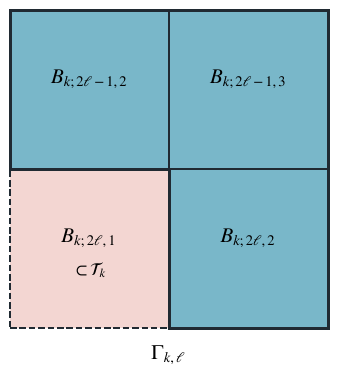}
  \end{minipage}
  \hfill
  \begin{minipage}[c]{0.6\textwidth}
    \scriptsize
    Greedy rank-one (\cref{alg:DP_Cov_H_GreedyRelease}) \\[0em]
    \includegraphics[height=3cm]{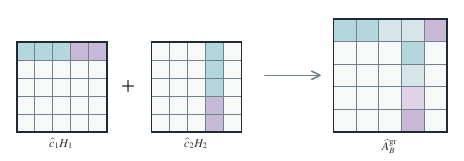}\\[-0em]
    Profile (\cref{alg:DP_Cov_G_ProfileRelease})\\[0em]
    \includegraphics[height=3cm]{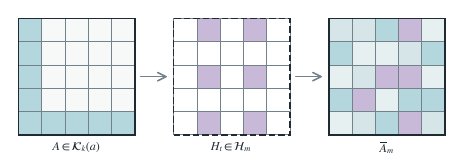}\\[-0em]
    Peeling (\cref{alg:DP_Cov_G_FrobeniusRelease})\\[0em]
    \includegraphics[height=3cm]{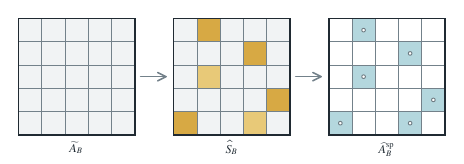}
  \end{minipage}
  \caption{
    Center--outer dyadic architecture and local release modules.
  }
  \label{fig:CenterOuterLocalReleases}
\end{figure}

\begin{algorithm}[htp]
  \caption{DP Center--Outer Dyadic Estimator}
  \label{alg:DP_Cov_DyadicTemplate}
  \begin{algorithmic}
    \State \textbf{Input:} \(X\), privacy budget \(\rho\), center size \(k_0\), terminal size \(k_+\in\{k_0,2k_0,\ldots,d/2\}\), and local procedure \(\mf{BlockRelease}\).
    \State Initialize the center by \(\widehat{\Sigma}=\mf{DPCovTri}\bigl(X;k_0,\rho/4,K\sqrt{C_1(k_0+\log n)}\bigr)\).

    \For{each \(k\in\{k_0,2k_0,\ldots\}\) satisfying \(k<k_+\)}
      \State Let \(\rho_k=\rho/ (4J(k_0)N_{2k}) \).
      \State Set \(\widehat{\Sigma}[B]=\mf{BlockRelease}(X,B;k,\rho_k)\) for each \(B\in\mca{B}_k\).
    \EndFor
    \State Fill lower-triangular of \( \widehat{\Sigma} \) by symmetry.
    \State \textbf{return} \(\Pi_{M_0}(\widehat{\Sigma})\).
  \end{algorithmic}
\end{algorithm}

\begin{theorem}[Operator-norm upper bounds]
  \label{thm:DPCov_GH_Operator}
  Fix $\alpha,K,M_0,M>0$.
  For each \(\mca{J}\in\{\mca{G},\mca{H}\}\) and all sufficiently large \(n\), the corresponding \(\widehat{\Sigma}_{\mca{J}}\) defined below is \(\rho\)-zCDP and
  \begin{equation}
    \sup_{P\in\mca{P}_{\alpha}^{\mca{J}}}
    \E_P \norm{\widehat{\Sigma}_{\mca{J}}-\Sigma(P)}^2
    \lesssim_{\log}
    n^{-\frac{2\alpha}{2\alpha+1}}
    \wedge\frac{d}{n}
    +
    \mca{Q}_{\alpha,d} \left(\frac{d}{\rho n^2}\right).
    \label{eq:DPCov_GH_Operator}
  \end{equation}
\end{theorem}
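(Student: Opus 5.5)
Since the center--outer template is fixed, I would prove \cref{thm:DPCov_GH_Operator} by decomposing the error of the pre-projection estimator $\widehat\Sigma$ into three parts: center, outer shells $k_0\le k<k_+$, and tail beyond $k_+$. The final projection $\Pi_{M_0}$ only helps, since $\Sigma\in[0,M_0]$ gives $\|\Pi_{M_0}(\widehat\Sigma)-\Sigma\|\le 2\|\widehat\Sigma-\Sigma\|$. It also caps the loss at $O(M_0^2)$, which handles low-probability events.

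Privacy needs only bookkeeping. The center uses $\rho/4$. Each shell $k$ releases $3(N_{2k}-1)$ blocks at $\rho_k=\rho/(4J(k_0)N_{2k})$, so it uses at most $3\rho/(4J(k_0))$. Summing over at most $J(k_0)$ shells and composing (\cref{lem:DPMechanism_Composition}) gives total budget $\le\rho$, provided each $\mf{BlockRelease}$ is $\rho_k$-zCDP.

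For accuracy, the center is controlled by the proof of \cref{thm:DPCov_Operator} with clipping radius $K\sqrt{C_1(k_0+\log n)}$. This gives $(k_0+\log d)/n+dk_0(k_0+\log d)/(\rho n^2)$.

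The tail $\Sigma-\Sigma[\mca T_{k_+}]$ consists of $k_+$-off-diagonal entries. By Schur's test and the row-tail condition (or \cref{prop:ClassInclusions} for $\mca H_\alpha$), its operator norm is $\lesssim k_+^{-\alpha}$. One could choose $k_+=d/2$, making the tail vanish, but I would choose $k_+$ at the statistical scale $n^{1/(2\alpha+1)}$ so that the non-private term is not inflated.

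For shell $k$, the blocks in $\mca B_k$ sit in three "diagonals" indexed by $r\in\{2,3\}$ and parity of $\ell$. Within each diagonal the blocks have disjoint row and column index sets. Hence the operator norm of the shell error is at most a constant times $\max_B\|\widehat\Sigma[B]-\Sigma[B]\|$. A union bound over the $N_{2k}\le d$ blocks costs only $\log$ factors. The total error is therefore bounded by
\[
  \sum_{k_0\le k<k_+}\max_{B\in\mca B_k}\|\widehat\Sigma[B]-\Sigma[B]\|,
\]
and summing over $O(\log d)$ shells costs another log factor.

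The core step is a local lemma for each class. The block $\Sigma[B]$ is a $k\times k$ matrix at distance about $k$ from the diagonal. For $\mca H_\alpha$, $\|\Sigma[B]\|_F\lesssim k\cdot k^{-\alpha-1}=k^{-\alpha}$. For $\mca G_\alpha$, $\Sigma[B]$ has row and column $\ell^1$ norms $\lesssim k^{-\alpha}$.

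I would show that the release (greedy rank-one for $\mca H$, profile for $\mca G$) attains
\[
  \E\|\widehat\Sigma[B]-\Sigma[B]\|^2\lesssim_{\log}\frac{k}{n}\wedge k^{-2\alpha}+\min_{1\le r\le k}\Bigl(\frac{k^{-2\alpha}}{r}+\frac{kr}{\rho_k n^2}\Bigr).
\]
The first term in the minimum is the residual after $r$ greedy rank-one peels: a matrix with $\|\cdot\|_F\le k^{-\alpha}$ has $(r+1)$th singular value $\le k^{-\alpha}/\sqrt r$. The second term is the Gaussian cost of releasing $r$ rank-one directions of dimension $k$ under budget $\rho_k\asymp\rho k/(d\log d)$.

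Substituting $\rho_k$ gives $u\,r$ up to logs, with $u=d/(\rho n^2)$. One also has the trivial zero release, with error $k^{-2\alpha}$. Taking the minimum over $r$ and the zero option, and then the supremum over $k$, reproduces $\mca Q_{\alpha,d}(u)$ after a min--max exchange, which I would verify from the explicit forms \cref{eq:Rate_Q_alpha_ge_half} and \cref{eq:Rate_Q_alpha_lt_half}.

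Finally, I would choose $k_0\asymp\log n$-scale (or $\min(n^{1/(2\alpha+1)},\cdot)$) so that the center privacy term $dk_0^2/(\rho n^2)$ is dominated by $\mca Q$, and add the $d/n$, $d^3/(\rho n^2)$ minima via the dense option when $d$ is small.

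The main obstacle is the local lemma. The greedy or profile selection is data-dependent, so its private version must select over a finite net using an exponential or report-noisy-max style mechanism within zCDP. One must then show that the selection error plus noise still yields the $\min_r(k^{-2\alpha}/r+ukr)$ tradeoff uniformly, including the sampling error $k/n$. For $\mca G_\alpha$ the $\ell^1$ row/column control must replace the Frobenius bound, via $\|A\|^2\le\|A\|_{\ell^1}\|A\|_{\ell^\infty}$ applied to the residual after profile extraction. I would try to prove the $\mca H$ case first and then reduce the $\mca G$ case to a similar singular-value decay.
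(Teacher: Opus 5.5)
Your outer skeleton matches the paper: the privacy accounting, the center/shell/tail split, the three disjoint diagonals of each shell, and the Schur bound on the tail. The gap is in the local lemma everything rests on.

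\textbf{The greedy privacy cost is off by a factor $k$.} Your structured privacy term $kr/(\rho_kn^2)$ becomes $ur$ after substituting $\rho_k\asymp\rho k/(d\log d)$. But $\mathcal{Q}_{\alpha,d}(u)$ is built from $ukr$, and the greedy release actually costs $k^2r/(\rho_kn^2)\asymp_{\log}ukr$. The missing $k$ comes from the step you call the main obstacle. Each direction is drawn by the exponential mechanism over the net $\mathcal{W}_k$, with $\log\abs{\mathcal{W}_k}\asymp k$, score sensitivity $\asymp L/n$ and parameter $\sqrt{\rho_k/r}$. By Lemma~\ref{lem:FiniteExponentialMechanism}, each selection therefore loses $\asymp(L/n)\sqrt{r/\rho_k}\,k$ in score, which squares to $k^2r/(\rho_kn^2)$. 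The Gaussian coefficient noise, of order $r/(\rho_kn^2)$, is lower order.

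Your version is false, not just loose. Take $\alpha<1/2$ in the privacy-dominated high-dimensional regime (e.g.\ $\alpha=1/4$, $\rho\asymp1$, $d=n^{3/2}$). Combining your lemma with a dense center of size $u^{-1/(2\alpha+3)}$ would give a global rate of order $u^{(2\alpha+1)/(2\alpha+3)}$. That is polynomially below the lower bound $\mathcal{Q}_{\alpha,d}(u)\asymp u^{2\alpha/(2\alpha+1)}$ of Theorem~\ref{thm:Lower__GH_Operator}, so no min--max exchange can turn your bound into $\mathcal{Q}_{\alpha,d}(u)$.

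With the corrected term, Lemma~\ref{lem:Proof_GH_ThreeRegimeReduction} bounds the per-scale privacy term by $Q_k(k^{-2\alpha},uk)$ between the cutoffs. The supremum of these over $k\le d/4$ is $\mathcal{Q}_{\alpha,d}(u)$ by definition. The approximation error must be controlled by the residual-energy identity for the averaged iterate, not by singular-value truncation, since greedy with privately chosen directions computes no SVD.

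The same computation rules out $k_0\asymp\log n$. The greedy bound is at least $k^{-2\alpha-1}$, which exceeds the dense cost $uk^2$ at scales below $u^{-1/(2\alpha+3)}$; at $k\asymp\log n$ it equals $(\log n)^{-2\alpha-1}$. So the dense center must reach $k_0\asymp d/2\wedge n^{1/(2\alpha+1)}\wedge u^{-1/(2\alpha+3)}$ as in \eqref{eq:DPCov_OperatorScales}, or you need a dense branch at small scales.

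\textbf{The $\mathcal{G}_\alpha$ case is missing its key idea.} There is no singular-value decay to reduce to. $\mathcal{K}_k(a)$ contains $aP$ for every permutation matrix $P$, whose singular values all equal $a$ and whose Frobenius norm is $\sqrt{k}\,a$. Any greedy or low-rank release therefore pays $ka^2/r+ukr$, i.e.\ $ak\sqrt{u}$ after optimizing over $r$. That is a factor $\sqrt{k}$ above the required $a\sqrt{uk}$, and with $a=k^{-\alpha}$ it only recovers the $\mathcal{F}_\alpha$ rate $u^{\alpha/(\alpha+1)}$.

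The paper instead works in the dual:
\begin{enumerate}
\item Lemma~\ref{lem:Proof_G_ProfileGeometry} bounds the operator norm, up to $(\log k)^2$, by discrepancies against rank-one probes with flat sparse factors of support profile $m$.
\item Such a probe has dual norm at most $\sqrt{2/m}$ in the row/column $\ell^1$ geometry. Hence entropy-regularized mirror descent on the lifted polytope has regret $a\sqrt{kT\log k/m}$ (Lemma~\ref{lem:Proof_G_ProfileRegret}).
\item Private probe selection over the profile-$m$ net costs only entropy $\asymp m$.
\item Balancing the horizon $T_m$ gives squared discrepancy $\lesssim_{\log}a\sqrt{km}/(n\sqrt{\rho_k})\le ak/(n\sqrt{\rho_k})\asymp_{\log}a\sqrt{uk}$.
\item A common feasible set reconciles the profile centers into one point estimate.
\end{enumerate}
Without this, or another mechanism achieving $k/n+ak/(n\sqrt{\rho_k})$ on $\mathcal{K}_k(a)$, the row-tail case of the theorem is not proved.
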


\begin{theorem}[Row-tail Frobenius upper bound]
  \label{thm:DPCov_G_Frobenius}
  Fix $\alpha,K,M_0,M>0$.
  For all sufficiently large \(n\), $\widehat{\Sigma}_{\mca{G},\mathrm{F}}$ defined below is $\rho$-zCDP and
  \begin{equation}
    \sup_{P\in\mca{P}_{\alpha}^{\mca{G}}}
    \frac{1}{d}
    \E_P \norm{\widehat{\Sigma}_{\mca{G},\mathrm{F}}-\Sigma(P)}_F^2
    \lesssim_{\log}
    n^{-\frac{2\alpha+1}{2\alpha+2}}
    \wedge\frac{d}{n}
    +
    \left(\frac{d}{\rho n^2}\right)^{\frac{2\alpha+1}{2\alpha+3}}
    \wedge\frac{d^3}{\rho n^2}.
    \label{eq:DPCov_G_Frobenius}
  \end{equation}
\end{theorem}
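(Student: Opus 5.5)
The plan is to instantiate \cref{alg:DP_Cov_DyadicTemplate} with a \emph{peeling} local release. On each square block \(B\in\mca{B}_k\), the release privately selects about \(s_k\) large entries and estimates only those; all other entries of \(B\) are set to zero. Privacy follows from the prescribed budget split \(\rho/4\) for the center and \(\rho_k=\rho/(4J(k_0)N_{2k})\) per block, together with \cref{lem:DPMechanism_Composition} and post-processing by \(\Pi_{M_0}\). Inside a block, \(B\) will use \(s_k\) rounds of a Gaussian report-noisy-max step followed by a Gaussian release of the selected entry, each at budget \(\rho_k/(2s_k)\). These steps act on entries of the coordinatewise clipped products \(\chi_{\tau}(X_{i,a})\chi_\tau(X_{i,b})\), with \(\tau\asymp K\sqrt{\log n}\), so each entry has sensitivity \(O(\log n/n)\). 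The per-entry noise variance is then \(\sigma_{e,k}^2\asymp_{\log} s_k N_{2k}/(\rho n^2)\asymp_{\log} s_k d/(k\rho n^2)\). The two endpoint rates \(d/n\) and \(d^3/(\rho n^2)\) are handled separately: when they are smaller, take \(k_0\asymp d\) (a dense Gaussian-perturbed clipped covariance), for which the bound is standard.

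For accuracy, \(\norm{\cdot}_F^2\) is additive over the disjoint pieces (center \(\mca{T}_{k_0}\), the shells \(\mca{B}_k\) with \(k_0\le k<k_+\), and the tail), and \(\Pi_{M_0}\) is a Frobenius contraction toward \(\Sigma\). The center is analysed exactly as in the proof of \cref{thm:DPCov_Frobenius}; the bias there is replaced by the shells. Its contribution to \(d^{-1}\E\norm{\cdot}_F^2\) is \(\lesssim_{\log} k_0/n+dk_0^2/(\rho n^2)\). The tail contributes at most \(k_+^{-2\alpha}\). This holds because every entry at distance \(\ge k\) is at most \(Mk^{-\alpha}\) and every row tail sum is at most \(Mk^{-\alpha}\), so each row has squared tail mass \(\le M^2k^{-2\alpha}\). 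I will take \(k_+\) to be the smallest dyadic scale for which this is below the target, or \(d/2\).

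The core step is a per-block oracle inequality. Each \(B\in\mca{B}_k\) is \(k\)-off-diagonal, so the row-tail condition \cref{eq:Cov_Bandable_RowTail} gives \(\norm{\Sigma_B}_{\ell^1\text{(entrywise)}}\le L_k:=Mk^{1-\alpha}\) and \(\norm{\Sigma_B}_F^2\le M^2k^{1-2\alpha}\). I will show that peeling achieves, with high probability,
\[
\norm{\widehat\Sigma_B-\Sigma_B}_F^2\lesssim_{\log}\min\Bigl\{k^{1-2\alpha},\ \frac{L_k^2}{s_k}+s_k\sigma_{e,k}^2+\frac{s_k}{n}\Bigr\}.
\]
The first term comes from the \(\ell^1\) best \(s\)-term approximation bound \(L^2/s\). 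The remaining terms come from the fact that noisy-max selection returns entries within \(O(\sigma_{e,k}\sqrt{\log})\) of the true maxima, which costs only an additional \(s\sigma^2\log\) after the usual swap argument. I will also need to control the clipping bias through sub-Gaussian tails. Optimizing gives \(s_k^3\asymp L_k^2k\rho n^2/d\). Summing over \(N_{2k}\asymp d/k\) blocks and normalizing by \(d\), the privacy part of shell \(k\) is then \(\lesssim_{\log}k^{-4\alpha/3}x^{1/3}\) with \(x=d/(\rho n^2)\). This is decreasing geometrically in \(k\), so the shells sum to their value at \(k_0\). With \(k_0=\min\{n^{1/(2\alpha+2)},x^{-1/(2\alpha+3)}\}\), we get \(k_0^{-4\alpha/3}x^{1/3}\le x^{(2\alpha+1)/(2\alpha+3)}\) when the privacy choice is active. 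Otherwise the cap \(k^{-2\alpha}\cdot\) (per-row, i.e. \(k_0^{-(2\alpha+1)}\) after using \(\min\) with the \(\ell^1\) bound) matches \(n^{-(2\alpha+1)/(2\alpha+2)}\). The sampling part is \(s_k/(kn)\le k_0/n\) at this choice. This gives \cref{eq:DPCov_G_Frobenius} up to \(J(k_0)\) and the logarithmic factors.

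The main obstacle is the peeling oracle inequality. It requires handling data-dependent selection across \(s_k\) adaptively composed rounds, ensuring that near-ties and small entries picked because of noise cost no more than \(s\sigma^2\log\), and bounding the clipping bias uniformly over entries. My first attempt will be the standard peeling analysis for private sparse estimation: condition on a high-probability event where all Gaussian noise maxima are at most \(\sigma\sqrt{\log(dn)}\) and all clipped sample entries concentrate at rate \(\sqrt{\log n/n}\), then argue deterministically.
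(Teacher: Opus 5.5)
Your architecture and local analysis match the paper's. With \(s=kr\) and \(L_k=ka_k\), your oracle \(L_k^2/s+s/n+s\sigma_{e,k}^2\) is the paper's peeling bound \(k\{a_k^2/r+r/n+kr^2/(\rho_k n^2)\}\), and your swap argument is a valid variant of theirs. Your treatment of the privacy-limited case \(k_0=x^{-1/(2\alpha+3)}\) is also correct: shell cost \(k^{-4\alpha/3}x^{1/3}\), sampling cost at most \(k_0/n\). Truncating at \(k_+\), and summing shells geometrically instead of paying \(J(k_0)\), are harmless changes.

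The gap is the sampling-limited case \(k_0=n^{1/(2\alpha+2)}<x^{-1/(2\alpha+3)}\). Your rule \(s_k^3\asymp L_k^2k/x\) drops the \(s/n\) term. For \(k<x^{-1/(2\alpha+3)}\) it gives \(s_k>k^2\), which is infeasible. At \(k=k_0\) it gives \(s_k/k=(k_0^{-2\alpha}/x)^{1/3}>k_0\), so your claim \(s_k/(kn)\le k_0/n\) fails there. Capping at \(s=k^2\) instead costs \(k/n\) per row, which is also too large.

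The fallback you invoke, a zero-branch cost of \(k_0^{-(2\alpha+1)}\) per row, is the entrywise bound available in \(\mca{H}_\alpha\), not in \(\mca{G}_\alpha\). Consider a single off-diagonal at distance \(2k\) with entries \(ck^{-\alpha}\). It belongs to \(\mca{G}_\alpha\), lies entirely in shell \(k\), and costs \(\asymp k^{-2\alpha}\) per row when zeroed. At \(k_0=n^{1/(2\alpha+2)}\) this is \(n^{-2\alpha/(2\alpha+2)}\), a factor \(n^{1/(2\alpha+2)}\) above the target.

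The missing step is to take \(s_k\) as the minimizer of the full three-term objective. In this regime that means balancing approximation against sampling: \(s_k\asymp L_k\sqrt n\), i.e.\ \(r\asymp a_k\sqrt n\), which lies in \([1,k]\) for \(k_0\le k\lesssim n^{1/(2\alpha)}\); use the zero branch beyond that. The normalized shell error is then \(\lesssim_{\log}a_k/\sqrt n\asymp k^{-\alpha}n^{-1/2}\). The privacy part \(x a_k^2 n\) is no larger, because \(a_k x n^{3/2}\lesssim 1\) for all \(k\ge k_0\); at \(k_0\) this is exactly the regime condition \(n^{1/(2\alpha+2)}\le x^{-1/(2\alpha+3)}\). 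These bounds decay geometrically in \(k\) and equal \(n^{-(2\alpha+1)/(2\alpha+2)}\) at \(k_0\). This is the \(a/\sqrt n\) branch of \(U(a,k)\) in \cref{lem:Proof_G_FrobeniusThreeBranchReduction}. It is precisely what lets the row-tail class beat the banding rate \(n^{-2\alpha/(2\alpha+1)}\) under Frobenius loss.

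Separately, your selection step needs a different privacy justification. Gaussian report-noisy-max at the Gaussian-mechanism calibration \(\sigma^2=\Delta^2/(2\rho')\) is not \(\rho'\)-zCDP uniformly in the number \(m\) of candidates. Moving half of \(m\) equal scores up by \(\Delta\) and the other half down yields a Kullback--Leibler divergence of order \(\rho'\log m\), because the maximum of many Gaussians concentrates. Either inflate the noise by \(\sqrt{\log k}\) and prove the resulting guarantee, or use the exponential mechanism as the paper does (\cref{lem:FiniteExponentialMechanism}), which gives the same utility up to \(\log k\) factors.
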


\subsubsection{Pointwise-decay class under operator loss: greedy blocks}
\label{subsubsec:pointwise-decay-upper-block}

Pointwise decay makes every outer block small in Frobenius norm.
The greedy procedure exploits this by building a rank-one approximation: at each round it selects a direction aligned with the current residual and releases only the corresponding noisy coefficient.
Using \(r\) rounds produces an approximation term of order \(a^2/r\), but the privacy noise only grows linearly with \(r\),
so if \( r \) is small, the privacy noise is smaller than that of the dense block estimator.

For a \(k\times k\) block \(A\), define \(\mca{E}_k(a)=\{A:\norm{A}_F \leq a,\ \norm{A}\leq M_0\}\).
For \(\Sigma\in\mca{H}_\alpha\), every scale-\(k\) square of \(\Sigma\) belongs to \(\mca{E}_k(a_{\mca{H},k})\), where \(a_{\mca{H},k}=C_{\mca{H}}k^{-\alpha}\) and \(C_{\mca{H}}\) depends only on \((M,M_0)\).
Thus \(\mca{E}_k(a)\) records the two facts used by the local analysis: the Frobenius radius \(a\) controls low-rank approximation, and \(M_0\) supplies a uniform operator bound.
Let \(\mca{W}_k=\{\xi uv^\T:u,v\text{ lie in a fixed }1/4\text{-net of the unit sphere in }\R^k,\ \xi\in\{-1,1\}\}\) be the set of rank-one candidates.

\begin{algorithm}[H]
  \caption{Greedy Rank-One Block Estimator}
  \label{alg:DP_Cov_H_GreedyRelease}
  \begin{algorithmic}
    \State \textbf{Input:} \(X\), square block \(B=I\times J\) of side length \(k\), radius \(a\), and budget \(\rho_B\).
    \State Choose \( r = \argmin_{1\leq r\leq k} (a^2/r + k^2 r /(\rho_B n^2)) \).
    \State Set \(\varepsilon=\sqrt{\rho_B/r}\) and \( L=CK^2 \log n \).
    \State Set \(B_0=0\).
    \For{\(t=1,\ldots,r\)}
      \State For \(H=\xi uv^\T \in\mca{W}_k\), set
      \[
        s_t(X,H)
        =
        \frac{1}{n}\sum_{i=1}^n
        \chi_L \xk{
          \xi(u^\T X_{i,I})(v^\T X_{i,J})
        }
        -
        \langle B_{t-1},H\rangle_F.
      \]
      \State Draw \(H_t\) by the exponential mechanism with privacy parameter
      \(\varepsilon\) and score \(s_t(X,H)\).
      \State Draw \(G_t \sim N(0,4L^2 r/(\rho_B n^2))\), set \(\widehat{c}_t=s_t(X,H_t)+G_t\), and update \(B_t=B_{t-1}+\widehat{c}_t H_t\).
    \EndFor
    \State Set \(\overline{B}_r=r^{-1}\sum_{t=1}^r B_{t-1}\), and project
    \(\overline{B}_r\) onto the operator-norm ball of radius \(2M_0\) to
    obtain \(\widehat{A}_B^{\mathrm{gr}}\).
    \State \textbf{return} \(\widehat{A}_B^{\mathrm{gr}}\).
  \end{algorithmic}
\end{algorithm}

Up to logarithmic factors, the blockwise squared error is \(k/n+a_{\mca{H},k}^2/r+k^2 r/(\rho_k n^2)\) for a fixed rank \( r \),
so we minimize the last two terms for an optimal \( r_k \).
Balancing the errors yield the choice of \( k_0 \) and dyadic scale \( k_+ \) satisfying
\begin{equation}
  \begin{aligned}
    k_0
    &\asymp
    \frac{d}{2}
    \wedge
    n^{1/(2\alpha+1)}
    \wedge
    \left(\frac{d}{\rho n^2}\right)^{-1/(2\alpha+3)},\\
    k_+
    &\asymp
    \frac{d}{2}
    \wedge
    n^{1/(2\alpha+1)}
    \wedge
    \left(\frac{d}{\rho n^2}\right)^{-1/(2\alpha+1)}.
  \end{aligned}
  \label{eq:DPCov_OperatorScales}
\end{equation}
\subsubsection{Row-tail class under operator loss: profile blocks}

The row--column \(\ell^1\) constraint controls the total mass in every row and column but allows that mass to be distributed across many support sizes.
Thus a low rank approximation is no longer suitable.
Instead, we use a dual perspective to find the approximation with low complexity.
Define the local \( \ell^1 \) feasible set fo \(k\times k\) block \(A=(A_{uv})\)
\[
  \mca{K}_k(a)
  =
  \left\{
    A:
    \max_u \sum_v \abs{A_{uv}}
    \vee
    \max_v \sum_u \abs{A_{uv}}
    \leq a
  \right\}.
\]
For \(\Sigma\in\mca{G}_\alpha\), every scale-\(k\) square of \(\Sigma\) belongs to \(\mca{K}_k(a_{\mca{G},k})\), where \(a_{\mca{G},k}=C_{\mca{G}}k^{-\alpha}\) and \(C_{\mca{G}}\) depends only on \((M,M_0)\).
Recall that \( \norm{A} = \sup_{H = uv^\T} \ang{A, H}_F  \), where \( u,v \) range over all unit vectors.
The geometry of \(\mca{K}_k(a)\) allows us to work with the following low-complexity subset instead of all such \( H \)'s.
Define
\begin{math}
    \mca{V}_s
  =
  \left\{
    u\in\R^k:
    \norm{u}_0\leq s,
    \norm{u}_2\leq1,
    \norm{u}_\infty\leq\sqrt{2/s}
  \right\},
\end{math}
and consider
\[
  \mca{H}_m
  =
  \left\{
    H = \xi uv^\T:
    u\in\mca{V}_s,\quad
    v\in\mca{V}_r,\quad
    \max(s,r)=m,\quad
    \xi\in\{-1,1\}
  \right\}.
\]
Define \( d_m(A,B) = \sup_{H \in \mca{H}_m}\abs{\ang{A-B,H}_F} \) as the approximate error measure over \(  \mca{H}_m \).
Then, it suffices to find targets \( A_m \) that are close to the non-private empirical covariance block,
and aggregate those targets for the final estimate.
We summarize the main procedure as follows, but defer technical details to the supplementary material.

\begin{algorithm}[H]
  \caption{Profile Block Estimator (informal)}
  \label{alg:DP_Cov_G_ProfileRelease}
  \begin{algorithmic}
    \State \textbf{Input:} \(X\), square block \(B=I\times J\) of side length \(k\), radius \(a\), and budget \(\rho_B\).
    \For{each dyadic profile \(m\)}
      \For{\( t = 1,\dots,T_m \)}
        \State Compute the candidate block \( A_t \) using \( (H_s)_{s < t} \).
        \State Privately select probe \( H_t \in \mca{H}_m \) with the largest remaining discrepancy.
      \EndFor
      \State Compute final candidate block \( \overline{A}_m \).
    \EndFor
    \State Reconcile \((\overline{A}_m)_m\) into a point estimate \(\widehat{A}_B^{\mathrm{prof}}\).
    \State \textbf{return} \(\widehat{A}_B^{\mathrm{prof}}\).
  \end{algorithmic}
\end{algorithm}

We can show that the resulting blockwise squared error is \(k/n+ak/(n\sqrt{\rho_B})\) up to logarithmic factors.
With \(a=a_{\mca{G},k}\) and \(\rho_B=\rho_k\), balancing the center, outer-block, and tail errors gives the same scales \(k_0\) and \(k_+\) in \eqref{eq:DPCov_OperatorScales}.
In the supplement, the profile centers are reconciled through a common feasible set; the true block belongs to it with high probability, and its small diameter yields the pointwise deviation bound.
We also remark that this estimator for \( \mca{G}_\alpha \) also applies to \( \mca{H}_\alpha \subset \mca{G}_\alpha\), giving the same  upper rates,
but we keep the simpler construction under \( \mca{H}_\alpha \).

\subsubsection{Row-tail class under Frobenius loss: peeling blocks}

Under Frobenius loss, the row--column \(\ell^1\) constraint yields a direct sparse approximation.
Indeed, \(A\in\mca{K}_k(a)\) implies \(\sum_{u,v} \abs{A_{uv}}\leq ka\), so retaining the \(kr\) largest entries gives a support \(S\subseteq[k]^2\) with \(\abs{S}\leq kr\) and \(\norm{A_{S^c}}_F^2 \leq ka^2/r\).
Because the locations of these entries are unknown, the peeling procedure selects them privately and releases only their noisy values.
When \(r\) is small, this avoids the privacy cost of releasing all \(k^2\) entries.

\begin{algorithm}[H]
  \caption{Peeling Frobenius Block Estimator}
  \label{alg:DP_Cov_G_FrobeniusRelease}
  \begin{algorithmic}
    \State \textbf{Input:} \(X\), square block \(B=I\times J\) of side length \(k\), radius \(a\), and budget \(\rho_B\).
    \State Choose \( r = \argmin_{1\leq r\leq k} (a^2/r + r/n + kr^2/(\rho_B n^2)) \).
    \State Set \(\widetilde{A}_{B,uv}=n^{-1} \sum_{i=1}^n \chi_L(X_{i,I(u)} X_{i,J(v)})\) for \( L=CK^2 \log n \).
    \State Starting with all \(k^2\) coordinates, perform \(kr\) successive exponential-mechanism selections with score \(\abs{\widetilde{A}_{B,uv}}\) and privacy parameter \(\sqrt{\rho_B/(kr)}\); denote the selected support by \(\widehat{S}_B\).
    \State For \((u,v)\in\widehat{S}_B\), set \(\widehat{A}_{B,r}^{\mathrm{sp}}(u,v)=\widetilde{A}_{B,uv}+G_{B,uv}\), where \(G_{B,uv} \stackrel{\mathrm{iid}}{\sim}N(0,4L^2 k r/(\rho_B n^2))\), and set the remaining coordinates to zero.
    \State \textbf{return} \(\widehat{A}_{B,r}^{\mathrm{sp}}\).
  \end{algorithmic}
\end{algorithm}

Up to logarithmic factors, the blockwise squared error is \(k(a^2/r+r/n+kr^2/(\rho_B n^2))\) for a fixed sparsity index \(r\),
so we minimize the three terms for an optimal \(r_k\).
The three terms balance sparse approximation, sampling on the selected entries, and privacy noise from selection and release.
We set \(k_+=d/2\) and
balance these errors to yield the choice
\[
  k_0
  \asymp
  \frac{d}{2}
  \wedge
  n^{1/(2\alpha+2)}
  \wedge
  \left(\frac{d}{\rho n^2}\right)^{-1/(2\alpha+3)}.
\]

\section{Minimax Lower Bounds}
\label{sec:LowerBound}

In this section, we present our main minimax lower bound results for covariance estimation under DP\@.
As one of our main technical contributions, we first introduce a novel DP van Trees inequality,
which is of independent interest for other private estimation problems beyond this paper.
Then, we use it to derive minimax lower bounds for covariance estimation under zCDP\@.

\subsection{A DP van Trees Inequality}

The key technique for proving the lower bounds is \cref{thm:VanTrees__zCDP}, a novel van Trees inequality under DP\@.
It extends the classical van Trees inequality to the differentially private setting by incorporating a Fisher information bound under zCDP\@.
The proof of \cref{thm:VanTrees__zCDP} is deferred to the supplementary material.

\begin{theorem}[DP van Trees Inequality]
  \label{thm:VanTrees__zCDP}
  Let $\hat{\theta}$ be a $\rho$-zCDP estimator computed from $n$ i.i.d.\ samples from $P_\theta$, $\theta \in \R^p$.
  Let $I_{x}(\theta)$ be the Fisher information matrix of a single sample from $P_\theta$.
  Let $\pi$ be a prior distribution on $\Theta \subseteq \R^p$ with Fisher information matrix $J_\pi$.
  Under common regularity conditions, we have
  \begin{equation}
    \label{eq:VanTrees__zCDP}
    \E_{\pi} \E_{\theta} \norm{\hat{\theta} - \theta}^2_2
    \geq \frac{p^2}{I + \Tr J_\pi},\quad
    I =
    \left\{
      C_\rho \rho n^2 \int_{\Theta} \norm{I_{x}(\theta)} \dd \pi(\theta)
    \right\}
    \wedge
    \left\{
      n \int_{\Theta} \Tr {I_{x}(\theta)} \dd \pi(\theta)
    \right\},
  \end{equation}
  where \(C_\rho=(e^{2\rho}-1)/\rho\).
\end{theorem}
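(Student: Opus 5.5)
The plan is to combine the multivariate van Trees inequality, applied to the output $\hat\theta$ viewed as an observation, with a bound on the Fisher information that the privatized output carries about $\theta$. Let $Q_\theta$ denote the law of $\hat\theta$ when the data are $X_{1:n}\stackrel{\mathrm{iid}}{\sim}P_\theta$, and let $I_{\hat\theta}(\theta)$ be its Fisher information. The classical Bayesian Cram\'er--Rao (van Trees) inequality, applied to the statistical experiment $\{Q_\theta\}$ with prior $\pi$, gives
\[
\E_\pi\E_\theta\norm{\hat\theta-\theta}_2^2\ \ge\ \frac{p^2}{\int\Tr I_{\hat\theta}(\theta)\,\dd\pi(\theta)+\Tr J_\pi}.
\]
This is the trace form obtained by taking $C=I_p$ in Gill--Levit. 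It then suffices to show that
\[
\Tr I_{\hat\theta}(\theta)\le \min\bigl\{n\Tr I_x(\theta),\ C_\rho\rho n^2\norm{I_x(\theta)}\bigr\}
\]
pointwise in $\theta$. Integrating in $\pi$ and using $\int\min\le\min\int$ then yields \cref{eq:VanTrees__zCDP}. The first bound is immediate from the data-processing inequality for Fisher information: $\hat\theta$ is a randomized function of $X_{1:n}$, so $I_{\hat\theta}(\theta)\preceq nI_x(\theta)$.

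For the privacy bound I would use the score representation
\[
I_{\hat\theta}(\theta)=\E\bigl[\E[S_n\mid\hat\theta]\,\E[S_n\mid\hat\theta]^\T\bigr],\qquad S_n=\sum_i s_\theta(X_i),
\]
where $s_\theta$ is the single-sample score. This identity holds under the regularity conditions allowing differentiation under the integral. Fix a unit vector $v$, and put $g(\hat\theta)=\E[\ang{v,S_n}\mid\hat\theta]$ and $g_i(\hat\theta)=\E[\ang{v,s_\theta(X_i)}\mid\hat\theta]$. Then
\[
\E g^2=\sum_i\E[g(\hat\theta)\ang{v,s_\theta(X_i)}].
\]
For each $i$, I would compare with the coupled output $\hat\theta^{(i)}$ computed on the dataset in which $X_i$ is replaced by an independent copy $X_i'$. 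Since $\hat\theta^{(i)}$ is independent of $X_i$ and the score has mean zero, $\E[g(\hat\theta^{(i)})\ang{v,s_\theta(X_i)}]=0$. So the $i$th term is
\[
\E\bigl[\{g(\hat\theta)-g(\hat\theta^{(i)})\}\ang{v,s_\theta(X_i)}\bigr].
\]
Conditioning on the data and applying Cauchy--Schwarz bounds the difference of expectations of $g$ under the two neighboring output laws by
\[
\sqrt{\chi^2\bigl(M(S)\|M(S')\bigr)}\,\sqrt{\E_{M(S')}g^2}.
\]
With $\alpha=2$ in \cref{def:rho-zCDP}, $\chi^2=e^{D_2}-1\le e^{2\rho}-1$. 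A second Cauchy--Schwarz over $X_i$ then gives the $i$th term at most
\[
\sqrt{e^{2\rho}-1}\,\sqrt{\E g^2}\,\sqrt{v^\T I_x(\theta)v}.
\]
Summing over $i$, $\E g^2\le n\sqrt{e^{2\rho}-1}\sqrt{\E g^2}\sqrt{v^\T I_xv}$, hence
\[
v^\T I_{\hat\theta}v\le(e^{2\rho}-1)n^2\norm{I_x}=C_\rho\rho n^2\norm{I_x}.
\]

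The difficulty is that this controls only a single direction $v$, whereas the van Trees bound needs the trace, and summing over $p$ directions would lose a factor $p$. To address this, I would exploit the fact that van Trees needs only $\Tr I_{\hat\theta}$ paired against the estimator itself. Concretely, I would rerun Gill--Levit directly, with $\ang{\hat\theta-\theta,\cdot}$ as the test function: the numerator is $p=\int\mathrm{div}(\pi)\,\ldots$ and the denominator is a Cauchy--Schwarz between $\hat\theta-\theta$ and the score $\E[S_n\mid\hat\theta]+\nabla\log\pi$. The key step is then to bound $\E\ang{\hat\theta-\theta,S_n}$ by the above neighbor-coupling argument with $g$ replaced by the vector $\hat\theta-\theta$. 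Cauchy--Schwarz in $\R^p$ combined with the $\chi^2$ bound should produce $\sqrt{C_\rho\rho}\,n\sqrt{\E\norm{\hat\theta-\theta}^2}\sqrt{\sup_v v^\T I_xv}$, so only the operator norm appears. This directional-to-trace step is the main obstacle and the part needing most care. The cross term with the prior score and the interchange of integrals fall under the stated regularity assumptions, and squaring and rearranging gives the claimed inequality.
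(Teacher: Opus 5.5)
Your skeleton matches the paper's: the trace-form van Trees inequality for the output experiment, $\Tr I_{\hat\theta}\le n\Tr I_x$ by data processing, and a privacy contraction obtained by swapping $X_i$ for an independent copy and using $\chi^2\le e^{D_2}-1\le e^{2\rho}-1$. However, the decisive step is missing, and the obstacle you describe is self-inflicted. You do not need a direction $v$. Run the same coupling with the vector-valued post-processing $M(\hat\theta)=\E[S_n\mid\hat\theta]$. Then $\Tr I_{\hat\theta}=\E\ang{M(\hat\theta),S_n}=\sum_i\E\ang{M(\hat\theta),s_\theta(X_i)}$. For each $i$, apply the $\chi^2$ bound, conditionally on the data, to the \emph{scalar} post-processing $t\mapsto\ang{M(t),s_\theta(X_i)}$. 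Only afterwards use that $\hat\theta^{(i)}$ is independent of $X_i$:
\[
\E\ang{M(\hat\theta^{(i)}),s_\theta(X_i)}^2=\Tr\bigl(I_x(\theta)\,\E[MM^\T]\bigr)\le\norm{I_x(\theta)}\Tr I_{\hat\theta}(\theta).
\]
This yields $\Tr I_{\hat\theta}\le n\{(e^{2\rho}-1)\norm{I_x}\Tr I_{\hat\theta}\}^{1/2}$. That is exactly the pointwise trace bound to which your first paragraph reduced the theorem, and it is how the paper proves Lemma~\ref{lem:VanTrees__zCDP_FisherInfoBound} via Lemma~\ref{lem:Lower__AiControl}.

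Your replacement route does not deliver the theorem as written. The mechanism you name for obtaining $\norm{I_x}$, Cauchy--Schwarz in $\R^p$ after the $\chi^2$ step, actually gives $\Tr I_x$. Bounding $\ang{\E[\hat\theta-\hat\theta^{(i)}\mid X,X'],s_\theta(X_i)}$ by the product of norms leaves $\E\norm{s_\theta(X_i)}^2=\Tr I_x$. The operator norm appears only if the pairing with $s_\theta(X_i)$ stays inside the $\chi^2$ step and its second moment is then factorized by independence, as in the display; this also works with $\hat\theta-\theta$ in place of $M$. Even with that repair, bounding $\E\ang{\hat\theta-\theta,S_n}$ and $\E\ang{\hat\theta-\theta,\nabla\log\pi}$ separately discards the orthogonality $\E_\theta s_{\hat\theta}=0$ that makes the van Trees denominator additive. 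With $R$ the Bayes risk you get $p\le\sqrt R\,(\sqrt I+\sqrt{\Tr J_\pi})$, hence only $R\ge p^2/(\sqrt I+\sqrt{\Tr J_\pi})^2\ge p^2/\{2(I+\Tr J_\pi)\}$. That loss is harmless for rates, but it is not \eqref{eq:VanTrees__zCDP}. To get the stated constant, bound $\Tr I_{\hat\theta}$ as above and apply a single Cauchy--Schwarz against $s_{\hat\theta}+\nabla\log\pi$.
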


Let us make some remarks on \cref{thm:VanTrees__zCDP} from the following perspectives.

\subsubsection{Cost of Privacy}
We note that there are three terms in the denominator of the lower bound in \cref{eq:VanTrees__zCDP}:
the terms $n \int_{\Theta} \Tr {I_{x}(\theta)} \dd \pi(\theta)$ and $\Tr J_\pi$ are standard in the classical van Trees inequality,
while the additional term $\rho n^2 \int_{\Theta} \norm{I_{x}(\theta)} \dd \pi(\theta)$ captures the restriction of the information due to privacy.
We highlight that this term involves the operator norm $\norm{I_{x}(\theta)}$ of the Fisher information matrix instead of the trace $\Tr I_{x}(\theta)$, which is generally smaller than \( \Tr {I_{x}(\theta)} \) by a factor of the parameter dimension \( p \).
Hence, the cost of privacy often has a stronger dependence on the dimension of the parameter compared to the non-private case.
This difference is commonly observed in private estimation problems~\citep{cai2021_CostPrivacy,cai2023_ScoreAttack}.

\subsubsection{Comparison with Other Lower Bound Techniques}
There are several existing techniques for deriving minimax lower bounds under DP\@.
One approach is to use private versions of classical information-theoretic inequalities such as Fano's inequality~\citep{duchi2014_LocalPrivacy, duchi2018_MinimaxOptimal,acharya2020_DifferentiallyPrivate,cai2021_CostPrivacy}.
However, these techniques often require delicate constructions and fail to yield tight lower bounds in many problems.

Another popular approach is the fingerprinting method~\citep{bassily2014_DifferentiallyPrivate} or the recently developed score attack~\citep{cai2023_ScoreAttack}.
However, these methods typically require that the prior distribution of the parameter has independent coordinates,
which limits their applicability in complex problems where such independence may not hold, such as covariance estimation~\citep{kamath2022_NewLower,narayanan2024_BetterSimpler,portella2024_LowerBounds}.
Notably, the use of van Trees inequality for deriving minimax lower bounds under DP was first explored in \citet{cai2024_OptimalFederated}, but it was still interpreted through the lens of score attack.
Moreover, these applications typically require cumbersome control of remainder terms and often suffer from logarithmic-factor gaps.

\subsubsection{Applications}
Because it is computationally convenient, \cref{thm:VanTrees__zCDP} can be used to derive minimax lower bounds under DP for various estimation problems, yielding tight lower bounds in many cases.
To use \cref{thm:VanTrees__zCDP}, it suffices to (i) compute and bound the Fisher information matrix with respect to the parameter of interest, and (ii) construct an appropriate prior distribution over the parameter space.
For more complex problems, the prior distribution may need to be carefully designed to balance $I$ and $J_\pi$,
as we will see in the proof of our main results in \cref{sec:LowerBound}.

Let us further illustrate the use of \cref{thm:VanTrees__zCDP} with simple examples of mean estimation and linear regression.
For mean estimation, we take $x \sim N(\mu, I_d)$ and thus $I_{x}(\mu) = I_d$.
Taking a prior distribution of $\mu$ with independent coordinates satisfying regularity conditions, we obtain the lower bound
\begin{math}
  \frac{d}{n} \vee \frac{d^2}{\rho n^2}.
\end{math}
For linear regression, we take $y = \ang{\beta,x} + \ep$ with $x \sim N(0,I_d)$ and $\ep \sim N(0,1)$ independent,
and we have $I_{(x,y)}(\beta) = I_d$.
Taking a similar prior distribution on $\beta$, we obtain the same lower bound as in mean estimation.
These examples easily recover the known minimax lower bounds for mean estimation and linear regression~\citep{cai2021_CostPrivacy, cai2023_ScoreAttack}.
Therefore, we believe that \cref{thm:VanTrees__zCDP} can serve as a general-purpose tool for deriving minimax lower bounds under DP\@.

\subsection{Class-Specific Minimax Lower Bounds}
\label{subsec:Covariance_LowerBound}

Using the DP van Trees inequality,
we can provide matching minimax lower bounds for the three classes and two loss norms.
Recall that $\mca{M}_\rho$ denotes the set of all $\rho$-zCDP estimators based on $n$ samples.
We begin with the separated-block class and then turning to the two smaller classes.

\subsubsection{Separated-block class}

The separated-block lower bound holds simultaneously for all Schatten losses between the Frobenius and operator norms.
Taking $q=\infty$ gives the operator-norm lower bound, whereas $q=2$ gives the normalized Frobenius lower bound.

\begin{theorem}[Lower bound in \( \mca{F}_\alpha \)]
  \label{thm:Lower__Operator}
  For every $q\in[2,\infty]$,
  \begin{equation}
    \inf_{\hat{\Sigma} \in \mca{M}_\rho}
    \sup_{\Sigma \in \mca{F}_\alpha}
    d^{-\frac{2}{q}} \E_{\Sigma} \normsch{\hat{\Sigma} - \Sigma}^2
    \gtrsim
    n^{-\frac{2\alpha}{2\alpha+1}} \wedge \frac{d}{n} + \xk{\frac{d}{\rho n^2}}^{\frac{\alpha}{\alpha+1}} \wedge \frac{d^3}{\rho n^2}.
  \end{equation}
\end{theorem}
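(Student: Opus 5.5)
The lower bound over \(\mca{F}_\alpha\) is a sum of a non-private term and a privacy term, and the maximum of two lower bounds is at least half their sum. I would therefore prove the two terms separately. The non-private term \(n^{-2\alpha/(2\alpha+1)}\wedge d/n\) follows from the known non-private lower bound of \citet{cai2010_OptimalRates}, which holds over \(\mca{G}_\alpha\subseteq\mca{F}_\alpha\) by \cref{prop:ClassInclusions}. The same bound also follows from \cref{thm:VanTrees__zCDP}, since the trace term of the minimum always applies and \(\mca{M}_\rho\) is contained in the class of all estimators.

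The main work is the privacy term \((d/(\rho n^2))^{\alpha/(\alpha+1)}\wedge d^3/(\rho n^2)\). I would work in the Gaussian model \(X\sim N(0,\Sigma)\) and use a block-diagonal prior. Fix a block size \(k\le d\) and split \([d]\) into \(N=d/k\) consecutive blocks. Set \(\Sigma=\frac{M_0}{2}I_d+\delta\,\mathrm{diag}(\Theta_1,\dots,\Theta_N)\), where each \(\Theta_\ell\) is a symmetric \(k\times k\) random matrix with smooth density. The parameter \(\theta\) collects the entries of the \(\Theta_\ell\), so \(p\asymp dk\).

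Membership in \(\mca{F}_\alpha\) has to be checked. A \(k'\)-off-diagonal block with \(k'\ge k\) has zero entries here. For \(k'<k\), its operator norm is at most \(\delta\max_\ell\norm{\Theta_\ell}\). It therefore suffices that \(\delta\norm{\Theta_\ell}\lesssim k^{-\alpha}\) and \(\delta\norm{\Theta_\ell}\le M_0/4\) for every \(\ell\). I would take each \(\Theta_\ell\) with entries \(\theta_{uv}\) i.i.d.\ from a smooth compactly supported density on \([-1,1]\), and then restrict to a high-probability event where \(\norm{\Theta_\ell}\lesssim\sqrt{k}\). To make the van Trees argument clean, the constraint should be built into the prior. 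One option is to multiply the product density by a smooth cutoff of \(\norm{\Theta_\ell}/\sqrt{k}\). Alternatively, one can take the entries bounded by \(1/\sqrt{k}\) and rescale \(\delta\).

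With this normalization, \(\delta\asymp k^{-\alpha-1/2}\) in entry units: each entry has size \(\delta\) and the block operator norm is \(\delta\sqrt{k}\lesssim k^{-\alpha}\). The Fisher information per sample is then bounded as \(\norm{I_x(\theta)}\lesssim\delta^2\) and \(\Tr I_x\lesssim p\delta^2\), since \(\Sigma\) is well conditioned. The prior information satisfies \(\Tr J_\pi\lesssim p\). This is the main obstacle: the operator-norm cutoff in the density must not inflate \(J_\pi\) beyond order \(p\) times a constant. Smoothing on the scale \(\sqrt{k}\) should give only lower-order gradient contributions.

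Apply \cref{thm:VanTrees__zCDP} to \(\theta\) with \(\hat\theta\) obtained from the blocks of \(\hat\Sigma/\delta\); projection onto the block structure does not increase error. This gives
\[
\E\norm{\hat\theta-\theta}_2^2\gtrsim\frac{p^2}{\rho n^2\delta^2+p}.
\]
In \(\Sigma\)-units the Frobenius error is at least \(\delta^2 p\min\{1,p/(\rho n^2\delta^2)\}\). For \(q\in[2,\infty]\), Hölder gives \(d^{-2/q}\normsch{A}^2\ge d^{-1}\norm{A}_F^2\), so it suffices to bound the normalized Frobenius loss. That loss is at least \(\delta^2 k\min\{1,dk/(\rho n^2\delta^2)\}\asymp k^{-2\alpha}\min\{1,dk^{2\alpha+2}/(\rho n^2)\}\).

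Now choose \(k\asymp(\rho n^2/d)^{1/(2\alpha+2)}\), which is admissible when \(k\le d\), i.e.\ \(d\gtrsim(\rho n^2)^{1/(2\alpha+3)}\). This yields \((d/(\rho n^2))^{\alpha/(\alpha+1)}\). Otherwise take \(k=d\), a single dense block with \(\delta\) free up to \(d^{-1/2}\). Choosing \(\delta^2\asymp d^2/(\rho n^2)\), which is at most \(d^{-1}\) in that regime, gives \(d\delta^2\asymp d^3/(\rho n^2)\). The \(C_\rho\) factor is bounded because \(\rho\lesssim1\).
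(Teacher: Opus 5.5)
Your core argument is the paper's. You apply the DP van Trees inequality to a block prior whose entries have size $k^{-\alpha-1/2}$ against an operator-norm budget $k^{-\alpha}$. In covariance units this gives $\Tr J_\pi\asymp dk^{2\alpha+2}$ and $\norm{I_x}\lesssim 1$, and you then reduce to normalized Frobenius loss by H\"older. Your bound $k^{-2\alpha}\min\{1,dk^{2\alpha+2}/(\rho n^2)\}$ is the paper's $dk^2/(\rho n^2+dk^{2\alpha+2})$. Two departures are harmless: you use diagonal blocks added to $\frac{M_0}{2}I_d$ instead of the off-diagonal blocks $B_{k;2l,1}$, and a bounded product density times a cutoff instead of the truncated Gaussian $\Upsilon_k$. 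Membership is fine because a separated block meets the block-diagonal part in pieces with disjoint rows and columns, so its norm is at most $\delta\max_\ell\norm{\Theta_\ell}$.

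Two of the alternatives you offer are wrong and should be dropped.

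First, the non-private term cannot be imported from the known bound over $\mca{G}_\alpha$ when $q<\infty$. An operator-norm lower bound does not transfer to $d^{-2/q}\normsch{\cdot}^2$, which is at most $\norm{\cdot}^2$. Moreover, over $\mca{G}_\alpha$ the normalized Frobenius rate is only $n^{-(2\alpha+1)/(2\alpha+2)}$. Your second route is the one that works, and it is what the paper does. The trace branch with your $\mca{F}_\alpha$ prior gives $k/(n+k^{2\alpha+1})$, and $k\asymp n^{1/(2\alpha+1)}\wedge d$ then yields $n^{-2\alpha/(2\alpha+1)}\wedge d/n$.

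Second, independent entries bounded by $1/\sqrt{k}$ without a cutoff fail. A deterministic operator-norm bound on a product support forces covariance entries of size $k^{-\alpha-1}$. Hence $\Tr J_\pi\asymp dk^{2\alpha+3}$, and optimizing gives only $(d/(\rho n^2))^{(2\alpha+1)/(2\alpha+3)}$, which is the $\mca{H}_\alpha$ Frobenius rate. A dependent prior concentrated on an operator-norm ball is essential; that is exactly the content of the paper's prior lemma.

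On the cutoff route, the obstacle you flag is easily settled. The map $\Theta\mapsto\norm{\Theta}$ is $1$-Lipschitz in Frobenius norm, so
$\norm{\nabla\log\varphi(\norm{\Theta}/(C\sqrt{k}))}_F^2\lesssim k^{-1}(\varphi'/\varphi)^2$.
Since $(\varphi')^2/\varphi\le\pi^2$ and it is supported only where $\norm{\Theta}\ge C\sqrt{k}$, the cutoff adds just $O(1/k)$ per block to $J_\pi$.

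However, the resulting density is only locally Lipschitz, not $C^1$ as the stated regularity conditions require. You can fix this in either of two ways. Replace the norm by a smooth surrogate within a constant factor, e.g.\ the paper's $\tau\log\Tr\exp(X^\T X/\tau)$ with $\tau\asymp 1/\log k$ applied to $X=\Theta/(C\sqrt{k})$. Alternatively, justify van Trees for absolutely continuous priors.

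Finally, in the dense regime the class constraint is $\delta\lesssim d^{-\alpha-1/2}$, not $d^{-1/2}$, because corner blocks at separation of order $d$ have norm of order $\delta\sqrt{d}$. Your choice $\delta^2\asymp d^2/(\rho n^2)$ still satisfies this, precisely because $d^{2\alpha+3}\lesssim\rho n^2$ there. Setting $k=d$ in your general formula gives the same $d^3/(\rho n^2)$ directly.
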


\subsubsection{Pointwise-decay and row-tail classes}

The stronger local structure of $\mca{G}_\alpha$ and $\mca{H}_\alpha$ changes the privacy rate under operator loss.
Thanks to \cref{prop:ClassInclusions}, it suffices to show lower bounds for the class \( \mca{H}_\alpha \).
Thus the two smaller classes share a common hard submodel for the lower-bound argument, even though their upper bounds require different private local procedures.
We have the following two theorems.

\begin{theorem}[Operator lower under $\mca{G}_\alpha, \mca{H}_\alpha$]
  \label{thm:Lower__GH_Operator}
  For $\mca{J}\in\{\mca{G},\mca{H}\}$, we have
  \begin{equation}
    \inf_{\hat{\Sigma}\in\mca{M}_\rho}
    \sup_{P\in\mca{P}_{\alpha}^{\mca{J}}}
    \E_P \norm{\hat{\Sigma}-\Sigma(P)}^2
    \gtrsim
    n^{-\frac{2\alpha}{2\alpha+1}}
    \wedge\frac{d}{n}
    +
    \mca{Q}_{\alpha,d} \left(\frac{d}{\rho n^2}\right).
    \label{eq:Lower__GH_Operator}
  \end{equation}
\end{theorem}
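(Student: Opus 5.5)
The lower bound splits into a non-private term and a privacy term, and I would prove each separately. By \cref{prop:ClassInclusions}, \(\mca{H}_\alpha\subseteq\mca{G}_\alpha\) up to radius constants, so it suffices to work in \(\mca{H}_\alpha\) with Gaussian data \(X\sim N(0,\Sigma)\). These distributions are sub-Gaussian with \(K\) depending only on \(M_0\). The non-private term \(n^{-2\alpha/(2\alpha+1)}\wedge d/n\) is the classical operator-norm lower bound of \citet{cai2010_OptimalRates}. It can also be recovered from \cref{thm:VanTrees__zCDP} by dropping the privacy term in the minimum. Since a \(\rho\)-zCDP estimator is in particular an estimator, only the privacy term \(\mca{Q}_{\alpha,d}(d/(\rho n^2))\) needs a new argument. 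I would prove it separately for each fixed admissible pair \((k,r)\) and then take the supremum.

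\emph{Prior.} Fix integers \(1\le r\le k\le\lfloor d/4\rfloor\). Partition the indices into about \(d/(4k)\) disjoint groups of consecutive pairs of length-\(k\) intervals \((I_\ell,J_\ell)\), with \(J_\ell\) at distance between \(k\) and \(2k\) from \(I_\ell\). Take
\[
\Sigma=I_d+\sum_\ell\bigl(\Delta_\ell+\Delta_\ell^\T\bigr),
\]
where each \(\Delta_\ell\) is supported on \(I_\ell\times J_\ell\). Within each block, parametrize \(\Delta_\ell=\delta\,U_\ell\Theta_\ell V_\ell^\T\). Here \(U_\ell,V_\ell\) are fixed \(k\times r\) matrices with orthonormal, spread-out columns; Hadamard-type columns have entries of size \(k^{-1/2}\). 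The free parameter is \(\Theta_\ell\in\R^{r\times r}\), with i.i.d.\ entries drawn from a smooth compactly supported density on \([-1,1]\), such as \(\cos^2\). The total parameter dimension is \(p\asymp (d/k)r^2\).

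\emph{Membership.} To stay in \(\mca{H}_\alpha\), the entries of \(\Delta_\ell\) must satisfy \(\lesssim k^{-\alpha-1}\). The entries are bounded by \(\delta\, r\cdot k^{-1}\) times an entry bound on \(\Theta_\ell\). To avoid the factor \(r\), I would restrict the prior so that \(U\Theta V^\T\) has entries of order \(\sqrt r/k\), which holds with random-sign structure. A cleaner route is to let \(\Theta\) be diagonal-plus-small, but the full \(r\times r\) freedom is needed for the information count. The resulting operator scale is \(\norm{\Delta_\ell}\asymp\delta\sqrt r\), so \(\delta\asymp k^{-\alpha}/\sqrt r\), giving squared operator signal \(k^{-2\alpha}/r\), which is the first term of \cref{eq:Rate_Q}. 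Positive semidefiniteness and \(\Sigma\preceq M_0I\) follow once \(\delta\sqrt r\le 1/2\).

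\emph{Applying the van Trees inequality.} The Fisher information of one Gaussian sample with respect to \(\theta=\mathrm{vec}(\Theta)\) has operator norm \(\lesssim\delta^2\), because the map \(\theta\mapsto\Sigma\) has Lipschitz constant \(\delta\) in Frobenius norm and \(\Sigma\asymp I\). The prior information satisfies \(\Tr J_\pi\asymp p\). \cref{thm:VanTrees__zCDP} then gives
\[
\E\norm{\hat\theta-\theta}_2^2\gtrsim \frac{p^2}{\rho n^2\delta^2+p}.
\]
To pass from \(\hat\Sigma\) to \(\hat\theta\), project: \(\hat\theta_\ell=\delta^{-1}U_\ell^\T\hat\Sigma_{I_\ell J_\ell}V_\ell\). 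Then
\[
\norm{\hat\Sigma-\Sigma}^2\gtrsim \max_\ell \norm{\hat\Delta_\ell-\Delta_\ell}^2\ge \frac{r}{d/k}\cdot\frac{k}{d}\sum_\ell \norm{\hat\Delta_\ell-\Delta_\ell}_F^2/r .
\]
A more careful form of this averaging is \(\norm{A}^2\ge\max_\ell\norm{A_\ell}_F^2/r\ge (k/(dr))\sum_\ell\norm{A_\ell}_F^2\), which yields \(\E\norm{\hat\Sigma-\Sigma}^2\gtrsim \delta^2 (k/(dr))\,\E\norm{\hat\theta-\theta}^2\). Substituting \(p=dr^2/k\) gives
\[
\E\norm{\hat\Sigma-\Sigma}^2\gtrsim \frac{\delta^2 r\, (d r^2/k)}{\rho n^2\delta^2 + dr^2/k}\cdot\frac{k}{d}\cdot\frac1r\cdot r \;\asymp\; \min\Bigl(\delta^2 r,\ \frac{d}{\rho n^2}\,kr\Bigr)\cdot c .
\]
After tuning \(\delta\) down if necessary, this equals \(\min\bigl(k^{-2\alpha}/r,\ u k r\bigr)\) with \(u=d/(\rho n^2)\). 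Taking the supremum over \((k,r)\) yields \(\mca{Q}_{\alpha,d}(u)\).

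\emph{Main obstacle.} The hardest part is matching the operator-norm signal \(\delta\sqrt r\) with the pointwise entry constraint \(k^{-\alpha-1}\) while keeping the prior smooth with independent entries. Generic \(\Theta\) has \(\norm{\Theta}\asymp\sqrt r\), but the entries of \(U\Theta V^\T\) must remain of order \(\sqrt r/k\) almost surely, which is a sup-norm requirement. I would first try Hadamard-type \(U,V\): each entry of \(U\Theta V^\T\) is \(k^{-1}\) times a signed sum of the \(r^2\) entries of \(\Theta\), which is too big deterministically. The fix I would attempt is to truncate the prior to the high-probability set where all such sums are \(\lesssim r\sqrt{\log k}\), using a smooth cutoff whose Fisher information stays \(O(p)\). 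This costs a logarithmic factor. If that factor must be avoided, I would instead use block-diagonal \(\Theta\) built from \(r\) disjoint sub-blocks of size \(k/r\), whose entries are controlled deterministically. I would then check that the information count still gives \(ukr\).
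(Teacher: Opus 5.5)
There is a genuine gap in the privacy term. Your reduction to $\mca{H}_\alpha$, the non-private term from \citet{cai2010_OptimalRates}, and the use of \cref{thm:VanTrees__zCDP} with a projection decoder all match the paper. The problem is the prior: $\Delta_\ell=\delta U_\ell\Theta_\ell V_\ell^\T$ with $\Theta_\ell\in\R^{r\times r}$ fixes both column spaces. That leaves only $r^2$ parameters per block, so $p\asymp dr^2/k$, which is too few.

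Your final display does not simplify as claimed. With your own conversion $\E\norm{\hat\Sigma-\Sigma}^2\gtrsim\delta^2\frac{k}{dr}\E\norm{\hat\theta-\theta}_2^2$, one gets
\[
  \frac{\delta^2k}{dr}\cdot\frac{p^2}{\rho n^2\delta^2+p}
  \asymp
  \min\xk{\delta^2 r,\ u\frac{r^3}{k}},
  \qquad u=\frac{d}{\rho n^2},
\]
not $\min(\delta^2 r,ukr)$. For fixed $r$, both terms decrease in $k$ (with $\delta^2 r\asymp k^{-2\alpha}/r$). So the best your family can give is the diagonal $r=k$:
\[
  \sup_{k\le d/4}\min\xk{k^{-2\alpha-1},uk^2}\asymp u^{(2\alpha+1)/(2\alpha+3)}\wedge ud^2 .
\]
This equals $\mca{Q}_{\alpha,d}(u)$ when $\alpha\ge1/2$. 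For $\alpha<1/2$ it misses the branches $d^{1-\alpha}/(\sqrt\rho\,n)$ and $u^{2\alpha/(2\alpha+1)}$ in \cref{eq:Rate_Q_alpha_lt_half}. Those are exactly the regimes where the optimal rank has $r\ll k$, so your closing ``check that the information count still gives $ukr$'' would fail.

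The missing idea is to fix only one side of the rank-$r$ block. The paper uses $B_\ell(\Theta_\ell)=b_k\Theta_\ell H^\T$ with:
\begin{itemize}
  \item $\Theta_\ell\in[-1,1]^{k\times r}$ having i.i.d.\ $\cos^2$ entries;
  \item $H\in\{0,1\}^{k\times r}$ the incidence matrix of $r$ groups of size $\asymp k/r$ in the partner interval;
  \item $b_k=c_0k^{-(\alpha+1)}$.
\end{itemize}
This gives $m\asymp dr$ parameters, a factor $k/r$ more than yours. The coordinate derivatives are Frobenius-orthogonal with $\norm{D_u}_F^2\asymp a_k^2/(kr)$, hence $\norm{I_x}\lesssim a_k^2/(kr)$. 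The projected error has rank at most $2Nr$, so
\[
  \frac{a_k^2}{dr^2}\cdot\frac{m^2}{\rho n^2a_k^2/(kr)+m}\asymp\min\xk{\frac{a_k^2}{r},\frac{dkr}{\rho n^2}}.
\]

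The same construction removes what you call the main obstacle, with no truncation or logarithmic loss. Each entry of $B_\ell$ is $b_k$ times a single coordinate of $\Theta_\ell$, so the pointwise-decay constraint holds deterministically. Conditioning follows from $\norm{B_\ell}\le b_k\norm{\Theta_\ell}_F\norm{H}\le\sqrt2 a_k$.

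Your Hadamard-plus-smooth-cutoff route would cost a $\sqrt{\log k}$ factor, which is incompatible with the log-free $\gtrsim$. Your scalings there are also inconsistent. With random signs, the entries of $U\Theta V^\T$ are of order $r/k$, not $\sqrt r/k$. And $\delta\asymp k^{-\alpha}/\sqrt r$ would give $\norm{\Delta_\ell}^2\asymp k^{-2\alpha}$, not $k^{-2\alpha}/r$.
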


\begin{theorem}[Frobenius lower bound under $\mca{G}_\alpha, \mca{H}_\alpha$]
  \label{thm:Lower__Frobenius}
   For $\mca{J}\in\{\mca{G},\mca{H}\}$, we have
  \begin{equation}
    \label{eq:Lower__GH_Frobenius}
    \inf_{\hat{\Sigma}\in\mca{M}_\rho}
    \sup_{P\in\mca{P}_{\alpha}^{\mca{J}}}
    \frac{1}{d}\E_P\norm{\hat{\Sigma}-\Sigma(P)}_F^2
    \gtrsim
    n^{-\frac{2\alpha+1}{2(\alpha+1)}} \wedge \frac{d}{n} +
    \xk{\frac{d}{\rho n^2}}^{\frac{2\alpha+1}{2\alpha+3}} \wedge \frac{d^3}{\rho n^2}.
  \end{equation}
\end{theorem}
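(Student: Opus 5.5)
By \cref{prop:ClassInclusions}, $\mca{H}_\alpha\subseteq\mca{G}_\alpha$ up to constants, so it suffices to prove \eqref{eq:Lower__GH_Frobenius} for $\mca{H}_\alpha$. Both terms will come from Gaussian submodels $X\sim N(0,\Sigma_\theta)$, handled by the DP van Trees inequality \cref{thm:VanTrees__zCDP}.

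The submodel is a banded entrywise prior. Fix a bandwidth $k\le d/4$ and take
\[
\Sigma_\theta=\tfrac{M_0}{2}I_d+\sum_{0<|i-j|\le k}\theta_{ij}E_{ij},
\]
with symmetric entries $\theta_{ij}=\theta_{ji}$. The free upper-triangular coordinates $\theta_{ij}$, $0<j-i\le k$, are independent and $\theta_{ij}=\delta\,\eta_{ij}$, where $\eta_{ij}$ has a fixed smooth density on $[-1,1]$ vanishing at the endpoints, such as $\cos^2(\pi x/2)$. The number of parameters is $p\asymp dk$ and $\Tr J_\pi\asymp p/\delta^2$.

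Membership in $\mca{H}_\alpha$ needs $\delta\lesssim k^{-(\alpha+1)}$. It also needs $\norm{\Sigma_\theta-\tfrac{M_0}{2}I}\le M_0/4$, which holds deterministically when $k\delta\lesssim 1$ by the $\ell^\infty$ row-sum bound. Since $\alpha>0$, the choice $\delta=c\,k^{-(\alpha+1)}$ therefore keeps the prior supported in $\mca{H}_\alpha(M_0,M)$.

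Next I bound the Fisher information. For Gaussians,
\[
I_x(\theta)_{ab}=\tfrac12\Tr\bigl(\Sigma^{-1}\partial_a\Sigma\,\Sigma^{-1}\partial_b\Sigma\bigr).
\]
With $\Sigma\succeq (M_0/4) I$, this gives $\norm{I_x(\theta)}\lesssim1$ and $\Tr I_x(\theta)\lesssim p$, because each $\partial_a\Sigma=E_{ij}+E_{ji}$ has unit-order Frobenius norm. The operator-norm bound follows by viewing $I_x$ as a quadratic form: $v^\T I_x v=\tfrac12\norm{\Sigma^{-1/2}(\sum_a v_a\partial_a\Sigma)\Sigma^{-1/2}}_F^2\lesssim\norm{v}_2^2$.

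Plugging these into \eqref{eq:VanTrees__zCDP} with $C_\rho\asymp1$ for $\rho\lesssim1$ gives
\[
\E\norm{\hat\theta-\theta}_2^2\gtrsim \frac{p^2}{\min(np,\rho n^2)+p/\delta^2}.
\]
For any estimator $\hat\Sigma$, projecting onto the banded coordinates yields an estimator $\hat\theta$ with $\norm{\hat\Sigma-\Sigma}_F^2\ge\norm{\hat\theta-\theta}_2^2$, since each coordinate appears twice. Dividing by $d$ and using $p\asymp dk$ gives the Bayes risk
\[
\frac1d\E\norm{\hat\Sigma-\Sigma}_F^2\gtrsim \min\Bigl\{\frac{k}{n}+\frac{dk^2}{\rho n^2},\; k\delta^2\Bigr\}
\]
up to constants. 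This uses $p^2/(A+B)\gtrsim\min(p^2/A,p^2/B)$ and $p^2/\min(np,\rho n^2)=\max(p/n,p^2/(\rho n^2))$.

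It remains to tune $k$, with $\delta^2=c k^{-2\alpha-2}$ so that $k\delta^2\asymp k^{-(2\alpha+1)}$.

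For the non-private term, take $k\asymp n^{1/(2\alpha+2)}\wedge d/4$. The bound is then $\min(k/n,k^{-(2\alpha+1)})\asymp n^{-\frac{2\alpha+1}{2\alpha+2}}\wedge d/n$. When $k$ is capped at $d/4$, take instead $\delta\asymp n^{-1/2}$; membership still holds because $n^{-1/2}\lesssim k^{-(\alpha+1)}$ in that regime, and the bound is $d/n$.

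For the private term, take $k\asymp(\rho n^2/d)^{1/(2\alpha+3)}\wedge d/4$. This balances $dk^2/(\rho n^2)$ against $k^{-(2\alpha+1)}$ and gives $(d/(\rho n^2))^{\frac{2\alpha+1}{2\alpha+3}}$. In the capped case $k\asymp d$, choose $\delta^2\asymp d/(\rho n^2)$. Membership requires $d/(\rho n^2)\lesssim d^{-2\alpha-2}$, which is exactly the capped regime, and the bound is $k\delta^2\asymp d^2/(\rho n^2)$.

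That is only $d^2/(\rho n^2)$, a factor $d$ short of the target $d^3/(\rho n^2)$. The loss is in using the crude bound $\norm{I_x}\lesssim1$.

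I expect this capped small-$d$ private regime, where the target is $d^3/(\rho n^2)$, to be the main obstacle. I would not use the entrywise prior there. Instead I would take $\Sigma=\tfrac{M_0}{2}I+\delta W$ with $W$ a random dense symmetric matrix of bounded operator norm, for example a Wishart or GOE-type matrix conditioned on $\norm{W}\le 1$ and smoothed. Then $\norm{\Sigma-\tfrac{M_0}{2}I}\lesssim\delta$ while entries are of order $\delta/\sqrt d$. This is the construction behind the unstructured $d^3/(\rho n^2)$ bound, and I would adapt it by invoking the Schatten-$2$ case of \cref{thm:Lower__Operator}, whose $d^3/(\rho n^2)$ branch lives on matrices of size $d\lesssim(\rho n^2)^{1/(2\alpha+4)}$.

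I would check that this prior lies in $\mca{H}_\alpha$. Entries of size $\delta/\sqrt d$ with $\delta^2\asymp d^2/(\rho n^2)$ satisfy the pointwise bound exactly when $d^{2\alpha+4}\lesssim\rho n^2$, which is the threshold where $d^3/(\rho n^2)$ is the minimum. Should the dependent prior's Fisher information be delicate, the fallback is to read this branch off from \cref{thm:Lower__Operator} with a sub-box verified to be inside $\mca{H}_\alpha$.

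Finally, summing the two lower bounds, via the maximum of the two submodel risks, gives \eqref{eq:Lower__GH_Frobenius}.
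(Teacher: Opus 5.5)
Your construction for the privacy term is the paper's: the entrywise band prior with amplitude $\delta=ck^{-(\alpha+1)}$, the bounds $\norm{I_x}\lesssim1$ and $\Tr J_\pi\asymp dk/\delta^2$, and the DP van Trees inequality. Obtaining the non-private term from the trace branch of the same inequality, instead of citing the classical bound as the paper does, is fine.

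The problem is your capped private regime. Your own display already gives the target there. With $k\asymp d$ and $\delta^2=ck^{-2\alpha-2}$ it gives $\min\{d^3/(\rho n^2),\,c\,d^{-(2\alpha+1)}\}$. The cap in $k\asymp(\rho n^2/d)^{1/(2\alpha+3)}\wedge d/4$ binds exactly when $d^{2\alpha+4}\lesssim\rho n^2$, and in that regime this minimum equals $d^3/(\rho n^2)$. Your choice $\delta^2\asymp d/(\rho n^2)$ drops a factor $k\asymp d$. Balancing $k\delta^2$ against $dk^2/(\rho n^2)$ gives $\delta^2\asymp dk/(\rho n^2)\asymp d^2/(\rho n^2)$. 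The membership condition $\delta\lesssim d^{-(\alpha+1)}$ is then precisely $d^{2\alpha+4}\lesssim\rho n^2$. So the bound $\norm{I_x}\lesssim1$ loses nothing and there is no obstacle. The paper finishes with the single choice $k\asymp(\rho n^2/d)^{1/(2\alpha+3)}\wedge d$.

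The fallback you propose for this non-existent gap would not work as stated. A smoothed prior on $\{\norm{W}\le1\}$ only guarantees $\abs{W_{ij}}\le1$. Its support therefore contains matrices with single entries of order $\delta$, not $\delta/\sqrt d$. So the prior is not supported in $\mca{H}_\alpha$ unless $\delta$ is shrunk by $\sqrt d$, which costs the rate.

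Adding an entrywise truncation would bring you back to a product-type prior. In any case the dense prior gains nothing: its Fisher information $d^3/\delta^2$ equals $d^2/\epsilon^2$ for entry size $\epsilon=\delta/\sqrt d$, the same as the entrywise prior.

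Nor can the $d^3/(\rho n^2)$ branch be read off Theorem~\ref{thm:Lower__Operator}. That theorem bounds the risk over the larger class $\mca{F}_\alpha$, and lower bounds do not pass from a larger class to a smaller one. Moreover, its block prior has entries up to $ck^{-\alpha}$ at distance of order $k$, which violates the pointwise decay $\abs{i-j}^{-(\alpha+1)}$.
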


\subsubsection{Proof Strategy}

All three constructions use \cref{thm:VanTrees__zCDP} with $x\sim N(0,\Sigma)$.
We regard the covariance parameter as an element of the Euclidean space of symmetric matrices, equipped with the Frobenius inner product, and represent its Fisher information in any orthonormal basis for this inner product.
It can be computed that the trace and operator norm of the Fisher information matrix are given by
\begin{equation*}
  \Tr I_x(\Sigma) =  \frac{1}{4} \zk{\Tr(\Sigma^{-2}) + \xk{\Tr \Sigma^{-1}}^2}, \quad \norm{I_{x}(\Sigma)} = \frac{1}{2} \norm{\Sigma^{-1}}^2.
\end{equation*}

For $\mca{F}_\alpha$, we need to control the operator norm of off-diagonal blocks.
To this end, we need to carefully design a prior distribution over matrices with bounded operator norm and small Fisher information trace.
This is accomplished in \cref{lem:Lower__PriorDistribution} below.

\begin{lemma}
  \label{lem:Lower__PriorDistribution}
  There is a distribution $\Upsilon_d$ over matrices $X \in \R^{d \times d}$ with continuously differentiable density $q(X)$ supported on \( \norm{X} \leq 1 \) such that the trace of the Fisher information matrix \( J_{\Upsilon_d} \) of the distribution satisfies
  \begin{equation}
    \label{eq:Lower__PriorDistribution_Fisher}
    \Tr J_{\Upsilon_d} = \sum_{i,j=1}^d \E_{\Upsilon_d} \zk{\frac{\partial}{\partial x_{ij}} \log q(X)}^2 \leq C d^3,
  \end{equation}
  where $C$ is an absolute constant.
\end{lemma}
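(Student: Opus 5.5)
The plan is to build $\Upsilon_d$ as a product prior at the random-matrix scale $1/\sqrt d$, multiplied by a smooth cutoff that forces $\norm{X}\le 1$. The product part alone has Fisher information of order $d^2\cdot d=d^3$. The cutoff costs only $O(1)$ in Fisher information, and it removes only a constant fraction of mass.

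\emph{Step 1 (product part).} Let $q_0$ be a fixed $C^1$ density on $[-1,1]$ with finite Fisher information $J_0$, for instance $q_0(t)=\cos^2(\pi t/2)$. Let $p$ be the density of $X=Z/(C_0\sqrt d)$, where the entries $Z_{ij}$ are i.i.d.\ with density $q_0$. Each coordinate is rescaled by $C_0\sqrt d$, so its Fisher information is $C_0^2 d\,J_0$. Summing over the $d^2$ independent coordinates gives $\Tr J_p=C_0^2J_0\,d^3$. The entries are bounded, so standard bounds give $\E\norm{Z}\le C'\sqrt d$. Choosing $C_0$ large therefore makes $\E\norm{X}\le 1/20$, and hence $\bbP_p(\norm{X}\le 1/10)\ge 1/2$ by Markov.

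\emph{Step 2 (smooth operator-norm cutoff).} The operator norm is not differentiable, so I use the Schatten proxy $N(X)=\normsch[2m]{X}$ with $m=\lceil\log d\rceil$. This proxy satisfies $\norm{X}\le N(X)\le d^{1/(2m)}\norm{X}\le e^{1/2}\norm{X}$. It is $C^\infty$ away from $X=0$. Its gradient has Schatten-$\frac{2m}{2m-1}$ norm equal to $1$, hence Frobenius norm at most $1$. Take $h\in C^1(\R)$ with $h=1$ on $(-\infty,1/2]$, $h=0$ on $[1,\infty)$, and $\abs{h'}\le C$, for example $h(t)=\cos(\pi(t-1/2))$ on $[1/2,1]$. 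Set $w=h\circ N$. Since $h$ is constant near $0$, $w$ is $C^1$, $\norm{\nabla w}_F\le C$, and $w(X)=0$ whenever $\norm{X}\ge 1$. Define $q=p\,w^2/Z_w$ with $Z_w=\int p\,w^2$. On the event $\norm{X}\le 1/10$ we have $N(X)\le e^{1/2}/10<1/2$, so $w=1$ there, and Step 1 gives $Z_w\ge 1/2$. The density $q$ is $C^1$, since $p$ and $w^2$ are $C^1$ and vanish at the boundaries where they are cut off. It is supported in $\{\norm{X}\le 1\}$.

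\emph{Step 3 (Fisher information).} Where $q>0$, the score is $\nabla\log q=\nabla\log p+2\nabla w/w$. Using $(a+b)^2\le 2a^2+2b^2$,
\[
\Tr J_{\Upsilon_d}\le \frac{2}{Z_w}\int \norm{\nabla\log p}_F^2\, p\,w^2+\frac{8}{Z_w}\int \norm{\nabla w}_F^2\, p\le 4\Tr J_p+16C^2\lesssim d^3.
\]
In the first integral I used $w^2\le 1$. In the second, the factor $w^2$ cancels against $1/w^2$, which is exactly why the cutoff enters squared.

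The main obstacle is Step 2: obtaining a differentiable surrogate for the operator-norm constraint whose gradient is bounded in Frobenius norm uniformly in $d$, and which differs from $\norm{\cdot}$ only by a constant factor. The Schatten-$2m$ norm with $m\asymp\log d$ achieves both. A secondary point is the random-matrix bound $\E\norm{Z}\lesssim\sqrt d$ for bounded i.i.d.\ entries, which guarantees that the cutoff removes at most half the mass.
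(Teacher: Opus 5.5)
Your proof is correct and follows essentially the same route as the paper: a product prior at scale $d^{-1/2}$ with Fisher trace of order $d^3$, multiplied by a $\cos^2$-type cutoff composed with a smooth spectral surrogate whose Frobenius gradient is bounded uniformly in $d$, so that the cutoff adds only $O(1)$ Fisher information and removes at most a constant fraction of the mass. The paper uses a Gaussian base and the log-sum-exp surrogate $\tau\log\Tr\exp(X^\T X/\tau)$ with $\tau=1/(2\log d)$, whereas you use bounded entries and the Schatten-$2m$ norm with $m\asymp\log d$, which is an equivalent choice. One small slip: your example $h(t)=\cos(\pi(t-1/2))$ is not $C^1$ at $t=1$, but your argument only needs $h^2$ to be $C^1$ (which it is) and $h$ to be Lipschitz where it is positive; you should also take $m\geq1$ when $d=1$.
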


The proof of \cref{lem:Lower__PriorDistribution} is done by constructing an explicit distribution based on truncated Gaussian random matrices.
We note that if we only consider distributions with independent entries, the upper bound in \cref{eq:Lower__PriorDistribution_Fisher} is at least of order \( d^4 \), which is too large for our purpose.

With  \cref{lem:Lower__PriorDistribution}, we introduce a tridiagonal block structure for the prior distribution of $\Sigma$,
setting diagonal blocks as $I_k$ and the off-diagonal blocks as $c k^{-\alpha} W_l$ for \( 1 \leq l \leq N_k \),
where \( W_l \) are i.i.d.\ from the distribution \( \Upsilon_k \).
Similar calculations then yield the desired lower bound in \cref{thm:Lower__Operator},
where the balancing of the terms also corresponds to the optimal block size in \cref{cor:DPCov_Operator}.

For the operator loss over $\mca{H}_\alpha$, we use rank-$r$ perturbations within blocks of width $k$,
using the same idea of low-rank approximation in \cref{subsubsec:pointwise-decay-upper-block}.
For every $1\leq r\leq k\leq\lfloor d/4\rfloor$, the resulting Gaussian submodel gives the privacy lower bound
$\min\xk{k^{-2\alpha}/r,dkr/(\rho n^2)}$.
Optimizing over $k$ and $r$ yields $\mca{Q}_{\alpha,d}(d/(\rho n^2))$ in \cref{thm:Lower__GH_Operator}.
Finally, the Frobenius loss over $\mca{H}_\alpha$ is simpler, we take $\Sigma_{ij}=c k^{-(\alpha+1)}u_{ij}$ for $1\leq\abs{i-j}\leq k$, where the $u_{ij}$ are independent with density $\cos^2(\pi t/2)$ on $[-1,1]$.

\section{Adaptive Estimators}
\label{sec:Adaptive}

In this section, we construct adaptive estimators for unknown decay parameter \( \alpha \).
In parallel to the non-adaptive estimators for different bandable covariance classes,
we use two adaptive frameworks.
The blockwise tridiagonal estimators that threshold dyadic increments handle the separated-block class and the pointwise-decay class under Frobenius loss.
The center--outer estimators combine a public radius grid with the greedy, profile, and peeling block estimators from \cref{sec:UpperBounds} for the pointwise-decay operator mode and the row-tail operator and Frobenius modes.

\subsection{Adaptive Blockwise Tridiagonal Estimators}
\label{subsec:Adaptive_Construction}

We first adapt the blockwise tridiagonal estimator in \cref{alg:DP_Cov_BlockDiagonal}.
Fix sufficiently large constants \(C_0,C,L_1\) and a sufficiently small constant \(c_0\), and set
\[
  \begin{aligned}
    k_0&=d\wedge\ceil{C_0 \log n},
    &k_m&=2^m k_0,\\
    M&=1+\max\{m\geq0:k_m \leq\min(c_0 n,d)\},
    &N_m&=\frac{d}{k_m}\quad(0\leq m<M).
  \end{aligned}
\]
These public scales do not depend on \(\alpha\).
At each noncentral scale, the estimator releases the corresponding dyadic increments and retains only those exceeding a scale-dependent threshold.

\begin{algorithm}[H]
  \caption{DP Adaptive Blockwise Tridiagonal Estimator (Operator Norm)}
  \label{alg:DP_Cov_Adaptive}
  \begin{algorithmic}
    \State \textbf{Input:} \(X\) and privacy parameter \(\rho\).
    \State Initialize \( \hat{\Sigma} = \bm{0}_{d \times d} \) and set \( \rho_0 = \rho / (2MN_0) \).
    \State Compute
    \begin{math}
      \hat{\Sigma}[B_{k_0;l,r}]
      =
      \mf{DPCovBlock}
      \left(
        X,B_{k_0;l,r};
        \rho_0,\sqrt{Ck_0}
      \right)
    \end{math}
    for all \( l \in [N_0] \) and \( r \in \{0,1\} \).
    \For{\( m = 1,\dots,M-1 \)}
      \State Set \( \rho_m = \rho / (M N_m) \) and
      \begin{math}
        \tau_m^2 = L_1 \xk{\frac{k_m + \log d}{n} + \frac{k_m^2 (k_m + \log d)}{\rho_m n^2} + \exp(-2k_m)}.
      \end{math}
      \For{\( l = 1,\dots,N_m-1 \)}
        \State Compute
        \begin{math}
          A
          =
          \mf{DPCovBlock}
          \left(
            X,B_{k_m;l,1};
            \rho_m,\sqrt{Ck_m}
          \right)[\Gamma_{m,l}].
        \end{math}
        \State Set
        \begin{math}
          \hat{\Sigma}[\Gamma_{m,l}] = A \cdot \ind{\norm{A} > \tau_m}.
        \end{math}
      \EndFor
    \EndFor
    \State Fill the lower triangular part by symmetry and \textbf{return} \( \hat{\Sigma}^{\mf{Ada}} = \hat{\Sigma} \).
  \end{algorithmic}
\end{algorithm}

\begin{theorem}[Operator-norm adaptation over \(\mca{F}_\alpha\)]
  \label{thm:DP_Cov_Adaptive}
  Let $\Sigma \in \mca{F}_\alpha$ for some $\alpha > 0$.
  Suppose that $\rho n^2 / d \gtrsim (\log n)^{2\alpha+3}$.
  The estimator \(\hat{\Sigma}^{\mf{Ada}}\) in
  \cref{alg:DP_Cov_Adaptive} is \(\rho\)-zCDP and satisfies
  \begin{equation}
    \E \norm{\hat{\Sigma}^{\mf{Ada}}  - \Sigma}^2
    \lesssim n^{-\frac{2\alpha}{2\alpha+1}} \wedge  \frac{d}{n}
    + \xk{\frac{d \log n}{\rho n^2  }}^{\frac{\alpha}{\alpha+1}} \wedge \frac{d^3 \log n}{\rho n^2}.
  \end{equation}
  The same upper bound applies to the normalized Frobenius loss because
  \[
    \frac{1}{d}\E\norm{\hat{\Sigma}^{\mf{Ada}}-\Sigma}_F^2
    \leq
    \E\norm{\hat{\Sigma}^{\mf{Ada}}-\Sigma}^2.
  \]
\end{theorem}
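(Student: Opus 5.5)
Privacy is immediate. The center releases $2N_0$ blocks at budget $\rho/(2MN_0)$ each, for a total of $\rho/M$. Scale $m$ releases $N_m-1$ blocks at $\rho/(MN_m)$ each, for a total below $\rho/M$. Summing over the $M$ levels via \cref{lem:DPMechanism_Composition} gives $\rho$. Thresholding and symmetrization are post-processing. The Frobenius claim follows from $d^{-1}\norm{A}_F^2\le\norm{A}^2$. The substance is the operator bound, which I would prove by an oracle-inequality argument comparing $\hat\Sigma^{\mf{Ada}}$ with the non-adaptive estimator of \cref{thm:DPCov_Operator} at the oracle scale. Let $m^\ast$ be the smallest index with $k_{m^\ast}\ge k^\ast\asymp \min(n^{1/(2\alpha+1)},(\rho n^2/(d\log n))^{1/(2\alpha+2)})\vee k_0$, capped at $M-1$.

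Write $E_m$ for the release error on scale-$m$ increments, i.e. clipping bias, sampling and Gaussian noise restricted to $\Gamma_{m,l}$. The first step is a high-probability event $\mathcal{E}$ on which every increment satisfies $\norm{E_{m,l}}\le\tau_m/2$, for all $m,l$ simultaneously. For the sampling part I would use the sub-Gaussian covariance concentration on a $2k_m$-block, giving $\sqrt{(k_m+\log d)/n}$ after a union bound over $N_m\le d$ blocks. The GUE part has operator norm $\lesssim \sigma\sqrt{k_m+\log d}$ with $\sigma^2\asymp k_m^2/(\rho_m n^2)$. Clipping at $\sqrt{Ck_m}$ contributes a bias of order $e^{-2k_m}$, which matches the third term of $\tau_m$. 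Off $\mathcal{E}$, I bound the loss crudely using Gaussian moments; since $\bbP(\mathcal{E}^c)\le n^{-10}$, this contribution is negligible.

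On $\mathcal{E}$, I decompose $\hat\Sigma-\Sigma$ into three parts: the center error, the increments with $m\le m^\ast$, and those with $m>m^\ast$. The increments $\Gamma_{m,l}$ at a fixed scale $m$ are block-separated, so each increment family is a sum of two block-diagonal families; hence the operator norm of a scale's contribution is at most a constant times $\max_l$. For a kept or killed increment at scale $m$, the error is at most $\norm{E_{m,l}}+\min(\norm{\Sigma[\Gamma_{m,l}]},\tau_m)\cdot$const. This is the standard hard-thresholding dichotomy: if the increment is killed, then $\norm{\Sigma[\Gamma]}\le 3\tau_m/2$; if it is kept, the error is $E$. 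Because $\Gamma_{m,l}$ is a $k_{m-1}$-off-diagonal block, membership in $\mca{F}_\alpha$ gives $\norm{\Sigma[\Gamma_{m,l}]}\lesssim k_m^{-\alpha}$.

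Summing over scales, the error for $m\le m^\ast$ is $\lesssim\sum_{m\le m^\ast}\tau_m\lesssim\tau_{m^\ast}$ by geometric growth. For $m>m^\ast$ it is $\lesssim\sum_{m>m^\ast}k_m^{-\alpha}\lesssim (k^\ast)^{-\alpha}$. The tail beyond $k_{M-1}$ is nonempty only when $c_0 n<d$; in that case it is bounded by $\norm{\Sigma_R}\lesssim n^{-\alpha}$ for off-diagonal blocks at distance $\gtrsim c_0 n$, using the same block-diagonal splitting. Squaring, I get $\tau_{m^\ast}^2+(k^\ast)^{-2\alpha}$. Since $\rho_m n^2/k_m^2\asymp\rho n^2/(Md\,k_m)$ with $M\lesssim\log n$, this is
\[
\frac{k^\ast+\log d}{n}+\frac{d\,k^\ast(k^\ast+\log d)\log n}{\rho n^2}+(k^\ast)^{-2\alpha},
\]
which balances to the claimed rate. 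The assumption $\rho n^2/d\gtrsim(\log n)^{2\alpha+3}$ ensures $k^\ast\gtrsim\log n\ge\log d$, absorbing the $\log d$ factors. When $d$ is small, the minima with $d/n$ and $d^3\log n/(\rho n^2)$ come from taking $m^\ast$ with $k_{m^\ast}\asymp d$ (no truncation bias).

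The main obstacle is the summation step. The thresholding dichotomy yields $\sum_m\min(k_m^{-\alpha},\tau_m)$ only if the errors from different scales are added in operator norm, and I must check that this costs only a constant rather than an extra $\log$. Since $\tau_m$ grows and $k_m^{-\alpha}$ decays geometrically, both sums are dominated by their terms at $m^\ast$, so this should work. A second point to check is that the noise in the GUE tail bound is uniform over all $l$ and $m$ at the $\log d$ level; this is exactly why $\log d$ appears inside $\tau_m$.
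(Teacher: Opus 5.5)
Your proposal is correct and follows essentially the same route as the paper: a union-bound event built from \cref{lem:BlockThresholding}, bounding each scale by its maximal increment error via the block-separated structure, splitting $\sum_m\min(k_m^{-\alpha},\tau_m)$ at the oracle scale so that both geometric sums are dominated by their endpoint terms, and a Cauchy--Schwarz bound off the event. One wording fix: on the event, the per-increment error should be stated as $\min\bigl(\norm{\Sigma[\Gamma_{m,l}]},\,3\tau_m/2\bigr)$ with no additive $\norm{E_{m,l}}$ (a kept increment has $\norm{\Sigma[\Gamma_{m,l}]}>\tau_m/2\geq\norm{E_{m,l}}$), since this is exactly what makes the $m>m^\ast$ sum reduce to $\sum_{m>m^\ast}k_m^{-\alpha}$.
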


For Frobenius adaptation over \(\mca{H}_\alpha\), replace the operator threshold in \cref{alg:DP_Cov_Adaptive} by
\begin{equation}
  \hat{\Sigma}[\Gamma_{m,l}]
  =
  A \cdot \ind{\norm{A}_{F}^2 > k_m \tau_m^2}.
\end{equation}

\begin{theorem}[Frobenius adaptation over \(\mca{H}_\alpha\)]
  \label{thm:DP_Cov_Adaptive_Frob}
  Let $\Sigma \in \mca{H}_\alpha$ for some $\alpha > 0$.
  Suppose that $\rho n^2 / d \gtrsim (\log n)^{2\alpha+4}$.
  The Frobenius-thresholded estimator is \(\rho\)-zCDP and satisfies
  \begin{equation}
    \frac{1}{d}\E \norm{\hat{\Sigma}^{\mf{Ada}}  - \Sigma}_F^2
    \lesssim n^{-\frac{2\alpha+1}{2\alpha+2}} \wedge  \frac{d}{n}
    + \xk{\frac{d \log n}{\rho n^2  }}^{\frac{2\alpha+1}{2\alpha+3}} \wedge \frac{d^3 \log n}{\rho n^2}.
  \end{equation}
\end{theorem}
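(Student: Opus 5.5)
Privacy follows exactly as for \cref{alg:DP_Cov_Adaptive}. The central blocks use budget \(\rho_0\) each, at most \(2N_0\) of them, for a total of \(\rho/M\). Each scale \(m\) releases at most \(N_m\) blocks at budget \(\rho_m\), for a total of \(\rho/M\) per scale. Summing over the \(M\) scales gives at most \(2\rho\) under \cref{lem:DPMechanism_Composition}; after renormalizing the constants this is \(\rho\). Frobenius thresholding is post-processing, so it does not affect the accounting.

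For the risk, decompose the error as \(\hat\Sigma-\Sigma=(\hat\Sigma-\Sigma)[\mca{T}_{k_0}]+\sum_{m\ge1}\sum_l(\hat\Sigma-\Sigma)[\Gamma_{m,l}\cup\Gamma_{m,l}^\T]+\Sigma[\mca{T}_{k_M}^c]\). The pieces are disjoint, so squared Frobenius norms add.

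\emph{Center and tail.} The center contributes \(N_0\) blocks. Each has expected squared Frobenius error \(\lesssim k_0^2(k_0/n\cdot k_0^{-1}\cdot k_0 + k_0^2/(\rho_0 n^2))\), i.e. \(\lesssim k_0^2/n+k_0^4/(\rho_0n^2)\), via the entrywise variance of clipped products and the GUE noise. After dividing by \(d\), this gives \(k_0/n+dk_0^2 M/(\rho n^2)\), which is polylogarithmic times \(1/n+d/(\rho n^2)\) and hence negligible. The tail lies beyond distance \(k_M\gtrsim c_0n\wedge d\). Pointwise decay then gives \(d^{-1}\|\Sigma[\mca{T}_{k_M}^c]\|_F^2\lesssim k_M^{-(2\alpha+1)}\), which is dominated by \(n^{-(2\alpha+1)/(2\alpha+2)}\).

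\emph{Dyadic increments.} Let \(S_{m,l}=\Sigma[\Gamma_{m,l}]\). Since its entries lie at distance \(\asymp k_m\), membership in \(\mca{H}_\alpha\) gives \(\|S_{m,l}\|_F^2\lesssim k_m^2k_m^{-2(\alpha+1)}=k_m^{-2\alpha}\). Write \(A=S_{m,l}+E_{m,l}\), where \(E\) collects the sampling error, the clipping bias, and the noise. Let \(\mathcal{E}\) be the event that \(\|E_{m,l}\|_F^2\le k_m\tau_m^2/4\) for all \(m,l\). I would control \(\mathcal{E}\) with Hanson--Wright / Gaussian chaos concentration for \(\|E\|_F^2\). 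Its mean is \(\asymp k_m^3/n+k_m^4/(\rho_mn^2)\) and its fluctuation is of the same order, which makes \(L_1\) large. The \(\log d\) inside \(\tau_m\) then gives a failure probability of \((dn)^{-C}\). The clipping bias \(e^{-2k_m}\) is absorbed, using \(k_0\gtrsim\log n\) and \(R=\sqrt{Ck_m}\). On \(\mathcal{E}\), the standard hard-threshold inequality gives \(\|\hat\Sigma[\Gamma]-S\|_F^2\lesssim\min(\|S\|_F^2,k_m\tau_m^2)\). Indeed, if the block is kept, the error is \(\|E\|_F^2\le k_m\tau_m^2\). If the block is killed, then \(\|S\|_F^2\le 4k_m\tau_m^2\) and the error is \(\|S\|_F^2\). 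Off \(\mathcal{E}\), I would bound the loss by a polynomial in \(d,n\) times the tiny failure probability. This is where the \(\Pi_{M_0}\)-free estimator needs Gaussian moments, which is routine.

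\emph{Summation.} This is the main obstacle: making the sum over scales produce exactly the stated rate with only a \(\log n\) factor. There are \(N_m\asymp d/k_m\) blocks at scale \(m\), and \(\rho_m=\rho/(MN_m)\). Hence
\[
\frac1d\sum_m N_m\min\Bigl(k_m^{-2\alpha},\,\frac{k_m^2}{n}+\frac{k_m^3(k_m+\log d)dM}{k_m\rho n^2}\Bigr)\lesssim\sum_m\min\Bigl(k_m^{-(2\alpha+1)},\,\frac{k_m}{n}+\frac{d k_m^2\log n}{\rho n^2}\Bigr),
\]
using \(k_m\ge k_0\gtrsim\log d\) and \(M\lesssim\log n\). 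The sequence \(k_m\) is geometric, so each of the two competing sums is dominated by its value at the crossover scale. For the statistical term, the crossover is at \(k_m\asymp n^{1/(2\alpha+2)}\), giving \(n^{-(2\alpha+1)/(2\alpha+2)}\). For the privacy term, it is at \(k_m\asymp(\rho n^2/(d\log n))^{1/(2\alpha+3)}\), giving \((d\log n/(\rho n^2))^{(2\alpha+1)/(2\alpha+3)}\). The assumption \(\rho n^2/d\gtrsim(\log n)^{2\alpha+4}\) ensures that this crossover exceeds \(k_0\asymp\log n\). If a crossover exceeds \(d\), the sum is capped at the last scale, which yields the \(d/n\) and \(d^3\log n/(\rho n^2)\) alternatives.

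Finally, I would check that the center term \(k_0/n+dk_0^2\log n/(\rho n^2)\) is dominated by these rates. This uses \(k_0\lesssim\log n\) together with the same assumption on \(\rho n^2/d\). Combining the three contributions completes the bound.
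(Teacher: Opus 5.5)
Your proof follows the paper's skeleton. Privacy is by composition. The error splits disjointly in Frobenius norm into center, dyadic increments and tail. A hard-threshold dichotomy gives $\min(\|\Sigma[\Gamma_{m,l}]\|_F^2,k_m\tau_m^2)$ on a high-probability event, with a Cauchy--Schwarz fourth-moment remainder off it. A geometric summation then finishes, and there $\rho n^2/d\gtrsim(\log n)^{2\alpha+4}$ is exactly what keeps the privacy crossover above $k_0\asymp\log n$ (equivalently, what makes your center term $(\log n)^3 d/(\rho n^2)$ dominated). Splitting $\min(a,b+c)\le\min(a,b)+\min(a,c)$ and treating the two crossovers separately is an equivalent alternative to the paper's single split at $m_*$.

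The genuine difference is how you obtain the Frobenius event, and the paper's route is much cheaper. It reuses the operator-norm event of Lemma~\ref{lem:BlockBound}, already used in Lemma~\ref{lem:BlockThresholding}, on which $\|A-\Sigma[\Gamma_{m,l}]\|\le\tau_m/2$. This error lives in a $k_m\times k_m$ block, so it has rank at most $k_m$ and $\|A-\Sigma[\Gamma_{m,l}]\|_F^2\le k_m\tau_m^2/4$. That is precisely why the Frobenius threshold is $k_m\tau_m^2$, and no new concentration is needed.

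Your direct route is workable but misstated in three places.
\begin{enumerate}
\item The mean of $\|E_{m,l}\|_F^2$ is of order $k_m^2/n+k_m^4/(\rho_m n^2)$, since there are at most $k_m^2$ entries, each with variance $O(1/n)$. It is not $k_m^3/n+\cdots$. With $k_m^3/n$ the event $\mathcal{E}$ would fail, because $k_m\tau_m^2\asymp L_1\{k_m^2/n+k_m^4/(\rho_m n^2)\}$.
\item Hanson--Wright covers the Gaussian noise, a scaled $\chi^2$ with $\asymp k_m^2$ degrees of freedom. It does not cover the sampling error, since $X$ may have dependent coordinates.
\item A plain Hilbert-space Bernstein bound at failure probability $(dn)^{-C}$ only gives $k_m^2\log(dn)/n$, a logarithm above the threshold that no constant $L_1$ absorbs. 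You would need a Talagrand--Bousquet bound using the weak variance $\sup_{\|U\|_F\le1}\mathrm{Var}\langle Y,U\rangle\lesssim k_m$ and the clipping bound $\|Y\|_F\lesssim k_m$. Take $t\asymp\log(dn)$, which is admissible because $k_m\ge k_0\asymp\log n$.
\end{enumerate}

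Two smaller slips. First, the privacy total is already below $\rho$: the center costs $\rho/M$ and each of the $M-1$ outer scales costs less than $\rho/M$, so nothing needs renormalizing. Second, the tail is $\Sigma[\mathcal{T}_{k_{M-1}}^c]$, which is empty when $d\le c_0 n$ (then $k_{M-1}=d$). Your claim $k_M^{-(2\alpha+1)}\lesssim n^{-(2\alpha+1)/(2\alpha+2)}$ is false for $d<n^{1/(2\alpha+2)}$. It is only needed when $d>c_0 n$, where $k_{M-1}\asymp n$.
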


For both thresholding procedures, the statistical term is attained without an adaptation loss, while the privacy term incurs a logarithmic factor.

\subsection{Adaptive Center--Outer Dyadic Estimators}
\label{subsec:Adaptive_GH}

The center--outer estimators in \cref{subsec:DyadicSquareEstimators} are not adaptive: their center size, terminal scale in the operator modes, class-specific local radius of order \(k^{-\alpha}\), and the rank or sparsity indices in the greedy and peeling procedures are calibrated to the unknown smoothness \(\alpha\).
Adaptation is therefore not a single bandwidth choice, because the preferred local release can switch among zero, structured, and dense branches across scales, while all radius candidates and outer blocks must be controlled under a common privacy budget.
We fix the smallest center and traverse public dyadic scales; at each scale, we form a geometric radius grid, choose the branch by public criteria, and then select one radius jointly for all outer blocks by Lepski comparisons before assembling them with the dense center.
For the selected mode, write \(\mca{J}=\mca{H}\) in mode \(\mca{H}_{\mathrm{op}}\) and \(\mca{J}=\mca{G}\) in the two \(\mca{G}\)-modes.

\begin{algorithm}[H]
  \caption{DP Adaptive Center--Outer Dyadic Estimator}
  \label{alg:DP_Cov_GH_Adaptive}
  \begin{algorithmic}
    \State \textbf{Input:} \(X\), privacy parameter \(\rho\), interval \([\underline{\alpha},\overline{\alpha}]\), and mode \(\mca{G}_{\mathrm{op}}\), \(\mca{H}_{\mathrm{op}}\), or \(\mca{G}_{\mathrm{F}}\).
    \State Set \(J_{\mathrm{sq}}=\log_2d-1\) and initialize \(\widehat{\Sigma}\) on \(\mca{T}_1\) as in \cref{alg:DP_Cov_DyadicTemplate}, using budget \(\rho/4\).
    \For{each \(k\in\{1,2,4,\ldots,d/4\}\)}
      \If{the mode is operator norm and \(k>n/(\log n)^C\)}
        Break.
      \EndIf
      \State Form the public radius grid \((a_{\mca{J},k,j})_{j=0}^{J_k}\) specified below.
      \State Set \(\rho_{k,j}=\rho/\{4J_{\mathrm{sq}}N_{2k}(J_k+1)\}\) for \(0\leq j\leq J_k\).
      \For{\(j=0,\ldots,J_k\)}
        \State Select a branch \(t_{k,j}\) and deviation radius \(D_{k,j}\) by the public tuning rules below.
        \For{\(B\in\mca{B}_k\)}
          \State Run branch \(t_{k,j}\) with radius \(a_{\mca{J},k,j}\) and budget \(\rho_{k,j}\) to obtain a point candidate.
          \State Radially clip the candidate to the loss-norm ball of radius \(a_{\mca{J},k,0}\) in the operator modes or \(\sqrt{k}a_{\mca{J},k,0}\) in the Frobenius mode; denote the result by \(Z_{B,k,j}\).
        \EndFor
      \EndFor
      \State Set
      \[
        \widehat{j}_k
        =
        \max\left\{
              j:
              \max_{B\in\mca{B}_k}
              \norm{Z_{B,k,j}-Z_{B,k,i}}_\star
              \leq D_{k,j}+D_{k,i}
              \text{ for every }i<j
        \right\}.
      \]
      \State Set \(\widehat{\Sigma}[B]=Z_{B,k,\widehat{j}_k}\) for each \(B\in\mca{B}_k\).
    \EndFor
    \State Fill the lower triangular part of \( \widehat{\Sigma} \) by symmetry.
    \State \textbf{return} \(\Pi_{M_0}(\widehat{\Sigma})\).
  \end{algorithmic}
\end{algorithm}

At a dyadic scale \(k\), set
\[
  J_k=\ceil{(\overline{\alpha}-\underline{\alpha})\log_2k},
  \qquad
  a_{\mca{J},k,j}
  =
  C_{\mca{J}}k^{-\underline{\alpha}}2^{-j},
  \qquad
  0\leq j\leq J_k,
\]
and abbreviate \(a_j=a_{\mca{J},k,j}\).
For every \(\alpha\in[\underline{\alpha},\overline{\alpha}]\), the grid contains a radius between \(C_{\mca{J}}k^{-\alpha}\) and \(2C_{\mca{J}}k^{-\alpha}\), where \(C_{\mca{J}}\) depends only on the class and model bounds.
At each radius, choose publicly among zero, structured, and dense candidates.
Their deterministic scores are
\[
  \renewcommand{\arraystretch}{1.2}
  \begin{array}{c@{\quad}c@{\quad}c@{\quad}c}
    \text{mode}
    &W_{k,j,0}
    &W_{k,j,\mathrm{str}}
    &W_{k,j,\mathrm{dense}}
    \\ \hline
    \mca{H}_{\mathrm{op}}
    &a_j^2
    &\displaystyle
      \frac{k}{n}
      +\min_{1\leq r\leq k}
      \left\{
        \frac{a_j^2}{r}+\frac{k^2r}{\rho_{k,j}n^2}
      \right\}
    &\displaystyle
      \frac{k}{n}+\frac{k^3}{\rho_{k,j}n^2}
    \\
    \mca{G}_{\mathrm{op}}
    &a_j^2
    &\displaystyle
      \frac{k}{n}+\frac{a_jk}{n\sqrt{\rho_{k,j}}}
    &\displaystyle
      \frac{k}{n}+\frac{k^3}{\rho_{k,j}n^2}
    \\
    \mca{G}_{\mathrm{F}}
    &ka_j^2
    &\displaystyle
      k\min_{1\leq r\leq k}
      \left\{
        \frac{a_j^2}{r}
        +\frac{r}{n}
        +\frac{kr^2}{\rho_{k,j}n^2}
      \right\}
    &\displaystyle
      \frac{k^2}{n}+\frac{k^4}{\rho_{k,j}n^2}
  \end{array}
\]
Set
\[
  t_{k,j}\in\argmin_{t\in\{0,\mathrm{str},\mathrm{dense}\}}W_{k,j,t},
  \qquad
  D_{k,j}^2=C(\log n)^C W_{k,j,t_{k,j}},
\]
using a fixed tie-breaking rule.
Thus \(D_{k,j}\) is a public deviation radius for the point candidate \(Z_{B,k,j}\).
In modes \(\mca{H}_{\mathrm{op}}\), \(\mca{G}_{\mathrm{op}}\), and \(\mca{G}_{\mathrm{F}}\), the structured branch is \cref{alg:DP_Cov_H_GreedyRelease}, \cref{alg:DP_Cov_G_ProfileRelease}, and \cref{alg:DP_Cov_G_FrobeniusRelease}, respectively.
In every mode, the dense branch is \(\mf{DPCovBlock}(X,B;\rho_{k,j},R_k)\) with \(R_k^2=CK^2(k+\log n)\).

Denote the outputs in modes \(\mca{G}_{\mathrm{op}}\), \(\mca{H}_{\mathrm{op}}\), and \(\mca{G}_{\mathrm{F}}\) by \(\widehat{\Sigma}_{\mca{G}}^{\mf{Ada}}\), \(\widehat{\Sigma}_{\mca{H}}^{\mf{Ada}}\), and \(\widehat{\Sigma}_{\mca{G},\mathrm{F}}^{\mf{Ada}}\), respectively.
These adaptive estimators attain the minimax rates up to polylogarithmic factors.

\begin{theorem}[Operator-norm adaptation over \(\mca{G}_\alpha\) and \(\mca{H}_\alpha\)]
  \label{thm:DP_Cov_GH_Adaptive}
  Fix $0<\underline{\alpha}\le\overline{\alpha}<\infty$ and $K,M_0,M>0$.
  For each \(\mca{J}\in\{\mca{G},\mca{H}\}\) and all sufficiently large \(n\), the corresponding estimator \(\hat{\Sigma}^{\mf{Ada}}_{\mca{J}}\) in
  \cref{alg:DP_Cov_GH_Adaptive} is $\rho$-zCDP and, uniformly over $\alpha\in[\underline{\alpha},\overline{\alpha}]$,
  \begin{equation}
    \sup_{P\in\mca{P}_{\alpha}^{\mca{J}}}
    \E_P \norm{\hat{\Sigma}^{\mf{Ada}}_{\mca{J}}-\Sigma(P)}^2
    \lesssim_{\log}
    n^{-\frac{2\alpha}{2\alpha+1}}
    \wedge\frac{d}{n}
    +
    \mca{Q}_{\alpha,d} \left(\frac{d}{\rho n^2}\right).
    \label{eq:DP_Cov_GH_Adaptive}
  \end{equation}
\end{theorem}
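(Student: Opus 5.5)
The argument has three parts: a privacy accounting, a uniform high-probability event on which every candidate obeys its public deviation radius, and an oracle inequality for the Lepski selection at each dyadic scale. These are summed over scales and compared with the known-smoothness bound of \cref{thm:DPCov_GH_Operator}.

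\textbf{Privacy.} The center uses budget \(\rho/4\). Each scale \(k\) has at most \(N_{2k}\cdot 3\) blocks and \(J_k+1\) radii, with per-call budget \(\rho_{k,j}=\rho/\{4J_{\mathrm{sq}}N_{2k}(J_k+1)\}\). Summing over at most \(J_{\mathrm{sq}}\) scales and applying \cref{lem:DPMechanism_Composition} gives total cost at most \(c\rho\). Branch choices, radii and thresholds are public, and clipping, Lepski selection and \(\Pi_{M_0}\) are post-processing. The constant \(3\) is absorbed by rescaling the budget.

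\textbf{Uniform deviation event.} I will show that, with probability \(1-n^{-C}\), simultaneously for all \(k\), \(j\) and \(B\in\mca{B}_k\) with \(\Sigma_B\) in the class feasible set of radius \(a_j\) (\(\mca{E}_k(a_j)\) or \(\mca{K}_k(a_j)\)),
\[
\norm{Z_{B,k,j}-\Sigma_B}\le D_{k,j}.
\]
For the zero branch this is immediate, since \(\norm{\Sigma_B}\le a_j\). For the dense branch it follows from the operator-norm analysis behind \cref{thm:DPCov_Operator}, namely sub-Gaussian block deviation plus a GUE norm bound. For the structured branches it follows from the per-block high-probability bounds established for \cref{alg:DP_Cov_H_GreedyRelease} and \cref{alg:DP_Cov_G_ProfileRelease} in the supplement. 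Those bounds have the forms \(k/n+a^2/r+k^2r/(\rho_Bn^2)\) and \(k/n+ak/(n\sqrt{\rho_B})\) up to logs, which are exactly the \(W\) scores. A union bound over the polynomially many \((k,j,B)\) costs a \(\log\) factor, absorbed into \(C(\log n)^C\). Clipping to the ball of radius \(a_0\ge a_j\) is \(2\)-Lipschitz toward a point inside the ball, so it preserves the bound up to a constant.

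\textbf{Lepski oracle inequality.} Fix \(\alpha\) and let \(j^*\) be the grid index with \(C_{\mca{J}}k^{-\alpha}\le a_{j^*}\le 2C_{\mca{J}}k^{-\alpha}\). Every \(i\le j^*\) has \(a_i\ge a_{j^*}\), so the true blocks are feasible for all \(i\le j^*\). On the good event the triangle inequality gives \(\norm{Z_{B,k,j^*}-Z_{B,k,i}}\le D_{k,j^*}+D_{k,i}\) for all \(i<j^*\), hence \(\widehat{j}_k\ge j^*\). Applying the defining comparison at \(i=j^*\) then yields
\[
\norm{Z_{B,k,\widehat{j}_k}-\Sigma_B}\le 2D_{k,j^*}+D_{k,\widehat{j}_k}.
\]
Bounding \(D_{k,\widehat{j}_k}\lesssim D_{k,j^*}\) is the main obstacle, because \(D_{k,j}\) is not monotone in \(j\). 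The scores \(W_{k,j,t}\) are nonincreasing in \(a_j\) for every branch, and \(\rho_{k,j}\) does not depend on \(j\). Hence \(\min_t W_{k,j,t}\) is nonincreasing in \(j\), which gives \(D_{k,j}\le D_{k,j^*}\) for \(j\ge j^*\). So each outer block has error at most \(3D_{k,j^*}\), where
\[
D_{k,j^*}^2\lesssim_{\log}\min\Bigl\{k^{-2\alpha},\ \tfrac{k}{n}+\text{structured privacy term},\ \tfrac{k}{n}+\tfrac{k^3}{\rho_kn^2}\Bigr\}
\]
is evaluated at the true radius. The same holds for the transposed blocks.

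\textbf{Assembly.} I will decompose the error as
\[
\widehat{\Sigma}-\Sigma=E_{\mathrm{center}}+\sum_k E_k+E_{\mathrm{tail}},
\]
where \(E_{\mathrm{tail}}\) comes from the operator-mode break at \(k>n/(\log n)^C\) and \(E_k\) collects the scale-\(k\) outer blocks.
\begin{itemize}
\item The blocks of \(\mca{B}_k\) form \(O(1)\) families whose row and column index sets are disjoint. Therefore \(\norm{E_k}\lesssim\max_B\norm{\cdot}_B\lesssim D_{k,j^*}\).
\item Summing over the \(O(\log d)\) scales gives \(\norm{\widehat{\Sigma}-\Sigma}\lesssim\log d\cdot\max_k D_{k,j^*}\), plus the center error \((1+\log n)/n+d(\log n)/(\rho n^2)\), plus the tail bias \(\lesssim(n/\log^Cn)^{-\alpha}\). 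The tail bias and the center error are polylog-dominated by the target.
\end{itemize}
It remains to show \(\max_kD_{k,j^*}^2\lesssim_{\log}n^{-2\alpha/(2\alpha+1)}\wedge d/n+\mca{Q}_{\alpha,d}(d/(\rho n^2))\). Because \(\rho_k\asymp_{\log}\rho k/d\), the per-scale minimum of the three branches is at most \(k/n\wedge k^{-2\alpha}\) plus \(\min(k^{-2\alpha}/r,\,dkr/(\rho n^2))\) for a suitable \(r\). Each of these is bounded by the target after taking the supremum over \(k\) and \(r\), by the definition of \(\mca{Q}_{\alpha,d}\) in \cref{eq:Rate_Q}. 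The \(\mca{G}_{\mathrm{op}}\) profile term \(a k/(n\sqrt{\rho_k})\) is exactly the minimum over \(r\) of \(a^2/r+k^2r/(\rho_kn^2)\) up to constants, so it reduces to the same quantity. Finally, \(\Pi_{M_0}\) at most doubles the operator error. Taking expectations, with the bad event contributing \(n^{-C}\cdot\mathrm{poly}(d)\), completes the argument uniformly over \(\alpha\in[\underline{\alpha},\overline{\alpha}]\), since all constants depend only on the interval.
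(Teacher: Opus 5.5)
Your proposal is correct and follows essentially the paper's route: composition over the $J_k+1$ radii, at most $3N_{2k}$ blocks and $J_{\mathrm{sq}}$ scales (which already totals exactly $\rho/4+3\rho/4=\rho$, so no rescaling is needed), deviation radii $D_{k,j}$ certified branch by branch, and the Lepski comparison using that $D_{k,j}$ is nonincreasing in $j$. Two small slips there do not affect the argument: the scores are non\emph{de}creasing in $a_j$, not nonincreasing as you wrote, and the profile term $ak/(n\sqrt{\rho_k})$ is only bounded above by the constrained minimum over $r$, which is the direction you need. The rest also matches: matching-based aggregation over dyadic scales and the three-regime reduction to $\mca{Q}_{\alpha,d}$. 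The only differences are bookkeeping. You work on one global good event and control its complement through the deterministic bound $\norm{\Pi_{M_0}(\widehat{\Sigma})-\Sigma}\le 2M_0$, treating the post-break scales as a single tail. The paper instead takes per-scale expectations through the clipped-candidate Lepski lemma and handles inactive scales as zero blocks via a separate scale-bound lemma.
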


\begin{theorem}[Frobenius adaptation over \(\mca{G}_\alpha\)]
  \label{thm:DP_Cov_G_Frobenius_Adaptive}
  Under the conditions of \cref{thm:DP_Cov_GH_Adaptive}, the estimator
  \(\hat{\Sigma}^{\mf{Ada}}_{\mca{G},\mathrm{F}}\) in
  \cref{alg:DP_Cov_GH_Adaptive} is \(\rho\)-zCDP and, uniformly over
  \(\alpha\in[\underline{\alpha},\overline{\alpha}]\),
  \begin{equation}
      \sup_{P\in\mca{P}_\alpha^{\mca{G}}}
      \frac{1}{d}
      \E_P \norm{
        \hat{\Sigma}^{\mf{Ada}}_{\mca{G},\mathrm{F}}-\Sigma(P)
      }_F^2
      \lesssim_{\log}
      n^{-\frac{2\alpha+1}{2\alpha+2}}
      \wedge\frac{d}{n}
      +
      \left(\frac{d}{\rho n^2}\right)^{\frac{2\alpha+1}{2\alpha+3}}
      \wedge
      \frac{d^3}{\rho n^2}.
    \label{eq:DP_Cov_G_Frobenius_Adaptive}
  \end{equation}
\end{theorem}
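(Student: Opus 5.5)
We follow the proof of \cref{thm:DP_Cov_GH_Adaptive} in mode \(\mca{G}_{\mathrm{F}}\), replacing operator norms by Frobenius norms throughout. The argument splits into privacy, an oracle benchmark, a Lepski step at each dyadic scale, and a final sum over scales.

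\emph{Privacy.} The dense center on \(\mca{T}_1\) uses budget \(\rho/4\). For each of the at most \(J_{\mathrm{sq}}\) scales, each of the \(J_k+1\) radii and each of the \(3(N_{2k}-1)\) blocks in \(\mca{B}_k\), exactly one branch is run with budget \(\rho_{k,j}\). The total is therefore at most a constant multiple of \(\rho\). The budget split is chosen so that this total is at most \(\rho\); if the constant factor \(3\) is not absorbed, we rescale \(\rho_{k,j}\) by \(1/3\). The branch choice \(t_{k,j}\), the deviation radius \(D_{k,j}\) and the grid are all public. Clipping, the Lepski selection \(\widehat{j}_k\) and the projection \(\Pi_{M_0}\) are post-processing. \cref{lem:DPMechanism_Composition} then gives \(\rho\)-zCDP.

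\emph{Per-block deviation bounds.} Fix \(\Sigma\in\mca{G}_\alpha\), a scale \(k\), and a radius index \(j\) with \(a_j\geq a_{\mca{G},k}=C_{\mca{G}}k^{-\alpha}\), so that every scale-\(k\) block of \(\Sigma\) lies in \(\mca{K}_k(a_j)\). Let \(\mca{E}\) be the event that for all \(k\), all such \(j\) and all \(B\in\mca{B}_k\),
\[
  \norm{Z_{B,k,j}-\Sigma_B}_F\leq D_{k,j}.
\]
We show \(\mca{E}\) has probability at least \(1-n^{-C}\), branch by branch.
\begin{itemize}
  \item Zero branch: the bound is \(\norm{\Sigma_B}_F^2\leq k a_j^2\).
  \item Structured (peeling) branch: the high-probability form of the analysis behind \cref{thm:DPCov_G_Frobenius} gives error
  \[
    k\Bigl(\frac{a_j^2}{r}+\frac{r}{n}+\frac{kr^2}{\rho_{k,j}n^2}\Bigr)\times\text{polylog}.
  \]
  This uses exponential-mechanism utility for the \(kr\) selections, Gaussian tails, and the clipping bias at level \(L\), which is \(n^{-C}\) by sub-Gaussianity.
  \item Dense branch: Gaussian and GUE concentration give the corresponding bound.
\end{itemize}
Clipping to the Frobenius ball of radius \(\sqrt{k}a_0\) does not increase the distance to \(\Sigma_B\), since \(\Sigma_B\) lies in that ball and radial clipping toward a convex ball is nonexpansive relative to its points. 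On \(\mca{E}^c\), all candidates are bounded by the clipping and by \(\Pi_{M_0}\), so \(\mca{E}^c\) contributes a negligible \(O(n^{-C})\).

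\emph{Lepski step.} Let \(j^*\) be the largest index with \(a_{j^*}\geq C_{\mca{G}}k^{-\alpha}\); by the grid property, \(a_{j^*}\leq 2C_{\mca{G}}k^{-\alpha}\). On \(\mca{E}\), all indices \(i\leq j^*\) satisfy the comparison test pairwise, so \(\widehat{j}_k\geq j^*\). The test against \(j^*\) then gives
\[
  \norm{Z_{B,k,\widehat{j}_k}-\Sigma_B}_F\leq 2D_{k,\widehat{j}_k}+D_{k,j^*}.
\]
The step that needs care is showing \(D_{k,j}\lesssim D_{k,j^*}\) for \(j\geq j^*\). The score \(W_{k,j,t}\) is nondecreasing in \(a_j\) for each branch, and \(\rho_{k,j}\) does not depend on \(j\). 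Taking a minimum over branches preserves monotonicity, so \(D_{k,j}\leq D_{k,j^*}\). Consequently \(\norm{\widehat{\Sigma}_B-\Sigma_B}_F^2\lesssim_{\log} W^*_k\), where
\[
  W^*_k=\min\Bigl\{k^{1-2\alpha},\;k\min_r\Bigl(\frac{k^{-2\alpha}}{r}+\frac{r}{n}+\frac{k r^2}{\rho n^2/(d/k)}\Bigr),\;\frac{k^2}{n}+\frac{k^4 d}{k\rho n^2}\Bigr\},
\]
with the factors \(J_k,J_{\mathrm{sq}}\) absorbed into the logarithm.

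\emph{Summation and main obstacle.} The center contributes \(d\bigl(1/n+d/(\rho n^2)\bigr)\) times polylog to \(\norm{\cdot}_F^2\). Since \(\Pi_{M_0}\) is a projection onto a convex set containing \(\Sigma\), it does not increase Frobenius error. The outer part is at most \(\sum_k (d/k)\,W^*_k\), up to polylog factors. The main work is the calculus showing
\[
  \frac1d\sum_k \frac{d}{k}W^*_k\lesssim_{\log} n^{-\frac{2\alpha+1}{2\alpha+2}}\wedge\frac dn+\Bigl(\frac{d}{\rho n^2}\Bigr)^{\frac{2\alpha+1}{2\alpha+3}}\wedge\frac{d^3}{\rho n^2}.
\]
We bound \(W^*_k/k\) by the pointwise minimum of the three branch expressions and evaluate it at the non-adaptive choices of \cref{thm:DPCov_G_Frobenius}:
\begin{itemize}
  \item dense for \(k\lesssim k_0\);
  \item peeling with \(r\asymp\) the optimal \(r_k\) for intermediate \(k\);
  \item zero when \(k^{-2\alpha}\) is below the remaining terms.
\end{itemize}
Each regime then gives a geometric series in \(k\) dominated by its endpoint, and at most \(\log d\) scales are summed. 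Uniformity over \(\alpha\in[\underline{\alpha},\overline{\alpha}]\) holds because all constants depend only on the interval endpoints.
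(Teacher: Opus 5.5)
Your proposal is correct and follows essentially the same route as the paper. The paper's steps are: public zero/peeling/dense deviation radii \(D_{k,j}\) that are nonincreasing in \(j\); a Lepski comparison anchored at the bracketing index \(j_*\), packaged in the paper as the adaptive Frobenius block oracle; the scale-wise three-branch rate reduction; and Frobenius aggregation over the \(O(\log d)\) scales with the \(k_0=1\) center absorbed. Two small corrections. First, no rescaling of \(\rho_{k,j}\) is needed, since \(\sum_k 3(N_{2k}-1)(J_k+1)\rho_{k,j}\leq 3\rho/4\) already combines with the center's \(\rho/4\) to give exactly \(\rho\). Second, the Lepski triangle inequality gives \(D_{k,\widehat{j}_k}+2D_{k,j^*}\) rather than \(2D_{k,\widehat{j}_k}+D_{k,j^*}\); by monotonicity this is still at most \(3D_{k,j^*}\).
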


\section{Estimating the Precision Matrix under DP}
\label{sec:Precision}

Estimating the precision matrix, i.e., the inverse of the covariance matrix, is also a fundamental problem in statistics and machine learning, with important applications in graphical models, portfolio optimization, and other areas.
As in the non-private setting~\citep{cai2010_OptimalRates}, an estimator of the precision matrix can be obtained by applying matrix inversion to the DP covariance estimator developed in this paper.
Following the classical framework, we assume that the precision matrix is bounded in operator norm, which is equivalent to requiring the smallest eigenvalue of the covariance matrix to be bounded away from zero.
Fix \(0<c_0<M_0\).
For \(\mca{J}\in\{\mca{F},\mca{G},\mca{H}\}\), define the uniformly conditioned covariance and distribution classes
\begin{equation*}
  \tilde{\mca{J}}_\alpha
  =
  \mca{J}_\alpha
  \cap
  \dk{\Sigma:\lambda_{\min}(\Sigma)\geq c_0},
  \qquad
  \tilde{\mca{P}}_\alpha^{\mca{J}}
  =
  \dk{P\in\mca{P}_\alpha^{\mca{J}}:\Sigma(P)\in\tilde{\mca{J}}_\alpha},
\end{equation*}
and write \(\Omega(P)=\Sigma(P)^{-1}\).

Since a covariance matrix estimator $\hat{\Sigma}$ may not be invertible, we regularize it via eigenvalue truncation.
Let the eigen-decomposition of $\hat{\Sigma}$ be $\hat{\Sigma} = \hat{U} \hat{\Lambda} \hat{U}^\T$, where $\hat{\Lambda} = \mr{Diag}(\hat{\lambda}_i)_{i \in [d]}$ is the diagonal matrix of eigenvalues.
Then, we define the precision matrix estimator as
\begin{equation}
  \label{eq:Precision_Estimator}
  \hat{\Omega} = \hat{U}\mr{Diag}\xk{\max(\hat{\lambda}_i,L_2^{-1})^{-1}}_{i \in [d]} \hat{U}^\T,
\end{equation}
for any fixed constant \(L_2\geq c_0^{-1}\).

\begin{theorem}
  \label{thm:Precision__MinimaxRates}
  Suppose that \( \rho n^2 / d  \gtrsim (\log d)^{2 (\alpha+1)} \). Then we have
  \begin{equation}
    \inf_{\hat{\Omega} \in \mca{M}_\rho}
    \sup_{P \in \tilde{\mca{P}}_\alpha^{\mca{F}}}
    \E_P \norm{\hat{\Omega} - \Omega(P)}^2
    \asymp
    n^{-\frac{2\alpha}{2\alpha+1}} \wedge \frac{d}{n} + \xk{\frac{d}{\rho n^2}}^{\frac{\alpha}{\alpha+1}} \wedge \frac{d^3}{\rho n^2}.
  \end{equation}
  In particular, the upper bound can be achieved by the estimators in \cref{eq:Precision_Estimator} based on the DP covariance matrix estimators.
  For each \(\mca{J}\in\{\mca{G},\mca{H}\}\), under the conditions of \cref{thm:Overview_GH_Operator}, the same conclusion holds with \(\tilde{\mca{P}}_\alpha^{\mca{F}}\) replaced by \(\tilde{\mca{P}}_\alpha^{\mca{J}}\), the right-hand side replaced by that of \cref{eq:Overview_GH_Operator}, and \(\asymp\) replaced by \(\asymp_{\log}\).
\end{theorem}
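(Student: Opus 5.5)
The proof splits into an upper bound obtained by inverting the private covariance estimators, and a lower bound obtained by transferring the covariance lower bounds through the inverse map.

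\emph{Upper bound.} Let \(\hat\Sigma\) be a \(\rho\)-zCDP covariance estimator attaining the relevant covariance rate:
\begin{itemize}
\item \(\hat\Sigma_{\mca{F},\mathrm{op}}\) from \cref{cor:DPCov_Operator}, or the dense estimator in the small-\(d\) regime, for \(\mca{F}_\alpha\);
\item \(\widehat\Sigma_{\mca{J}}\) from \cref{thm:DPCov_GH_Operator} for \(\mca{J}\in\{\mca{G},\mca{H}\}\).
\end{itemize}
Then \(\hat\Omega\) in \cref{eq:Precision_Estimator} is a post-processing of \(\hat\Sigma\), so it is \(\rho\)-zCDP by \cref{lem:DPMechanism_Composition}.

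For the error, write \(g(\lambda)=\max(\lambda,L_2^{-1})^{-1}\). This function is Lipschitz on \(\R\) with constant \(L_2^2\). Since \(\lambda_{\min}(\Sigma)\ge c_0\ge L_2^{-1}\), we have \(g(\Sigma)=\Sigma^{-1}=\Omega\). An operator-Lipschitz estimate is convenient but not needed. Instead we use the resolvent identity
\[
\hat\Omega-\Omega=\hat\Omega(\Sigma-\tilde\Sigma)\Omega,
\qquad
\tilde\Sigma=\hat U\mathrm{Diag}(\max(\hat\lambda_i,L_2^{-1}))\hat U^\T,
\]
which uses \(\hat\Omega=\tilde\Sigma^{-1}\). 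This gives \(\|\hat\Omega-\Omega\|\le L_2c_0^{-1}\|\tilde\Sigma-\Sigma\|\).

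It remains to bound \(\|\tilde\Sigma-\Sigma\|\le\|\tilde\Sigma-\hat\Sigma\|+\|\hat\Sigma-\Sigma\|\). The first term is at most \(\max_i(L_2^{-1}-\hat\lambda_i)_+\). By Weyl's inequality, for each \(i\) with \(\hat\lambda_i<L_2^{-1}\le c_0\le\lambda_i(\Sigma)\),
\[
L_2^{-1}-\hat\lambda_i\le\lambda_i(\Sigma)-\hat\lambda_i\le\|\hat\Sigma-\Sigma\|.
\]
Hence \(\|\hat\Omega-\Omega\|\le 2L_2c_0^{-1}\|\hat\Sigma-\Sigma\|\) deterministically. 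Squaring, taking expectations, and invoking the covariance upper bounds gives the claimed rates. For \(\mca{G},\mca{H}\) the rates carry logarithmic factors, and \(\Pi_{M_0}\) does not interfere.

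\emph{Lower bound.} We reduce to the covariance lower bounds restricted to well-conditioned priors. All priors in \cref{sec:LowerBound} have the form \(I+\Delta\) or a block version of it, with \(\|\Delta\|\le1/2\) for small constants \(c\). The choice \(c_0\) fixed lets us rescale to \(\lambda_{\min}\ge c_0\), for instance by using \(\Sigma=\tfrac{M_0+c_0}{2}(I+\Delta)\) with \(\|\Delta\|\) small. The inverse map is then bi-Lipschitz in operator norm on this set: \(\|\Omega_1-\Omega_2\|\ge M_0^{-2}\|\Sigma_1-\Sigma_2\|\).

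Given any private \(\hat\Omega\), project it onto \(\{c\preceq\Omega\preceq C\}\), which is post-processing and changes the loss by at most a factor 2. Its inverse is then a private covariance estimator with \(\|\hat\Omega^{-1}-\Sigma\|\lesssim\|\hat\Omega-\Omega\|\). So the precision risk is bounded below by a constant times the covariance risk over the submodel, and \cref{thm:Lower__Operator} with \(q=\infty\) and \cref{thm:Lower__GH_Operator} finish the argument.

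\emph{Main obstacle.} The key check is that the van Trees priors (the \(\Upsilon_k\) blocks, and the rank-\(r\) perturbations for \(\mca{H}\)) indeed lie in \(\tilde{\mca{J}}_\alpha\) with \(\lambda_{\min}\ge c_0\). This requires \(\|\Delta\|\) bounded by a small constant, which holds since perturbations are of size \(ck^{-\alpha}\) with operator norm \(\le1\). The lower bounds are proven for the Gaussian model with covariances in these submodels, so the rates are unchanged up to constants.
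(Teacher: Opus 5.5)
Your proposal is correct and follows the paper's proof essentially step for step. For the upper bound you use spectral flooring, Weyl's inequality to get $\|\tilde\Sigma-\Sigma\|\le 2\|\hat\Sigma-\Sigma\|$, the identity $\hat\Omega-\Omega=\hat\Omega(\Sigma-\tilde\Sigma)\Omega$, and post-processing. For the lower bound you use a symmetrize-and-clip ``projection'' of $\hat\Omega$ to the spectral range $[M_0^{-1},c_0^{-1}]$, inversion as post-processing with the bi-Lipschitz bound, and the covariance lower bounds applied on Gaussian submodels recentred at a well-conditioned $\lambda_0 I_d$ with reduced perturbation amplitude, exactly as in the paper.
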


\cref{thm:Precision__MinimaxRates} shows that the minimax rates for estimating the precision matrix under DP constraints are the same as those for estimating the covariance matrix itself, similar to the non-private setting~\citep{cai2010_OptimalRates}.
We remark that the adaptive estimator in \cref{sec:Adaptive} can also be used in \cref{eq:Precision_Estimator} for adaptive estimation of the precision matrix.

The proof of \cref{thm:Precision__MinimaxRates} is contained in the supplementary material, and we give a brief sketch here.
For the upper bound, eigenvalue truncation moves the covariance estimator by at most its distance from \(\Sigma\), and the matrix inversion formula then gives
\begin{math}
  \norm{\hat{\Omega}-\Omega}
  \leq
  2L_2c_0^{-1}\norm{\hat{\Sigma}-\Sigma}.
\end{math}
Thus, the covariance upper bounds transfer directly by post-processing.
For the lower bound, any DP precision matrix estimator can be symmetrized, spectrally truncated to the uniformly conditioned parameter space, and inverted to obtain a DP covariance matrix estimator.
The inversion map is bi-Lipschitz on this parameter space, so the covariance lower bounds in \cref{thm:Lower__Operator} and \cref{thm:Lower__GH_Operator} transfer directly.

\section{Discussion}
\label{sec:Conclusion}

This paper develops minimax and adaptive theory for differentially private estimation over the nested covariance classes
\(\mca{H}_\alpha\subset\mca{G}_\alpha\subset\mca{F}_\alpha\)
under operator and Frobenius losses. The central statistical message is that privacy fundamentally changes how these classes compare. Under operator loss, all three classes share the same leading non-private algebraic rate, yet their privacy costs separate the broad separated-block class from the row-tail and pointwise-decay classes. For the latter two classes, the operator privacy cost exhibits an additional phase transition at \(\alpha=1/2\), whereas no analogous smoothness transition occurs under Frobenius loss. Thus, the decay exponent alone does not determine the difficulty of private estimation: the local covariance geometry and the loss function also play essential roles.

The estimator constructions reflect these geometric distinctions. Over \(\mca{F}_\alpha\), blockwise tridiagonal estimation balances truncation bias, sampling error, and privacy noise. Over \(\mca{G}_\alpha\) and \(\mca{H}_\alpha\), a center--outer dyadic decomposition separates distance scales and supports profile, peeling, and greedy procedures tailored to each class. The lower bounds are obtained by combining a zCDP van Trees inequality with dependent matrix-valued priors tailored to the geometry of each class. These constructions identify the same scale and rank obstructions that govern the upper bounds. Under uniform spectral conditioning, the operator norm guarantees further extend to precision matrix estimation through spectral truncation and inversion.

Several important questions remain open. First, the minimax rates exhibit an unavoidable polynomial dependence on the ambient dimension \(d\), even in the presence of ordered off-diagonal decay, which may limit their applicability in extremely high-dimensional regimes. It would be valuable to determine whether alternative structural assumptions, including sparsity, low rank, or Toeplitz structure \citep{cai2016_EstimatingStructured}, can reduce this dimensional dependence. Differentially private estimation of covariance functions for functional data \citep{cai_NonparametricCovariance} is another promising direction.

Second, the private covariance estimators developed here may serve as building blocks for downstream procedures, including principal component analysis, uncertainty quantification, discriminant analysis, and graphical model estimation under privacy constraints. Establishing optimality guarantees for these downstream procedures would clarify how error in covariance estimation propagates through private multivariate analysis.

Third, extending the present central DP framework to distributed or federated settings is of substantial practical interest. In such settings, observations are partitioned across multiple users or institutions that may have heterogeneous sample sizes, covariance structures, communication constraints, and privacy requirements. These features introduce new statistical and algorithmic trade-offs that are absent from the centralized model.

Finally, it remains open whether the polylogarithmic overheads in the adaptive guarantees are intrinsic. In particular, it is unknown whether the oracle ratios bounded by fixed polylogarithmic factors for the row-tail procedures and the pointwise-decay operator procedure can be sharpened, or whether the logarithmic privacy overheads for the separated-block procedures and the pointwise-decay Frobenius procedure are unavoidable.

We hope that the results and techniques developed here will stimulate further research on differentially private multivariate analysis and high-dimensional statistics.

\begin{funding}
  The research of Tony Cai was supported in part by NSF grant DMS-2413106 and NIH grants R01-GM123056 and R01-GM129781.
\end{funding}

\clearpage
\begin{appendix}

\section{Proofs for Blockwise Tridiagonal Estimator and its Adaptation}
\label{sec:Proofs_CovarianceEstimation}

This section analyzes the blockwise tridiagonal estimators used for the separated-block class \(\mca{F}_\alpha\) under squared operator loss and the pointwise-decay class \(\mca{H}_\alpha\) under normalized squared Frobenius loss.
For known smoothness, we first establish privacy and blockwise concentration bounds, and then use a blocking argument to convert local estimation errors into the global risks in \cref{thm:DPCov_Operator} and \cref{thm:DPCov_Frobenius}.
We subsequently analyze thresholding rules based on the dyadic block increments to prove the adaptive guarantees in \cref{thm:DP_Cov_Adaptive} and \cref{thm:DP_Cov_Adaptive_Frob}.

\subsection{Upper bounds for known smoothness}

Let us introduce some further notation for convenience.

\subsubsection{Privacy Guarantee}

\begin{proposition}
  \label{prop:DP_Cov_Helper_Block_Privacy}
  \cref{alg:DP_Cov_Helper_Block} is $\rho_B$-zCDP\@.
\end{proposition}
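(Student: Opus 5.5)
The plan is to reduce the statement to the Gaussian mechanism (\cref{lem:DPMechanism_Gaussian_zCDP}) combined with post-processing (\cref{lem:DPMechanism_Composition}). The argument has two parts: a bound on the $\ell_2$-sensitivity of the clipped block $\tilde{\Sigma}_B$, and a check that the noise $\sigma_B M_B$ is at least as large, coordinatewise, as the Gaussian mechanism requires.

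\emph{Sensitivity.} Let $S,S'$ be adjacent datasets differing only in record $i_0$. Every other summand of $\tilde{\Sigma}_B$ is unchanged. Writing $a=\widetilde{X}_{i_0,I}$, $b=\widetilde{X}_{i_0,J}$ and $a',b'$ for the replaced record, we get
\[
  \norm{\tilde{\Sigma}_B(S)-\tilde{\Sigma}_B(S')}_F
  =\frac1n\norm{ab^\T-a'b'^\T}_F
  \le\frac1n\bigl(\norm{a}_2\norm{b}_2+\norm{a'}_2\norm{b'}_2\bigr)
  \le\frac{2R^2}{n}.
\]
Here we use $\norm{ab^\T}_F=\norm{a}_2\norm{b}_2$ together with the bound $\norm{\chi_R(z)}_2\le R$ coming from the radial clipping. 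We therefore take $\Delta=2R^2/n$. The Gaussian mechanism then needs per-coordinate variance $\Delta^2/(2\rho_B)=2R^4/(\rho_B n^2)$.

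\emph{Noise structure.} Next we identify the released object as a vector. There are two cases.
\begin{itemize}
\item If $B=I\times J$ is an off-diagonal block ($I\cap J=\emptyset$), the entries of $M_B=\mr{GUE}(d)_B$ are independent centered Gaussians. Their variance is bounded below by an absolute constant, which is at least $1/2$ under the usual normalization.
\item If $B=I\times I$ is a diagonal block, both $\tilde{\Sigma}_B$ and $M_B$ are symmetric. The output is then a deterministic function of its upper-triangular coordinates $\{(u,v):u\le v\}$. On these coordinates the noise entries are independent, again with variance at least $1/2$. The sensitivity of the upper-triangular vector is at most the Frobenius sensitivity, so $\Delta=2R^2/n$ still applies.
\end{itemize}
In both cases the noise is a vector $\sigma_B g$ with independent coordinates of variance at least $\sigma_B^2/2=9R^4/(\rho_B n^2)$. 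This is at least $2R^4/(\rho_B n^2)$.

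\emph{Conclusion.} Decompose the noise as $g_0+g_1$. Here $g_0\sim N(0,s^2 I)$ with $s^2=2R^4/(\rho_B n^2)$, and $g_1$ is independent Gaussian noise with nonnegative coordinate variances. By \cref{lem:DPMechanism_Gaussian_zCDP}, the vector $f+g_0$ is $\rho_B$-zCDP. Adding $g_1$, which is independent of the data, and then symmetrizing in the diagonal case are both post-processing. Hence \cref{lem:DPMechanism_Composition} gives that \cref{alg:DP_Cov_Helper_Block} is $\rho_B$-zCDP.

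The only delicate point is the bookkeeping of the $\mr{GUE}$ normalization and the symmetric diagonal blocks. The factor $18$ in $\sigma_B^2$, compared with the required $2$, leaves ample room for any standard convention on these variances.
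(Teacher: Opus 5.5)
Your proof is correct and follows the paper's route: bound the replacement sensitivity of the clipped block in Frobenius norm by $2R^2/n$, then invoke the Gaussian mechanism, since $18R^4/(\rho_B n^2)$ dominates the required $2R^4/(\rho_B n^2)$. Your reduction of symmetric diagonal blocks to upper-triangular coordinates, and your treatment of the excess noise as post-processing, spell out what the paper compresses into ``the noise variance dominates''; the one case you leave out, a partially overlapping $I,J$, never arises in the paper's calls and is handled the same way, because radial clipping is a scalar rescaling and so the clipped block is symmetric on the overlap.
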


\begin{proof}
  By \cref{lem:DPMechanism_Gaussian_zCDP}, it suffices to bound the sensitivity.
  Let $\caD$ and $\caD'$ be two neighboring datasets differing in $X_1$ and $X_1'$.
  The blockwise truncation in \cref{alg:DP_Cov_Helper_Block} gives
  \begin{align*}
    \norm{\tilde{\Sigma}_{B}(\caD) - \tilde{\Sigma}_{B}(\caD')}_F
    &\leq \frac{1}{n}
    \norm{\widetilde{X}_{1,I} \widetilde{X}_{1,J}^\T
    -\widetilde{X}_{1,I}'(\widetilde{X}_{1,J}')^\T}_F \\
    &\leq \frac{2R^2}{n}.
  \end{align*}
  The Gaussian noise variance in \cref{alg:DP_Cov_Helper_Block} dominates the variance required for this sensitivity.
\end{proof}

\begin{proposition}
  \label{prop:DP_Cov_BlockDiagonal_Privacy}
  \cref{alg:DP_Cov_BlockDiagonal} is $\rho$-zCDP\@.
\end{proposition}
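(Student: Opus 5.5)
The proof is a direct application of zCDP composition (\cref{lem:DPMechanism_Composition}) on top of \cref{prop:DP_Cov_Helper_Block_Privacy}. \cref{alg:DP_Cov_BlockDiagonal} calls $\mf{DPCovBlock}$ on the blocks $B_{k;l,r}$ with $r\in\{0,1\}$ and $l+r\leq N_k$. There are $N_k$ diagonal blocks ($r=0$) and $N_k-1$ superdiagonal blocks ($r=1$), so the total number of calls is $2N_k-1\leq 2N_k$.

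\emph{Per-call privacy.} Each call uses budget $\rho_0=\rho/(2N_k)$ and the common radius $R$. By \cref{prop:DP_Cov_Helper_Block_Privacy}, each call is $\rho_0$-zCDP as a function of the full dataset. That proposition already accounts for the clipping $\chi_R$ applied separately to the $I$- and $J$-coordinates, which gives sensitivity at most $2R^2/n$. The Gaussian noise in each call is fresh. Even when $I=J$ (the diagonal blocks) or when two blocks share an index set, the calls are separate mechanisms whose noise is independent conditional on the data. Composition therefore applies regardless of how the blocks overlap in the coordinates they read.

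\emph{Composition.} By the composition part of \cref{lem:DPMechanism_Composition}, the joint release of all block estimates is $(2N_k-1)\rho_0$-zCDP. Since $(2N_k-1)\rho_0\leq 2N_k\rho_0=\rho$, it is $\rho$-zCDP. The next step, filling the lower-triangular part by symmetry and setting the remaining entries to zero, is a deterministic function of these released blocks. By the post-processing part of the same lemma, the output $\hat{\Sigma}^{\mf{DP}}$ is $\rho$-zCDP.

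\emph{Monotonicity remark.} The last step uses that $\rho'$-zCDP implies $\rho$-zCDP for $\rho'\le\rho$, which is immediate from \cref{def:rho-zCDP}. The argument has no real obstacle. One point worth stating explicitly is that a call on a block $B=I\times J$ depends on each record only through the clipped pair $(\chi_R(X_{i,I}),\chi_R(X_{i,J}))$. Hence replacing a single record changes every block statistic, and this is exactly why we compose over all blocks rather than using parallel composition over disjoint data.
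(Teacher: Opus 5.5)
Your proposal is correct and follows the paper's proof: count the $2N_k-1$ block releases, apply \cref{prop:DP_Cov_Helper_Block_Privacy} to each call with budget $\rho_0=\rho/(2N_k)$, compose to get total cost at most $\rho$, and treat symmetric completion and zero filling as post-processing. Your extra remarks on fresh noise per call and on why parallel composition is unavailable are accurate but not needed.
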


\begin{proof}
  The estimator releases at most \(2N_k-1\) blocks, each with budget \(\rho_0=\rho/(2N_k)\).
  Hence \cref{prop:DP_Cov_Helper_Block_Privacy} and \cref{lem:DPMechanism_Composition} give total privacy cost at most \(\rho\), and symmetric completion and zero filling are post-processing.
\end{proof}

\subsubsection{Concentrations}

For the blockwise traditional estimators, write \(R_k=CK\sqrt{k}\), with the same sufficiently large numerical constant as in \cref{alg:DP_Cov_BlockDiagonal}.
For a block \(I\subseteq[d]\) of size \(k\), let
\[
  \widetilde{X}_{i,I}
  =
  \chi_{R_k}(X_{i,I})
\]
as in \cref{alg:DP_Cov_Helper_Block}, and write
\begin{equation*}
  \tilde{\Sigma}_B
  =
  \frac{1}{n}\sum_{i=1}^n
  \widetilde{X}_{i,I} \widetilde{X}_{i,J}^\T.
\end{equation*}

\begin{lemma}[DP Block bound]
  \label{lem:BlockBound}
  Let $B = I \times J$ be a fixed square block of size $k$ and \( \hat{\Sigma}_B^{\mf{DP}} \) be the output of \cref{alg:DP_Cov_Helper_Block} with clipping radius \(R_k\).
  Suppose that $k + \log d \leq c n$ for some small enough constant $c$.
  When the numerical constant in \(R_k\) is sufficiently large, with probability at least $1 - C d^{-10} \exp(-k)$, we have
  \begin{equation}
    \norm{\hat{\Sigma}_B^{\mf{DP}} - \Sigma_B}^2 \lesssim \tau^2(k;n,\rho,d) \coloneqq \frac{k + \log d}{n} + \frac{k^2 (k + \log d)}{\rho_0 n^2} + \exp(-2k).
  \end{equation}
\end{lemma}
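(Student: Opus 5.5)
We split the error into three pieces with the triangle inequality,
\[
  \hat{\Sigma}_B^{\mf{DP}} - \Sigma_B
  = \bigl(\tilde{\Sigma}_B - \E\tilde{\Sigma}_B\bigr)
  + \bigl(\E\tilde{\Sigma}_B - \Sigma_B\bigr)
  + \sigma_B M_B ,
\]
which we call the sampling fluctuation of the clipped sample covariance, the clipping bias, and the Gaussian privacy noise. Each piece is bounded in operator norm with probability at least \(1-Cd^{-10}e^{-k}\); a union bound over the three pieces then gives the claim.

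\emph{Privacy noise.} The block \(M_B\) is a \(k\times k\) submatrix of a GUE matrix. If it is diagonal it is a \(k\times k\) GUE; if it is off-diagonal it has i.i.d.\ Gaussian entries. In either case the standard Gaussian matrix concentration (for instance \(\norm{M_B}\le C(\sqrt{k}+t)\) with probability \(1-2e^{-t^2}\)) applies. Take \(t=\sqrt{k+10\log d}\). With \(R=R_k=CK\sqrt{k}\) we have \(\sigma_B^2=18R^4/(\rho_0n^2)\asymp k^2/(\rho_0 n^2)\), so
\[
  \sigma_B^2\norm{M_B}^2 \lesssim \frac{k^2(k+\log d)}{\rho_0 n^2},
\]
which is the second term of \(\tau^2\).

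\emph{Sampling fluctuation.} We use a net argument on the \(1/4\)-nets of \(S^{k-1}\times S^{k-1}\), which have at most \(9^{2k}\) points. For fixed unit vectors \(u,v\), the variables \((u^\T\widetilde{X}_{i,I})(v^\T\widetilde{X}_{i,J})\) are products of sub-Gaussian variables, because radial clipping is a contraction toward the origin and so preserves sub-Gaussianity with norm \(\lesssim K\). They are therefore sub-exponential with norm \(\lesssim K^2\). Bernstein's inequality gives deviation \(\lesssim\sqrt{s/n}+s/n\) with probability \(1-2e^{-s}\). Take \(s\asymp k+\log d\) large enough to absorb the net cardinality and leave failure probability \(d^{-10}e^{-k}\). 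The assumption \(k+\log d\le cn\) makes the linear term dominated by the square-root term, so this piece contributes the first term \((k+\log d)/n\).

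\emph{Clipping bias.} This is the step I expect to be the main obstacle, since the \(\exp(-2k)\) term must come out exactly here. The bias is
\[
  \E\tilde{\Sigma}_B-\Sigma_B=\E\bigl[\widetilde{X}_I\widetilde{X}_J^\T - X_IX_J^\T\bigr],
\]
and the integrand vanishes unless \(\norm{X_I}_2>R_k\) or \(\norm{X_J}_2>R_k\). Testing against unit vectors \(u,v\) and applying Cauchy--Schwarz, it is bounded by \(\lesssim K^2\,\bbP\bigl(\norm{X_I}>R_k\bigr)^{1/2}\) plus the symmetric term, using that \(\E(u^\T X)^4\lesssim K^4\) and that the clipped variables have the same moment bounds. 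The vector \(X_I\) is a \(k\)-dimensional sub-Gaussian vector, so \(\norm{X_I}_2\le CK(\sqrt{k}+t)\) with probability \(1-e^{-t^2}\). Taking \(R_k=CK\sqrt{k}\) with a large enough numerical constant gives \(\bbP(\norm{X_I}>R_k)\le e^{-c'k}\) with \(c'\ge 4\). Hence the squared bias is \(\le e^{-2k}\), and this is where the requirement that the numerical constant be sufficiently large enters. The only delicate point is to check the constants in the sub-Gaussian norm concentration: via the Hanson--Wright-type bound \(\norm{\norm{X_I}_2}_{\psi_2}\lesssim K\sqrt{k}\), the tail exponent must beat \(2k\), and this holds once \(C\) is large.
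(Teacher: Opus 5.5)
Your argument is correct and matches the paper's proof. It uses the same split into clipped-sample fluctuation, clipping bias, and Gaussian noise, with only cosmetic differences: you run net--Bernstein directly on the cross block and bound the bias by Cauchy--Schwarz with fourth moments, whereas the paper applies covariance concentration to the stacked vector \((\widetilde{X}_{i,I},\widetilde{X}_{i,J})\) and a tail-integration truncation bound. One small correction to your closing justification: \(\norm{\norm{X_I}_2}_{\psi_2}\lesssim K\sqrt{k}\) only yields \(\bbP(\norm{X_I}_2>CK\sqrt{k})\le 2e^{-cC^2}\), not \(e^{-c'k}\), and Hanson--Wright would require independent coordinates; what you actually need is the net-based bound \(\norm{X_I}_2\le CK(\sqrt{k}+t)\) with probability \(1-e^{-t^2}\), which you already stated and which holds for any sub-Gaussian vector.
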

\begin{proof}
  Since \(\E X_i=0\), we have \(\Sigma_B=\E X_{i,I} X_{i,J}^\T\).
  Therefore,
  \begin{align*}
    \norm{\hat{\Sigma}_B^{\mf{DP}} - \Sigma_B}
    &\leq
    \norm{\tilde{\Sigma}_B-\Sigma_B}
    +
    \norm{\sigma_B M_B}.
  \end{align*}
  The first term is bounded in
  \cref{prop:A__BlockBound_CovTruncation}.
  For the second term, using \cref{lem:MatrixGaussianConcentration},
  with probability at least $1 - C d^{-10} \exp(-k)$, we have
  \begin{equation*}
    \norm{M_B}^2 \lesssim k + \log d.
  \end{equation*}
  Recalling that $\sigma_B^2 \asymp \frac{k^2}{\rho_0 n^2}$, we obtain the desired result.
\end{proof}

\begin{proposition}
  \label{prop:A__BlockBound_CovTruncation}
  Under the conditions in \cref{lem:BlockBound}, with probability at least $1 - C d^{-10} \exp(-k)$, we have
  \begin{equation}
    \norm{\tilde{\Sigma}_B-\Sigma_B}^2
    \lesssim
    \frac{k+\log d}{n}+\exp(-2k).
  \end{equation}
\end{proposition}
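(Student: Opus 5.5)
We split the error into a truncation bias and a stochastic deviation of the clipped empirical covariance:
\[
  \norm{\tilde{\Sigma}_B-\Sigma_B}
  \leq
  \norm{\E\tilde{\Sigma}_B-\Sigma_B}
  +
  \norm{\tilde{\Sigma}_B-\E\tilde{\Sigma}_B}.
\]
We then bound the two terms separately.

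\emph{Bias.} Write \(\widetilde{X}_{I}=X_I-\Delta_I\), where \(\Delta_I=X_I-\chi_{R_k}(X_I)\) vanishes unless \(\norm{X_I}_2>R_k\); similarly for \(J\). Then
\[
  \E\widetilde{X}_I\widetilde{X}_J^\T-\Sigma_B
  =
  -\E\Delta_I X_J^\T-\E X_I\Delta_J^\T+\E\Delta_I\Delta_J^\T.
\]
For unit vectors \(u,v\), Cauchy--Schwarz gives
\[
  \abs{\E (u^\T\Delta_I)(v^\T X_J)}
  \leq
  \bigl(\E\norm{X_I}_2^4\bigr)^{1/4}\,
  \bbP\dk{\norm{X_I}_2>R_k}^{1/4}\,
  \bigl(\E(v^\T X_J)^2\bigr)^{1/2}.
\]
The three factors are controlled as follows. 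Since \(\norm{X_I}_{\psi_2}\leq K\), the vector norm concentrates: \(\norm{X_I}_2\) is sub-Gaussian around a level of order \(K\sqrt{k}\). Hence \(\bbP\{\norm{X_I}_2>CK\sqrt{k}\}\leq 2e^{-c'C^2k}\), and \(\E\norm{X_I}_2^4\lesssim K^4k^2\). The last factor is bounded by \(\sqrt{M_0}\). Choosing the numerical constant in \(R_k\) large enough, the polynomial factor \(k\) is absorbed into the exponential, so each term is \(\lesssim e^{-k}\) and the squared bias is \(\lesssim \exp(-2k)\).

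\emph{Deviation.} Use a \(1/4\)-net argument. Let \(\mca{N}_I,\mca{N}_J\) be \(1/4\)-nets of the unit spheres in \(\R^{k}\), of size at most \(9^k\) each. Then
\[
  \norm{A}\leq \tfrac{16}{7}\max_{u\in\mca{N}_I,v\in\mca{N}_J}u^\T Av .
\]
For fixed \(u,v\), the summands \((u^\T\widetilde{X}_{i,I})(v^\T\widetilde{X}_{i,J})\) are i.i.d. Each factor is sub-Gaussian with norm \(\lesssim K\), because radial clipping is a contraction toward \(0\) and so \(\abs{u^\T\chi_R(x)}\leq\abs{u^\T x}\). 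The product is therefore sub-exponential with norm \(\lesssim K^2\). Bernstein's inequality gives, for \(t\geq0\),
\[
  \bbP\dk{\abs{u^\T(\tilde{\Sigma}_B-\E\tilde{\Sigma}_B)v}>CK^2\Bigl(\sqrt{t/n}+t/n\Bigr)}\leq 2e^{-t}.
\]
Take \(t=C'(k+\log d)\) and union bound over the \(81^k\) pairs. With probability at least \(1-Cd^{-10}e^{-k}\), the deviation is \(\lesssim\sqrt{(k+\log d)/n}+(k+\log d)/n\). Under \(k+\log d\leq cn\), the linear term is dominated by the square-root term.

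Squaring and combining the two bounds yields the claim. The main obstacle is the bias step: we must verify that the clipping threshold \(CK\sqrt{k}\) sits far enough above the typical norm \(\sqrt{\Tr\Sigma_{II}}\leq\sqrt{M_0k}\) that the tail is \(e^{-ck}\) with \(c\) as large as needed. This requires the concentration of \(\norm{X_I}_2\) for sub-Gaussian vectors without independence of coordinates. We will obtain it from a net bound \(\norm{X_I}_2\leq 2\max_{u\in\mca{N}}u^\T X_I\) together with a union bound.
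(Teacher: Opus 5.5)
Your argument is correct and is essentially the paper's proof. It uses the same split into clipping bias and fluctuation, the same observation $\abs{u^\T\chi_R(x)}\leq\abs{u^\T x}$ to keep the clipped vectors sub-Gaussian, and a net-plus-Bernstein bound with $t\asymp k+\log d$ for the fluctuation; the paper gets the latter by stacking $z_i=(\widetilde{X}_{i,I}^\T,\widetilde{X}_{i,J}^\T)^\T$ and citing its covariance concentration lemma. For the bias, the paper integrates the tail of $\norm{XY^\T}\ind{\norm{X}\vee\norm{Y}\geq R}$ instead of using your Cauchy--Schwarz with fourth moments, but both rest on the same $e^{-ck}$ tail of $\norm{X_I}_2$ at level $CK\sqrt{k}$, with $c$ made large by enlarging the constant in $R_k$.
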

\begin{proof}
  We can write
  \begin{equation*}
    \norm{\tilde{\Sigma}_B-\Sigma_B}
    \leq
    \norm{\tilde{\Sigma}_B-\E\tilde{\Sigma}_B}
    +
    \norm{\E\tilde{\Sigma}_B-\Sigma_B}
    =I_1+I_2.
  \end{equation*}
  For $I_1$, we can take $z_i = (\widetilde{X}_{i,I}^\T, \widetilde{X}_{i,J}^\T)^\T \in \R^{2k}$ so that
  $\tilde{\Sigma}_B-\E\tilde{\Sigma}_B$ is a sub-matrix of
  \( \frac{1}{n} \sum_{i=1}^n z_i z_i^\T - \E z_i z_i^\T \).
  Now,
  \begin{align*}
    \norm{z_i}_{\psi_2} &= \sup_{\norm{u}=1} \norm{\ang{u,z_i}}_{\psi_2}
    \leq \sup_{\norm{u}=1} \zk{\norm{\ang{u_I,\widetilde{X}_{i,I}}}_{\psi_2} + \norm{\ang{u_J,\widetilde{X}_{i,J}}}_{\psi_2}} \\
    & \leq \sup_{\norm{u}=1}\norm{\ang{u_I,\widetilde{X}_{i,I}}}_{\psi_2} + \sup_{\norm{u}=1}\norm{\ang{u_J,\widetilde{X}_{i,J}}}_{\psi_2} \\
    & \leq 2 \norm{X_i}_{\psi_2},
  \end{align*}
  where the last inequality uses
  \(\abs{\ang{u_I,\chi_R(X_{i,I})}}\leq\abs{\ang{u_I,X_{i,I}}}\).
  Thus, we can use \cref{lem:StandardCovarianceEst} with $t = C(k + \log d)$ to get
  \begin{equation*}
    \norm{\tilde{\Sigma}_B-\E\tilde{\Sigma}_B}
    \leq
    \norm{\frac{1}{n} \sum_{i=1}^n z_i z_i^\T - \E z_i z_i^\T}
    \lesssim  \sqrt{\frac{k + \log d}{n}},
  \end{equation*}
  with probability at least $1 - 2 d^{-10} \exp(-k)$.

  For $I_2$, we can apply \cref{prop:SubGVectorCovTruncation} to get
  \begin{equation*}
    I_2
    =
    \norm{
      \E
      \chi_{R_k}(X_{1,I})
      \chi_{R_k}(X_{1,J})^\T
      -
      \E X_{1,I} X_{1,J}^\T
    }
    \lesssim
    k\exp(-C_0 k)
    \lesssim
    \exp(-k),
  \end{equation*}
  provided that the numerical constant in \(R_k\) is sufficiently large.
  Combining the two parts, we obtain the result.
\end{proof}

\begin{lemma}[Block-wise Expectation Control]
  \label{lem:BlockExpectationBound}
  Let $B$ be a fixed square block of size $k \leq cn$ for some small $c > 0$ and \( \hat{\Sigma}_B^{\mf{DP}} \) be the output of \cref{alg:DP_Cov_Helper_Block} with clipping radius \(R_k\).
  We have
  \begin{align}
    \E \norm{\hat{\Sigma}_B^{\mf{DP}} - \Sigma_B}^2 & \lesssim \frac{k}{n} + \frac{k^3}{\rho_0 n^2} + \exp(-2k), \\
    \E \norm{\hat{\Sigma}_B^{\mf{DP}} - \Sigma_B}_F^2 & \lesssim \frac{k^2}{n} + \frac{k^4}{\rho_0 n^2} + k \exp(-2k).
  \end{align}
\end{lemma}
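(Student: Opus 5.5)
We would prove \cref{lem:BlockExpectationBound} by integrating the tail bounds that sit behind \cref{lem:BlockBound}, keeping track of the $\log d$ dependence so that it disappears in expectation. First split the error as
\[
\hat{\Sigma}_B^{\mf{DP}}-\Sigma_B=(\tilde{\Sigma}_B-\E\tilde{\Sigma}_B)+(\E\tilde{\Sigma}_B-\Sigma_B)+\sigma_B M_B .
\]
Then bound the second moment of each piece separately, using $(a+b+c)^2\le 3(a^2+b^2+c^2)$. For the Frobenius statement we use $\norm{A}_F^2\le k\norm{A}^2$ for a $k\times k$ block, applied to the first two pieces. The noise piece is handled exactly: $\E\norm{\sigma_BM_B}_F^2=\sigma_B^2k^2\asymp k^4/(\rho_0n^2)$. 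So the Frobenius bound follows from the operator bound, with the noise term computed directly.

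The deterministic bias piece is handled by \cref{prop:SubGVectorCovTruncation}, as in the proof of \cref{prop:A__BlockBound_CovTruncation}. It gives $\norm{\E\tilde{\Sigma}_B-\Sigma_B}\lesssim\exp(-k)$, which contributes $\exp(-2k)$, and $k\exp(-2k)$ in Frobenius.

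For the noise piece, $\E\norm{M_B}^2\lesssim k$. This follows from standard Gaussian matrix norm bounds, for instance by integrating the tail in \cref{lem:MatrixGaussianConcentration}: $\norm{M_B}\lesssim\sqrt{k}+\sqrt{t}$ with probability $1-e^{-t}$. Combined with $\sigma_B^2\asymp R_k^4/(\rho_0n^2)\asymp k^2/(\rho_0n^2)$, this gives the term $k^3/(\rho_0n^2)$.

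The fluctuation piece is the main point, since \cref{lem:BlockBound} carries a $\log d$ that must be removed. Apply \cref{lem:StandardCovarianceEst} to the clipped vectors $z_i\in\R^{2k}$, which are sub-Gaussian with norm at most $2K$ as shown above, with a free deviation parameter $t$. This yields
\[
\norm{\tilde{\Sigma}_B-\E\tilde{\Sigma}_B}\lesssim\sqrt{(k+t)/n}+(k+t)/n
\]
with probability at least $1-2e^{-t}$. Integrating over $t$ gives $\E\norm{\cdot}^2\lesssim k/n+k^2/n^2\lesssim k/n$ because $k\le cn$. One caveat is that the tail regime $t\gtrsim n$ only contributes a term of order $(t/n)^2$. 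Since $z_i$ is clipped, $\norm{z_i}_2\le 2R_k$, so the error is deterministically bounded by $8R_k^2\lesssim k$. Splitting the integral at $t=n$ therefore costs at most $k\,e^{-cn}\lesssim k/n$.

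The step requiring the most care is this tail integration: the expectation must be free of $\log d$ without losing the range condition $k\le cn$. We would use the bounded-support argument above to handle the large-$t$ regime. Summing the three contributions yields both displayed bounds.
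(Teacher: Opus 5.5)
Your proposal is correct and follows essentially the paper's argument: it uses the same split into empirical fluctuation, clipping bias (handled by \cref{prop:SubGVectorCovTruncation}) and Gaussian noise, controls the noise through \cref{lem:MatrixGaussianConcentration} with $\sigma_B^2\asymp k^2/(\rho_0 n^2)$, and passes to Frobenius norm via $\norm{A}_F^2\le k\norm{A}^2$. The only difference is cosmetic. The paper removes the $\log d$ simply by citing the moment bound already contained in \cref{lem:StandardCovarianceEst} (with $p=2k$, $q=2$), so your tail integration and the split at $t=n$ are not needed. Even if you integrate directly, the $(t/n)^2$ contribution against $e^{-t}$ is only $O(n^{-2})$.
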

\begin{proof}
  It suffices to prove a bound for the operator norm since $\norm{A}_F^2 \leq k \norm{A}^2$ for a matrix $A \in \R^{k \times k}$\@.
  As in the proof of \cref{lem:BlockBound}, we decompose
  \begin{equation*}
    \norm{\hat{\Sigma}_B^{\mf{DP}} - \Sigma_B}
    \leq
    \norm{\tilde{\Sigma}_B-\Sigma_B}
    +
    \norm{\sigma_B M_B}.
  \end{equation*}
  Following \cref{prop:A__BlockBound_CovTruncation} and using the fact that
  \begin{equation*}
    \E \norm{\tilde{\Sigma}_B-\E\tilde{\Sigma}_B}^2
    \lesssim \xk{\sqrt{\frac{k}{n}} + \frac{k}{n}}^2
    \lesssim \frac{k}{n},
  \end{equation*}
  where we use \cref{lem:StandardCovarianceEst} with \( p = 2k \) and \( q = 2 \),
  we have
  \begin{equation*}
    \E \norm{\tilde{\Sigma}_B-\Sigma_B}^2
    \lesssim
    \frac{k}{n}+\exp(-2k).
  \end{equation*}
  Finally, using \cref{lem:MatrixGaussianConcentration}, we have
  \begin{equation*}
    \E \norm{\sigma_B M_B}^2 \lesssim \sigma_B^2 k \lesssim \frac{k^3}{\rho_0 n^2}.
  \end{equation*}
  Summarizing the two parts, we obtain
  \begin{equation*}
    \E \norm{\hat{\Sigma}_B^{\mf{DP}} - \Sigma_B}^2 \lesssim \frac{k}{n} + \frac{k^3}{\rho_0 n^2} + \exp(-2k),
  \end{equation*}
  which gives the desired result.
\end{proof}

Taking the estimator $\hat{\Sigma}^{\mf{DP}}$ given by \cref{alg:DP_Cov_Helper_Block} with the full block $B = [d]^2$ and clipping radius \(R_d=CK\sqrt d\),
we have the following immediate corollary of \cref{lem:BlockExpectationBound}.

\begin{corollary}
  \label{cor:FullBlockDPCov}
  Suppose that $\Sigma$ is a general covariance matrix and let \( \hat{\Sigma}^{\mf{DP}} \) be the naive DP covariance estimator given by \cref{alg:DP_Cov_Helper_Block} with the full block $B = [d]^2$ and clipping radius \(R_d\).
  Then, as long as $d \leq c n$ for some small enough constant $c > 0$, we have
  \begin{align*}
    & \E \norm{\hat{\Sigma}^{\mf{DP}} - \Sigma}^2 \lesssim \frac{d}{n} + \frac{d^3}{\rho n^2}, \\
    & \frac{1}{d} \E \norm{\hat{\Sigma}^{\mf{DP}} - \Sigma}_F^2 \lesssim \frac{d}{n} + \frac{d^3}{\rho n^2}.
  \end{align*}
\end{corollary}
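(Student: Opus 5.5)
This follows from \cref{lem:BlockExpectationBound} applied with the single block \(B=[d]^2\), so \(k=d\). First, privacy is immediate from \cref{prop:DP_Cov_Helper_Block_Privacy}, since only one block is released. The plan is then to check what the budget and clipping look like in this degenerate case. \cref{alg:DP_Cov_Helper_Block} is called directly with budget \(\rho_B=\rho\), rather than through \cref{alg:DP_Cov_BlockDiagonal}. So the quantity \(\rho_0\) appearing in \cref{lem:BlockExpectationBound} is just \(\rho\). The clipping radius \(R_d=CK\sqrt d\) is exactly \(R_k\) with \(k=d\). The hypothesis \(k\le cn\) of the lemma becomes \(d\le cn\), which is assumed.

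Substituting \(k=d\) and \(\rho_0=\rho\) into the operator-norm bound of \cref{lem:BlockExpectationBound} gives
\[
\E\norm{\hat\Sigma^{\mf{DP}}-\Sigma}^2\lesssim \frac dn+\frac{d^3}{\rho n^2}+e^{-2d}.
\]
It remains to absorb \(e^{-2d}\). Under the standing regime \(d\gtrsim\log n\), with the implicit constant large enough, we have \(e^{-2d}\le n^{-1}\le d/n\). Alternatively, one can note that the truncation-bias step in \cref{prop:A__BlockBound_CovTruncation} actually gives \(d e^{-C_0 d}\), which is \(\lesssim d/n\) once \(C_0 d\ge \log n\).

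For the Frobenius bound, I would use \(\norm{A}_F^2\le d\norm{A}^2\) for \(d\times d\) matrices and divide by \(d\). This gives the same rate. Equivalently, one can take the second display of \cref{lem:BlockExpectationBound} and divide by \(d\).

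The only mild obstacle is the bookkeeping: confirming that \(\rho_0\) in the lemma refers to the block budget (here \(\rho\)), and that the exponential remainder is dominated by \(d/n\) under the paper's standing assumption \(d\gtrsim\log n\). Beyond that, the corollary is a direct specialization of the lemma.
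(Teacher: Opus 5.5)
Your proposal is correct and follows the paper's argument. The paper presents the corollary as an immediate specialization of \cref{lem:BlockExpectationBound} with $k=d$ and block budget $\rho_0=\rho$, and the Frobenius bound follows from $\norm{A}_F^2\le d\norm{A}^2$. Your treatment of the $e^{-2d}$ remainder fills in a step the paper leaves implicit. Of your two routes, the second (choosing the exponent $C_0$ in the clipping-bias bound $d e^{-C_0 d}$ large relative to the constant in $d\gtrsim\log n$) is the robust one; the first needs that constant to be at least $1/2$, and some such assumption is genuinely required because for fixed $d$ the clipping bias does not vanish as $n\to\infty$.
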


The following bound, while being very rough, will be useful in controlling estimation errors when the high-probability event in \cref{lem:BlockBound} fails.

\begin{proposition}[Rough Bound]
  \label{prop:BlockRoughBound}
  Under the setting of \cref{lem:BlockExpectationBound}, the output \( \hat{\Sigma}_B^{\mf{DP}} \) satisfies
  \begin{equation}
    \E \norm{\hat{\Sigma}_B^{\mf{DP}}}_F^4 \lesssim k^4(1 + \sigma_B^4),\quad \sigma_B^2 \lesssim \frac{k^2}{\rho_0 n^2}.
  \end{equation}
\end{proposition}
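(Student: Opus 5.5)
The estimator is \(\hat{\Sigma}_B^{\mf{DP}}=\tilde{\Sigma}_B+\sigma_B M_B\). We bound the two parts separately and combine them with \((a+b)^4\le 8(a^4+b^4)\), which gives
\[
  \E\norm{\hat{\Sigma}_B^{\mf{DP}}}_F^4\le 8\,\E\norm{\tilde{\Sigma}_B}_F^4+8\,\sigma_B^4\,\E\norm{M_B}_F^4 .
\]

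\emph{Clipped empirical covariance.} Clipping gives deterministic control. Each clipped vector satisfies \(\norm{\widetilde{X}_{i,I}}_2\le R_k\) and \(\norm{\widetilde{X}_{i,J}}_2\le R_k\). Hence each rank-one summand has \(\norm{\widetilde{X}_{i,I}\widetilde{X}_{i,J}^\T}_F\le R_k^2\). By the triangle inequality, \(\norm{\tilde{\Sigma}_B}_F\le R_k^2=C^2K^2k\) almost surely. Therefore \(\E\norm{\tilde{\Sigma}_B}_F^4\le C^8K^8k^4\lesssim k^4\), since \(K\) is a fixed constant. No concentration argument is needed here.

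\emph{Noise term.} \(M_B\) is the \(B\)-restriction of a GUE matrix, so it has at most \(k^2\) entries, each Gaussian with variance \(O(1)\). Write \(\norm{M_B}_F^2=\sum_{(u,v)\in B}M_{uv}^2\). By Cauchy--Schwarz,
\[
  \E\norm{M_B}_F^4\le k^2\sum_{(u,v)\in B}\E M_{uv}^4\lesssim k^4 .
\]
(A sharper chi-square moment computation gives the same order \(k^4\).) Thus \(\sigma_B^4\,\E\norm{M_B}_F^4\lesssim k^4\sigma_B^4\).

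\emph{Conclusion.} Summing the two bounds gives \(\E\norm{\hat{\Sigma}_B^{\mf{DP}}}_F^4\lesssim k^4(1+\sigma_B^4)\). The stated bound on \(\sigma_B^2\) follows from the definition in \cref{alg:DP_Cov_Helper_Block}: \(\sigma_B^2=18R_k^4/(\rho_0 n^2)=18C^4K^4k^2/(\rho_0 n^2)\lesssim k^2/(\rho_0 n^2)\).

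The only point needing care is the normalization convention for the GUE entries. Diagonal and off-diagonal entries may have different variances, and the off-diagonal ones may be complex. In every convention each entry's fourth moment is still bounded by an absolute constant, so the bound holds regardless. I expect no step to be a real obstacle; the proof is a direct moment computation.
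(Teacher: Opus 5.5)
Your proof is correct and follows the paper's argument: the same split into the clipped empirical part and the Gaussian noise part, and the same deterministic bound $\norm{\tilde{\Sigma}_B}_F\le R_k^2\lesssim k$. The only difference is in the noise term, where the two routes give the same order: you bound $\E\norm{M_B}_F^4\lesssim k^4$ directly by Cauchy--Schwarz over the $k^2$ entries and Gaussian fourth moments, while the paper uses $\norm{M_B}_F^2\le k\norm{M_B}^2$ together with the operator-norm moment bound $\E\norm{M_B}^4\lesssim k^2$ from Lemma~\ref{lem:MatrixGaussianConcentration}; yours is slightly more elementary.
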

\begin{proof}
  We start with
  \begin{equation*}
    \E \norm{\hat{\Sigma}_B^{\mf{DP}}}_F^4
    \lesssim
    \E \norm{\tilde{\Sigma}_B}_F^4
    +
    \E \norm{\sigma_B M_B}_F^4.
  \end{equation*}
  The first term is roughly bounded by
  \begin{align*}
    \norm{\tilde{\Sigma}_B}_F & \leq \frac{1}{n} \sum_{i=1}^n \norm{\widetilde{X}_{i,I} \widetilde{X}_{i,J}^\T}_F
    = \frac{1}{n} \sum_{i=1}^n \norm{\widetilde{X}_{i,I}} \norm{\widetilde{X}_{i,J}} \\
    & \leq R_k^2 \lesssim k.
  \end{align*}
  The second term is bounded using \cref{lem:MatrixGaussianConcentration}, which gives
  \begin{equation*}
    \E \norm{\sigma_B M_B}_F^4 \leq \sigma_B^4 \E \norm{M_B}_F^4 \lesssim \sigma_B^4 k^2 \E \norm{M_B}^4
    \lesssim \sigma_B^4 k^4.
  \end{equation*}
\end{proof}

\subsubsection{Blocking Lemma}
\label{subsec:BlockingLemma}

\begin{figure}[h]
  \centering
  \begin{minipage}{0.2\textwidth}
    \centering
    \includegraphics[width=1\textwidth]{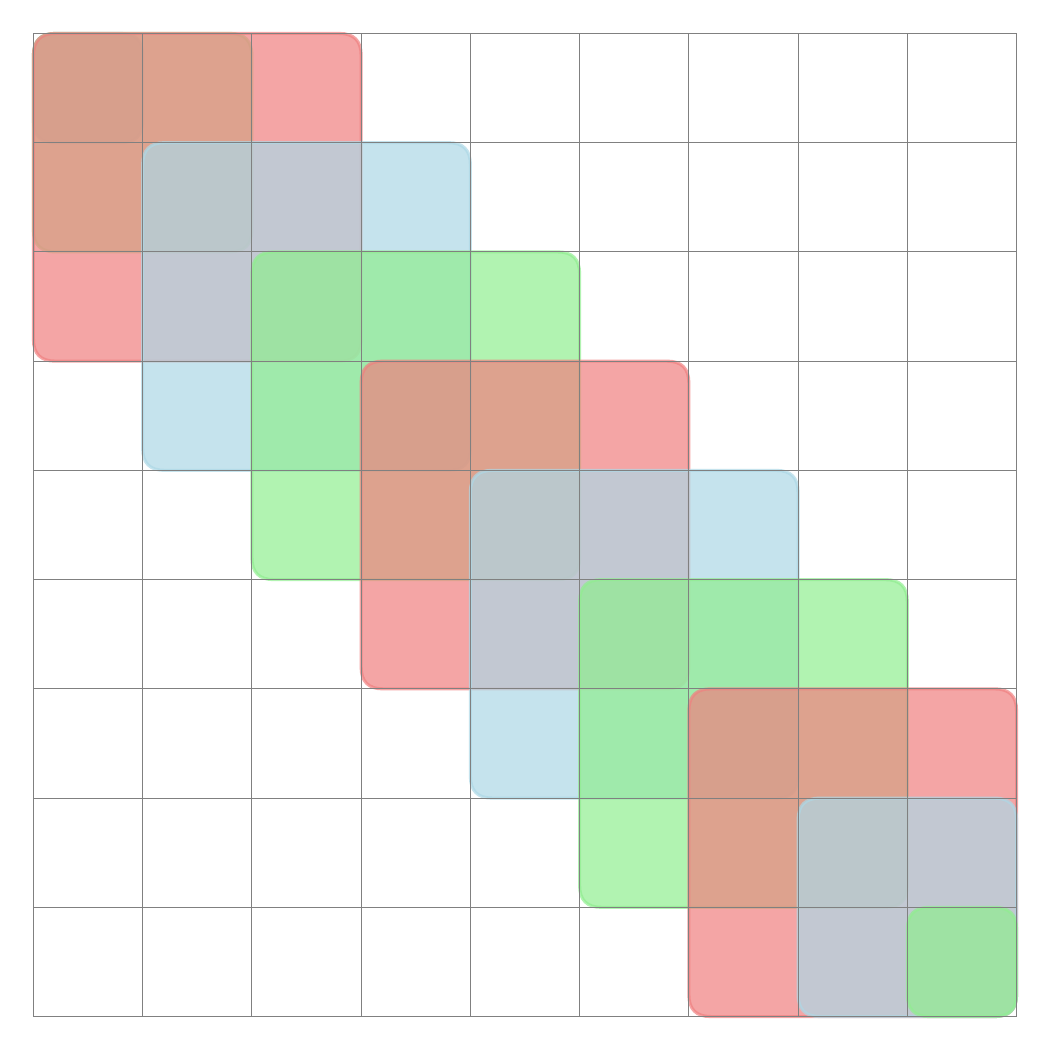}
  \end{minipage}%
  \begin{large}
    \( \longrightarrow \)
  \end{large}
  \begin{minipage}{0.2\textwidth}
    \centering
    \includegraphics[width=1\textwidth]{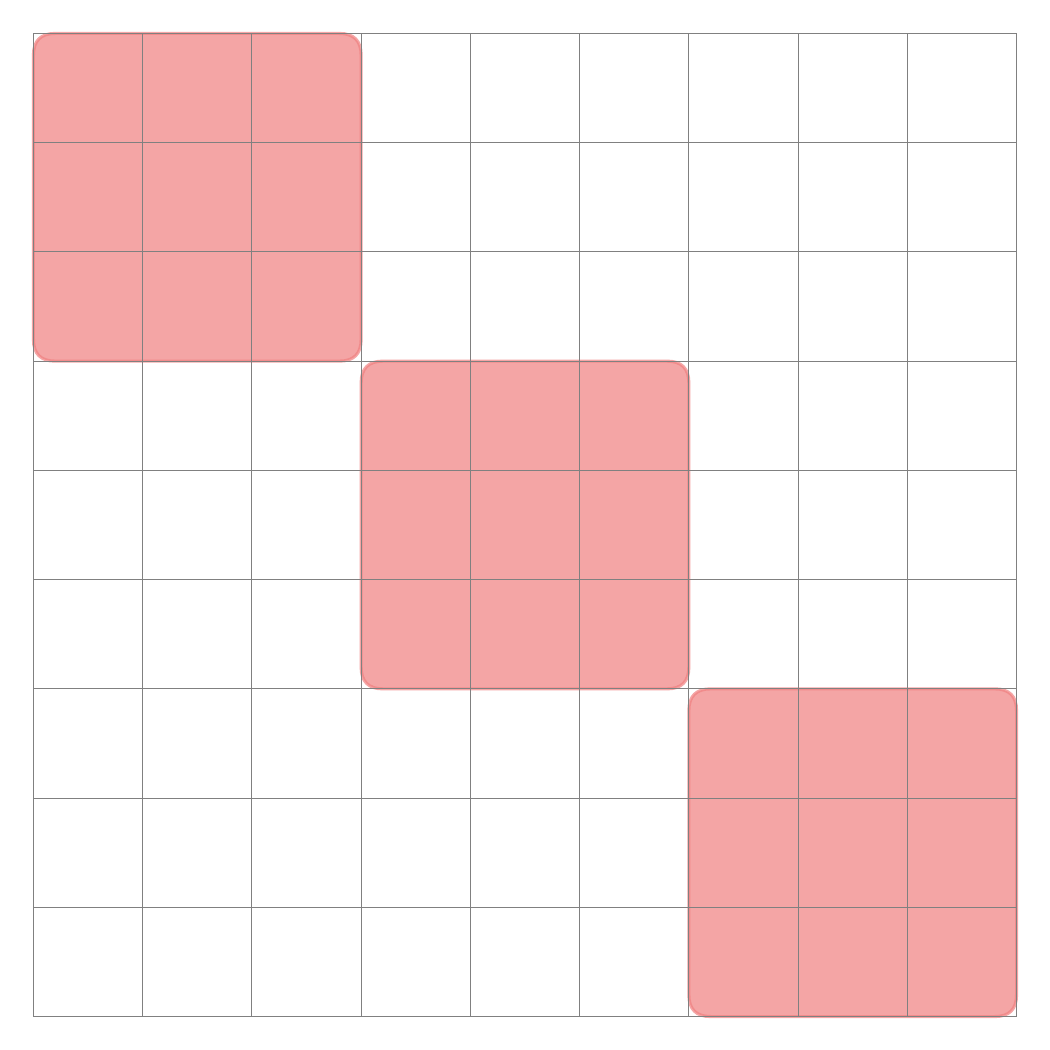}
  \end{minipage}%
  \begin{minipage}{0.2\textwidth}
    \centering
    \includegraphics[width=1\textwidth]{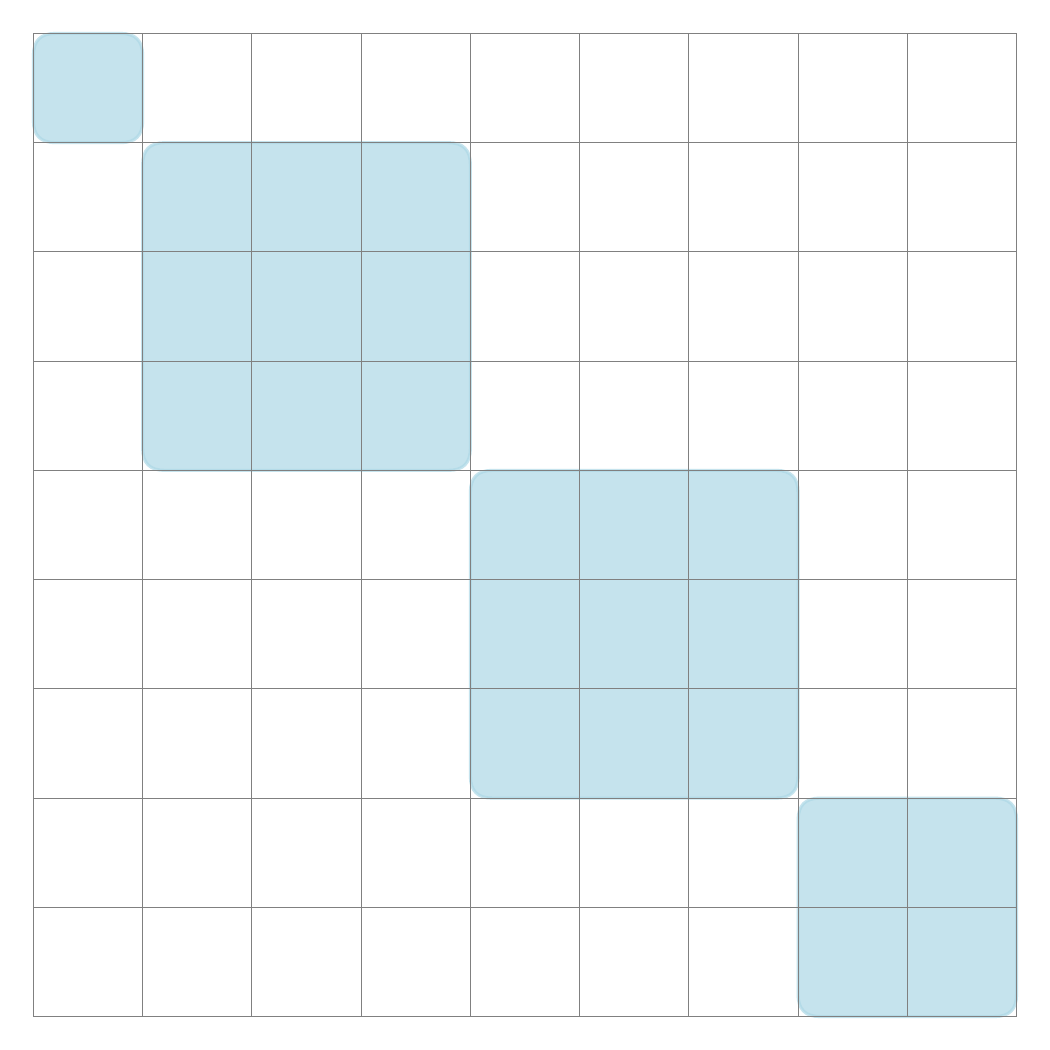}
  \end{minipage}%
  \begin{minipage}{0.2\textwidth}
    \centering
    \includegraphics[width=1\textwidth]{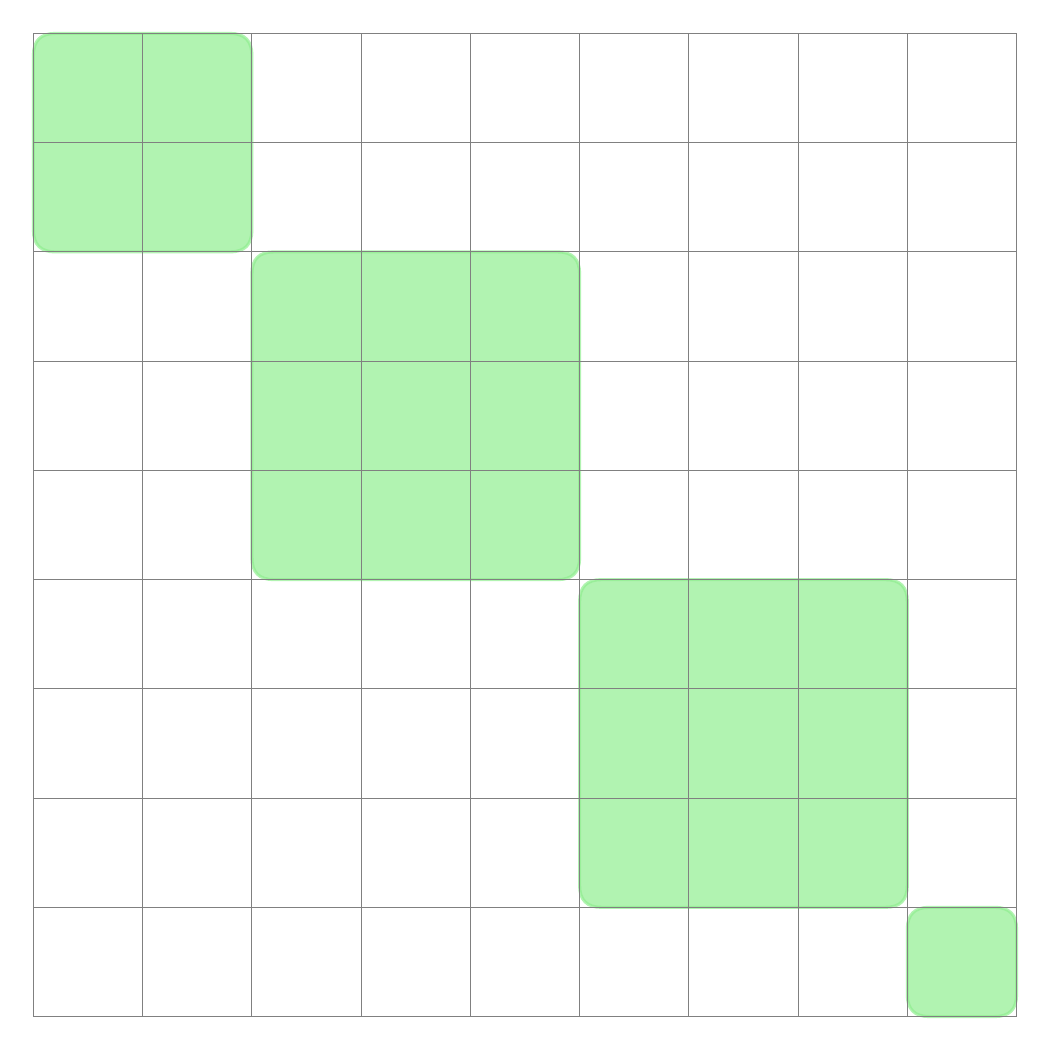}
  \end{minipage}

  \caption{
    Partitioning overlapping blocks into groups with disjoint blocks.
  }
  \label{fig:BlockPartition}
\end{figure}

To handle the overlapping blocks in the blockwise tridiagonal estimator, we introduce the following blocking lemma.

\begin{lemma}[Block Partition Lemma]
  \label{lem:BlockPartitionLemma}
  Let $A^{(1)},\dots,A^{(d+m)} \in \R^{d \times d}$ be matrices such that $A^{(k)}$ and $A^{(k')}$ have disjoint blocks if $\abs{k-k'} \geq m$.
  Then, we have
  \begin{equation}
    \norm{\sum_{k=1}^{d+m} A^{(k)}} \leq m \max_{k \in [d+m]} \norm{A^{(k)}}.
  \end{equation}
\end{lemma}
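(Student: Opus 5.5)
The plan is to partition the index set $[d+m]$ into $m$ residue classes modulo $m$ and apply the triangle inequality across these classes. For $s\in\{1,\dots,m\}$, let
\[
  \mca{C}_s=\{k\in[d+m]: k\equiv s \pmod m\}.
\]
Any two distinct indices $k,k'$ in the same class satisfy $\abs{k-k'}\geq m$. By hypothesis, $A^{(k)}$ and $A^{(k')}$ then have disjoint blocks. I read this as: each $A^{(k)}$ is supported on a block $I_k\times J_k$, and for $k\neq k'$ in the same class we have $I_k\cap I_{k'}=\emptyset$ and $J_k\cap J_{k'}=\emptyset$. This is the situation for the tridiagonal and L-shaped blocks in \cref{fig:BlockPartition}.

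Writing $S_s=\sum_{k\in\mca{C}_s}A^{(k)}$, the triangle inequality gives
\[
  \norm{\sum_k A^{(k)}}\leq\sum_{s=1}^m\norm{S_s}.
\]
It therefore suffices to show $\norm{S_s}=\max_{k\in\mca{C}_s}\norm{A^{(k)}}$, or at least $\leq$.

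For the key step, fix $s$ and unit vectors $u,v$. Because the row index sets $I_k$ are disjoint and the column index sets $J_k$ are disjoint within $\mca{C}_s$,
\[
  u^\T S_s v=\sum_{k\in\mca{C}_s}u_{I_k}^\T A^{(k)}v_{J_k}
  \leq\max_k\norm{A^{(k)}}\sum_k\norm{u_{I_k}}\norm{v_{J_k}}.
\]
By Cauchy--Schwarz and disjointness,
\[
  \sum_k\norm{u_{I_k}}\norm{v_{J_k}}\leq\Bigl(\sum_k\norm{u_{I_k}}^2\Bigr)^{1/2}\Bigl(\sum_k\norm{v_{J_k}}^2\Bigr)^{1/2}\leq1.
\]
Equivalently, after permuting rows and columns, $S_s$ is block diagonal, and the operator norm of a block-diagonal matrix is the maximum of the block norms. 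Combining the $m$ classes gives the claimed bound $m\max_k\norm{A^{(k)}}$.

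The main obstacle is interpretive rather than technical. "Disjoint blocks" must mean that both the row supports and the column supports are disjoint; disjointness of the entry sets alone would not suffice. For the L-shaped increments $\Gamma_{k,\ell}$, each piece lies inside a block $I\times J$ of the coarser grid, so I would apply the argument to these enclosing rectangles. Symmetric pairs such as $B$ and $B^\T$ can share rows with other blocks in the same class; I would handle them by treating the upper and lower triangular parts separately, which costs at most a factor of $2$ absorbed into $m$.
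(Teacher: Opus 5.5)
Your proposal is correct and follows essentially the same route as the paper. Both split the indices into the \(m\) residue classes \(\{s,s+m,s+2m,\dots\}\), note that within a class the sum is block diagonal after permutation so its operator norm is the maximal block norm, and apply the triangle inequality across the \(m\) classes. Your reading of ``disjoint blocks'' as disjointness of both the row and the column index sets is exactly what the paper's equality \(\norm{\sum_{i\in S_k}A^{(i)}}=\max_{i\in S_k}\norm{A^{(i)}}\) implicitly requires, and your Cauchy--Schwarz argument correctly fills in that step.
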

\begin{proof}
  See \cref{fig:BlockPartition}.
  We partition the set $\dk{1,\dots,d+m}$ into $m$ groups $S_k = \dk{ k, k+m, k+2m, \dots}$ for $k \in [m]$.
  Then, for each group $S_k$, the matrices $\dk{A^{(i)} : i \in S_k}$ have disjoint blocks,
  so we have
  \begin{equation*}
    \norm{\sum_{i \in S_k} A^{(i)}} = \max_{i \in S_k} \norm{A^{(i)}} \leq \max_{k \in [d+m]} \norm{A^{(k)}}.
  \end{equation*}
  Consequently, we have
  \begin{equation*}
    \norm{\sum_{k=1}^{d+m} A^{(k)}} = \norm{\sum_{k=1}^m \sum_{i \in S_k} A^{(i)}}
    \leq \sum_{k=1}^m \norm{\sum_{i \in S_k} A^{(i)}} \leq m \max_{k \in [d+m]} \norm{A^{(k)}}.
  \end{equation*}
\end{proof}

\begin{corollary}[Blockwise Tridiagonal Partition]
  \label{cor:TriDiagonalNormBound}
  Let \(\bigsqcup_l I_l=[d]\) and let
  \(A=\sum_{\abs{l-l'}\leq1}A_{l,l'}\), where \(A_{l,l'}\) is supported on \(I_l \times I_{l'}\).
  Then,
  \begin{equation}
    \norm{A} \leq 4 \max_{\abs{l-l'}\leq1} \norm{A_{l,l'}}.
  \end{equation}
\end{corollary}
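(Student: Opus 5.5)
We reduce the statement to \cref{lem:BlockPartitionLemma} by indexing the blocks appropriately. Write the index blocks as \(I_1,\dots,I_N\), listed in their natural order along \([d]\). The nonzero pieces of \(A\) are \(A_{l,l'}\) with \(\abs{l-l'}\leq1\), namely the diagonal blocks \(A_{l,l}\), the upper blocks \(A_{l,l+1}\) and the lower blocks \(A_{l+1,l}\). We assign each piece an integer label \(\kappa(l,l')=l+l'\), so that \(A_{l,l}\) receives the label \(2l\) and both \(A_{l,l+1}\) and \(A_{l+1,l}\) receive \(2l+1\). We then group pieces with the same label into \(A^{(\kappa)}=\sum_{l+l'=\kappa}A_{l,l'}\).

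For an even label \(\kappa=2l\), the matrix \(A^{(\kappa)}=A_{l,l}\) is a single block, so \(\norm{A^{(\kappa)}}=\norm{A_{l,l}}\). For an odd label \(\kappa=2l+1\), the two pieces \(A_{l,l+1}\) and \(A_{l+1,l}\) occupy disjoint row sets \(I_l,I_{l+1}\) and disjoint column sets \(I_{l+1},I_l\). Since their row and column index sets are disjoint, the operator norm of their sum equals the maximum of the two operator norms. It follows that
\[
\norm{A^{(\kappa)}}\leq\max_{\abs{l-l'}\leq1}\norm{A_{l,l'}}
\]
for every label \(\kappa\).

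The key step is to check the hypothesis of \cref{lem:BlockPartitionLemma}. We read ``disjoint blocks'' as meaning that the two matrices have disjoint row supports and disjoint column supports; this is exactly what the proof of that lemma uses to conclude that a group's norm is the maximum of the individual norms. The rows of \(A^{(\kappa)}\) lie in \(\bigcup\{I_l:l\in\{\lfloor\kappa/2\rfloor,\lceil\kappa/2\rceil\}\}\), and the same holds for its columns. If \(\abs{\kappa-\kappa'}\geq4\), then \(\abs{\lceil\kappa/2\rceil-\lfloor\kappa'/2\rfloor}\geq1\) in the appropriate direction. Hence the index ranges \(\{\lfloor\kappa/2\rfloor,\lceil\kappa/2\rceil\}\) and \(\{\lfloor\kappa'/2\rfloor,\lceil\kappa'/2\rceil\}\) are disjoint, and so are both the row supports and the column supports of \(A^{(\kappa)}\) and \(A^{(\kappa')}\). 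Therefore \(m=4\) works in \cref{lem:BlockPartitionLemma}.

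To apply the lemma literally we also need to match its indexing. The labels run over \(\kappa\in\{2,\dots,2N\}\), and we pad the list with zero matrices so that it has the form \(A^{(1)},\dots,A^{(d+m)}\); zero matrices trivially satisfy the disjointness condition. Applying the lemma, which sums over the \(m\) residue classes modulo \(m\), gives
\[
\norm{A}\leq4\max_{\kappa}\norm{A^{(\kappa)}}\leq4\max_{\abs{l-l'}\leq1}\norm{A_{l,l'}}.
\]

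The only point requiring care is the disjointness check for odd labels, where each group contains two pieces, and confirming that the separation \(4\) between labels really forces disjoint row and column sets. One could obtain a better constant (for instance \(3\)), but \(4\) is what the statement requires.
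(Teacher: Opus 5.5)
Your argument is correct and uses the same key tool as the paper, \cref{lem:BlockPartitionLemma}; the only difference is how the pieces are grouped.

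\textbf{The paper's grouping.} The paper takes \(A^{(l)}=A_{l,l}+A_{l,l+1}+A_{l+1,l}\), supported on \((I_l\cup I_{l+1})\times(I_l\cup I_{l+1})\). These matrices have disjoint blocks once \(\abs{l-l'}\geq2\), so \(m=2\). Each group costs \(\norm{A^{(l)}}\leq\norm{A_{l,l}}+\norm{A_{l,l+1}+A_{l+1,l}}\leq2\max_{\abs{i-j}\leq1}\norm{A_{i,j}}\). The off-diagonal pair contributes only one maximum, by the same disjoint-rows-and-columns fact you use. This gives the constant \(2\cdot2=4\).

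\textbf{Your grouping.} Your anti-diagonal labelling \(\kappa=l+l'\) makes every group cost only \(\max\norm{A_{l,l'}}\) and moves the price into the separation parameter. As you observe, \(m=3\) already suffices, since \(\lfloor(\kappa+3)/2\rfloor>\lceil\kappa/2\rceil\). So your route gives the sharper constant \(3\), while the paper's needs slightly less bookkeeping.

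\textbf{A cosmetic point.} The list length \(d+m\) in the lemma's statement is immaterial, because its proof works for any finite list of matrices. So you do not need to pad. Moreover, your labels can run up to \(2N\leq2d\), so padding to exactly \(d+4\) entries is not always possible anyway.
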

\begin{proof}
  We can take
  \begin{equation*}
    A^{(l)} = A_{l,l}+A_{l,l+1}+A_{l+1,l}.
  \end{equation*}
  Then they have disjoint blocks if $\abs{l-l'} \geq 2$, and we have
  \begin{equation*}
    \norm{A^{(l)}} \leq 2 \max_{\abs{i-j}\leq1} \norm{A_{i,j}}.
  \end{equation*}
  Applying \cref{lem:BlockPartitionLemma} with $m=2$ gives the desired result.
\end{proof}

\subsubsection{Proof of \cref{thm:DPCov_Frobenius}}
\label{subsec:Proof_DPCov_Frobenius}
Recall the blockwise tridiagonal region \(\mca{T}_k\) from
\cref{eq:BlockwiseTridiagonalRegion}.
Then, we have
\begin{align*}
  \norm{\hat{\Sigma}^{\mf{DP}} - \Sigma}_F^2 =
  \norm{\hat{\Sigma}^{\mf{DP}}_{\mca{T}_k} - \Sigma_{\mca{T}_k}}_F^2 + \norm{\Sigma_{\mca{T}_k^\complement}}_F^2.
\end{align*}
Using the hypothesis that $\Sigma \in \mca{H}_\alpha$, we have
\begin{equation*}
  \norm{\Sigma_{\mca{T}_k^\complement}}_F^2
  \lesssim \sum_{i,j:\abs{i-j} > k} \abs{i-j}^{-2(\alpha+1)} \lesssim d k^{-(2\alpha + 1)}.
\end{equation*}

On the other hand, applying \cref{lem:BlockExpectationBound}, we have
\begin{align*}
  \norm{\hat{\Sigma}^{\mf{DP}}_{\mca{T}_k} - \Sigma_{\mca{T}_k}}_F^2
  &= \sum_{l \in [N_k]} \norm{\hat{\Sigma}^{\mf{DP}}_{B_{k;l,0}} - \Sigma_{B_{k;l,0}}}_F^2
  + 2 \sum_{l < N_k} \norm{\hat{\Sigma}^{\mf{DP}}_{B_{k;l,1}} - \Sigma_{B_{k;l,1}}}_F^2 \\
  & \lesssim N_k \xk{\frac{k^2}{n} + \frac{k^4}{\rho_0 n^2} + k \exp(-2k)}  \\
  & \lesssim \frac{d k}{n} + \frac{d^2 k^2}{\rho n^2} + d \exp(-2k),
\end{align*}
where we recall that \( N_k = d/k \) and \( \rho_0 = \rho / N_k \).
Combining the two estimates, we have
\begin{equation*}
  \frac{1}{d}\E \norm{\hat{\Sigma}^{\mf{DP}} - \Sigma}_F^2
  \lesssim \frac{k}{n} + \frac{d k^2}{\rho n^2} + k^{-(2\alpha+1)},
\end{equation*}
where we note that the $\exp(-2k)$ is dominated by $k^{-(2\alpha+1)}$,
and the last term vanishes if $k \geq d/2$.

\subsubsection{Proof of \cref{thm:DPCov_Operator}}
\label{subsec:Proof_DPCov_Operator}
Let us use the same notation as in the proof of \cref{thm:DPCov_Frobenius}.
Still, we can write
\begin{equation*}
  \hat{\Sigma}^{\mf{DP}} - \Sigma = \hat{\Sigma}^{\mf{DP}}_{\mca{T}_k} - \Sigma_{\mca{T}_k} + \Sigma_{\mca{T}_k^\complement},
\end{equation*}
so
\begin{equation*}
  \norm{\hat{\Sigma}^{\mf{DP}} - \Sigma}
  \leq \norm{\hat{\Sigma}^{\mf{DP}}_{\mca{T}_k} - \Sigma_{\mca{T}_k}} + \norm{\Sigma_{\mca{T}_k^\complement}}.
\end{equation*}

For the first term, recalling the blockwise tridiagonal structure and applying \cref{cor:TriDiagonalNormBound},
we get
\begin{align*}
  \norm{\hat{\Sigma}^{\mf{DP}}_{\mca{T}_k} - \Sigma_{\mca{T}_k}}
  &\leq
  4\max\xk{
    \max_{l\leq N_k} \norm{\hat{\Sigma}^{\mf{DP}}_{B_{k;l,0}}-\Sigma_{B_{k;l,0}}},
    \max_{l<N_k} \norm{\hat{\Sigma}^{\mf{DP}}_{B_{k;l,1}}-\Sigma_{B_{k;l,1}}}
  },
\end{align*}
Let us use \cref{lem:BlockBound} with union bound over all blocks (at most $d$) in the band and denote the event by $E$,
which holds with probability at least $1 - C d^{-9} \exp(-k)$.
Under the event $E$, we have
\begin{align*}
  \max_{\substack{r\in\{0,1\}\\l+r\leq N_k}}
  \norm{\hat{\Sigma}^{\mf{DP}}_{B_{k;l,r}}-\Sigma_{B_{k;l,r}}}^2
  &\lesssim
  \frac{k+\log d}{n}
  +\frac{k^2(k+\log d)}{\rho_0 n^2}
  +\exp(-2k).
\end{align*}
Combining the above bound with \cref{prop:Proof_OffBandBound}, and using \( (a+b)^2 \lesssim a^2 + b^2 \), we get on the event \( E \)
\begin{equation*}
  \norm{\hat{\Sigma}^{\mf{DP}} - \Sigma}^2
  \lesssim \frac{k + \log d}{n} + \frac{d k (k + \log d)}{\rho n^2} + k^{-2\alpha}.
\end{equation*}

It remains to bound the expectation on the event $E^\complement$.
Using the Cauchy--Schwarz inequality, we have
\begin{equation*}
  \E \zk{\bm{1}_{E^\complement} \norm{\hat{\Sigma}^{\mf{DP}} - \Sigma}^2}
  \leq \sqrt{\bbP(E^\complement)} \sqrt{\E \norm{\hat{\Sigma}^{\mf{DP}} - \Sigma}^4}.
\end{equation*}
For the last term, we can use
\begin{align*}
  \E \norm{\hat{\Sigma}^{\mf{DP}} - \Sigma}^4 \lesssim \E \norm{\hat{\Sigma}^{\mf{DP}}}^4 + \norm{\Sigma}^4 \lesssim 1 + \E \norm{\hat{\Sigma}^{\mf{DP}}}^4.
\end{align*}
Now, we bound the last term very roughly by
\begin{align*}
  \norm{\hat{\Sigma}^{\mf{DP}}}^4
  \leq \norm{\hat{\Sigma}^{\mf{DP}}}^4_F
  =
  \left\{
    \sum_{l\leq N_k} \norm{\hat{\Sigma}^{\mf{DP}}_{B_{k;l,0}}}_F^2
    +
    2\sum_{l<N_k} \norm{\hat{\Sigma}^{\mf{DP}}_{B_{k;l,1}}}_F^2
  \right\}^2
  \lesssim
  d\sum_{\substack{r\in\{0,1\}\\l+r\leq N_k}}
  \norm{\hat{\Sigma}^{\mf{DP}}_{B_{k;l,r}}}_F^4,
\end{align*}
so with \cref{prop:BlockRoughBound}, we have
\begin{equation*}
  \E  \norm{\hat{\Sigma}^{\mf{DP}}}^4
  \lesssim
  d\sum_{\substack{r\in\{0,1\}\\l+r\leq N_k}}
  \E\norm{\hat{\Sigma}^{\mf{DP}}_{B_{k;l,r}}}_F^4
  \lesssim d^2 k^4(1 + \sigma_B^4)
  \lesssim d^6 \zk{1 + \xk{\frac{kd}{\rho n^2}}^2}.
\end{equation*}
Consequently, we have
\begin{align*}
  \E \zk{\bm{1}_{E^\complement} \norm{\hat{\Sigma}^{\mf{DP}} - \Sigma}^2}
  &\lesssim
  \left\{
    d^{-9} e^{-k}\,d^2 k^4
    \left[1+\left(\frac{kd}{\rho n^2}\right)^2\right]
  \right\}^{1/2}\\
  &\lesssim
  d^{-3/2} e^{-k/2}
  \left(1+\frac{kd}{\rho n^2}\right),
\end{align*}
which is negligible compared to the previous bound.
Therefore, the theorem follows.

\paragraph{Bounding the Off-Band Terms}

\begin{proposition}
  \label{prop:Proof_OffBandBound}
  Let \(\Sigma\in\mca{F}_\alpha\), and let \(\mca{T}_k\) be the blockwise tridiagonal region in \cref{eq:BlockwiseTridiagonalRegion}.
  Then,
  \begin{equation*}
    \norm{\Sigma_{\mca{T}_k^\complement}}^2 \lesssim k^{-2\alpha} \ind{k < d/2}.
  \end{equation*}
\end{proposition}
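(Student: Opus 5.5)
The plan is to decompose the complement \(\mca{T}_k^\complement\) into \(k\)-off-diagonal blocks and bound each of them with the defining inequality \eqref{eq:Cov_Bandable_Norm} of \(\mca{F}_\alpha\).

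\textbf{Trivial case.} If \(k\geq d/2\), then \(N_k\leq 2\). Hence \(\mca{T}_k=[d]^2\), the complement is empty, and the bound holds with the indicator equal to zero.

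\textbf{Decomposition for \(k<d/2\).} By symmetry it suffices to treat the upper-triangular part \(U=\mca{T}_k^\complement\cap\{j>i\}\), since
\[
\norm{\Sigma_{\mca{T}_k^\complement}}\leq\norm{\Sigma_U}+\norm{\Sigma_{U^\T}}=2\norm{\Sigma_U}.
\]
In block coordinates, \(U\) is the union of the blocks \(B_{k;\ell,r}\) with \(r\geq2\). For each \(\ell\in[N_k-2]\), define the row strip
\[
R_\ell=I_{k,\ell}\times\bigl[(\ell+1)k+1,\,d\bigr]=\bigsqcup_{r\geq2}B_{k;\ell,r}.
\]
Each \(R_\ell\) is a single rectangular block. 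Every entry \((i,j)\in R_\ell\) satisfies \(j-i\geq k+1\), so \(R_\ell\) is \(k\)-off-diagonal and \eqref{eq:Cov_Bandable_Norm} gives \(\norm{\Sigma_{R_\ell}}\leq Mk^{-\alpha}\). The strips have disjoint row sets, but their column sets overlap. As a result, \(\Sigma_U=\sum_\ell\Sigma_{R_\ell}\) is not block-disjoint, and \cref{lem:BlockPartitionLemma} cannot be applied directly.

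\textbf{Handling the overlap (main obstacle).} Naive summation over the strips loses a factor of \(N_k\), so a different grouping is needed. I would split the index set \([N_k]\) into consecutive groups of length \(k\)-blocks in a staircase pattern. Concretely, I would write \(U\) as a union of \(O(1)\) families, each consisting of block-disjoint \(k\)-off-diagonal rectangles of the form
\[
\bigl[1+(\ell-1)k,\ \ell k\bigr]\times\bigl[(\ell+1)k+1,\ (\ell+2)k\bigr]
\]
together with larger "corner" rectangles. This only partly works.

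A cleaner route uses the single-rectangle corners directly. For each \(\ell\), let \(C_\ell=[1,\ell k]\times[(\ell+1)k+1,d]\). Then \(C_\ell\) is \(k\)-off-diagonal, so \(\norm{\Sigma_{C_\ell}}\leq Mk^{-\alpha}\), and \(U\) is a "staircase" union of these corners. Restricting to dyadic subsets of strips reduces the problem to bounding a staircase (Toeplitz-like) region by its maximal rectangles. The standard approach here is a dyadic (Whitney-type) decomposition of the staircase into squares \(I\times J\) with \(|I|=|J|=2^sk\) at distance \(\gtrsim 2^sk\) from the diagonal. Within each dyadic level \(s\), these squares are block-disjoint up to a constant overlap, so \cref{lem:BlockPartitionLemma} yields a norm bound of \(\lesssim M(2^sk)^{-\alpha}\) for that level.

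\textbf{Conclusion.} Summing the level bounds over \(s\geq0\) gives a geometric series,
\[
\sum_{s\geq0}M(2^sk)^{-\alpha}\lesssim_\alpha Mk^{-\alpha}.
\]
Squaring this bound and using \(\norm{\Sigma_{\mca{T}_k^\complement}}\leq2\norm{\Sigma_U}\) completes the proof. The step I expect to be delicate is verifying that the Whitney squares at each level fit inside \(U\), are \(2^{s}k\)-off-diagonal, and have bounded overlap count. This uses the nesting in \eqref{eq:DyadicTridiagonalIncrement} together with the three-square decomposition of each \(\Gamma_{k,\ell}\) into \(\mca{B}_k\).
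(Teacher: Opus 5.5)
Your final route is the paper's proof. You decompose the off-band region into the dyadic shells \(\mca{T}_{2^{s+1}k}\setminus\mca{T}_{2^sk}\), built from the side-\(2^sk\) squares of \(\mca{B}_{2^sk}\), each of which is \(2^sk\)-off-diagonal. You bound each shell by a constant times the maximal block norm via the block-partition lemma, and then sum a geometric series. The paper does the same, except that it groups each level's squares into the L-shaped increments \(\Gamma_{m,l}\) (each a union of two \(k_{m-1}\)-off-diagonal blocks) and applies \cref{cor:TriDiagonalNormBound}. The earlier strip and corner attempts are unnecessary detours, and the step you flag as delicate follows directly from the offsets \(r\in\{2,3\}\) in the three-square decomposition of \(\Gamma_{k,\ell}\).
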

\begin{proof}
  When \(k\geq d/2\), we have \(\mca{T}_k^\complement=\emptyset\), so the result is immediate.
  Use the dyadic construction in \cref{subsec:DyadicBlockStructure} with \(k_0=k\) and \(M=\lceil\log_2(d/k)\rceil\).
  Then
  \begin{equation*}
    \mca{T}_k^\complement
    =
    \bigsqcup_{m=1}^{M}
    \left(\mca{T}_{k_m} \setminus\mca{T}_{k_{m-1}}\right),
  \end{equation*}
  where each increment has a blockwise tridiagonal incidence pattern.
  Since every \(\Gamma_{m,l}\) consists of two \(k_{m-1}\)-off-diagonal blocks, the definition of \(\mca{F}_\alpha\) gives
  \begin{equation*}
    \norm{\Sigma[\Gamma_{m,l}]}
    \leq
    2k_{m-1}^{-\alpha}
    \lesssim
    2^{-\alpha m} k^{-\alpha}.
  \end{equation*}
  Combining this with \cref{cor:TriDiagonalNormBound}, we get
  \begin{equation*}
    \norm{\Sigma[\mca{T}_{k_m} \setminus\mca{T}_{k_{m-1}}]}
    \leq
    4\max_l \norm{\Sigma[\Gamma_{m,l}]}
    \lesssim
    2^{-\alpha m} k^{-\alpha}.
  \end{equation*}
  Summing over $m$ yields
  \begin{align*}
    \norm{\Sigma_{\mca{T}_k^\complement}}
    &\leq
    \sum_{m=1}^M
    \norm{\Sigma[\mca{T}_{k_m} \setminus\mca{T}_{k_{m-1}}]}
    \lesssim
    k^{-\alpha} \sum_{m=1}^M2^{-\alpha m}
    \lesssim
    k^{-\alpha}.
  \end{align*}
\end{proof}

\subsubsection{Schatten norms}

The operator-norm upper bound also yields a consequence for the Schatten-\(q\) norm when \(q\geq2\).
Since
\[
  \normsch{A}^2 \leq d^{2/q} \norm{A}^2
\]
for every \(A\in\R^{d\times d}\), we obtain the following result.

\begin{corollary}[Schatten-norm consequence over the separated-block class]
  \label{cor:DPCov_F_Schatten}
  Let \(q\geq2\) and suppose that the conditions of \cref{cor:DPCov_Operator} hold.
  The estimator \(\hat{\Sigma}_{\mca{F},\mathrm{op}}\) in \cref{cor:DPCov_Operator} satisfies
  \(\sup_{\Sigma\in\mca{F}_\alpha}d^{-2/q} \E\normsch{\hat{\Sigma}_{\mca{F},\mathrm{op}}-\Sigma}^2 \lesssim n^{-2\alpha/(2\alpha+1)}+\left(d/(\rho n^2)\right)^{\alpha/(\alpha+1)}\).
  In particular, \(q=2\) gives the structured Frobenius rate over \(\mca{F}_\alpha\).
\end{corollary}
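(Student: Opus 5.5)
The corollary follows directly from \cref{cor:DPCov_Operator} and the elementary comparison between Schatten norms and the operator norm.

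First, for any \(A\in\R^{d\times d}\) with singular values \(\sigma_1(A)\ge\dots\ge\sigma_d(A)\ge0\) and any \(q\in[2,\infty)\), we have
\[
\normsch{A}^q=\sum_{i=1}^d\sigma_i(A)^q\le d\,\norm{A}^q .
\]
Taking \(2/q\)-th powers gives \(\normsch{A}^2\le d^{2/q}\norm{A}^2\). For \(q=\infty\) this is an equality with \(d^{0}=1\).

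Second, apply this pointwise with \(A=\hat{\Sigma}_{\mca{F},\mathrm{op}}-\Sigma\) and take expectations:
\[
d^{-2/q}\,\E\normsch{\hat{\Sigma}_{\mca{F},\mathrm{op}}-\Sigma}^2\le\E\norm{\hat{\Sigma}_{\mca{F},\mathrm{op}}-\Sigma}^2 .
\]
By \cref{cor:DPCov_Operator}, the right-hand side is \(\lesssim n^{-2\alpha/(2\alpha+1)}+(d/(\rho n^2))^{\alpha/(\alpha+1)}\). This holds for every \(\Sigma\in\mca{F}_\alpha\), since that corollary's bound comes from \cref{thm:DPCov_Operator}. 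In that theorem the implicit constants depend only on \(\alpha,K,M_0,M\) and not on the particular \(\Sigma\): the off-band bound \cref{prop:Proof_OffBandBound} uses only the class constraint, and the block concentration bounds are uniform over the model. Hence we may take the supremum over \(\Sigma\in\mca{F}_\alpha\).

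Finally, specializing to \(q=2\) gives \(d^{-1}\normsch[2]{\cdot}^2=d^{-1}\norm{\cdot}_F^2\), which is the normalized Frobenius rate over \(\mca{F}_\alpha\).

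The only step that needs any care is checking that the bound in \cref{cor:DPCov_Operator} is uniform over the class, so that passing to the supremum is legitimate; everything else is immediate. No new probabilistic estimate is required.
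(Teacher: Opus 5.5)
Your proposal is correct and follows the paper's own argument: the paper likewise derives the corollary from the pointwise inequality \(\normsch{A}^2\le d^{2/q}\norm{A}^2\) combined with the operator-norm bound of \cref{cor:DPCov_Operator}. Your explicit check that the constants in \cref{thm:DPCov_Operator} are uniform over \(\mca{F}_\alpha\), which justifies taking the supremum, is a reasonable addition that the paper leaves implicit.
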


\subsection{Adaptive upper bounds}
\label{sec:ProofAda}

Set $\hat{\Sigma}=\hat{\Sigma}^{\mf{Ada}}$.

\begin{lemma}[Block Thresholding]
  \label{lem:BlockThresholding}
  Fix \(\Gamma_{m,l}\) and suppose that
  \(k_m+\log d\leq c_0 n\) for a sufficiently small \(c_0>0\).
  Then, with probability at least $1-Cd^{-10}\exp(-k_m)$,
  \begin{equation}
    \norm{\hat{\Sigma}[\Gamma_{m,l}] - \Sigma[\Gamma_{m,l}]} \lesssim \min(\norm{\Sigma[\Gamma_{m,l}]}, \tau_m),
  \end{equation}
  where
  \begin{equation*}
    \tau_m^2 = L_1 \xk{\frac{k_m + \log d}{n} + \frac{k_m^2 (k_m + \log d)}{\rho_m n^2} + \exp(-2k_m)}.
  \end{equation*}
\end{lemma}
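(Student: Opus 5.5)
The plan is to reduce the statement to a deterministic consequence of a single concentration event for the released increment $A$. Let $\widehat{B}=\mf{DPCovBlock}(X,B_{k_m;l,1};\rho_m,\sqrt{Ck_m})$, so that $A=\widehat{B}[\Gamma_{m,l}]$. Note that $\Gamma_{m,l}\subseteq B_{k_m;l,1}$, and $\Gamma_{m,l}$ splits into a bounded number of square blocks of size $k_{m-1}$, since it is the union of $B_{k_{m-1};2l-1,2}$, $B_{k_{m-1};2l-1,3}$ and $B_{k_{m-1};2l,2}$. Restricting to these sub-blocks and applying the triangle inequality therefore changes operator norms by at most a factor $3$. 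Consequently
\[
\norm{A-\Sigma[\Gamma_{m,l}]}\le 3\norm{\widehat{B}-\Sigma_{B_{k_m;l,1}}}.
\]
Apply \cref{lem:BlockBound} to the block $B_{k_m;l,1}$ of size $k_m$ with budget $\rho_m$; its hypothesis $k_m+\log d\le cn$ is the assumed one. This gives an event $E$ of probability at least $1-Cd^{-10}e^{-k_m}$ on which
\[
\norm{A-\Sigma[\Gamma_{m,l}]}\le C_1\tau_m/\sqrt{L_1}\le\tau_m/2,
\]
provided $L_1$ is chosen large relative to the absolute constant $C_1$.

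On $E$, do a case analysis on the threshold indicator. If $\norm{A}>\tau_m$, then $\hat\Sigma[\Gamma_{m,l}]=A$ and the error is at most $\tau_m/2$. Moreover, $\norm{\Sigma[\Gamma_{m,l}]}\ge\norm{A}-\tau_m/2>\tau_m/2$, so the error is also at most $\norm{\Sigma[\Gamma_{m,l}]}$. The error is thus $\lesssim\min(\norm{\Sigma[\Gamma_{m,l}]},\tau_m)$. If instead $\norm{A}\le\tau_m$, then $\hat\Sigma[\Gamma_{m,l}]=0$ and the error equals $\norm{\Sigma[\Gamma_{m,l}]}$. This is trivially bounded by itself, and also satisfies $\norm{\Sigma[\Gamma_{m,l}]}\le\norm{A}+\tau_m/2\le 3\tau_m/2$. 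Again the error is $\lesssim\min(\norm{\Sigma[\Gamma_{m,l}]},\tau_m)$. Combining the two cases gives the claim with an absolute implicit constant.

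The only delicate point is the bookkeeping in the first step. One must check that \cref{lem:BlockBound} is applied with the correct budget $\rho_m$ in place of $\rho_0$, so that its $\tau^2$ matches the stated $\tau_m^2$ up to the factor $L_1$. One must also confirm the clipping radius requirement: the radius $\sqrt{Ck_m}$ must be of the form $R_{k_m}$ with a large enough constant, which is absorbed into the constant $C$ of the algorithm. Finally, the restriction to $\Gamma_{m,l}$ must not inflate the error beyond a constant factor, which the decomposition into three square sub-blocks handles. The exponential truncation term $\exp(-2k_m)$ is already part of $\tau_m^2$, so no further care is needed there.
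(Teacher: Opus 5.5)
Your proposal is correct and matches the paper's proof: apply Lemma~\ref{lem:BlockBound} to $B_{k_m;l,1}$ with budget $\rho_m$, transfer the bound to $\Gamma_{m,l}$ at a constant-factor cost, then do the same two-case threshold analysis. The only difference is cosmetic. The paper writes $\Gamma_{m,l}=B_{k_m;l,1}\setminus B_{k_{m-1};2l,1}$ to get a factor $2$, while your three-sub-block decomposition gives a factor $3$, which is absorbed by taking $L_1$ large.
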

\begin{proof}
  Let
  \[
    A_0
    =
    \mf{DPCovBlock}
    \left(
      X,B_{k_m;l,1};
      \rho_m,\sqrt{Ck_m}
    \right)
  \]
  and
  \(A=A_0[\Gamma_{m,l}]\).
  The algorithm sets
  \(\hat{\Sigma}[\Gamma_{m,l}]=A\ind{\norm{A}>\tau_m}\).
  Denote \(B=B_{k_m;l,1}\) and \(\Gamma=\Gamma_{m,l}\).
  Using \cref{lem:BlockBound}, we have
  \( \norm{A_0- \Sigma_B} \leq \frac{1}{4}\tau_m \) with probability at least $1 - C d^{-10} \exp(-k_m)$ as long as we take $L_1$ large enough.
  Since \(\Gamma_{m,l}=B_{k_m;l,1} \setminus B_{k_{m-1};2l,1}\),
  for any matrix \(M\),
  \begin{equation*}
    \norm{M[\Gamma_{m,l}]}
    \leq
    \norm{M[B_{k_m;l,1}]}
    +
    \norm{M[B_{k_{m-1};2l,1}]}
    \leq
    2\norm{M[B_{k_m;l,1}]}.
  \end{equation*}
  Hence,
  \begin{equation*}
    \norm{A - \Sigma_\Gamma} \leq 2 \norm{A_0 - \Sigma_B} \leq \frac{1}{2}\tau_m.
  \end{equation*}
  Consequently,
  \begin{equation*}
    \norm{A} > \tau_m \implies \norm{\Sigma_\Gamma} > \frac{1}{2}\tau_m,\quad \text{and} \quad
    \norm{A} \leq \tau_m \implies \norm{\Sigma_\Gamma} \leq \frac{3}{2}\tau_m.
  \end{equation*}
  Therefore, if \( \norm{A} > \tau_m \), we have
  \begin{equation*}
    \norm{A\cdot \ind{\norm{A} > \tau_m} - \Sigma_\Gamma} = \norm{A - \Sigma_\Gamma} \leq \frac{1}{2}\tau_m \leq \norm{\Sigma_\Gamma}.
  \end{equation*}
  On the other hand, if \( \norm{A} \leq \tau_m \), we have
  \begin{equation*}
    \norm{A\cdot \ind{\norm{A} > \tau_m} - \Sigma_\Gamma} = \norm{\Sigma_\Gamma} \leq \frac{3}{2}\tau_m.
  \end{equation*}
  Thus,
  \begin{equation*}
    \norm{A\cdot \ind{\norm{A} > \tau_m} - \Sigma_\Gamma} \leq \min(\norm{\Sigma_\Gamma}, 2\tau_m).
  \end{equation*}

\end{proof}

\subsubsection{Proof of \cref{thm:DP_Cov_Adaptive}}

Let the largest retained blockwise tridiagonal region be \(\mca{T}_{k_{M-1}}\).
Apply a union bound to the initial blocks \(B_{k_0;l,0}\) and \(B_{k_0;l,1}\) using \cref{lem:BlockBound}, and to the increments \(\Gamma_{m,l}\) using \cref{lem:BlockThresholding}; denote the resulting event by \(E\),
which satisfies $\bbP(E) \geq 1 - C d^{-8} \exp(-k_0)$.

On $E$, decompose
\begin{equation*}
  \norm{ \hat{\Sigma} - \Sigma}
  \leq \norm{\hat{\Sigma}_{\mca{T}_{k_{M-1}}} - \Sigma_{\mca{T}_{k_{M-1}}}} + \norm{\Sigma_{\mca{T}_{k_{M-1}}^\complement}}.
\end{equation*}
The second term is bounded by \cref{prop:Proof_OffBandBound}, which gives
\begin{equation}
  \label{eq:Proof_Adaptive_OffBandBound}
  \norm{\Sigma_{\mca{T}_{k_{M-1}}^\complement}}\lesssim k_{M-1}^{-\alpha} \ind{k_{M} < d}.
\end{equation}

By iterating \cref{eq:DyadicTridiagonalIncrement}, we decompose
\begin{equation*}
  \mca{T}_{k_{M-1}}
  =
  \mca{T}_{k_0}
  \sqcup
  \bigsqcup_{m=1}^{M-1}
  \left(\mca{T}_{k_m} \setminus\mca{T}_{k_{m-1}}\right),
\end{equation*}
Both \(\mca{T}_{k_0}\) and every increment have blockwise tridiagonal incidence patterns.
Decomposing the terms yields
\begin{equation}
  \label{eq:Proof_Adaptive_GammaBlockDecomp}
  \norm{\hat{\Sigma}_{\mca{T}_{k_{M-1}}} - \Sigma_{\mca{T}_{k_{M-1}}}}
  \leq
  \norm{\hat{\Sigma}_{\mca{T}_{k_0}} - \Sigma_{\mca{T}_{k_0}}}
  +
  \sum_{m=1}^{M-1}
  \norm{
    \hat{\Sigma}_{\mca{T}_{k_m} \setminus\mca{T}_{k_{m-1}}}
    -
    \Sigma_{\mca{T}_{k_m} \setminus\mca{T}_{k_{m-1}}}
  }.
\end{equation}
\cref{cor:TriDiagonalNormBound} and \cref{lem:BlockBound} yield
\begin{align*}
  \norm{\hat{\Sigma}_{\mca{T}_{k_0}} - \Sigma_{\mca{T}_{k_0}}}
  \lesssim \tau_0 = \xk{\frac{k_0 + \log d}{n}}^{1/2} + \xk{\frac{k_0^2 (k_0 + \log d)}{\rho_0 n^2}}^{1/2} + \exp(-k_0).
\end{align*}
Similarly, \cref{cor:TriDiagonalNormBound} and \cref{lem:BlockThresholding} yield
\begin{equation}
  \norm{
    \hat{\Sigma}_{\mca{T}_{k_m} \setminus\mca{T}_{k_{m-1}}}
    -
    \Sigma_{\mca{T}_{k_m} \setminus\mca{T}_{k_{m-1}}}
  }
  \lesssim \max_{l<N_m} \norm{\hat{\Sigma}[\Gamma_{m,l}] - \Sigma[\Gamma_{m,l}]}
  \lesssim \max_{l<N_m} \min(\norm{\Sigma[\Gamma_{m,l}]}, \tau_m).
\end{equation}
Since \(\Sigma\in\mca{F}_\alpha\) and \(\Gamma_{m,l}\) is the union of two \(k_{m-1}\)-off-diagonal blocks, we have
\begin{equation*}
  \max_{l<N_m} \norm{\Sigma[\Gamma_{m,l}]}
  \lesssim
  k_{m-1}^{-\alpha}
  \lesssim
  2^{-\alpha m} k_0^{-\alpha}.
\end{equation*}
Since \(k_0 \asymp\log n\), \(k_0+\log d\asymp k_0\),
\(k_m=2^mk_0\), \(N_m\lesssim d/k_m\), and
\(\rho_m=\rho/(MN_m)\), we have
\begin{align*}
  \tau_m &\lesssim \xk{\frac{k_m + \log d}{n} + \frac{M d k_m (k_m + \log d)}{\rho n^2} + \exp(-2k_m)}^{1/2} \\
  & \lesssim \frac{1}{\sqrt{n}} k_m^{\hf} + \xk{\frac{M d}{ \rho n^2}}^{\hf} k_m + \exp(-k_m) \\
  & \lesssim \frac{\sqrt{k_0}}{\sqrt{n}} 2^{m/2} + \xk{\frac{M d k_0^2}{\rho n^2}}^{\hf} 2^m.
\end{align*}
Plugging the two bounds into \cref{eq:Proof_Adaptive_GammaBlockDecomp}, we obtain
\begin{equation*}
  \norm{\hat{\Sigma}_{\mca{T}_{k_{M-1}}} - \Sigma_{\mca{T}_{k_{M-1}}}}
  \lesssim \tau_0 + \sum_{m=1}^{M-1} \min(2^{-\alpha m} k_0^{-\alpha}, \tau_m)
\end{equation*}
The bias term decreases with $m$, whereas $\tau_m$ increases.
Splitting the sum at some $0 \leq m_* \leq M-1$, we have
\begin{align*}
  \norm{\hat{\Sigma}_{\mca{T}_{k_{M-1}}} - \Sigma_{\mca{T}_{k_{M-1}}}} &
  \lesssim \sum_{0 \leq m \leq m_*} \tau_m + \sum_{m_* < m \leq M} k_0^{-\alpha} 2^{-\alpha m} \\
  & \lesssim \frac{\sqrt{k_0}}{\sqrt{n}} 2^{m_*/2} + \xk{\frac{M d k_0^2}{\rho n^2}}^{\hf} 2^{m_*} + k_0^{-\alpha} 2^{-\alpha m_*} \ind{m_* \leq M}.
\end{align*}
For some $k \geq k_0$, taking $m_* = \max \dk{m < M : k_m \leq k}$ so that $k_0 2^{m_*} \asymp k$, we can write
\begin{equation}
  \label{eq:Proof_Adaptive_BandBound}
  \norm{\hat{\Sigma}_{\mca{T}_{k_{M-1}}} - \Sigma_{\mca{T}_{k_{M-1}}}}
  \lesssim \sqrt{\frac{k}{n}} + \xk{\frac{M d k^2}{ \rho n^2}}^{\hf} + k^{-\alpha} \ind{k \leq k_{M-1}}.
\end{equation}
Combining \cref{eq:Proof_Adaptive_OffBandBound} and
\cref{eq:Proof_Adaptive_BandBound} gives, on $E$,
\begin{equation}
  \label{eq:Proof_Ada_MainBound}
  \E \zk{\bm{1}_E \norm{ \hat{\Sigma} - \Sigma}^2} \lesssim \frac{k}{n} + \frac{M d k^2}{\rho n^2}
  + k^{-2\alpha} \cdot \ind{k \leq k_{M-1}} + k_{M-1}^{-2\alpha} \cdot \ind{k_{M} < d}
  \quad  \forall k \geq k_0.
\end{equation}

To optimize \cref{eq:Proof_Ada_MainBound}, use
$k_0 \asymp \log n$, $k_{M-1} \asymp \min(d,n)$, and
\(M \lesssim \log n \wedge \log d \lesssim \log n\).
The assumptions \(d\gtrsim\log n\) and
\(\rho n^2/d\gtrsim(\log n)^{2\alpha+3}\) ensure that each candidate
scale in the optimization is at least of order \(k_0\).
Taking
\begin{equation*}
  k \asymp \min \xk{ n^{\frac{1}{2\alpha+1}}, \xk{\frac{\rho n^2}{M d}}^{\frac{1}{2\alpha+2}}, d},
\end{equation*}
we find that
\begin{align*}
  \E \zk{\bm{1}_E \norm{ \hat{\Sigma} - \Sigma}^2}
  &\lesssim n^{-\frac{2\alpha}{2\alpha+1}}\wedge \frac{d}{n} +
  \xk{\frac{d \log n}{\rho n^2  }}^{\frac{\alpha}{\alpha+1}} \wedge \frac{d^3 \log n}{\rho n^2}
\end{align*}

It remains to control the exceptional event.
By Cauchy--Schwarz,
\begin{equation*}
  \E \zk{\bm{1}_{E^\complement} \norm{\hat{\Sigma} - \Sigma}^2}
  \leq \zk{\bbP(E^\complement) \E \norm{\hat{\Sigma} - \Sigma}^4}^{\hf}
  \lesssim \zk{d^{-8} \exp(-k_0) \E \norm{\hat{\Sigma} - \Sigma}^4}^{\hf}.
\end{equation*}
Moreover,
\begin{equation*}
  \E \norm{\hat{\Sigma} - \Sigma}^4 \lesssim \E \norm{\hat{\Sigma}}^4 + \norm{\Sigma}^4 \lesssim 1 + \E \norm{\hat{\Sigma}}_F^4.
\end{equation*}
Decomposing $\hat{\Sigma}$ into blocks and using \cref{prop:BlockRoughBound} gives
\begin{align*}
  \norm{\hat{\Sigma}}_F^4
  & \lesssim d^2 {\sum_{B} \norm{\hat{\Sigma}_{B}}_F^4}
  \lesssim d^2 \sum_B \zk{1 + \xk{\frac{M d^3}{\rho n^2}}^2} \\
  & \lesssim d^4 \zk{1 + \xk{\frac{M d^3}{\rho n^2}}^2},
\end{align*}
so
\begin{equation*}
  \E \zk{\bm{1}_{E^\complement} \norm{\hat{\Sigma} - \Sigma}^2}
  \lesssim \exp(-k_0/2) \xk{1 + {\frac{M d}{\rho n^2}}}
  \lesssim \frac{1}{n} \xk{1 + {\frac{M d}{\rho n^2}}},
\end{equation*}
which is dominated by the main-event rate.
Combining the two events concludes the proof.

\subsubsection{Proof of \cref{thm:DP_Cov_Adaptive_Frob}}

Since \(M\lesssim d\), the standing regime gives
\(M/(\rho n^2)=o(1)\).

\begin{lemma}[Frobenius Thresholding]
  \label{lem:BlockThresholding_Frob}
  Under the same setting as in \cref{lem:BlockThresholding}, we have
  \begin{equation}
    \E \norm{\hat{\Sigma}[\Gamma_{m,l}] - \Sigma[\Gamma_{m,l}]}^2_{F}
    \lesssim \min(\norm{\Sigma[\Gamma_{m,l}]}^2_F, k_m \tau_m^2) + d^{-2} \exp(-k_m/4),
  \end{equation}
  and
  \begin{equation}
    \E \norm{\hat{\Sigma}[B] - \Sigma[B]}_F^2
    \lesssim k_0 \tau_0^2 + d^{-2} \exp(-k_0/4)
    \qquad
    \forall B\in\left\{B_{k_0;l,0},B_{k_0;l,1} \right\}.
  \end{equation}
\end{lemma}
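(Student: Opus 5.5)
We split on the event \(E\) from \cref{lem:BlockThresholding}, on which the released increment is accurate, and on its complement. Let \(A_0=\mf{DPCovBlock}(X,B_{k_m;l,1};\rho_m,\sqrt{Ck_m})\), \(A=A_0[\Gamma]\) with \(\Gamma=\Gamma_{m,l}\), \(B=B_{k_m;l,1}\), and \(\hat\Sigma[\Gamma]=A\ind{\norm{A}_F^2>k_m\tau_m^2}\).

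\textbf{Good event.} By \cref{lem:BlockBound} and the argument of \cref{lem:BlockThresholding}, there is an event \(E\) with \(\bbP(E^\complement)\le Cd^{-10}e^{-k_m}\) on which \(\norm{A-\Sigma_\Gamma}\le\tau_m/2\). Since \(A-\Sigma_\Gamma\) has rank at most \(2k_m\), this gives
\[
\norm{A-\Sigma_\Gamma}_F^2\le 2k_m\norm{A-\Sigma_\Gamma}^2\le k_m\tau_m^2/2 .
\]
Write \(\delta=\norm{A-\Sigma_\Gamma}_F\le\sqrt{k_m/2}\,\tau_m\) and \(s=\sqrt{k_m}\,\tau_m\), so \(\delta\le s/\sqrt2\). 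The deterministic hard-thresholding dichotomy from \cref{lem:BlockThresholding} then runs in Frobenius norm.
\begin{itemize}
\item If \(\norm{A}_F>s\), the error is \(\delta\le s/\sqrt2\). Also \(\norm{\Sigma_\Gamma}_F\ge s-\delta\ge(1-1/\sqrt2)s\), so \(\delta\lesssim\norm{\Sigma_\Gamma}_F\).
\item If \(\norm{A}_F\le s\), the error is \(\norm{\Sigma_\Gamma}_F\le s+\delta\lesssim s\).
\end{itemize}
In both cases the error is \(\lesssim\min(\norm{\Sigma_\Gamma}_F,s)\). Squaring and taking expectations on \(E\) gives the first term \(\min(\norm{\Sigma_\Gamma}_F^2,k_m\tau_m^2)\). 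Enlarging \(L_1\) absorbs the constant in the \(\tfrac12\)-accuracy bound.

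\textbf{Bad event.} By Cauchy--Schwarz,
\[
\E\bigl[\bm 1_{E^\complement}\norm{\hat\Sigma[\Gamma]-\Sigma_\Gamma}_F^2\bigr]\le \bbP(E^\complement)^{1/2}\bigl(\E\norm{A_0}_F^4+\norm{\Sigma_B}_F^4\bigr)^{1/2}.
\]
Here \(\norm{\hat\Sigma[\Gamma]}_F\le\norm{A_0}_F\) and \(\norm{\Sigma_B}_F^2\le k_mM_0^2\). \cref{prop:BlockRoughBound} gives \(\E\norm{A_0}_F^4\lesssim k_m^4(1+\sigma_B^4)\) with \(\sigma_B^2\lesssim k_m^2/(\rho_mn^2)\lesssim Mdk_m/(\rho n^2)\). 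The standing regime \(d/(\rho n^2)=o(1)\) and \(M\lesssim\log n\) make \(\sigma_B^2\lesssim k_m\). Hence the bad-event contribution is
\[
\lesssim d^{-5}e^{-k_m/2}\,k_m^2(1+k_m^2)\lesssim d^{-2}e^{-k_m/4},
\]
absorbing polynomial factors of \(k_m\) into \(e^{-k_m/4}\).

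\textbf{Initial blocks.} The second display follows the same way with no thresholding. On the good event of \cref{lem:BlockBound}, \(\norm{\hat\Sigma[B]-\Sigma_B}_F^2\le k_0\norm{\hat\Sigma[B]-\Sigma_B}^2\lesssim k_0\tau_0^2\). The complement is handled by the same Cauchy--Schwarz and \cref{prop:BlockRoughBound} argument.

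\textbf{Main obstacle.} The delicate point is the bad-event bookkeeping. Making \(\sigma_B^4k_m^4\) times \(d^{-10}e^{-k_m}\) smaller than \(d^{-2}e^{-k_m/4}\) uses \(\rho_m\ge\rho/(Md)\) together with \(d\le\rho n^2\), which is where the preliminary remark \(M/(\rho n^2)=o(1)\) is used. Everything else is a direct Frobenius transcription of \cref{lem:BlockThresholding}.
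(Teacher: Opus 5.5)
Your proposal is correct and follows the paper's proof. On the good event of Lemma~\ref{lem:BlockThresholding} you transcribe the hard-thresholding dichotomy into Frobenius norm via the rank bound, a step the paper only asserts, and on the complement you use Cauchy--Schwarz with Proposition~\ref{prop:BlockRoughBound}, exactly as the paper does. The one loose step is deriving $\sigma_B^2\lesssim k_m$ from $d/(\rho n^2)=o(1)$ and $M\lesssim\log n$ alone; that needs the theorem's hypothesis $\rho n^2/d\gtrsim(\log n)^{2\alpha+4}$. This is harmless, because the cruder bound $\sigma_B^2\lesssim Mk_m^2\lesssim dk_m^2$ from your last paragraph is already absorbed by the factor $d^{-5}$.
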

\begin{proof}
  The proof of \cref{lem:BlockThresholding} gives an event $E$ with
  probability at least $1 - C d^{-10} \exp(-k_m)$ on which
  \begin{equation*}
    \bm{1}_E \norm{\hat{\Sigma}[\Gamma_{m,l}] - \Sigma[\Gamma_{m,l}]}^2_F \lesssim \min(\norm{\Sigma[\Gamma_{m,l}]}^2_F, k_m \tau_m^2).
  \end{equation*}
  On $E^\complement$, \cref{prop:BlockRoughBound} gives
  \begin{align*}
    \E \zk{\bm{1}_{E^\complement} \norm{\hat{\Sigma}[\Gamma_{m,l}] - \Sigma[\Gamma_{m,l}]}^2_F}
    &\lesssim \zk{\bbP(E^\complement) \E \norm{\hat{\Sigma}[\Gamma_{m,l}] - \Sigma[\Gamma_{m,l}]}^4}^{\hf} \\
    & \lesssim k_m^2 (1+\sigma_B^2) d^{-5} \exp(-k_m/2) \\
    & \lesssim d^{-2} \exp(-k_m/4)
  \end{align*}
  Combining the two bounds proves the claim for \(\Gamma_{m,l}\).
  The proof for \(B\in\{B_{k_0;l,0},B_{k_0;l,1}\}\) is similar using \cref{lem:BlockBound}.
\end{proof}

With the convention \(\Gamma_{0,l}=B_{k_0;l,1}\), the Frobenius error decomposes as
\begin{align*}
  \E \norm{ \hat{\Sigma} - \Sigma}_F^2
  &=
  \sum_{l=1}^{N_0}
  \E\norm{\hat{\Sigma}[B_{k_0;l,0}]-\Sigma[B_{k_0;l,0}]}_F^2\\
  &\quad+
  2\sum_{m=0}^{M-1} \sum_{l<N_m}
  \E\norm{\hat{\Sigma}[\Gamma_{m,l}]-\Sigma[\Gamma_{m,l}]}_F^2
  +
  \E\norm{\Sigma_{\mca{T}_{k_{M-1}}^\complement}}_F^2.
\end{align*}
For $\Sigma \in \mca{H}_\alpha$,
\begin{equation*}
  \norm{\Sigma[\Gamma_{m,l}]}^2_F \lesssim k_m^{-2\alpha},\quad 0 \leq m \leq M-1,
\end{equation*}
and similarly
\begin{equation*}
  \norm{\Sigma_{\mca{T}_{k_{M-1}}^\complement}}_F^2 \lesssim d k_M^{-(2\alpha+1)} \asymp N_M k_M^{-2\alpha},
\end{equation*}
so the off-band remainder can be absorbed into the \( m = M \) term in the tail sum below.
Using \cref{lem:BlockThresholding_Frob} and splitting the sums at
$0 \leq m_* \leq M-1$ gives
\begin{align*}
  \E \norm{ \hat{\Sigma} - \Sigma}_F^2
  &\lesssim \sum_{0 \leq m \leq m_*} N_m k_m \tau_m^2 + \sum_{m_* < m \leq M} N_m k_{m}^{-2\alpha} + R \\
  &\lesssim d \sum_{0 \leq m \leq m_*} \tau_m^2 + d \sum_{m_* < m \leq M} k_{m}^{-(2\alpha+1)} + R,
\end{align*}
where
\begin{equation*}
  R \coloneqq d^{-2} \sum_{m=0}^{M-1} N_m \exp(-k_m/4).
\end{equation*}
Using \( N_m \lesssim d/k_m \), \( k_m = 2^m k_0 \), and \( k_0 \asymp \log n \), we have
\begin{equation*}
  R \lesssim d^{-1} \sum_{m=0}^{M-1} 2^{-m} \exp(-2^m k_0/4)
  \lesssim d^{-1} \exp(-k_0/4).
\end{equation*}
With \(k\asymp k_02^{m_*}\), the estimates for $\tau_m$ and $k_m$ give
\begin{align*}
  \frac{1}{d} \E \norm{ \hat{\Sigma} - \Sigma}_F^2
  &\lesssim
  \frac{k_0}{n}2^{m_*}
  +\frac{Mdk_0^2}{\rho n^2}2^{2m_*}\\
  &\quad+
  k_0^{-(2\alpha+1)}2^{-(2\alpha+1)m_*} \ind{m_*\leq M}
  +d^{-2} e^{-k_0/4}\\
  &\lesssim
  \frac{k}{n}
  +\frac{Mdk^2}{\rho n^2}
  +k^{-(2\alpha+1)} \ind{k\leq k_{M-1}}.
\end{align*}
Under the theorem assumption \( \rho n^2 / d \gtrsim (\log n)^{2\alpha+4} \), we have \( M / (\rho n^2) \lesssim (\log n)^{-(2\alpha+3)} \),
so the exponential remainder above is negligible compared to the main terms.
Together with \(d\gtrsim\log n\), the same assumption ensures that each
candidate scale in the optimization is at least of order \(k_0\).
Choosing
\begin{equation*}
  k \asymp \min \xk{ n^{\frac{1}{2\alpha+2}}, \xk{\frac{\rho n^2}{M d}}^{\frac{1}{2\alpha+3}}, d}
\end{equation*}
gives the desired result.
   \clearpage
  \section{Center--Outer Dyadic Estimator for Pointwise-Decay and Row-Tail Classes}
\label{sec:Proofs_GH_UpperBounds}

In this section we prove upper bounds for the center--outer dyadic estimator when the smoothness parameter is known.
The center--outer construction handles covariance classes whose off-diagonal decay is imposed either entrywise or through row tails.
In \cref{subsec:Proofs_GH_LocalBlocks}, at each dyadic scale, we establish a greedy block oracle for the pointwise-decay class \(\mca{H}_\alpha\) under squared operator loss, together with profile and peeling block oracles for the row-tail class \(\mca{G}_\alpha\) under squared operator and normalized squared Frobenius losses, respectively.
Privacy accounting and dyadic aggregation then convert these local oracle bounds into the guarantees for known smoothness in \cref{thm:DPCov_GH_Operator} and \cref{thm:DPCov_G_Frobenius}.

Throughout this section, \(C>0\) denotes a sufficiently large constant that
may change from line to line, and we set \(L=CK^2\log n\).
In every local result below, we additionally assume that \(q\),
\(\rho_B^{-1}\), and \(\beta_B^{-1}\), when present, are bounded by fixed
powers of \(n\), as required for the dyadic construction.
The results below under the operator norm require the following condition on
the local scale:
\begin{equation}
  k+\log(q+1)
  \leq
  \frac{n}{(\log n)^C}.
  \label{eq:Supp_DPCov_GH_OperatorScaleCondition}
\end{equation}
The concentration, net, and private-selection bounds used below are collected
in \cref{lem:SubGaussianProduct}, \cref{lem:SphereNet}, and
\cref{lem:FiniteExponentialMechanism}.

\subsection{Privacy guarantee}

\begin{proposition}
  \label{prop:Supp_DPCov_GH_LocalPrivacy}
  For a fixed block \(B\), each call to
  \(\mf{DPCovBlock}\), the greedy release in
  \cref{alg:DP_Cov_H_GreedyRelease}, the formal profile release in
  \cref{alg:Supp_DPCov_G_ProfileReleaseFormal}, or the peeling release in
  \cref{alg:DP_Cov_G_FrobeniusRelease}, with public tuning parameters and
  budget \(\rho_B\), is \(\rho_B\)-zCDP.
  Consequently, applying any one of these procedures to \(q\) blocks from
  the same sample is jointly \(q\rho_B\)-zCDP.
\end{proposition}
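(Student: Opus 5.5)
The plan is to prove the single-block claim separately for each of the four procedures, always in the same way. First bound the sensitivity of each released statistic. Then invoke the Gaussian mechanism (\cref{lem:DPMechanism_Gaussian_zCDP}) for the numerical releases and the exponential mechanism for the selections. Finally combine the pieces by adaptive composition and post-processing (\cref{lem:DPMechanism_Composition}). Two standard facts are needed. An $\varepsilon$-DP exponential mechanism is $\varepsilon^2/2$-zCDP by \cref{lem:DPMechanism_Relation}. Composition in \cref{lem:DPMechanism_Composition} holds adaptively, since each round's mechanism may depend on previously released outputs.

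For $\mf{DPCovBlock}$, the claim is exactly \cref{prop:DP_Cov_Helper_Block_Privacy}.

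For the greedy release in \cref{alg:DP_Cov_H_GreedyRelease}, fix round $t$ and condition on $B_{t-1}$, which is a function of earlier outputs.
\begin{itemize}
\item \emph{Sensitivity of the score.} For each $H\in\mca{W}_k$, the score $s_t(X,H)$ is an empirical mean of terms clipped by $\chi_L$, minus a public quantity. Changing one record therefore moves it by at most $2L/n$.
\item \emph{Selection step.} With the exponential mechanism calibrated to this sensitivity and parameter $\varepsilon=\sqrt{\rho_B/r}$, the step is $\varepsilon$-DP. Hence it is $\rho_B/(2r)$-zCDP.
\item \emph{Release step.} Given $H_t$, the coefficient $s_t(X,H_t)$ also has sensitivity $2L/n$. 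Gaussian noise of variance $4L^2r/(\rho_Bn^2)=(2L/n)^2/(2\cdot\rho_B/(2r))$ makes this release $\rho_B/(2r)$-zCDP.
\end{itemize}
Each round thus costs $\rho_B/r$, and the $r$ rounds compose to $\rho_B$. The averaging and the operator-norm projection are post-processing.

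For the peeling release in \cref{alg:DP_Cov_G_FrobeniusRelease}, each entry $\widetilde{A}_{B,uv}$ has sensitivity $2L/n$, and so does each score $|\widetilde{A}_{B,uv}|$.
\begin{itemize}
\item \emph{Selection.} The $kr$ successive exponential-mechanism draws, each with parameter $\sqrt{\rho_B/(kr)}$, cost $kr\cdot\rho_B/(2kr)=\rho_B/2$ in total.
\item \emph{Release of values.} Changing one record changes all $kr$ selected entries at once. The $\ell_2$-sensitivity of the released vector is therefore at most $\sqrt{kr}\,2L/n$, which is crude but sufficient. Noise variance $4L^2kr/(\rho_Bn^2)$ matches the Gaussian mechanism with budget $\rho_B/2$.
\end{itemize}
The total is $\rho_B$.

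For the formal profile release in \cref{alg:Supp_DPCov_G_ProfileReleaseFormal}, I would argue the same way. Its probes are exponential-mechanism selections over the finite families $\mca{H}_m$, with scores of the form $\ang{\widetilde{A}-A_t,H}$. These have sensitivity $O(L/n)$ because $\abs{u^\T x}\abs{v^\T y}$ is clipped. Any released coefficients carry Gaussian noise. The only work is to verify that the formal algorithm splits $\rho_B$ across profiles $m$ and rounds $t\le T_m$ with parameters whose squares sum to at most $2\rho_B$. The reconciliation step is post-processing. \textbf{This is the main obstacle}: the budget bookkeeping of the deferred formal algorithm has to be read off and checked term by term.

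Finally, when $q$ blocks are processed, each with budget $\rho_B$, the runs use the same dataset. Composition over the $q$ runs then gives $q\rho_B$-zCDP, whether or not the blocks are disjoint.
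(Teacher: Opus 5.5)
Your proposal is correct and follows the paper's proof essentially step for step. It uses the same $2L/n$ sensitivity for the clipped scores and coefficients, the $\rho_B/(2r)+\rho_B/(2r)$ split per greedy round, the $\rho_B/2+\rho_B/2$ split for peeling with $\ell_2$-sensitivity $2L\sqrt{kr}/n$, adaptive composition within a block, and sequential composition over the $q$ blocks.

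The profile step you flag as the main obstacle is immediate. The formal algorithm sets $\varepsilon_m=\sqrt{2\rho_B/(DT_m)}$, so each round costs $\rho_B/(DT_m)$, each profile costs $\rho_B/D$, and the $D$ profiles together cost $\rho_B$. That step involves no Gaussian releases at all: the mirror-descent centers and the feasible-set reconciliation are post-processing of the selected probes.
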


\begin{proof}
  The dense release is \(\rho_B\)-zCDP by
  \cref{prop:DP_Cov_Helper_Block_Privacy}.
  For the greedy release, conditionally on the preceding transcript, the
  score and coefficient sensitivities at every round are both at most
  \(2L/n\).
  The exponential-mechanism selection and Gaussian coefficient release each
  cost \(\rho_B/(2r)\)-zCDP, so adaptive composition over the \(r\) rounds
  costs \(\rho_B\).

  For the profile release, suppose that the public profile collection has
  cardinality \(D\).
  Each round in a fixed profile costs
  \(\rho_B/(D T_m)\)-zCDP; hence the \(T_m\) rounds cost \(\rho_B/D\), and
  composition over the \(D\) profiles costs \(\rho_B\).
  For the peeling release, put \(s=k r\).
  The \(s\) exponential-mechanism selections together cost
  \(\rho_B/2\)-zCDP.
  Conditionally on the selection transcript, the selected \(s\)-vector has
  Euclidean sensitivity at most \(2L\sqrt{s}/n\), so its Gaussian release
  costs the remaining \(\rho_B/2\).
  All subsequent averaging, feasible-set construction, and projection are
  post-processing.
  Using fresh randomization at every conditional release, adaptive composition
  gives the per-block guarantees above, while sequential
  composition over the \(q\) released blocks gives \(q\rho_B\)-zCDP.
\end{proof}

\begin{proposition}
  \label{prop:Supp_DPCov_GH_GlobalPrivacy}
  Every estimator constructed from \cref{alg:DP_Cov_DyadicTemplate} using
  either the zero release or one of the local releases in
  \cref{prop:Supp_DPCov_GH_LocalPrivacy} is \(\rho\)-zCDP.
\end{proposition}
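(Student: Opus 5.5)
The argument is a budget count plus the zCDP composition and post-processing properties of \cref{lem:DPMechanism_Composition}. I would first fix the data-independent structure. The center size \(k_0\), the terminal scale \(k_+\), the scales \(k\in\{k_0,2k_0,\ldots\}\) with \(k<k_+\), the families \(\mca{B}_k\), the radii \(a_{\mca{J},k}\), and the per-block budgets \(\rho_k=\rho/(4J(k_0)N_{2k})\) are all public. They depend only on \((n,d,\rho,\alpha)\) and the model constants, never on the sample. This lets the template be treated as a fixed sequence of mechanisms composed adaptively, with every tuning parameter public as \cref{prop:Supp_DPCov_GH_LocalPrivacy} requires.

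Next I would charge each part of the budget.

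\textbf{Center.} The call \(\mf{DPCovTri}(X;k_0,\rho/4,\cdot)\) is \(\rho/4\)-zCDP by \cref{prop:DP_Cov_BlockDiagonal_Privacy}. The clipping radius there is a public function of \(k_0\) and \(n\), and that proposition only uses the sensitivity bound for whatever radius is supplied.

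\textbf{Outer shells.} At scale \(k\), \(\mca{B}_k\) contains three blocks for each \(1\le\ell<N_{2k}\), so at most \(3N_{2k}\) blocks. Each block is handled by the zero release, which costs nothing, or by one of the local procedures with budget \(\rho_k\). By \cref{prop:Supp_DPCov_GH_LocalPrivacy} with \(q\le 3N_{2k}\), scale \(k\) costs at most \(3N_{2k}\rho_k=3\rho/(4J(k_0))\).

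\textbf{Count of scales.} The number of active scales satisfies \(k_0 2^j<k_+\le d/2\), which gives at most \(\log_2(d/k_0)-1\le J(k_0)\) scales. When \(k_+=k_0\) there are none, which is harmless. Summing over scales, the outer part costs at most \(3\rho/4\).

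Adding the two parts, adaptive composition gives a total of at most \(\rho/4+3\rho/4=\rho\). Symmetric filling and the spectral projection \(\Pi_{M_0}\) are post-processing, so the released estimator is \(\rho\)-zCDP.

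The main thing to check is the counting of \(\mca{B}_k\) against \(N_{2k}\), together with the exact range of scales. This matters especially after padding \(d\) to \(2^Jk_0\), and in the edge case where \(J(k_0)=1\) is forced by the \(1\vee\) in its definition; in that case at most one scale is active, so the bound still holds. A second point is that all calls reuse the same sample. This is covered because \cref{lem:DPMechanism_Composition} applies to adaptive composition of mechanisms on a common dataset, with fresh randomness in each call, exactly as in \cref{prop:Supp_DPCov_GH_LocalPrivacy}.
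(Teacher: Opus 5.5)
Your proposal is correct and matches the paper's proof: the center costs \(\rho/4\), each active scale costs at most \(3N_{2k}\rho_k=3\rho/(4J(k_0))\), there are at most \(J(k_0)\) active scales, and symmetric completion and the spectral projection are post-processing. The paper simply asserts the bound on the number of active scales, so your explicit count via \(k_0 2^j<k_+\le d/2\) and your check of the \(1\vee\) edge case add detail that the paper omits.
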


\begin{proof}
  The dense center uses budget \(\rho/4\) and is \(\rho/4\)-zCDP by
  \cref{prop:DP_Cov_BlockDiagonal_Privacy}.
  At a dyadic outer scale \(k\), there are
  \(\abs{\mca{B}_k}=3(N_{2k}-1)\leq3N_{2k}\) blocks, each released with
  budget \(\rho_k=\rho/\xkx{4J(k_0)N_{2k}}\).
  Thus \cref{prop:Supp_DPCov_GH_LocalPrivacy} bounds the privacy cost of one
  scale by \(3\rho/\xkx{4J(k_0)}\).
  There are at most \(J(k_0)\) active scales, so the center and outer
  releases together cost at most
  \[
    \frac{\rho}{4}
    +
    J(k_0)\frac{3\rho}{4J(k_0)}
    =
    \rho.
  \]
  Symmetric completion, block assignment, and spectral projection are
  post-processing.
\end{proof}

\subsection{Local block oracles}
\label{subsec:Proofs_GH_LocalBlocks}

\subsubsection{A robust dense block deviation bound}

\begin{proposition}[Robust dense block deviation bound]
  \label{prop:Supp_DPCov_GH_DenseDeviation}
  Let \(A_1,\ldots,A_q\) be \(k\times k\) covariance cross-blocks from the
  same sample, and fix \(0<\rho_B\leq\rho\) and \(0<\beta_B<1/4\).
  For each \(b\leq q\), set \(R^2=CK^2(k+\log n)\) and
  \(\widetilde{A}_b=\mf{DPCovBlock}(X,B_b;\rho_B,R)\).

  If \cref{eq:Supp_DPCov_GH_OperatorScaleCondition} holds, set
  \[
    R_{\mathrm{dense}}^2
    =
    C\log^C n
    \xk{
      \frac{k}{n}+\frac{k^3}{\rho_B n^2}
    }.
  \]
  Then
  \[
    \Pr\dk{
      \max_{b\leq q}\norm{\widetilde{A}_b-A_b}
      \leq R_{\mathrm{dense}}
    }
    \geq1-\beta_B,
  \]
  and the same rate holds in expectation.

  At every scale, set
  \begin{equation}
    \xkx{R_{\mathrm{dense}}^{\mathrm{F}}(\rho_B)}^2
    =
    C\log^C n
    \xk{
      \frac{k^2}{n}+\frac{k^4}{\rho_B n^2}
    }.
    \label{eq:DP_Cov_G_FrobeniusDeviationDense}
  \end{equation}
  Then
  \[
    \Pr\dk{
      \max_{b\leq q}\norm{\widetilde{A}_b-A_b}_F
      \leq R_{\mathrm{dense}}^{\mathrm{F}}(\rho_B)
    }
    \geq1-\beta_B,
  \]
  and the same rate holds in expectation.
\end{proposition}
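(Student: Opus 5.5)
The plan is to reuse the decomposition from \cref{lem:BlockBound}: for each \(b\),
\(\norm{\widetilde{A}_b-A_b}\le\norm{\tilde{\Sigma}_{B_b}-\E\tilde{\Sigma}_{B_b}}+\norm{\E\tilde{\Sigma}_{B_b}-A_b}+\sigma_B\norm{M_{B_b}}\), with \(\sigma_B^2=18R^4/(\rho_Bn^2)\asymp(k+\log n)^2/(\rho_Bn^2)\). Each of the three terms gets a high-probability bound at confidence level \(\beta_B/(3q)\), and a union bound over \(b\le q\) finishes. Since \(q,\beta_B^{-1}\le n^{C}\), we have \(\log(q/\beta_B)\lesssim\log n\), so every logarithm is absorbed into the \(\log^C n\) prefactor.

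\textbf{Three terms.}
\begin{itemize}
\item \emph{Sampling term.} Stack the clipped vectors \(z_i\in\R^{2k}\) as in \cref{prop:A__BlockBound_CovTruncation}; clipping does not increase the \(\psi_2\) norm. Apply \cref{lem:StandardCovarianceEst} with \(t\asymp k+\log(q/\beta_B)\). This gives \(\sqrt{(k+\log n)/n}+(k+\log n)/n\), which is \(\lesssim\sqrt{k\log n/n}\) once \cref{eq:Supp_DPCov_GH_OperatorScaleCondition} guarantees \(k+\log n\le n\).
\item \emph{Clipping bias.} By \cref{prop:SubGVectorCovTruncation} with \(R^2=CK^2(k+\log n)\), this term is at most \(k\exp(-c(k+\log n))\lesssim n^{-1}\), which is negligible against \(k/n\).
\item \emph{Privacy noise.} By \cref{lem:MatrixGaussianConcentration}, \(\norm{M_{B_b}}^2\lesssim k+\log(q/\beta_B)\). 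Hence the noise contributes \(\sigma_B^2(k+\log n)\lesssim(k+\log n)^3/(\rho_Bn^2)\lesssim\log^3 n\cdot k^3/(\rho_Bn^2)\), using \(k\ge1\).
\end{itemize}
Summing the three squared bounds yields \(R_{\mathrm{dense}}^2\).

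\textbf{Expectation.} Bound \(\E\max_b\norm{\widetilde{A}_b-A_b}^2\) by the same three pieces, integrating the tails. Rather than Cauchy--Schwarz against a rough bound as in the proof of \cref{thm:DPCov_Operator}, it is simpler to note that all three pieces are sub-exponential or sub-Gaussian maxima over \(q\) indices. Concretely, \(\E\max_b\norm{M_{B_b}}^2\lesssim k+\log q\), and \(\E\max_b\norm{\tilde{\Sigma}_{B_b}-\E\tilde{\Sigma}_{B_b}}^2\lesssim(k+\log q)/n\) under the scale condition. This gives the same rate.

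\textbf{Frobenius version.} Use \(\norm{\cdot}_F^2\le k\norm{\cdot}^2\) on \(k\times k\) blocks, which multiplies each term by \(k\) and gives \(\log^Cn\,(k^2/n+k^4/(\rho_Bn^2))\). The catch is that this version is claimed at every scale, without \cref{eq:Supp_DPCov_GH_OperatorScaleCondition}, so the operator-norm concentration for the sampling term may fail when \(k\gtrsim n\). For the sampling part I would therefore switch to a direct Frobenius bound: \(\E\norm{\tilde{\Sigma}_B-\E\tilde{\Sigma}_B}_F^2\le n^{-1}\E\norm{\widetilde{X}_I}^2\norm{\widetilde{X}_J}^2\lesssim k^2/n\). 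For the high-probability statement, note that \(\norm{\widetilde{X}_{i,I}\widetilde{X}_{i,J}^\T}_F\le R^2\) is bounded, so a vector Bernstein or bounded-differences inequality in the Hilbert space of matrices gives deviation \(\sqrt{k^2\log(q/\beta_B)/n}+R^2\log(q/\beta_B)/n\). The last term is \(\lesssim(k+\log n)\log n/n\), which is at most \(k^2\log^2n/n\). The noise term uses \(\norm{M_B}_F^2\sim\chi^2_{k^2}\), giving \(k^2+\log(q/\beta_B)\) with high probability.

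I expect the main obstacle to be this last regime for the Frobenius bound, i.e.\ making sure the sampling and bias pieces are controlled without any restriction on \(k\) relative to \(n\). The bias is at most \(\sqrt{k}\) times the operator-norm bias, so it stays negligible. The routine remaining work is tracking the \(\log^Cn\) factors.
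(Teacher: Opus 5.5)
Your proposal is correct. The operator-norm part and its expectation follow the paper's proof essentially verbatim. You use the same three-way split into empirical fluctuation, clipping bias at radius \(R^2=CK^2(k+\log n)\), and Gaussian noise, together with union bounds over the \(q\) blocks. Logarithms of \(q\) and \(\beta_B^{-1}\) are absorbed into \(\log^C n\), the scale condition serves only to absorb the squared linear Bernstein term, and tail integration gives the expected risk.

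The one genuine difference is the Frobenius sampling term. The paper applies scalar Bernstein entrywise to the sub-exponential clipped products and takes a union bound over the \(qk^2\) coordinates. It then uses \(\norm{M}_F^2\le k^2\max_{u,v}\abs{M_{uv}}^2\). You instead apply a Hilbert-space Bernstein inequality directly to the matrix summands \(\widetilde X_{i,I}\widetilde X_{i,J}^{\top}\). This uses the fact that clipping makes them bounded by \(R^2\) in Frobenius norm, with second moment \(\lesssim k^2\).

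Both routes give \(k^2\log^C n/n\) with no restriction on \(k\) relative to \(n\). The paper's entrywise route does not need bounded summands, so it would work even without clipping, at the harmless cost of a \(\log(k^2)\) in the union bound. Yours uses the clipping to obtain the natural variance proxy \(\E\norm{Y}_F^2\) and avoids the coordinatewise union bound.

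One small slip: \(R^2\log(q/\beta_B)/n\) bounds the norm, not its square. It should be squared before being compared with the target, giving \((k+\log n)^2\log^2 n/n^2\lesssim k^2\log^4 n/n\), so the conclusion is unaffected. Similarly, for the Frobenius expectation, integrate your high-probability tail bound for \(\max_{b\le q}\) over the tail parameter, rather than relying on the single-block second-moment computation.
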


\begin{proof}[Proof of \cref{prop:Supp_DPCov_GH_DenseDeviation}]
  The fixed-block decomposition and the corresponding truncation,
  concentration, and Gaussian-noise bounds are established in Appendix A;
  see \cref{lem:BlockBound}, \cref{prop:A__BlockBound_CovTruncation}, and
  \cref{lem:BlockExpectationBound}.  Here we only use three features of that
  argument:
  the privacy parameter is \(\rho_B\), the radius depends on \(k+\log n\),
  and the bounds hold simultaneously over the \(q\) blocks.

  With \(R\) as above, write
  \[
    Y_{i,b}
    =
    \chi_R(X_{i,I_b})\chi_R(X_{i,J_b})^\T,
    \qquad
    A_b^R=\E Y_{i,b},
    \qquad
    \overline{A}_b=\frac{1}{n}\sum_{i=1}^nY_{i,b}.
  \]
  The released block is \(\widetilde{A}_b=\overline{A}_b+G_b\), where
  the entries of \(G_b\) have variance
  \(\sigma_B^2=18R^4/(\rho_B n^2)\).

  \emph{Operator loss.}
  For \(t\geq0\), put \(s_t=k+\log(q+1)+t\).
  The truncation-bias part of the Appendix A argument applies to the present
  radius.  A union bound over the \(q\) blocks then gives
  \[
    \max_{b\leq q}\norm{A_b^R-A_b}\lesssim n^{-c},
    \qquad
    \max_{b\leq q}\norm{A_b^R-A_b}_F^2\lesssim k n^{-2c}.
  \]
  The empirical-fluctuation and Gaussian-noise bounds, again with a union
  bound over the \(q\) blocks, give
  \[
    \Pr\dk{
      \max_{b\leq q}\norm{\overline{A}_b-A_b^R}
      >
      C\xk{
        \sqrt{\frac{s_t}{n}}+\frac{s_t}{n}
      }
    }
    \leq2e^{-t}.
  \]
  In addition, the Gaussian-noise bound from Appendix A gives
  \[
    \Pr\dk{
      \max_{b\leq q}\norm{G_b}
      >C\sigma_B\sqrt{s_t}
    }
    \leq2e^{-t}.
  \]
  Use the decomposition
  \[
    \widetilde{A}_b-A_b
    =
    (\overline{A}_b-A_b^R)+(A_b^R-A_b)+G_b.
  \]
  Set \(t=\log(4/\beta_B)\).
  Under \cref{eq:Supp_DPCov_GH_OperatorScaleCondition}, the linear Bernstein
  term is controlled, and the clipping bias is negligible.
  Combining these bounds with the displayed decomposition proves the operator
  deviation bound.  Integrating these tails, as in
  \cref{lem:BlockExpectationBound}, gives the expected risk.

  \emph{Frobenius loss.}
  Put \(u_t=\log(q k^2+1)+t\).
  Applying the coordinatewise empirical bound gives
  \[
    \Pr\dk{
      \max_{\substack{b\leq q\\u,v\leq k}}
      \abs{(\overline{A}_b-A_b^R)_{uv}}
      >
      C\xk{\sqrt{\frac{u_t}{n}}+\frac{u_t}{n}}
    }
    \leq2e^{-t}.
  \]
  The Gaussian chi-square step from Appendix A, followed by a union bound,
  gives
  \[
    \Pr\dk{
      \max_{b\leq q}\norm{G_b}_F^2
      >C\sigma_B^2\xkx{k^2+\log(q+1)+t}
    }
    \leq e^{-t}.
  \]
  Since \(\norm{M}_F^2\leq k^2\max_{u,v}\abs{M_{uv}}^2\), use the displayed
  decomposition, set \(t=\log(4/\beta_B)\), and apply
  \(R^4\lesssim_{\log}k^2\).  This proves the Frobenius deviation bound
  without requiring \cref{eq:Supp_DPCov_GH_OperatorScaleCondition};
  Integrating the two stochastic tail bounds, as in
  \cref{lem:BlockExpectationBound}, gives the expected risk.
\end{proof}

\subsubsection{Pointwise-decay class under operator loss: greedy blocks}

\begin{proposition}[Greedy block oracle for pointwise-decay blocks]
  \label{prop:Supp_DPCov_H_GreedyOracle}
  Assume that \cref{eq:Supp_DPCov_GH_OperatorScaleCondition} holds.
  Let \(A_1,\ldots,A_q\in\mca{E}_k(a)\), give each block budget
  \(0<\rho_B\leq\rho\), and fix \(0<\beta_B<1/4\).
  For every public \(1\leq r\leq k\), let
  \(\widehat{A}_{b,r}^{\mathrm{gr}}\) be the output of
  \cref{alg:DP_Cov_H_GreedyRelease} on block \(B_b\) when its rank-selection
  step is replaced by this fixed \(r\).
  With probability at least \(1-\beta_B\), these outputs satisfy
  \[
    \max_{b\leq q}
    \norm{\widehat{A}_{b,r}^{\mathrm{gr}}-A_b}^2
    \lesssim_{\log}
      \frac{a^2}{r}
      +\frac{k+\log(q+1)}{n}
      +\frac{k^2 r}{\rho_B n^2}.
  \]
  The corresponding expected risk satisfies
  \begin{equation}
    \E\max_{b\leq q}
    \norm{\widehat{A}_{b,r}^{\mathrm{gr}}-A_b}^2
    \lesssim_{\log}
    \frac{a^2}{r}
    +
    \frac{k+\log(q+1)}{n}
    +
    \frac{k^2 r}{\rho_B n^2}.
    \label{eq:Supp_DPCov_H_GreedyPointRisk}
  \end{equation}
\end{proposition}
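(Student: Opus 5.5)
The argument treats the greedy release as a private Frank--Wolfe (or greedy approximation) scheme over the atom set \(\mca{W}_k\). Throughout, the residual is measured in the dual pairing with rank-one atoms. Fix one block \(B=B_b\) with target \(A=A_b\in\mca{E}_k(a)\), and let \(\widetilde{A}\) denote the clipped empirical cross block, with entries implicitly defined through the scores \(\langle\widetilde{A},H\rangle=n^{-1}\sum_i\chi_L(\xi(u^\T X_{i,I})(v^\T X_{i,J}))\). The plan has three steps.

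\emph{Step 1: uniform empirical deviation over the net.} Bound \(\sup_{H\in\mca{W}_k}\abs{\langle\widetilde{A}-A,H\rangle}\). By \cref{lem:SphereNet}, \(\abs{\mca{W}_k}\leq 2\cdot 9^{2k}\). For each \(H\), the summand is a clipped product of two sub-Gaussian variables, so \cref{lem:SubGaussianProduct} bounds the fluctuation by \(\sqrt{s/n}+Ls/n\) with \(s=k+\log(q+1)+\log(1/\beta_B)\). Because \(L=CK^2\log n\), the clipping bias is \(n^{-c}\). A union bound over the net and the \(q\) blocks, together with \cref{eq:Supp_DPCov_GH_OperatorScaleCondition} to absorb the linear term, gives a uniform bound \(\delta^2\lesssim_{\log}(k+\log(q+1))/n\). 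The standard \(1/4\)-net argument then converts this net bound into an operator-norm bound on \(\widetilde{A}-A\) at an inflation of a constant factor, since \(\sup_{\mca{W}_k}\langle M,H\rangle\geq\norm{M}/2\) for every \(k\times k\) matrix \(M\).

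\emph{Step 2: the noisy greedy recursion.} The exponential mechanism selects among \(\abs{\mca{W}_k}\) candidates with parameter \(\varepsilon=\sqrt{\rho_B/r}\) and score sensitivity \(2L/n\). By \cref{lem:FiniteExponentialMechanism}, with high probability every round satisfies
\[
s_t(H_t)\geq\max_H s_t(H)-\eta,\qquad \eta\lesssim \frac{L(k+\log(qr/\beta_B))\sqrt r}{n\sqrt{\rho_B}}.
\]
Each Gaussian coefficient error \(G_t\) is similarly of order \(\eta\). Write \(R_t=\widetilde{A}-B_t\), so that \(s_t(H)=\langle R_{t-1},H\rangle\). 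The update \(B_t=B_{t-1}+\hat c_tH_t\) with \(\hat c_t=\langle R_{t-1},H_t\rangle+G_t\) gives
\[
\norm{R_t}_F^2=\norm{R_{t-1}}_F^2-\langle R_{t-1},H_t\rangle^2+G_t^2\norm{H_t}_F^2 .
\]
The atoms have \(\norm{H}_F\leq1\), and the near-greedy choice gives \(\langle R_{t-1},H_t\rangle\geq\norm{R_{t-1}}/2-\eta\). Telescoping and using \(\norm{R_0}_F^2=\norm{\widetilde{A}}_F^2\lesssim a^2+k\delta^2\) yields
\[
\frac1r\sum_{t=1}^r\norm{R_{t-1}}^2\lesssim \frac{\norm{\widetilde{A}}_F^2}{r}+\eta^2 .
\]

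\emph{Step 3: convert to the averaged output and the true block.} Averaging is the reason the algorithm outputs \(\overline{B}_r\). By convexity of \(\norm{\cdot}^2\),
\[
\norm{\widetilde{A}-\overline{B}_r}^2\leq\frac1r\sum_{t=1}^r\norm{R_{t-1}}^2 .
\]
Adding Step 1 and noting that projection onto the \(2M_0\) operator ball is \(2\)-Lipschitz toward \(A\) (since \(\norm{A}\leq M_0\)) gives
\[
\norm{\widehat{A}^{\mathrm{gr}}-A}^2\lesssim \frac{a^2}{r}+\frac{k\delta^2}{r}+\delta^2+\eta^2 .
\]
With \(k\delta^2/r\lesssim_{\log}k/n\), \(\delta^2\lesssim_{\log}(k+\log(q+1))/n\) and \(\eta^2\lesssim_{\log}k^2r/(\rho_Bn^2)\), this is the high-probability claim. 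For the expectation, work on the complement event of probability at most \(\beta_B\), chosen as an inverse polynomial in \(n\). There the output is bounded by \(2M_0\) after projection, so it contributes \(\beta_B M_0^2\), which is negligible.

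The main obstacle is the \(\norm{R_0}_F^2\) term in Step 2. Naively, \(\norm{\widetilde{A}-A}_F^2\) is of order \(k^2/n\), not \(k/n\), which would spoil the rate. I would handle this by running the telescoping argument on the shifted residual \(A-B_t\) instead, with \(\norm{A}_F\leq a\), and treating \(\langle\widetilde{A}-A,H\rangle\) as an additive score perturbation of size \(\delta\) that joins \(\eta\). The Frobenius decrement identity then acquires cross terms of order \(\delta\norm{A-B_{t-1}}\). I would absorb these by a Young inequality into \(\delta^2\), giving the bound above without the \(k\delta^2/r\) term.
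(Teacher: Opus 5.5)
Your corrected route in the last paragraph is essentially the paper's proof. It telescopes the exact energy identity on the true residual $R_t=A_b-B_t$ and folds both the empirical deviation of the clipped score and $G_t$ into the coefficient error $\widehat c_t-c_t$. It then uses near-greedy selection via the signed net, Jensen's inequality for $\overline B_r$, and radial projection.

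Your expectation argument is a valid shortcut for the paper's tail integration. The projected output and the truth lie deterministically within $3M_0$, so a polynomially small failure probability suffices.

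Present that version and drop the $\widetilde A$-based Steps 1--3 entirely, for two reasons. First, as you spotted, $k\delta^2/r\lesssim_{\log}k/n$ fails for $r\ll k$. Second, in general no matrix $\widetilde A$ has $\langle\widetilde A,H\rangle_F$ equal to the score, because $\chi_L$ clips each scalar product $\xi(u^\T X_{i,I})(v^\T X_{i,J})$ separately. Hence the residual $\widetilde A-B_t$ and the operator bound on $\widetilde A-A$ are undefined.

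Keep Step 1 only as the scalar bound $\max_{H\in\mca{W}_k}\abs{s_t(X,H)+\langle B_{t-1},H\rangle_F-\langle A_b,H\rangle_F}\leq\delta$.

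No Young step is needed either. With $c_t=\langle A_b-B_{t-1},H_t\rangle_F$ and $\norm{H_t}_F=1$, one has $\norm{R_t}_F^2=\norm{R_{t-1}}_F^2-c_t^2+(\widehat c_t-c_t)^2$ exactly, with no cross terms.
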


\begin{proof}[Proof of \cref{prop:Supp_DPCov_H_GreedyOracle}]
  Fix \(r\) and use the notation
  \(B_t,H_t,\widehat{c}_t,\overline{B}_r\) from
  \cref{alg:DP_Cov_H_GreedyRelease}.

  \emph{Uniform stochastic bounds.}
  Put
  \[
    \tau_r^2
    =
    C(\log n)^C
    \xk{
      \frac{k+\log(q+1)}{n}
      +
      \frac{k^2 r}{\rho_B n^2}
    }.
  \]
  For \(H=\xi uv^\T\), write
  \[
    \overline{m}_{b,H}
    =
    \frac{1}{n}\sum_{i=1}^n
      \chi_L \xk{
      \xi(u^\T X_{i,I_b})(v^\T X_{i,J_b})
    },
    \qquad
    m_{b,H}=\langle A_b,H\rangle_F.
  \]
  Since there are at most \(2q9^{2k}\) block-direction pairs,
  \cref{lem:SubGaussianProduct}, \cref{lem:SphereNet}, scalar Bernstein's
  inequality, and a union bound give
  \begin{equation}
    \delta_B^2
    \coloneqq
    \max_{b,H}
    \abs{\overline{m}_{b,H}-m_{b,H}}^2
    \leq
    \tau_r^2.
    \label{eq:Proof_H_GreedyEmpirical}
  \end{equation}
  outside an event of probability at most \(\beta_B/4\).
  Condition \cref{eq:Supp_DPCov_GH_OperatorScaleCondition} absorbs the square
  of the linear Bernstein term.

  Apply the conditional exponential-mechanism utility bound in
  \cref{lem:FiniteExponentialMechanism} with failure probability
  \(\beta_B/(4qr)\) at every adaptive selection.
  Although the directions are selected adaptively, conditioning on the
  transcript before each round allows the conditional exponential-mechanism
  bound to be applied.  The tower property and a union bound over the
  \(qr\) block-round pairs then yield the simultaneous event
  \[
    \max_H s_t(X,H)-s_t(X,H_t)
    \lesssim
    \frac{L}{n}\sqrt{\frac{r}{\rho_B}}
    \xkx{k+\log(q r/\beta_B+2)}.
  \]
  The signed net satisfies
  \[
    \max_{H\in\mca{W}_k}
    \langle A_b-B_{t-1},H\rangle_F
    \geq
    \frac{1}{2}\norm{A_b-B_{t-1}}.
  \]
  Thus, for
  \(R_{t-1}=A_b-B_{t-1}\) and
  \(c_t=\langle R_{t-1},H_t \rangle_F\),
  \begin{equation}
    c_t
    \geq
    \frac{1}{2}\norm{R_{t-1}}-\tau_r.
    \label{eq:Proof_H_GreedyDirection}
  \end{equation}
  A Gaussian union bound for the noisy coefficients, with failure
  probability at most \(\beta_B/4\), gives
  \begin{equation}
    \max_{b,t}
    \abs{\widehat{c}_t-c_t}
    \leq
    \tau_r.
    \label{eq:Proof_H_GreedyCoefficient}
  \end{equation}
  Thus \cref{eq:Proof_H_GreedyDirection} and
  \cref{eq:Proof_H_GreedyCoefficient}
  hold simultaneously for all blocks and rounds; the total failure
  probability is at most \(3\beta_B/4<\beta_B\).

  \emph{Residual-energy inequality.}
  Since \(\norm{H_t}_F=1\), the exact energy identity
  \[
    \norm{R_t}_F^2
    =
    \norm{R_{t-1}}_F^2-c_t^2+
    \abs{\widehat{c}_t-c_t}^2
  \]
  implies
  \[
    \sum_{t=1}^r c_t^2
    \leq
    \norm{A_b}_F^2+
    \sum_{t=1}^r
    \abs{\widehat{c}_t-c_t}^2.
  \]
  Moreover, \cref{eq:Proof_H_GreedyDirection} gives
  \(\norm{R_{t-1}}^2 \lesssim c_t^2+\tau_r^2\).
  Hence Jensen's inequality and the preceding energy identity yield
  \begin{equation}
    \begin{aligned}
      \norm{A_b-\overline{B}_r}^2
      &\leq
      \frac{1}{r}\sum_{t=1}^r\norm{R_{t-1}}^2\\
      &\lesssim
      \frac{1}{r}\sum_{t=1}^r c_t^2+\tau_r^2\\
      &\leq
      \frac{\norm{A_b}_F^2}{r}
      +
      \frac{1}{r}\sum_{t=1}^r
      \abs{\widehat{c}_t-c_t}^2
      +
      \tau_r^2\\
      &\lesssim
      \frac{\norm{A_b}_F^2}{r}+\tau_r^2.
    \end{aligned}
    \label{eq:Proof_H_GreedyRisk}
  \end{equation}
  Since \(\norm{A_b}\leq M_0\), the radial projection increases this error
  by at most a numerical factor.

  \emph{Expected risk.}
  For the expected risk, let \(u\geq0\) and put
  \[
    \tau_r^2(u)
    =
    C(\log n+u)^C
    \xk{
      \frac{k+\log(q+1)}{n}
      +
      \frac{k^2 r}{\rho_B n^2}
    }.
  \]
  Use failure probability \(e^{-u}/(4qr)\) at every conditional
  selection and the same tail parameter in the Bernstein and Gaussian
  bounds.
  Repeating the preceding argument gives
  \[
    \Pr\dk{
      \max_{b\leq q}
      \norm{\widehat{A}_{b,r}^{\mathrm{gr}}-A_b}^2
      >
      C\xk{\frac{a^2}{r}+\tau_r^2(u)}
    }
    \leq e^{-u}.
  \]
  Integrating this inequality over \(u\geq0\) proves
  \cref{eq:Supp_DPCov_H_GreedyPointRisk}.
\end{proof}

\subsubsection{Row-tail class under operator loss: profile blocks}

The argument is organized into five steps.
In \cref{para:Supp_DPCov_G_ProfileDualGeometry}, we reduce the operator norm
to a finite collection of dyadic profile discrepancies.
In \cref{para:Supp_DPCov_G_ProfileFiniteProbes}, we replace the infinite probe
classes by finite nets and obtain the corresponding entropy bound.
In \cref{para:Supp_DPCov_G_ProfilePrimalFitting}, we construct a center for
each profile through entropy-regularized updates and control the resulting
regret.
In \cref{para:Supp_DPCov_G_ProfilePrivateFits}, we define the clipped
measurements, adaptive horizons, private probe-selection rule, common feasible
set, and formal estimator.
Finally, \cref{para:Supp_DPCov_G_ProfileOracleGuarantee} combines these
ingredients to prove the profile-fitting and block-oracle bounds.

\begin{proposition}[Profile block oracle for row-tail blocks]
  \label{prop:Supp_DPCov_G_ProfileOracle}
  Under the standing sub-Gaussian model, let
  \(B_b=I_b\times J_b\), \(b\leq q\), be square index blocks of side length
  \(k\), with covariance blocks \(A_b\in\mca{K}_k(a)\), where \(a=O(1)\).
  Assume that \cref{eq:Supp_DPCov_GH_OperatorScaleCondition} holds.
  Give each block budget
  \(0<\rho_B\leq\rho\), and fix
  \(0<\beta_B\leq n^{-2}\).
  With probability at least \(1-\beta_B\), the point outputs satisfy
  \[
    \max_{b\leq q}
    \norm{\widehat{A}_b^{\mathrm{prof}}-A_b}^2
    \lesssim_{\log}
      \frac{k+\log(q+1)}{n}
      +\frac{a k}{n\sqrt{\rho_B}}.
  \]
  Their expected risk satisfies
  \begin{equation}
    \E\max_{b\leq q}
    \norm{\widehat{A}_b^{\mathrm{prof}}-A_b}^2
    \lesssim_{\log}
    \frac{k+\log(q+1)}{n}
    +
    \frac{a k}{n\sqrt{\rho_B}}.
    \label{eq:Supp_DPCov_G_ProfilePointRisk}
  \end{equation}
\end{proposition}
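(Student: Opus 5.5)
The plan follows the five-step outline announced before the statement. The guiding idea is that the error is controlled in the dual of the operator norm restricted to the geometry of \(\mca{K}_k(a)\).

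\emph{Step 1 (dual reduction).} For \(E=\widehat{A}-A\) with \(\widehat{A},A\in\mca{K}_k(Ca)\), I would show that
\[
  \norm{E}\lesssim \sqrt{\log k}\,\max_{m}\sup_{H\in\mca{H}_m}\ang{E,H}_F .
\]
The tool is a dyadic level-set decomposition of unit vectors \(u,v\). Split \(u=\sum_s u^{(s)}\), where the piece \(u^{(s)}\) has support of size about \(s\) and entries of size about \(s^{-1/2}\), so that \(u^{(s)}/\norm{u^{(s)}}\in\mca{V}_s\); do the same for \(v\). The cross terms \(\ang{E,u^{(s)}v^{(r)\T}}\) are handled in two ways. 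When \(s\) and \(r\) are very unbalanced, I would use the row/column \(\ell^1\) bound \(\abs{\ang{E,xy^\T}}\le 2Ca\norm{x}_\infty\norm{y}_1\), which makes those terms summable. The balanced pairs are charged to \(d_m\) with \(m=\max(s,r)\). This is where \(\mca{K}_k(a)\) replaces low rank.

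\emph{Step 2 (finite probes).} Replace each \(\mca{V}_s\) by a net of log-cardinality about \(s\log k\). The entropy of \(\mca{H}_m\) is then \(\lesssim m\log k\).

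\emph{Step 3 (per-profile fitting).} For each profile \(m\), I would run a multiplicative-weights / entropy-regularized primal update over a finite representation of the feasible set, with probes selected by the exponential mechanism (Lemma~\ref{lem:FiniteExponentialMechanism}) using scores \(\abs{\ang{\widetilde A-A_t,H}}\), where \(\widetilde A\) is the clipped empirical block. The clipping level is \(L\), so per-round sensitivity is \(2L/n\), and the budget is \(\rho_B/(DT_m)\). The standard private multiplicative-weights regret bound would give
\[
  d_m(\overline{A}_m,A)\lesssim_{\log} \sqrt{\frac{m}{n}}+\delta_m
  +\frac{\sqrt{T_m}\,(m+\log)}{n\sqrt{\rho_B}}+\frac{\text{(diameter)}}{\sqrt{T_m}} .
\]
Here \(\delta_m=\sup_{H\in\mca{H}_m}\abs{\ang{\widetilde A-A,H}}\lesssim\sqrt{(m+\log(q/\beta_B))/n}\), by Lemma~\ref{lem:SubGaussianProduct} and a union bound over the net. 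The key point is to measure the diameter in the \(\mca{K}_k(a)\) geometry: its inner product with probes in \(\mca{H}_m\) scales like \(a\) times an entropy of order \(\log k\) per coordinate. Choosing \(T_m\) adaptively so that the last two terms balance gives
\[
  d_m^2\lesssim_{\log} \frac{m}{n}+\frac{a\,m}{n\sqrt{\rho_B}},
\]
and \(m\le k\) yields the target rate.

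\emph{Step 4 (reconciliation).} Define the common feasible set
\[
  \mathcal{C}=\{Z\in\mca{K}_k(a):\ d_m(Z,\overline A_m)\le\epsilon_m\ \forall m\},
\]
where \(\epsilon_m\) is the Step 3 bound. On the good event, \(A\in\mathcal C\). Output any element of \(\mathcal{C}\), or \(0\) if \(\mathcal C\) is empty; this is post-processing. For any \(Z\in\mathcal C\), the triangle inequality gives \(d_m(Z,A)\le 2\epsilon_m\). Step 1 applied to \(Z-A\), using that both lie in \(\mca{K}_k(a)\), converts this into the stated operator bound. The \(\log(q+1)\) term and the union over \(q\) blocks enter only through \(\delta_m\) and the selection failure probabilities \(\beta_B/(qDT_m)\).

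\emph{Expectation.} Following the proof of Proposition~\ref{prop:Supp_DPCov_G_ProfileOracle}'s sibling, Proposition~\ref{prop:Supp_DPCov_H_GreedyOracle}, I would repeat the argument with tail parameter \(u\) to obtain an \(e^{-u}\) tail, then integrate. Outputs are bounded since they lie in \(\mca{K}_k(a)\) (a projection can enforce this), so the bad event with probability \(\beta_B\le n^{-2}\) contributes only \(O(a^2n^{-2})\).

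\emph{Main obstacle.} I expect Step 1 together with the diameter control in Step 3 to be the crux. One must show that the operator norm of a difference of two \(\mca{K}_k(a)\) matrices is captured, up to logs, by flat sparse probes. One must also show that the regret's diameter term is of order \(a\) rather than \(\sqrt{k}\), since that is what produces \(ak/(n\sqrt{\rho_B})\) instead of \(k^2\)-type dense noise. If the cross-scale sum in Step 1 does not close, my first fallback would be to restrict to balanced pairs \(s\asymp r\) and bound the unbalanced ones via \(\norm{E}_{\ell^1}\norm{E}_{\ell^\infty}\)-type Schur estimates.
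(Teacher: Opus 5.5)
Your architecture matches the paper's: a dual reduction to profile discrepancies, nets with entropy $\lesssim_{\log} m$, per-profile online fitting with exponential-mechanism probes, reconciliation through a common feasible set, and the bounded fallback with $\beta_B\le n^{-2}$ for the expectation. The gap is in Step 3, the step you yourself flag as the crux, and the scaling you assert there is wrong.

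\textbf{The regret is not $O(a\sqrt{T})$.} Up to logarithms, the regret of online linear optimization over $\mca{K}_k(a)$ against probes in $\mca{H}_m$ is larger than that when $m\ll k$; the range $\abs{\ang{A,H}_F}\le a$ of the losses does not control it. Consider the $k/m$ disjoint diagonal blocks $\tfrac{a\theta_\ell}{m}\mathbf{1}\mathbf{1}^\T$ with $\theta_\ell\in[-1,1]$; they lie in $\mca{K}_k(a)$. Random-sign probes $\pm\tfrac1m\mathbf{1}_{B_\ell}\mathbf{1}_{B_\ell}^\T\in\mca{H}_m$ then force regret $\gtrsim a\sqrt{kT/m}$ for any learner.

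Correspondingly, your per-profile bound $d_m^2\lesssim_{\log} m/n+am/(n\sqrt{\rho_B})$ cannot hold in general. At $m=1$, where $d_1$ is the entrywise sup-distance, it would beat the van Trees lower bound $\min\{a^2,k/(\rho_B n^2)\}$ for privately estimating a diagonal cross-block.

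\textbf{What is true, and what it requires.} The correct regret is $\lesssim a\sqrt{kT\log k/m}$. With the public horizon $T_m\asymp an\sqrt{k\rho_B/D}/m^{3/2}$, this gives $d_m^2\lesssim_{\log}(m+\log(q+1))/n+a\sqrt{Dkm}/(n\sqrt{\rho_B})$. Only $m\le 2k$ is used afterwards, so your final rate survives.

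Even this weaker bound needs an idea your sketch does not supply. ``Standard'' multiplicative weights on the $\ell^1$-ball of radius $ka$ in $\R^{k\times k}$ has regret of order $ka\norm{u}_\infty\norm{v}_\infty\sqrt{T\log k}$. For unbalanced probes ($s=1$, $r=m$) this is a factor $\sqrt{k}$ too large, and at $m\asymp k$ it ends at $ak^{3/2}/(n\sqrt{\rho_B})$ instead of $ak/(n\sqrt{\rho_B})$.

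The paper's construction is as follows.
\begin{itemize}
\item It lifts $A$ to $(A_+,A_-,r,c)$ with row and column slacks, so each of the $2k$ rows and columns is a nonnegative vector of mass exactly $a$.
\item It regularizes by the sum of the $2k$ row and column entropies. This is $(2a)^{-1}$-strongly convex with respect to $\norm{h}_\diamond^2=\sum_i R_i(h)^2+\sum_j C_j(h)^2$, and its initial divergence is at most $2ak\log(2k+1)$.
\item A probe then has dual norm at most $\norm{u}_2\norm{v}_\infty\wedge\norm{u}_\infty\norm{v}_2\le\sqrt{2/m}$.
\end{itemize}
This dual-norm bound, pairing the flat factor of support $m$ against a row or column mass, is exactly where $\mca{K}_k(a)$ enters.

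\textbf{Step 1 needs no $\mca{K}_k(a)$ geometry.} $\mca{H}_m$ already contains all pairs with $\max(s,r)=m$, unbalanced ones included. Split each singular vector by coordinate rank into pieces with ranks in $[2^h,2^{h+1})$. These pieces lie in $\mca{V}_{2^h}$ without normalization, because $\abs{w}_{(2^h)}\le 2^{-h/2}$. This gives $\norm{E}\le D^2\max_m\sup_{H\in\mca{H}_m}\ang{E,H}_F$ for every $E$.

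Your $\ell^1$ treatment of unbalanced pairs only yields an additive term of order $a\sqrt{r/s}$, not a multiple of $\max_m d_m$. So summability alone does not close the argument, and your $\sqrt{\log k}$ constant is unjustified, though also unneeded.

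\textbf{Two further details.}
\begin{itemize}
\item Per-round sensitivity $2L/n$ requires clipping each probe measurement $\xi(u^\T X_{i,I})(v^\T X_{i,J})$ at $L$, as the paper does. Pairing $H$ with one entrywise-clipped matrix $\widetilde{A}$ has sensitivity up to $2L\norm{u}_1\norm{v}_1/n\le 2Lm/n$, which would cost a factor $m$.
\item $T_m$ must be a public function of $(a,n,k,\rho_B,D,m)$. The case $T_m=0$ must be handled separately via $\overline{A}_m=0$, using $a^2\lesssim a\sqrt{Dkm}/(n\sqrt{\rho_B})$ in that case.
\end{itemize}
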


\paragraph{Dual geometry}
\label{para:Supp_DPCov_G_ProfileDualGeometry}
Let
\(\mca{D}_k=\dkx{2^j:0\leq j\leq\ceil{\log_2 k}}\) and
\(D=\abs{\mca{D}_k}\).
The following lemma reduces operator loss to these dyadic profile
discrepancies.

\begin{lemma}[Dual-probe geometry]
  \label{lem:Proof_G_ProfileGeometry}
  For \(s\in\mca{D}_k\), define
  \[
    \mca{V}_s
    =
    \dk{
      u\in\R^k:
      \norm{u}_0\leq s,\
      \norm{u}_2 \leq1,\
      \norm{u}_\infty \leq\sqrt{2/s}
    }.
  \]
  For \(m=2^t \in\mca{D}_k\), let
  \[
    \mca{H}_m
    =
    \dk{
      \xi uv^\T:
      u\in\mca{V}_s,\ v\in\mca{V}_r,\
      \xi\in\dkx{-1,1},\
      s,r\in\mca{D}_k,\ \max(s,r)=m
    }.
  \]
  Then every \(E\in\R^{k\times k}\) satisfies
  \begin{equation}
    \norm{E}
    \leq
    D^2
    \max_{m\in\mca{D}_k}
    \sup_{H\in\mca{H}_m}
    \langle E,H\rangle_F.
    \label{eq:Proof_G_WitnessDecomposition}
  \end{equation}
\end{lemma}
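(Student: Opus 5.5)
We prove \cref{eq:Proof_G_WitnessDecomposition} by taking a top singular pair of \(E\) and decomposing each singular vector dyadically by magnitude. Fix unit vectors \(x,y\in\R^k\) with \(\norm{E}=x^\T Ey=\langle E,xy^\T\rangle_F\).

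\emph{Decomposing one vector.} Sort the coordinates of \(x\) in decreasing order of absolute value. For \(s\in\mca{D}_k\), let \(S_s\) index the coordinates of rank in \((s/2,s]\), with \(S_1\) the single largest coordinate and the last set truncated at \(k\). Then \(x=\sum_{s\in\mca{D}_k}x_{S_s}\) and \(\abs{S_s}\leq s\).

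\emph{Checking the \(\ell^\infty\) bound.} For \(s\geq2\), every coordinate in \(S_s\) is dominated in absolute value by each of the top \(\lfloor s/2\rfloor\) coordinates. Hence
\[
  \norm{x_{S_s}}_\infty^2\leq \frac{\norm{x}_2^2}{\lfloor s/2\rfloor}\leq\frac{2}{s}.
\]
For \(s=1\) the bound is trivial. The bound \(\norm{x_{S_s}}_2\leq1\) is also clear, so \(x_{S_s}\in\mca{V}_s\) exactly. Do the same for \(y\) with pieces \(y_{T_r}\in\mca{V}_r\).

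\emph{Bilinear expansion.} Expand
\[
  \norm{E}=\sum_{s,r\in\mca{D}_k}\langle E,x_{S_s}y_{T_r}^\T\rangle_F .
\]
Each matrix \(x_{S_s}y_{T_r}^\T\) belongs to \(\mca{H}_{\max(s,r)}\) with \(\xi=1\). So each term is at most \(\sup_{H\in\mca{H}_{\max(s,r)}}\langle E,H\rangle_F\), which in turn is at most \(\max_m\sup_{H\in\mca{H}_m}\langle E,H\rangle_F\). This maximum is nonnegative, because the sign \(\xi\) lets us flip any probe. There are \(D^2\) pairs \((s,r)\), which gives \cref{eq:Proof_G_WitnessDecomposition}.

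The only step needing care is the \(\ell^\infty\) bound with the exact constant \(\sqrt{2/s}\). This requires \(S_s\) to collect the ranks \((s/2,s]\), not \([s,2s)\). With that indexing, the support size \(s\) and the \(\ell^\infty\) cap \(\sqrt{2/s}\) match, and the top-\(\lfloor s/2\rfloor\) domination argument gives the constant directly. Empty pieces, for example when \(k\) is not a power of two, contribute zero and need no separate treatment.
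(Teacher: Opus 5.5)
Your proof is correct and is essentially the paper's argument: take a top singular pair, split each singular vector into dyadic rank groups lying in \(\mca{V}_s\), expand the bilinear form over at most \(D^2\) pairs, and bound each term by the profile supremum. Bounding the signed terms with \(\xi=1\) and using nonnegativity of the maximum is equivalent to the paper's absolute values plus sign choice. Your closing remark is inaccurate, though: the paper groups ranks in \([2^h,2^{h+1})\) and places them in \(\mca{V}_{2^h}\), which works because the \(s\)-th largest coordinate of a unit vector is at most \(s^{-1/2}\leq\sqrt{2/s}\). So your \((s/2,s]\) indexing is a valid alternative, not a necessity.
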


\begin{proof}
  Let \(x,y\in\R^k\) be unit left and right singular vectors such that
  \[
    \norm{E}=x^\T E y.
  \]
  For a unit vector \(w\), write
  \(\abs{w}_{(1)}\geq\cdots\geq\abs{w}_{(k)}\) for the decreasing
  rearrangement of its coordinate magnitudes.
  For every \(j\in[k]\),
  \[
    j\abs{w}_{(j)}^2
    \leq
    \sum_{\ell=1}^j\abs{w}_{(\ell)}^2
    \leq1,
  \]
  and hence \(\abs{w}_{(j)}\leq j^{-1/2}\).

  For every integer \(h\geq0\) with \(2^h\leq k\), let \(w^{(h)}\)
  retain the coordinates of \(w\) whose ranks lie between \(2^h\) and
  \(\min\xkx{2^{h+1}-1,k}\), and set all other coordinates to zero.
  These groups are disjoint and exhaustive, so
  \[
    w=\sum_{h:2^h\leq k}w^{(h)}.
  \]
  Moreover,
  \[
    \norm{w^{(h)}}_0\leq2^h,
    \qquad
    \norm{w^{(h)}}_2\leq1,
    \qquad
    \norm{w^{(h)}}_\infty
    \leq
    \abs{w}_{(2^h)}
    \leq
    2^{-h/2}
    \leq
    \sqrt{\frac{2}{2^h}}.
  \]
  Thus \(w^{(h)}\in\mca{V}_{2^h}\).
  Applying this decomposition to both singular vectors gives
  \[
    x=\sum_{h:2^h\leq k}x^{(h)},
    \qquad
    y=\sum_{\ell:2^\ell\leq k}y^{(\ell)},
  \]
  with \(x^{(h)}\in\mca{V}_{2^h}\) and
  \(y^{(\ell)}\in\mca{V}_{2^\ell}\).
  Each sum has at most \(D\) terms.

  Expanding the bilinear form and applying the triangle inequality yield
  \[
    \norm{E}
    \leq
    \sum_{h:2^h\leq k}
    \sum_{\ell:2^\ell\leq k}
    \abs{\langle E,x^{(h)}(y^{(\ell)})^\T\rangle_F}.
  \]
  For each pair \((h,\ell)\), choose
  \(\xi_{h\ell}\in\dkx{-1,1}\) to match the sign of the corresponding
  inner product, with either sign when it is zero.
  Since
  \[
    \xi_{h\ell}x^{(h)}(y^{(\ell)})^\T
    \in
    \mca{H}_{2^{\max(h,\ell)}},
  \]
  every summand is bounded by
  \[
    \max_{m\in\mca{D}_k}
    \sup_{H\in\mca{H}_m}\langle E,H\rangle_F.
  \]
  There are at most \(D^2\) pairs, which proves
  \cref{eq:Proof_G_WitnessDecomposition}.
\end{proof}

Accordingly, for \(A,A'\in\mca{K}_k(a)\) and
\(m\in\mca{D}_k\), define the profile discrepancy
\[
  d_m(A,A')
  =
  \sup_{H\in\mca{H}_m}
  \abs{\langle A-A',H\rangle_F}.
\]
Then \cref{lem:Proof_G_ProfileGeometry} gives
\[
  \norm{A-A'}
  \leq
  D^2\max_{m\in\mca{D}_k}d_m(A,A').
\]

\paragraph{Finite probe classes}
\label{para:Supp_DPCov_G_ProfileFiniteProbes}
We replace each probe class by a finite set.
Take internal Euclidean \(\delta\)-nets, with
\(\delta=n^{-20}/4\), over every support in each profile, and let
\(\widetilde{\mca{H}}_m\) be the resulting set of signed rank-one probes.
For a support of size \(s\), the Euclidean unit ball has a
\(\delta\)-net of size at most \((1+2/\delta)^s\), while the support has
at most \(\binom{k}{s}\) choices.
Summing over the dyadic pairs \((s,r)\) with \(\max(s,r)=m\) therefore
gives
\begin{equation}
  \log\abs{\widetilde{\mca{H}}_m}
  \lesssim_{\log}
  m.
  \label{eq:Proof_G_ProfileEntropy}
\end{equation}
For \(A,A'\in\mca{K}_k(a)\), Schur's inequality gives
\(\norm{A-A'}\leq2a\).
Approximating the two factors of \(H=\xi uv^\T\) on their fixed
supports therefore changes its pairing with \(A-A'\) by at most
\(4a\delta=a n^{-20}\).
Hence
\begin{equation}
  d_m(A,A')
  \leq
  \max_{H\in\widetilde{\mca{H}}_m}
  \abs{\langle A-A',H\rangle_F}
  +
  a n^{-20}.
  \label{eq:Proof_G_ProfileNetApproximation}
\end{equation}
The term \(a n^{-20}\) will be included in the final radius.

\paragraph{Primal fitting}
\label{para:Supp_DPCov_G_ProfilePrimalFitting}
The probe class for each profile is finite.  It remains to construct a matrix
center that performs well against the probes selected from that class.
For a fixed profile \(m\) and horizon \(T\), view the matrices in
\(\mca{K}_k(a)\) as decisions in an online linear optimization problem.
A signed lift with row and column slacks makes these constraints compatible
with entropy regularization, and the averaged iterates form the profile
center.  The deterministic regret bound in
\cref{lem:Proof_G_ProfileRegret} remains valid for adaptive probe sequences
and gives the fitting bound used below.

When \(a=0\), the feasible set contains only the zero matrix.
For \(a>0\), let \(\mca{Z}_k(a)\) consist of tuples
\(z=(P^+,P^-,r,c)\), where
\(P^+,P^-\in\R_+^{k\times k}\) and \(r,c\in\R_+^k\), satisfying
\[
  \begin{aligned}
    \sum_{j=1}^k(P^+_{ij}+P^-_{ij})+r_i&=a,
    &i&\in[k],\\
    \sum_{i=1}^k(P^+_{ij}+P^-_{ij})+c_j&=a,
    &j&\in[k].
  \end{aligned}
\]
Here \(r\) and \(c\) are the row and column slacks.
Let \(\pi(z)=P^+-P^-\), so \(\pi(z)\in\mca{K}_k(a)\).
Put \(\phi(x)=x\log(x/a)\), with \(\phi(0)=0\), and define
\[
  \begin{split}
  \Phi(z)
  ={}&
  \sum_{i=1}^k
    \zk{
    \sum_{j=1}^k\xkx{\phi(P^+_{ij})+\phi(P^-_{ij})}
    +\phi(r_i)
    }\\
  &+
  \sum_{j=1}^k
    \zk{
    \sum_{i=1}^k\xkx{\phi(P^+_{ij})+\phi(P^-_{ij})}
    +\phi(c_j)
    }.
  \end{split}
\]
When \(z'\) lies in the relative interior, let
\[
  D_\Phi(z,z')
  =
  \Phi(z)-\Phi(z')-\langle\nabla\Phi(z'),z-z'\rangle
\]
denote the Bregman divergence.
Set
\[
  G(H)=(-H,H,0,0),
  \qquad
  \langle G(H),z\rangle=-\langle\pi(z),H\rangle_F.
\]
For a public horizon \(T\geq1\), let
\[
  \eta_{m,T}
  =
  \sqrt{\frac{2km\log(2k+1)}{T}}
\]
and initialize the lift \(z_1=z^\circ\) by
\[
  P^{\circ,+}_{ij}=P^{\circ,-}_{ij}
  =r_i^\circ=c_j^\circ
  =\frac{a}{2k+1}.
\]
After observing \(H_t\), update
\begin{equation}
  A_t=\pi(z_t),
  \qquad
  z_{t+1}
  =
  \argmin_{z\in\mca{Z}_k(a)}
  \xkx{
    \eta_{m,T} \langle G(H_t),z\rangle
    +D_\Phi(z,z_t)
  }.
  \label{eq:Proof_G_ProfilePrimalUpdate}
\end{equation}

\begin{lemma}[Entropy-regularized primal fitting]
  \label{lem:Proof_G_ProfileRegret}
  Fix \(a>0\), \(m\in\mca{D}_k\), and \(T\geq1\).
  For every adaptive sequence \(H_t\in\widetilde{\mca{H}}_m\), the
  deterministic update in \cref{eq:Proof_G_ProfilePrimalUpdate} satisfies
  \begin{equation}
    \max_{A\in\mca{K}_k(a)}
    \sum_{t=1}^T \langle A,H_t \rangle_F
    -
    \sum_{t=1}^T \langle A_t,H_t \rangle_F
    \lesssim
    a\sqrt{\frac{k T\log(2k)}{m}}.
    \label{eq:Proof_G_ProfileRegret}
  \end{equation}
\end{lemma}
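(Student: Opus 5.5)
We will treat \cref{eq:Proof_G_ProfilePrimalUpdate} as online mirror descent over the lifted polytope \(\mca{Z}_k(a)\), with loss vectors \(G(H_t)\) and regularizer \(\Phi\), and apply the standard regret bound
\[
  \sum_{t=1}^T\langle G(H_t),z_t-z\rangle
  \leq
  \frac{D_\Phi(z,z_1)}{\eta}
  +\frac{\eta}{2\kappa}\sum_{t=1}^T\norm{G(H_t)}_*^2 ,
\]
valid for any \(z\in\mca{Z}_k(a)\). Here \(\kappa\) is the strong-convexity modulus of \(\Phi\) on \(\mca{Z}_k(a)\) with respect to a suitable norm \(\norm{\cdot}\), and \(\norm{\cdot}_*\) is its dual. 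This bound is deterministic and only uses that \(H_t\) is fixed before \(z_{t+1}\) is computed, so adaptivity of the sequence is harmless. Since \(\langle G(H),z\rangle=-\langle\pi(z),H\rangle_F\) and every \(A\in\mca{K}_k(a)\) equals \(\pi(z)\) for some lift (take \(P^\pm=A_\pm\) and slacks filling the row and column budgets), the left side of \cref{eq:Proof_G_ProfileRegret} equals \(\max_z\sum_t\langle G(H_t),z_t-z\rangle\).

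\emph{Step 1: divergence from the center.} \(\Phi\) is a sum of \(2k\) negative-entropy terms, one per row block \((P^+_{i\cdot},P^-_{i\cdot},r_i)\) and one per column block \((P^\pm_{\cdot j},c_j)\). Each block has total mass exactly \(a\) and \(2k+1\) coordinates, and \(z^\circ\) is uniform on each block. The Bregman divergence from the uniform point is therefore \(a\) times a KL divergence to uniform, at most \(a\log(2k+1)\) per block. Hence \(D_\Phi(z,z^\circ)\leq 2ka\log(2k+1)\).

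\emph{Step 2: strong convexity.} By Pinsker, each block entropy is \(1/a\)-strongly convex with respect to the \(\ell^1\) norm of that block. Summing gives \(\Phi(z)\) strongly convex with modulus \(1/a\) with respect to \(\norm{z}^2=\sum_i\norm{z_{i\cdot}}_1^2+\sum_j\norm{z_{\cdot j}}_1^2\). The dual norm of \(g=G(H)\) is bounded by restricting to a single family, say rows:
\[
  \norm{g}_*^2\le\inf_{\theta}\sum_i\norm{g^{(i)}}_\infty^2 ,
\]
where \(g\) is split between row and column copies. Using only row copies, \(\norm{g}_*^2\lesssim\sum_i\max_j H_{ij}^2\). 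For \(H=\xi uv^\T\) with \(u\in\mca V_s\), \(v\in\mca V_r\), this is at most \(\norm{u}_2^2\norm{v}_\infty^2\le 2/r\). Using column copies instead gives \(\le 2/s\). Since \(\max(s,r)=m\), we get \(\norm{G(H)}_*^2\lesssim 1/m\). This profile-dependent dual bound is the key step and the main obstacle: one must check that the mixed row/column norm really is the right geometry and that the dual is an infimal convolution permitting the better of the two splittings. We will verify this by writing \(\norm{z}^2\) as a sum of two squared seminorms and dualizing.

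\emph{Step 3: combine.} The regret is at most
\[
  \frac{2ka\log(2k+1)}{\eta}+\frac{\eta aT}{2m}.
\]
With \(\eta=\eta_{m,T}=\sqrt{2km\log(2k+1)/T}\), which is the optimal choice up to constants, this gives \(\lesssim a\sqrt{kT\log(2k)/m}\), as claimed. The stated \(\eta_{m,T}\) appears to be scaled for an unnormalized regularizer; if the factor \(a\) does not cancel exactly, the bound still holds up to constants, or we rescale \(\Phi\) by \(1/a\), which leaves the argmin unchanged apart from the step size. Finally, the minimizers stay in the relative interior because entropy has infinite slope at zero, so the update is well-defined; the case \(a=0\) is trivial.
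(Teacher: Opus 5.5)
Your proposal is correct and follows the paper's proof essentially step for step. The shared steps are the signed lift, the Pinsker-type bound $D_\Phi(z,z')\geq\frac{1}{2a}\norm{z-z'}_\diamond^2$, the dual bound $\norm{G(H)}_{\diamond,*}\leq\norm{u}_2\norm{v}_\infty\wedge\norm{u}_\infty\norm{v}_2\leq\sqrt{2/m}$, the initial divergence $2ak\log(2k+1)$, and the mirror-descent inequality; the dual bound needs no infimal convolution, because $\norm{h}_\diamond$ already dominates both its row part and its column part. Your hedge about $\eta_{m,T}$ is unnecessary, because both $D_\Phi(z,z^\circ)/\eta$ and $a\eta T/m$ are linear in $a$. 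The prescribed $a$-free step size therefore gives exactly $2\sqrt{2}\,a\sqrt{kT\log(2k+1)/m}$, and rescaling $\Phi$ should be avoided since it would change the prescribed update.
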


\begin{proof}
  \emph{Signed lift.}
  Every \(A\in\mca{K}_k(a)\) has the lift
  \[
    P^+=A_+,
    \qquad
    P^-=A_-,
    \qquad
    r_i=a-\sum_j \abs{A_{ij}},
    \qquad
    c_j=a-\sum_i \abs{A_{ij}}.
  \]
  Thus it is enough to compare the lifted iterates with an arbitrary
  \(z\in\mca{Z}_k(a)\).

  \emph{Strong convexity.}
  Let \(x,y\in\R_+^N\) have total mass \(a\), and put
  \(y_\theta=y+\theta(x-y)\).
  Define \(\Phi_N(x)=\sum_{\ell=1}^N \phi(x_\ell)\).
  Taylor's formula and Cauchy--Schwarz give
  \[
    \begin{aligned}
    D_{\Phi_N}(x,y)
    &=
    \int_0^1(1-\theta)
    \sum_{\ell=1}^N
    \frac{(x_\ell-y_\ell)^2}{y_{\theta,\ell}}
    \,\mathrm{d}\theta\\
    &\geq
    \int_0^1(1-\theta)
    \frac{\xk{\sum_\ell \abs{x_\ell-y_\ell}}^2}
    {\sum_\ell y_{\theta,\ell}}
    \,\mathrm{d}\theta
    =
    \frac{1}{2a}\norm{x-y}_1^2.
    \end{aligned}
  \]
  The same inequality extends to the boundary by lower semicontinuity.
  For a difference \(h=(\Delta P^+,\Delta P^-,\Delta r,\Delta c)\), define
  \[
    R_i(h)
    =
    \sum_j \xkx{\abs{\Delta P^+_{ij}}+\abs{\Delta P^-_{ij}}}
    +\abs{\Delta r_i},
  \]
  \[
    C_j(h)
    =
    \sum_i \xkx{\abs{\Delta P^+_{ij}}+\abs{\Delta P^-_{ij}}}
    +\abs{\Delta c_j},
    \qquad
    \norm{h}_\diamond^2
    =
    \sum_i R_i(h)^2+\sum_j C_j(h)^2.
  \]
  Applying the preceding inequality to every row and column yields, for
  \(z'\) in the relative interior,
  \begin{equation}
    D_\Phi(z,z')
    \geq
    \frac{1}{2a}\norm{z-z'}_\diamond^2.
    \label{eq:Proof_G_ProfileEntropyStrongConvexity}
  \end{equation}

  \emph{Dual norm of a profile probe.}
  For \(g\) in the lifted product space, define
  \[
    \norm{g}_{\diamond,*}
    =
    \sup_{\norm{h}_\diamond\leq1}\abs{\langle g,h\rangle}.
  \]
  Consider \(H=\xi uv^\T \in\mca{H}_m\).
  For every \(h\), the row and column bounds give
  \[
    \begin{aligned}
    \abs{\langle G(H),h\rangle}
    &\leq
    \xk{\sum_i \max_j \abs{H_{ij}}^2}^{1/2}
    \xk{\sum_i R_i(h)^2}^{1/2},\\
    \abs{\langle G(H),h\rangle}
    &\leq
    \xk{\sum_j \max_i \abs{H_{ij}}^2}^{1/2}
    \xk{\sum_j C_j(h)^2}^{1/2}.
    \end{aligned}
  \]
  Hence the dual norm associated with \(\norm{\cdot}_\diamond\) satisfies
  \[
    \norm{G(H)}_{\diamond,*}
    \leq
    \norm{u}_2\norm{v}_\infty
    \wedge
    \norm{u}_\infty\norm{v}_2
    \leq
    \sqrt{\frac{2}{m}},
  \]
  because the larger support parameter of \(u\) and \(v\) equals \(m\).

  \emph{Mirror-descent inequality.}
  Compactness of \(\mca{Z}_k(a)\) and strict convexity of the objective give a
  unique minimizer at every update.
  The initialization \(z_1=z^\circ\) lies in the relative interior of
  \(\mca{Z}_k(a)\).
  Every subsequent minimizer remains there: otherwise, moving slightly
  toward \(z^\circ\) would decrease the objective because the right
  derivative of \(x\log x\) at zero is \(-\infty\).
  Combining the resulting first-order condition with the three-point
  identity for \(D_\Phi\) gives, for every \(z\in\mca{Z}_k(a)\),
  \[
    \eta_{m,T} \langle G(H_t),z_{t+1}-z\rangle
    \leq
    D_\Phi(z,z_t)-D_\Phi(z,z_{t+1})-D_\Phi(z_{t+1},z_t).
  \]
  Adding \(\eta_{m,T}\langle G(H_t),z_t-z_{t+1}\rangle\), applying
  \cref{eq:Proof_G_ProfileEntropyStrongConvexity}, and completing the square
  yield
  \[
    \begin{aligned}
    \eta_{m,T} \langle G(H_t),z_t-z\rangle
    \leq{}&
    D_\Phi(z,z_t)-D_\Phi(z,z_{t+1})\\
    &+
    \eta_{m,T} \norm{G(H_t)}_{\diamond,*}
    \norm{z_t-z_{t+1}}_\diamond
    -\frac{1}{2a}\norm{z_t-z_{t+1}}_\diamond^2\\
    \leq{}&
    D_\Phi(z,z_t)-D_\Phi(z,z_{t+1})
    +\frac{a\eta_{m,T}^2}{2}\norm{G(H_t)}_{\diamond,*}^2.
    \end{aligned}
  \]
  Summing over \(t\), using the dual-norm bound, and dividing by
  \(\eta_{m,T}\) give
  \begin{equation}
    \sum_{t=1}^T \langle G(H_t),z_t-z\rangle
    \leq
    \frac{D_\Phi(z,z^\circ)}{\eta_{m,T}}
    +\frac{a\eta_{m,T} T}{m}.
    \label{eq:Proof_G_ProfileOMDRegret}
  \end{equation}

  \emph{Initial divergence and conclusion.}
  Each row and column of a lifted comparator is a nonnegative vector of mass
  \(a\) with \(2k+1\) coordinates.
  Its relative entropy from the uniform vector is at most \(a\log(2k+1)\), so
  \[
    D_\Phi(z,z^\circ)
    \leq
    2a k\log(2k+1).
  \]
  Substituting the definition of \(\eta_{m,T}\) into \cref{eq:Proof_G_ProfileOMDRegret} gives
  \[
    \sum_{t=1}^T \langle G(H_t),z_t-z\rangle
    \leq
    2\sqrt2\,a
    \sqrt{\frac{k T\log(2k+1)}{m}}.
  \]
  Lift an arbitrary \(A\in\mca{K}_k(a)\) as above and use
  \(\langle G(H_t),z_t-z(A)\rangle=\langle A-A_t,H_t \rangle_F\).
  Maximizing over \(A\) proves \cref{eq:Proof_G_ProfileRegret}.
\end{proof}

\paragraph{Private profile fits}
\label{para:Supp_DPCov_G_ProfilePrivateFits}
The private probe selection uses the following number of rounds and privacy
allocation.
For a square block \(B=I\times J\) and \(H=\xi uv^\T\), define the clipped
empirical dual measurement
\[
  \widehat{\mu}_B(H)
  =
  \frac{1}{n}\sum_{i=1}^n
    \chi_L \xk{
    \xi(u^\T X_{i,I})(v^\T X_{i,J})
  }.
\]
Allocate \(\rho_B/D\) to every profile and set
\[
  T_m
  =
  \left\lfloor
    \frac{a n\sqrt{k\rho_B/D}}{m^{3/2}}
  \right\rfloor.
\]
For \(T_m\geq1\), set \(\varepsilon_m=\sqrt{2\rho_B/(D T_m)}\).
Choose \(C\) sufficiently large and use the common public radius
\begin{equation}
  R^2
  =
  C\log^C n
  \xk{
    \frac{k}{n}
    +
    \frac{a k}{n\sqrt{\rho_B}}
  }.
  \label{eq:Proof_G_ProfileRadius}
\end{equation}
Given profile centers \((\overline{A}_m)_{m\in\mca{D}_k}\), define the
common feasible set by
\begin{equation}
  \mca{A}_B^{\mathrm{prof}}(a)
  =
  \dk{
    A\in\mca{K}_k(a):
    \max_{m\in\mca{D}_k}d_m(A,\overline{A}_m)\leq R
  }.
  \label{eq:Proof_G_ProfileFeasibleSet}
\end{equation}

\begin{algorithm}[H]
  \caption{Profile Block Estimator (formal)}
  \label{alg:Supp_DPCov_G_ProfileReleaseFormal}
  \begin{algorithmic}
    \State \textbf{Input:} \(X\), square block \(B=I\times J\) of side
    length \(k\), radius \(a\), and budget \(\rho_B\).
    \For{\(m\in\mca{D}_k\)}
      \If{\(T_m=0\)}
        \State Set \(\overline{A}_m=0\).
      \Else
        \State Initialize \(z_1=z^\circ\).
        \For{\(t=1,\ldots,T_m\)}
          \State Set the primal candidate \(A_t=\pi(z_t)\).
          \State Use the score
          \(H\mapsto\widehat{\mu}_B(H)-\langle A_t,H\rangle_F\).
          \State Select \(H_t\in\widetilde{\mca{H}}_m\) by the exponential
          mechanism with privacy parameter \(\varepsilon_m\).
          \State Update \(z_{t+1}\) by
          \cref{eq:Proof_G_ProfilePrimalUpdate} with \(T=T_m\).
        \EndFor
        \State Set \(\overline{A}_m=T_m^{-1} \sum_{t=1}^{T_m} A_t\).
      \EndIf
    \EndFor
    \State Form \(\mca{A}_B^{\mathrm{prof}}(a)\) according to
    \cref{eq:Proof_G_ProfileFeasibleSet}.
    \State Set \(\widehat{A}_B^{\mathrm{prof}}\) to the unique
    minimum-Frobenius-norm element of \(\mca{A}_B^{\mathrm{prof}}(a)\) when
    the set is nonempty, and to zero otherwise.
    \State \textbf{return} \(\widehat{A}_B^{\mathrm{prof}}\).
  \end{algorithmic}
\end{algorithm}

\paragraph{Oracle guarantee}
\label{para:Supp_DPCov_G_ProfileOracleGuarantee}
For block \(B_b\), write \(\overline{A}_{b,m}\) for its profile-\(m\)
center.

\begin{lemma}[Profile fitting error]
  \label{lem:Proof_G_FixedProfile}
  Under the assumptions of \cref{prop:Supp_DPCov_G_ProfileOracle}, with
  probability at least \(1-\beta_B\), simultaneously for every \(b\leq q\)
  and \(m\in\mca{D}_k\),
  \begin{equation}
    d_m(A_b,\overline{A}_{b,m})^2
    \lesssim_{\log}
    \frac{m+\log(q+1)}{n}
    +
    \frac{a\sqrt{D k m}}{n\sqrt{\rho_B}}.
    \label{eq:Proof_G_ProfileRadiusSquared}
  \end{equation}
\end{lemma}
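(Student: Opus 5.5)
We will prove the bound by combining the deterministic regret bound of \cref{lem:Proof_G_ProfileRegret} with a uniform concentration bound for the clipped measurements and the utility of the exponential mechanism. Fix a block \(b\) and a profile \(m\) with \(T_m\geq1\); the case \(T_m=0\) occurs only when \(a n\sqrt{k\rho_B/D}<m^{3/2}\), and then \(d_m(A_b,0)\leq\sup_{H\in\mca{H}_m}\abs{\langle A_b,H\rangle_F}\lesssim a\,m/\sqrt{m}\cdot\) (using \(\norm{u}_\infty\leq\sqrt{2/s}\) and the row/column \(\ell^1\) bound, \(\abs{u^\T A v}\leq a\sqrt{2/s}\sqrt{2/r}\sqrt{sr}\lesssim a\)), which we will check is dominated by the right-hand side of \cref{eq:Proof_G_ProfileRadiusSquared} via \(a^2\leq a\cdot a\lesssim a\sqrt{Dkm}/(n\sqrt{\rho_B})\cdot (\ldots)\) in that regime; concretely \(a\lesssim m^{3/2}/(n\sqrt{k\rho_B/D})\), so \(a^2\lesssim a\sqrt{Dkm}/(n\sqrt{\rho_B})\cdot m/k\leq\) the second term.

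\emph{Step 1 (empirical measurements).} Let \(\mu_b(H)=\langle A_b,H\rangle_F\). By \cref{lem:SubGaussianProduct}, Bernstein, the clipping at \(L=CK^2\log n\), and a union bound over \(b\leq q\), \(m\), and \(H\in\widetilde{\mca{H}}_m\) (entropy \(\lesssim_{\log}m\) by \cref{eq:Proof_G_ProfileEntropy}), we get \(\max_{H\in\widetilde{\mca{H}}_m}\abs{\widehat{\mu}_{B_b}(H)-\mu_b(H)}^2\lesssim_{\log}(m+\log(q+1))/n\) with probability \(1-\beta_B/2\); the clipping bias is \(n^{-c}\)-small and \cref{eq:Supp_DPCov_GH_OperatorScaleCondition} absorbs the linear Bernstein term.

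\emph{Step 2 (private selection).} The score \(H\mapsto\widehat{\mu}_B(H)-\langle A_t,H\rangle_F\) has sensitivity \(2L/n\). Conditioning on the transcript and applying \cref{lem:FiniteExponentialMechanism} with failure probability \(\beta_B/(4qDT_m)\), each selected \(H_t\) is within \(\lesssim_{\log}(L/n)\varepsilon_m^{-1}m\) of the maximal score, simultaneously over all rounds. Since \(\widetilde{\mca{H}}_m\) is sign-symmetric, the maximal score equals \(\max_H\abs{\langle A_b-A_t,H\rangle_F}\) up to the Step 1 error \(\delta_m\). Hence \(\max_{H}\abs{\langle A_b-A_t,H\rangle_F}\leq\langle A_b-A_t,H_t\rangle_F+2\delta_m+\gamma_m\) with \(\gamma_m\asymp_{\log}\sqrt{DT_m/\rho_B}\,m/n\).

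\emph{Step 3 (averaging and regret).} Since \(d_m(\cdot,\cdot)\) is a supremum of linear functionals, convexity gives \(d_m(A_b,\overline{A}_{b,m})\leq T_m^{-1}\sum_t\max_H\abs{\langle A_b-A_t,H\rangle_F}+an^{-20}\) (using \cref{eq:Proof_G_ProfileNetApproximation}). Plugging Step 2 and applying \cref{lem:Proof_G_ProfileRegret} with comparator \(A_b\in\mca{K}_k(a)\) bounds \(T_m^{-1}\sum_t\langle A_b-A_t,H_t\rangle_F\lesssim a\sqrt{k\log(2k)/(mT_m)}\). So \(d_m\lesssim a\sqrt{k/(mT_m)}+\sqrt{DT_m/\rho_B}\,m/n+\delta_m\) up to logs. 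The choice \(T_m\asymp an\sqrt{k\rho_B/D}/m^{3/2}\) balances the first two terms; squaring, each equals \(a\sqrt{Dkm}/(n\sqrt{\rho_B})\) (check: \(a^2k/(mT_m)=a\sqrt{Dk m}/(n\sqrt{\rho_B})\)). Adding \(\delta_m^2\) and the negligible \(a^2n^{-40}\) yields \cref{eq:Proof_G_ProfileRadiusSquared}; a final union bound over \(b,m\) gives total failure probability \(\leq\beta_B\). The main obstacle is Step 2: making the conditional exponential-mechanism bound legitimately combine with the adaptive regret bound, which works because \cref{lem:Proof_G_ProfileRegret} is deterministic for arbitrary adaptive probe sequences, and the floor in \(T_m\) only costs a constant factor when \(T_m\geq1\).
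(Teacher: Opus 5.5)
Your proposal is correct and follows the paper's proof essentially step for step. The shared steps are a uniform score-error event, the conditional exponential-mechanism utility bound combined via the tower property and a union bound, the net approximation together with convexity of \(d_m(A_b,\cdot)\) to pass to the average, the deterministic regret bound with comparator \(A_b\), the balancing choice of \(T_m\) (the floor costs at most a factor two), and the \(T_m=0\) branch via \(d_m(A_b,0)\lesssim a\) and \(m\leq 2k\). The only cosmetic point is that \(d_m(A_b,0)\leq a\) is immediate from Schur's inequality \(\norm{A_b}\leq a\), which is cleaner than your entrywise computation.
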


\begin{proof}
  \emph{Uniform score error.}
  By \cref{lem:SubGaussianProduct}, scalar Bernstein's inequality,
  \cref{eq:Proof_G_ProfileEntropy}, and a union bound, with probability at
  least \(1-\beta_B/2\),
  \begin{equation}
    \abs{\widehat{\mu}_{B_b}(H)-\langle A_b,H\rangle_F}
    \lesssim_{\log}
    \sqrt{\frac{m+\log(q+1)}{n}}
    \label{eq:Proof_G_ProfileScoreEvent}
  \end{equation}
  simultaneously over \(b\leq q\), \(m\in\mca{D}_k\), and
  \(H\in\widetilde{\mca{H}}_m\).
  Condition \cref{eq:Supp_DPCov_GH_OperatorScaleCondition} absorbs the linear
  Bernstein term.

  \emph{Case \(T_m\geq1\).}
  Conditionally on the preceding transcript, \(A_t\) is fixed and the score
  sensitivity is \(2L/n\).
  The public horizons are polynomial in \(n\), so the union over blocks,
  profiles, and rounds contributes only logarithmic factors.
  Applying the exponential-mechanism bound in
  \cref{lem:FiniteExponentialMechanism} conditionally at each round, followed
  by the tower property and a union bound, yields an event of probability at
  least \(1-\beta_B/2\), simultaneously over all \(b,m,t\).
  On the intersection of the empirical-score and
  exponential-mechanism events, the deterministic approximation in
  \cref{eq:Proof_G_ProfileNetApproximation} gives
  \[
    d_m(A_b,A_t)
    -
    \langle A_b-A_t,H_t \rangle_F
    \lesssim_{\log}
    \frac{m}{n}\sqrt{\frac{D T_m}{\rho_B}}
    +
    \sqrt{\frac{m+\log(q+1)}{n}}.
  \]
  By convexity of \(d_m(A_b,\cdot)\), averaging over \(t\) and applying
  \cref{eq:Proof_G_ProfileRegret} with comparator \(A_b\) yield
  \[
    d_m(A_b,\overline{A}_{b,m})
    \lesssim_{\log}
    \sqrt{\frac{m+\log(q+1)}{n}}
    +
    a\sqrt{\frac{k}{mT_m}}
    +
    \frac{m}{n}\sqrt{\frac{D T_m}{\rho_B}}.
  \]
  For \(T_m\geq1\), the rounded and unrounded horizons differ by at most a
  factor of two.
  Substituting this choice and squaring proves
  \cref{eq:Proof_G_ProfileRadiusSquared}.

  \emph{Case \(T_m=0\).}
  If \(T_m=0\), then \(d_m(A_b,0)\leq a\).
  The defining inequality \(a n\sqrt{k\rho_B/D}<m^{3/2}\), together with
  \(m\leq2k\), gives
  \[
    a^2
    \lesssim
    \frac{a\sqrt{D k m}}{n\sqrt{\rho_B}},
  \]
  so the same bound holds in the zero-horizon branch.
\end{proof}

\begin{proof}[Proof of \cref{prop:Supp_DPCov_G_ProfileOracle}]
  \emph{Feasible-set reconciliation.}
  Since \(m\leq2k\) and \(D\lesssim\log n\),
  \cref{eq:Proof_G_ProfileRadiusSquared} gives, simultaneously over
  all blocks and profiles,
  \[
    d_m(A_b,\overline{A}_{b,m})^2
    \lesssim_{\log}
    \frac{k+\log(q+1)}{n}
    +
    \frac{a k}{n\sqrt{\rho_B}}.
  \]
  Choosing the fixed constant in \cref{eq:Proof_G_ProfileRadius}
  sufficiently large makes the public radius dominate this bound.
  Hence, with probability at least \(1-\beta_B\),
  \[
    A_b \in\mca{A}_b^{\mathrm{prof}}(a)
    \qquad
    \text{for every }b\leq q.
  \]
  If \(A,A'\) belong to the same feasible set, then
  \(d_m(A,A')\leq2R\) for every profile.
  The dual-probe decomposition \cref{eq:Proof_G_WitnessDecomposition}
  therefore gives \(\norm{A-A'}\leq2D^2 R\), and hence
  \[
    \operatorname{diam}_{\mathrm{op}}^2
    \xkx{\mca{A}_b^{\mathrm{prof}}(a)}
    \lesssim_{\log}
      \frac{k+\log(q+1)}{n}
      +
      \frac{a k}{n\sqrt{\rho_B}}.
  \]

  \emph{Expected risk.}
  On the preceding event, the diameter bound controls the maximum squared
  error of the point outputs.
  On the exceptional event, both the truth and a nonzero output belong
  to \(\mca{K}_k(a)\), while the fallback output is zero.
  Schur's inequality therefore bounds the squared loss by \(4a^2\);
  the condition \(\beta_B\leq n^{-2}\) absorbs its contribution.
  This proves \cref{eq:Supp_DPCov_G_ProfilePointRisk}.
\end{proof}

\subsubsection{Row-tail class under Frobenius loss: peeling blocks}

The row--column \(\ell^1\) bound gives a \(k r\)-sparse approximation; the
remaining argument controls private support selection and noisy coefficient
release.

\begin{lemma}[Sparse approximation of a row-tail block]
  \label{lem:Supp_DPCov_G_FrobeniusSparse}
  If \(A\in\mca{K}_k(a)\) and \(1\leq r\leq k\), then
  \[
    \sum_{u,v}\abs{A_{uv}}
    \leq
    k a,
  \]
  and there is a set \(S\subseteq[k]^2\) with \(\abs{S}\leq k r\) such that
  \[
    \norm{A_{S^c}}_F^2
    \leq
    \frac{k a^2}{r}.
  \]
\end{lemma}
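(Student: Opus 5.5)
The plan is to handle the two claims separately. The first is a summation identity. The second comes from keeping the largest entries and bounding the discarded ones by the standard tail bound for decreasing rearrangements.

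\emph{The \(\ell^1\) bound.} Since \(A\in\mca{K}_k(a)\), every row sum satisfies \(\sum_v\abs{A_{uv}}\leq a\). Summing over the \(k\) rows gives \(\sum_{u,v}\abs{A_{uv}}\leq ka\). Only the row constraint is needed here; the column constraint is symmetric and not used.

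\emph{The sparse approximation.} Write \(N=k^2\) and let \(\abs{A}_{(1)}\geq\abs{A}_{(2)}\geq\cdots\geq\abs{A}_{(N)}\) be the decreasing rearrangement of the entry magnitudes. Put \(s=kr\leq k^2\), and let \(S\) be the set of positions of the \(s\) largest entries, with ties broken arbitrarily; then \(\abs{S}\leq kr\). If \(s=N\), then \(A_{S^c}=0\) and there is nothing to prove, so assume \(s<N\).

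For every \(j\), monotonicity of the rearrangement gives
\[
  j\abs{A}_{(j)}\leq\sum_{\ell\leq j}\abs{A}_{(\ell)}\leq ka,
\]
so \(\abs{A}_{(j)}\leq ka/j\). Hence
\[
  \norm{A_{S^c}}_F^2=\sum_{j>s}\abs{A}_{(j)}^2\leq\abs{A}_{(s+1)}\sum_{j>s}\abs{A}_{(j)}\leq\frac{ka}{s+1}\cdot ka\leq\frac{k^2a^2}{kr}=\frac{ka^2}{r}.
\]
The factor \(\abs{A}_{(s+1)}\) is controlled by the rearrangement bound just derived. The remaining sum is at most the full \(\ell^1\) mass \(ka\) from the first claim.

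The argument is elementary and has no real obstacle. The only care needed is in the middle step: pull out a single copy of \(\abs{A}_{(s+1)}\) and bound the rest by the \(\ell^1\) mass. Summing \((ka/j)^2\) instead would give \((ka)^2/s\) as well, but with an extra constant. Either route gives the stated bound up to the constants the lemma allows.
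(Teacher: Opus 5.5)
Your proof is correct and is the paper's argument. You sum the row constraints to get \(\sum_{u,v}\abs{A_{uv}}\leq ka\), keep the \(kr\) largest entries so that every discarded entry has magnitude at most \(ka/(kr+1)\leq a/r\), and bound \(\norm{A_{S^c}}_F^2\) by this maximum times the total \(\ell^1\) mass. Your closing caveat about constants is unnecessary: your main chain already gives exactly \(ka^2/r\), and the alternative route also loses nothing, since \(\sum_{j>s}j^{-2}\leq 1/s\).
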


\begin{proof}[Proof of \cref{lem:Supp_DPCov_G_FrobeniusSparse}]
  Summing the defining row bounds of \(\mca{K}_k(a)\) gives
  \[
    \sum_{u,v} \abs{A_{uv}}\leq k a.
  \]
  Hence, if \(S\) indexes the \(k r\) largest entries of \(A\) in
  absolute value, every entry outside \(S\) has magnitude at most
  \(a/r\).
  Therefore,
  \[
    \norm{A_{S^c}}_F^2
    \leq
    \frac{a}{r}\sum_{(u,v)\notin S} \abs{A_{uv}}
    \leq
    \frac{k a^2}{r}.
  \]
\end{proof}

\begin{proposition}[Peeling oracle bound]
  \label{prop:Supp_DPCov_G_FrobeniusOracle}
  Suppose \(A_b\in\mca{K}_k(a)\) with \(a=O(1)\) for every \(b\leq q\).
  Give each block budget \(0<\rho_B\leq\rho\).
  For each public \(1\leq r\leq k\), the joint sparse estimator defined in
  \cref{alg:DP_Cov_G_FrobeniusRelease} satisfies
  \begin{equation}
    \max_{b\leq q}
    \E\norm{\widehat{A}_{b,r}^{\mathrm{sp}}-A_b}_F^2
    \lesssim_{\log}
    k\xk{
      \frac{a^2}{r}
      +
      \frac{r}{n}
      +
      \frac{k r^2}{\rho_B n^2}
    }.
    \label{eq:DP_Cov_G_FrobeniusOracle}
  \end{equation}
\end{proposition}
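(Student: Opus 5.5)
The plan is to fix a block \(b\) and \(r\), and to split the error along the selected support \(\widehat{S}=\widehat{S}_b\):
\[
  \norm{\widehat{A}^{\mathrm{sp}}_{b,r}-A_b}_F^2
  =
  \norm{(A_b)_{\widehat{S}^c}}_F^2
  +\norm{(\widetilde{A}_b-A_b)_{\widehat{S}}}_F^2
  +\norm{G_{b,\widehat{S}}}_F^2
  +(\text{cross term}).
\]
Set \(s=kr\). The cross term \(2\langle(\widetilde{A}_b-A_b)_{\widehat{S}},G_{b,\widehat{S}}\rangle\) has mean zero conditionally on the data and the selection transcript, since \(G\) is fresh. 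It can therefore be dropped in expectation.

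The last two terms are the easy ones. Conditionally on \(\widehat{S}\), the noise term has expectation exactly \(s\cdot 4L^2kr/(\rho_Bn^2)\lesssim_{\log}k\cdot kr^2/(\rho_Bn^2)\), because \(L=CK^2\log n\). For the sampling term, combine \cref{lem:SubGaussianProduct} with scalar Bernstein and a union bound over the \(k^2\) coordinates. Together with the clipping-bias bound from \cref{prop:Supp_DPCov_GH_DenseDeviation}, this gives \(\delta^2:=\max_{u,v}\abs{\widetilde{A}_{b,uv}-A_{b,uv}}^2\lesssim_{\log}1/n\) with probability \(1-n^{-C}\). Since \(\abs{\widehat{S}}\le s\), the sampling term is at most \(s\delta^2\lesssim_{\log}kr/n\) on that event.

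The main step is the bias \(\norm{(A_b)_{\widehat{S}^c}}_F^2\) caused by private selection. Apply \cref{lem:FiniteExponentialMechanism} conditionally at each of the \(s\) peeling rounds. The score sensitivity is \(2L/n\), the privacy parameter is \(\sqrt{\rho_B/s}\), and the candidate set has size at most \(k^2\). A union bound over the rounds then shows that, with high probability, each selected coordinate satisfies
\[
  \abs{\widetilde{A}_{uv}}\ge\max_{\text{remaining}}\abs{\widetilde{A}}-\gamma,
  \qquad
  \gamma\lesssim_{\log}\frac{1}{n}\sqrt{\frac{s}{\rho_B}}.
\]
Let \(\theta=\delta+\gamma\). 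An exchange argument then compares \(\widehat{S}\) with the oracle set \(S\) of the \(s\) largest entries of \(A_b\) from \cref{lem:Supp_DPCov_G_FrobeniusSparse}. Pair each \((u,v)\in S\setminus\widehat{S}\) injectively with some \((u',v')\in\widehat{S}\setminus S\). Each such pair satisfies \(\abs{A_{uv}}\le\abs{A_{u'v'}}+2\theta\), so
\[
  \norm{(A_b)_{S\setminus\widehat{S}}}_F^2
  \le 2\norm{(A_b)_{\widehat{S}\setminus S}}_F^2+8s\theta^2
  \le 2\norm{(A_b)_{S^c}}_F^2+8s\theta^2 .
\]
Hence
\[
  \norm{(A_b)_{\widehat{S}^c}}_F^2
  \lesssim \frac{ka^2}{r}+s\delta^2+s\gamma^2
  \lesssim_{\log}
  k\xk{\frac{a^2}{r}+\frac{r}{n}+\frac{kr^2}{\rho_Bn^2}},
\]
using \(s\gamma^2\asymp_{\log}s^2/(\rho_Bn^2)=k\cdot kr^2/(\rho_Bn^2)\). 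I expect this exchange step, with the conditional and adaptive use of the exponential-mechanism guarantee, to be the delicate part. If the clean pairing fails because entries are removed sequentially, the fallback is to work with thresholds: every unselected entry satisfies \(\abs{A_{uv}}\le(\text{the \(s\)-th largest selected value})+2\theta\le a/r+3\theta\). This bounds \(\norm{(A_b)_{\widehat{S}^c}}_F^2\le(a/r+3\theta)\sum\abs{A_{uv}}\). Using \(\sum\abs{A_{uv}}\le ka\) and \(a=O(1)\), this is at most \(ka^2/r+ka\theta\), but the \(ka\theta\) term is too crude. I would therefore rather split the sum into entries above and below \(\theta\), which recovers the \(s\theta^2\) scale.

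Finally, on the exceptional event of probability \(n^{-C}\), control the error by a rough fourth-moment bound. Clipped entries are bounded by \(L\), so \(\E\norm{\widetilde{A}_{\widehat{S}}+G}_F^4\) is polynomial in \(n\), given that \(\rho_B^{-1}\) is polynomially bounded. Cauchy--Schwarz, as in the proof of \cref{thm:DPCov_Operator}, then makes this contribution negligible. The bound is uniform in \(b\), which gives \eqref{eq:DP_Cov_G_FrobeniusOracle}.
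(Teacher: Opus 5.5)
Your proposal is correct and follows essentially the paper's argument. Both use the top-$kr$ oracle support from the sparse-approximation lemma, pair missed oracle coordinates with falsely selected ones (unselected coordinates remain available at every round, so the conditional exponential-mechanism gap $\gamma$ plus $2\delta$ bounds the magnitude difference), and rely on uniform Bernstein, exponential-mechanism and Gaussian bounds. The only real variation is algebraic: you square $(\abs{A_{u'v'}}+2\theta)^2\le 2\abs{A_{u'v'}}^2+8\theta^2$ to get $3ka^2/r+8s\theta^2$, whereas the paper uses the difference of squares with $\sum_{u,v}\abs{A_{uv}}\le ka$ to get $ka^2/r+ka(2\delta+\gamma)$ and then absorbs the cross term by AM--GM. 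So your remark that the $ka\theta$ term is ``too crude'' is mistaken: $ka\theta\le \frac{ka^2}{2r}+\frac{kr\theta^2}{2}$, and your fallback would also have worked.
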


\begin{proposition}[Peeling deviation radius]
  \label{prop:Supp_DPCov_G_FrobeniusDeviation}
  Under the setting of \cref{prop:Supp_DPCov_G_FrobeniusOracle}, fix
  \(0<\beta_B<1/4\).
  Set
  \begin{equation}
    R_{\mathrm{sp}}^2(a,r,\rho_B)
    =
    C\log^C n\,k
    \xk{
      \frac{a^2}{r}
      +
      \frac{r}{n}
      +
      \frac{k r^2}{\rho_B n^2}
    }.
    \label{eq:DP_Cov_G_FrobeniusDeviationSparse}
  \end{equation}
  With probability at least \(1-\beta_B\), the sparse estimator in
  \cref{alg:DP_Cov_G_FrobeniusRelease} satisfies simultaneously for all
  \(b\leq q\),
  \[
    \norm{\widehat{A}_{b,r}^{\mathrm{sp}}-A_b}_F
    \leq
    R_{\mathrm{sp}}(a,r,\rho_B).
  \]
\end{proposition}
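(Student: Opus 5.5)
We prove \cref{prop:Supp_DPCov_G_FrobeniusDeviation} by splitting the error of the peeling release into three pieces on a single high-probability event. Write \(\widetilde{A}_b\) for the clipped empirical block, \(\widehat{S}_b\) for the selected support and \(G_b\) for the Gaussian release noise. Then
\[
  \widehat{A}_{b,r}^{\mathrm{sp}}-A_b
  =
  (\widetilde{A}_b-A_b)[\widehat{S}_b]
  +G_b[\widehat{S}_b]
  -A_b[\widehat{S}_b^{c}],
\]
so
\[
  \norm{\widehat{A}_{b,r}^{\mathrm{sp}}-A_b}_F^2
  \lesssim
  \norm{(\widetilde{A}_b-A_b)[\widehat{S}_b]}_F^2
  +\norm{G_b}_F^2
  +\norm{A_b[\widehat{S}_b^{c}]}_F^2.
\]
We bound each term by \(R_{\mathrm{sp}}^2/3\) on events of probability at least \(1-\beta_B/3\), uniformly over \(b\leq q\).

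\emph{Step 1: entrywise concentration.} Each \(\chi_L(X_{i,I(u)}X_{i,J(v)})\) is a clipped product of sub-Gaussians, so \cref{lem:SubGaussianProduct} and scalar Bernstein apply. A union bound over the \(qk^2\) entries then gives \(\max_{b,u,v}\abs{\widetilde{A}_{b,uv}-A_{b,uv}}\leq \delta\), with
\[
  \delta^2\lesssim_{\log}\frac{1}{n}+\frac{1}{n^2}.
\]
Here the clipping bias is \(O(n^{-c})\) because \(L=CK^2\log n\), and \(\log(q/\beta_B)\) is logarithmic since \(q\) and \(\beta_B^{-1}\) are polynomial in \(n\). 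Since \(\abs{\widehat{S}_b}\leq kr\), the first term is at most \(kr\delta^2\lesssim_{\log}kr/n\).

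\emph{Step 2: Gaussian noise.} \(G_b[\widehat{S}_b]\) has \(kr\) coordinates, each with variance \(4L^2kr/(\rho_B n^2)\). A chi-square tail bound with a union over \(b\) gives
\[
  \norm{G_b}_F^2\lesssim_{\log}\frac{k^2r^2}{\rho_B n^2}
  =k\cdot\frac{kr^2}{\rho_B n^2}.
\]

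\emph{Step 3: missed mass.} This is the main obstacle. Each exponential-mechanism step has score \(\abs{\widetilde{A}_{uv}}\), sensitivity \(2L/n\), parameter \(\varepsilon=\sqrt{\rho_B/(kr)}\), and at most \(k^2\) candidates. By \cref{lem:FiniteExponentialMechanism}, applied conditionally on the transcript with failure probability \(\beta_B/(3qkr)\), each selected entry has empirical score within
\[
  \gamma\lesssim_{\log}\frac{L}{n\varepsilon}=\frac{L}{n}\sqrt{\frac{kr}{\rho_B}}
\]
of the current maximum over the unselected entries. Let \(S\) be the top-\(kr\) set of \(A_b\) from \cref{lem:Supp_DPCov_G_FrobeniusSparse}. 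We compare \(A_b[\widehat{S}_b^c]\) with \(A_b[S^c]\) by pairing each missed entry \(e\in S\setminus\widehat{S}_b\) with a selected entry \(e'\in\widehat{S}_b\setminus S\); the two sets have equal cardinality if we take \(\abs{\widehat{S}_b}=kr\). When \(e'\) was chosen, \(e\) was still available, so \(\abs{\widetilde{A}_{e}}\leq\abs{\widetilde{A}_{e'}}+\gamma\). Step 1 then gives \(\abs{A_{e}}\leq\abs{A_{e'}}+\gamma+2\delta\). Hence
\[
  \norm{A_b[\widehat{S}_b^c]}_F^2
  \leq 2\norm{A_b[S^c]}_F^2+2kr(\gamma+2\delta)^2
  \lesssim_{\log}
  \frac{ka^2}{r}+\frac{kr}{n}+\frac{k^2r^2}{\rho_B n^2}.
\]
Here \(\sum_{e\in S\setminus\widehat{S}_b}A_{e'}^2\leq\norm{A_b[S^c]}_F^2\), since each \(e'\) lies in \(S^c\). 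The delicate point is making this exchange argument rigorous under sequential, adaptive selection. We intend to order the missed entries and use the fact that every step's maximum is over a superset of the still-unselected entries of \(S\). If this pairing turns out awkward, the fallback is the standard bound that exponential-mechanism peeling returns a \(\gamma\)-approximate top-\(s\) set. Combining Steps 1–3 and absorbing constants into \(C\log^C n\) gives the claimed radius \(R_{\mathrm{sp}}\) with probability at least \(1-\beta_B\).
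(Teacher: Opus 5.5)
Your proof is correct and follows the paper's overall structure. Both arguments use the same three-way split: empirical error on \(\widehat S_b\), Gaussian refill noise, and missed mass. Both control \(\delta\), \(\gamma\) and \(g\) uniformly via Bernstein, a chi-square tail, and the conditional exponential-mechanism utility bound. Both use a bijection between \(S\setminus\widehat S_b\) and \(\widehat S_b\setminus S\).

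The one real difference is the exchange inequality.
\begin{itemize}
\item The paper writes \(A_e^2-A_{e'}^2\le(\abs{A_e}+\abs{A_{e'}})(2\delta+\gamma)\) and sums against the \(\ell^1\) budget \(\sum_{u,v}\abs{A_{b,uv}}\le ka\). This gives a linear remainder \(ka(2\delta+\gamma)\). Two Young inequalities then turn it into \(\tfrac{k}{2}(a^2/r+r/n)\) and \(\tfrac12(ka^2/r+k^2r^2/(\rho_Bn^2))\).
\item You use \(A_e^2\le 2A_{e'}^2+2(\gamma+2\delta)^2\) together with \(\abs{S\setminus\widehat S_b}\le kr\). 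Your remainder \(kr(\gamma+2\delta)^2\) then lands directly on \(kr/n\) and \(k^2r^2/(\rho_Bn^2)\), at the cost of a factor \(2\) on \(\norm{A_b[S^c]}_F^2\).
\end{itemize}
Your version uses the row--column \(\ell^1\) structure only through the sparse-approximation lemma and needs no Young step. The paper's remainder scales with the signal mass rather than with the number of selected entries. However, the selected-coordinate terms \(kr\delta^2+g^2\) appear in both proofs and are of the same order as your remainder, so neither choice changes the radius.

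The step you flag as the main obstacle is in fact immediate. An entry of \(S\setminus\widehat S_b\) is never selected, so it is in the candidate pool at every one of the \(kr\) rounds. Hence the utility bound \(\abs{\widetilde A_{b,e}}\le\abs{\widetilde A_{b,e'}}+\gamma\) holds for every pair with \(e'\in\widehat S_b\). Any bijection works, so you need neither an ordering of the missed entries nor the approximate top-\(s\) fallback.
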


\begin{proof}[Proofs of \cref{prop:Supp_DPCov_G_FrobeniusOracle} and
  \cref{prop:Supp_DPCov_G_FrobeniusDeviation}]
  Write \(s=k r\), and let \(T_b\) contain the \(s\) largest entries of
  \(A_b\) in absolute value.
  By \cref{lem:Supp_DPCov_G_FrobeniusSparse},
  \[
    \norm{(A_b)_{T_b^c}}_F^2
    \leq
    \frac{k a^2}{r}.
  \]

  \emph{Uniform stochastic bounds.}
  At peeling round \(t\), let \((U_{b,t},V_{b,t})\) denote the coordinate
  selected for block \(b\).
  Define
  \[
    \delta^2
    =
    \max_{b,u,v}
    \abs{\widetilde{A}_{b,uv}-A_{b,uv}}^2,
    \qquad
    g^2
    =
    \max_{b\leq q}
    \sum_{(u,v)\in\widehat{S}_b}G_{b,uv}^2,
  \]
  and
  \[
    \gamma
    =
    \max_{\substack{b\leq q\\t\leq s}}
    \xkx{
      \max_{(u,v)\ \mathrm{available}}
      \abs{\widetilde{A}_{b,uv}}
      -
      \abs{\widetilde{A}_{b,U_{b,t}V_{b,t}}}
    }.
  \]
  On a common event,
  \[
    \delta^2\lesssim_{\log}\frac{1}{n},
    \qquad
    \gamma\lesssim_{\log}\frac{\sqrt{s}}{n\sqrt{\rho_B}},
    \qquad
    g^2\lesssim_{\log}\frac{s^2}{\rho_B n^2}.
  \]
  By \cref{lem:SubGaussianProduct}, scalar Bernstein's inequality controls the
  clipped products in \(\delta\), while a Gaussian chi-square bound controls
  \(g\).
  For \(\gamma\), condition on the preceding peeling transcript at each
  round, apply the exponential-mechanism utility bound in
  \cref{lem:FiniteExponentialMechanism}, and then use the tower property and
  a union bound.
  The logarithmic clipping bias is included in the bound for \(\delta\).
  Allocate the total failure probability \(\beta_B\) among the coordinate,
  peeling, and refill events.
  The union bounds range over the \(q\) blocks, \(k^2\) coordinates, and at
  most \(k^2\) peeling rounds, and hence contribute only logarithmic factors.
  Hence all three bounds hold simultaneously with probability at least
  \(1-\beta_B\).

  \emph{Deterministic peeling inequality.}
  Pair the coordinates in \(T_b\setminus\widehat{S}_b\) bijectively
  with those in \(\widehat{S}_b\setminus T_b\).
  A missing coordinate \(i\) is still available when its paired false
  coordinate \(j\) is selected, so
  \[
    \abs{A_{b,i}}-\abs{A_{b,j}}
    \leq
    2\delta+\gamma.
  \]
  Using \(x^2-y^2\leq(x+y)(x-y)\) and
  \(\sum_{u,v}\abs{A_{b,uv}}\leq k a\) therefore gives
  \[
    \norm{(A_b)_{\widehat{S}_b^c}}_F^2
    \leq
    \frac{k a^2}{r}+k a(2\delta+\gamma).
  \]
  On the selected coordinates,
  \[
    \norm{
      (\widehat{A}_{b,r}^{\mathrm{sp}}-A_b)_{\widehat{S}_b}
    }_F^2
    \leq
    2s\delta^2+2g^2.
  \]
  Combining the last two displays gives
  \begin{equation}
    \norm{\widehat{A}_{b,r}^{\mathrm{sp}}-A_b}_F^2
    \leq
    \frac{k a^2}{r}
    +
    k a(2\delta+\gamma)
    +
    2s\delta^2
    +
    2g^2.
    \label{eq:Proof_G_FrobeniusMaster}
  \end{equation}

  Substitute these three stochastic bounds into
  \cref{eq:Proof_G_FrobeniusMaster}, and use
  \[
    \frac{k a}{\sqrt{n}}
    \leq
    \frac{k}{2}
    \xk{
      \frac{a^2}{r}
      +
      \frac{r}{n}
    },
    \qquad
    \frac{k a\sqrt{k r}}{n\sqrt{\rho_B}}
    \leq
    \frac{1}{2}
    \xk{
      \frac{k a^2}{r}
      +
      \frac{k^2 r^2}{\rho_B n^2}
    }.
  \]
  After increasing the fixed constant in the public sparse radius, this proves
  the deviation statement.

  For the expectation bound, keep the tail parameter free in the Bernstein
  and Gaussian inequalities.
  For the adaptive peeling steps, integrate conditionally and apply the
  tower property.
  This gives
  \[
    \E\delta^2\lesssim_{\log}\frac{1}{n},
    \qquad
    \E\gamma\lesssim_{\log}
    \frac{\sqrt{s}}{n\sqrt{\rho_B}},
    \qquad
    \E g^2\lesssim_{\log}
    \frac{s^2}{\rho_B n^2}.
  \]
  Taking expectations in \cref{eq:Proof_G_FrobeniusMaster}, using
  \(\E\delta\leq(\E\delta^2)^{1/2}\), and applying the same two Young
  inequalities proves
  \cref{eq:DP_Cov_G_FrobeniusOracle}.
\end{proof}

\subsection{Dyadic aggregation and rate reductions}

Throughout the remainder of this section, put \(x=d/(\rho n^2)\).
For each construction below, \(k_0\) denotes the center size supplied to
\cref{alg:DP_Cov_DyadicTemplate}.
At every dyadic outer scale, \(q\lesssim d\), the number of scales is
\(O(\log d)\), and
\(\rho_k^{-1}\lesssim d\log d/\rho\).
Under the standing regime, all local budget and block-count logarithms are
\(O(\log n)\).
For the known smoothness high-probability arguments, take
\(\beta_B=n^{-2}\).
The next two lemmas control the dense center and aggregate its error
with the outer blocks.
The remaining lemmas reduce the tuning problems to scalar balances used here
and in the adaptive proofs.

\begin{lemma}[Dense center error]
  \label{lem:Proof_GH_BaseRelease}
  For every dyadic center size \(k_0\geq1\) satisfying
  \(k_0+\log d\leq cn\), and every sufficiently large fixed \(C_1\), the
  dense estimator on \(\mca{T}_{k_0}\) in
  \cref{alg:DP_Cov_DyadicTemplate}, with symmetric embedding
  \(\widehat{\Sigma}^0\), satisfies
  \begin{align}
    \E\norm{
      \widehat{\Sigma}^0
      -\Sigma_{\mca{T}_{k_0}}
    }^2
    &\lesssim_{\log}
    \frac{k_0}{n}+x k_0^2,
    \label{eq:Proof_GH_BaseOperator}\\
    \frac{1}{d}\E\norm{
      \widehat{\Sigma}^0
      -\Sigma_{\mca{T}_{k_0}}
    }_F^2
    &\lesssim_{\log}
    \frac{k_0}{n}+x k_0^2.
    \label{eq:Proof_GH_BaseFrobenius}
  \end{align}
\end{lemma}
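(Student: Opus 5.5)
The dense center in \cref{alg:DP_Cov_DyadicTemplate} is $\mf{DPCovTri}(X;k_0,\rho/4,R)$ with $R^2=K^2C_1(k_0+\log n)$. So the plan is to repeat the proofs of \cref{thm:DPCov_Operator} and \cref{thm:DPCov_Frobenius} almost verbatim, with three changes. The clipping radius is now $R^2\asymp k_0+\log n$ instead of $k_0$. The privacy budget is $\rho/4$. No bias term appears, since the target is $\Sigma_{\mca{T}_{k_0}}$ itself. Each block $B_{k_0;l,r}$, $r\in\{0,1\}$, is released with per-block budget $\rho_0=\rho/(8N_{k_0})$. Hence the noise level satisfies
\[
\sigma_B^2\asymp R^4/(\rho_0n^2)\asymp (k_0+\log n)^2 N_{k_0}/(\rho n^2)\lesssim_{\log} d k_0/(\rho n^2).
\]

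\emph{Per-block bounds.} I write $\widehat{\Sigma}^0_B-\Sigma_B=(\tilde\Sigma_B-\E\tilde\Sigma_B)+(\E\tilde\Sigma_B-\Sigma_B)+\sigma_BM_B$, as in \cref{prop:A__BlockBound_CovTruncation}, and bound the three pieces separately.
\begin{itemize}
\item The fluctuation term is handled by \cref{lem:StandardCovarianceEst}. Clipping does not increase the sub-Gaussian norm, so the concentration bound there still applies.
\item The clipping bias is controlled by \cref{prop:SubGVectorCovTruncation}. With $R^2\ge C_1K^2(k_0+\log n)$ and $C_1$ large, it is at most $n^{-c}$, so after squaring it is dominated by $k_0/n$ up to constants.
\item The Gaussian term follows from \cref{lem:MatrixGaussianConcentration}: $\norm{M_B}^2\lesssim k_0+\log d$ with probability $1-Cd^{-10}$, and $\E\norm{M_B}^2\lesssim k_0$.
\end{itemize}
Together these give, uniformly over the at most $2N_{k_0}\le 2d$ blocks on a high-probability event $E$,
\[
\max_B\norm{\widehat{\Sigma}^0_B-\Sigma_B}^2\lesssim_{\log}\frac{k_0}{n}+x k_0^2 ,
\]
since $\sigma_B^2(k_0+\log d)\lesssim_{\log} x k_0^2$. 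In expectation, and under the Frobenius norm, the same steps give $\E\norm{\widehat{\Sigma}^0_B-\Sigma_B}_F^2\lesssim_{\log} k_0^2/n+k_0^3 d/(\rho n^2)$.

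\emph{Aggregation.} For the operator bound \cref{eq:Proof_GH_BaseOperator}, I apply \cref{cor:TriDiagonalNormBound} on $E$. This reduces the global operator norm to $4$ times the maximal block error. On $E^\complement$, I use Cauchy--Schwarz together with the rough fourth-moment bound of \cref{prop:BlockRoughBound}, adapted to the radius $R^2\lesssim k_0+\log n$. This gives a polynomial bound in $d$, $n$ and $1/\rho$, which is killed by $\bbP(E^\complement)\lesssim d^{-9}$. For the Frobenius bound \cref{eq:Proof_GH_BaseFrobenius}, no high-probability event is needed. I sum the per-block expectations over $O(N_{k_0})=O(d/k_0)$ blocks and divide by $d$, which yields
\[
\frac{k_0}{n}+\frac{k_0^2 d}{\rho n^2}=\frac{k_0}{n}+xk_0^2
\]
up to logarithms.

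\emph{Main obstacle.} The only delicate point is the change of clipping radius from $\sqrt{k_0}$ to $\sqrt{k_0+\log n}$. When $k_0$ is small, in particular $k_0=1$, the earlier bias bound $\exp(-2k)$ is not negligible. The enlarged radius with the extra $\log n$ is exactly what makes the bias polynomially small in $n$. The price is only a $\log^2 n$ inflation of $\sigma_B^2$, which is absorbed by $\lesssim_{\log}$. I therefore need to check that \cref{prop:SubGVectorCovTruncation} gives bias of order $k_0 e^{-cR^2/K^2}$, so that it is at most $n^{-c}$ for $C_1$ large. Beyond that check, the argument is routine.
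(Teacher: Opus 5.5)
Your route is the paper's. It uses the same per-block decomposition with the enlarged radius $R^2\asymp k_0+\log n$ and per-block budget $\rho/(8N_{k_0})$, \cref{cor:TriDiagonalNormBound} for the operator aggregation, and block summation for the Frobenius bound. The Frobenius part is correct as written; the paper instead uses $\norm{E_B}_F^2\leq k_0\norm{E_B}^2$ together with the expected maximum, which is equivalent.

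The one step that fails as written is the exceptional-event estimate in the operator part. You build $E$ at confidence level $1-Cd^{-9}$. You then bound the remainder by $\bbP(E^\complement)^{1/2}$ times the square root of the crude fourth-moment bound $1+\E\norm{\widehat{\Sigma}^0}^4$, which is at least one. So the remainder is only controlled by $d^{-9/2}$ up to polynomial factors.

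The target $k_0/n+xk_0^2$, however, can be as small as $\mathrm{polylog}(n)/n$. Take $k_0=1$, which is exactly the center used in \cref{alg:DP_Cov_GH_Adaptive}, and $\rho\asymp1$. The standing regime only requires $d\gtrsim\log n$, so $d$ may be polylogarithmic in $n$, and then $d^{-9/2}$ is not $\lesssim_{\log}1/n$. In the proof of \cref{thm:DPCov_Operator}, the analogous remainder $d^{-3/2}e^{-k/2}$ was absorbed by the bias $k^{-2\alpha}$. Here there is no bias term to absorb it, and $e^{-k_0}$ does not help when $k_0$ is small.

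The fix is the one you already applied to the clipping bias: put $\log n$ into the confidence level. Run \cref{lem:StandardCovarianceEst} and \cref{lem:MatrixGaussianConcentration} with tail parameter of order $k_0+\log d+C\log n$. After the union bound over the $2N_{k_0}$ blocks, this gives $\bbP(E^\complement)\leq n^{-C}$. The on-event bound changes only by $\log n$ factors. The crude fourth moment is polynomial in $n$, since $d=o(n^2)$ and $x=o(1)$, so for $C$ large the remainder becomes negligible.

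Alternatively, follow the paper. Because \cref{cor:TriDiagonalNormBound} is deterministic, it suffices to bound $\E\max_B\norm{\widehat{\Sigma}^0_B-\Sigma_B}^2$. You get this by union-bounding the tails with parameter $k_0+\log d+t$ and integrating over $t\geq0$, with no crude moment bound needed.
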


\begin{proof}
  \emph{Blockwise bound.}
  The center calls \cref{alg:DP_Cov_BlockDiagonal} with block size
  \(k_0\), total budget \(\rho/4\), and radius
  \(R_0=K\sqrt{C_1(k_0+\log n)}\).
  Each center block receives budget
  \(\rho_0=\rho/(8N_{k_0})\asymp\rho k_0/d\).
  Apply the covariance, clipping, and Gaussian-noise arguments from
  \cref{lem:BlockBound} with radius \(R_0\) and tail parameter
  \(k_0+\log d+t\).
  A union bound over at most \(2N_{k_0}\) blocks and integration over
  \(t\geq0\) give
  \begin{align*}
    \E
    \max_{\substack{l\in[N_{k_0}],\ r\in\dkx{0,1}\\
    l+r\leq N_{k_0}}}
    \norm{\widehat{\Sigma}[B_{k_0;l,r}]-\Sigma[B_{k_0;l,r}]}^2
    &\lesssim_{\log}
    \frac{k_0}{n}
    +
    \frac{(k_0+\log n)^2(k_0+\log d)}{\rho_0 n^2}\\
    &\qquad
    +e^{-c(k_0+\log n)}\\
    &\lesssim_{\log}
    \frac{k_0}{n}+x k_0^2.
  \end{align*}
  \emph{Operator aggregation.}
  The center blocks have a blockwise tridiagonal incidence pattern, so
  \cref{cor:TriDiagonalNormBound} proves
  \cref{eq:Proof_GH_BaseOperator}.

  \emph{Frobenius aggregation.}
  Let \(E_B=\widehat{\Sigma}[B]-\Sigma[B]\) for a center block \(B\).
  Each \(E_B\) has rank at most \(k_0\), so
  \(\norm{E_B}_F^2\leq k_0\norm{E_B}^2\).
  Disjointness of the upper-triangular center blocks, symmetry, and
  \(N_{k_0}k_0/d\asymp1\) therefore give
  \[
    \frac{1}{d}\E\norm{
      \widehat{\Sigma}^0-\Sigma_{\mca{T}_{k_0}}
    }_F^2
    \lesssim
    \frac{N_{k_0}k_0}{d}
    \E\max_B\norm{E_B}^2
    \lesssim_{\log}
    \frac{k_0}{n}+x k_0^2,
  \]
  which proves \cref{eq:Proof_GH_BaseFrobenius}.
\end{proof}

\begin{lemma}[Dyadic square-block error aggregation]
  \label{lem:Proof_DyadicSquareError}
  Fix center and terminal scales \(k_0\leq k_+\) satisfying
  \(k_0+\log d\leq cn\).
  Put
  \(E_B=\widehat{\Sigma}_B-\Sigma_B\) for \(B\in\mca{B}_k\).
  Let \(\widetilde{\Sigma}\) be obtained by the block assignments in
  \cref{alg:DP_Cov_DyadicTemplate}, before the final projection, and
  let \(\widehat{\Sigma}=\Pi_{M_0}(\widetilde{\Sigma})\).
  Then
  \begin{align}
    \E\norm{\widehat{\Sigma}-\Sigma}^2
    &\lesssim_{\log}
      \frac{k_0}{n}+x k_0^2
      +
      \norm{\Sigma-\Sigma_{\mca{T}_{k_+}}}^2
      +
      J(k_0)
      \sum_k
      \E\max_{B\in\mca{B}_k}\norm{E_B}^2,
    \label{eq:Proof_GH_OperatorAggregation}\\
    \frac{1}{d}\E\norm{\widehat{\Sigma}-\Sigma}_F^2
    &\lesssim_{\log}
      \frac{k_0}{n}+x k_0^2
      +
      \frac{1}{d}\norm{\Sigma-\Sigma_{\mca{T}_{k_+}}}_F^2
      +
      \frac{2}{d}
      \sum_k \sum_{B\in\mca{B}_k}
      \E\norm{E_B}_F^2.
    \label{eq:Proof_GH_FrobeniusAggregation}
  \end{align}
\end{lemma}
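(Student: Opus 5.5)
We handle the projection first. Since \(\Sigma\) itself lies in the convex set \(\{0\preceq A\preceq M_0I\}\), and \(\Pi_{M_0}\) is the Frobenius-metric projection onto it, we have \(\norm{\Pi_{M_0}(\widetilde{\Sigma})-\Sigma}_F\leq\norm{\widetilde{\Sigma}-\Sigma}_F\). The Frobenius bound therefore reduces to \(\widetilde{\Sigma}\).

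For the operator norm, spectral clipping acts eigenvalue-wise. We would use the standard fact that \(\norm{\Pi_{M_0}(\widetilde{\Sigma})-\Sigma}\leq 2\norm{\widetilde{\Sigma}-\Sigma}\) whenever \(\Sigma\) has spectrum in \([0,M_0]\). This follows from the Lipschitz property of \(t\mapsto\min(\max(t,0),M_0)\): write \(\Pi(\widetilde\Sigma)-\Sigma=(\Pi(\widetilde\Sigma)-\widetilde\Sigma)+(\widetilde\Sigma-\Sigma)\). The first term has norm equal to the largest distance of an eigenvalue of \(\widetilde\Sigma\) from \([0,M_0]\), and by Weyl's inequality this is at most \(\norm{\widetilde\Sigma-\Sigma}\). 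So it suffices to bound \(\widetilde{\Sigma}-\Sigma\) up to constants.

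Next we decompose the error along the dyadic cover of \cref{subsec:DyadicBlockStructure}. By construction,
\[
  \widetilde{\Sigma}-\Sigma=(\widehat{\Sigma}^0-\Sigma_{\mca{T}_{k_0}})+\sum_{k_0\leq k<k_+}\sum_{B\in\mca{B}_k}(E_B+E_B^\T)-(\Sigma-\Sigma_{\mca{T}_{k_+}}),
\]
where the last term collects the zero-filled tail beyond \(\mca{T}_{k_+}\). This identity uses that the initial blocks and the families \(\mca{B}_k\) for \(k<k_+\) partition the upper triangle of \(\mca{T}_{k_+}\).

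For the Frobenius loss the three pieces have disjoint supports. The squared Frobenius norm is therefore additive: the center piece is controlled by \cref{eq:Proof_GH_BaseFrobenius}, each outer block contributes \(2\norm{E_B}_F^2\) by symmetry, and the tail appears as is. Taking expectations gives \cref{eq:Proof_GH_FrobeniusAggregation} directly.

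For the operator loss, we apply \((a+b+c)^2\lesssim a^2+b^2+c^2\) and treat the center by \cref{eq:Proof_GH_BaseOperator}. The outer part is split by scale, and Cauchy–Schwarz over the at most \(J(k_0)\) active scales gives
\[
  \norm{\textstyle\sum_k S_k}^2\leq J(k_0)\sum_k\norm{S_k}^2,\qquad S_k=\sum_{B\in\mca{B}_k}(E_B+E_B^\T).
\]
It remains to show \(\norm{S_k}\lesssim\max_{B\in\mca{B}_k}\norm{E_B}\). Each \(E_B\) is supported on a square block of side \(k\) in the partition \(\{I_{k,\ell}\}\), with row index \(\ell\) and column index \(\ell+r\), \(r\in\{2,3\}\). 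We would group the blocks by row index and apply \cref{lem:BlockPartitionLemma} in the spirit of \cref{cor:TriDiagonalNormBound}. Define \(A^{(\ell)}\) as the sum of the (at most two) blocks in row \(\ell\) plus their transposes; its norm is at most \(4\max_B\norm{E_B}\). Then \(A^{(\ell)}\) and \(A^{(\ell')}\) have disjoint row and column supports once \(\abs{\ell-\ell'}\geq 7\), so the lemma applies with a constant \(m\).

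The step needing the most care is this support bookkeeping at boundary indices, together with checking that within one \(A^{(\ell)}\) the two blocks and their transposes really combine with only a constant loss. Both are handled by the triangle inequality with a fixed number of terms. The leftover \(\log\) factors are absorbed into \(\lesssim_{\log}\).
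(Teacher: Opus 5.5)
Your proposal is correct and follows the paper's proof. It uses the same disjoint-support decomposition into center, outer scales and tail, Frobenius additivity, Cauchy--Schwarz over the at most $J(k_0)$ active scales, and the factor-two Weyl bound (resp.\ Frobenius nonexpansiveness) for the spectral projection. The only difference is cosmetic: you group the blocks of $\mathcal{B}_k$ by row index and invoke the Block Partition Lemma, where $\abs{\ell-\ell'}\geq 4$ already gives disjoint supports, whereas the paper greedily edge-colors the bounded-degree interval--block incidence graph into matchings; both simply split the scale-$k$ error into a constant number of block-diagonal families.
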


\begin{proof}
  \emph{Error decomposition.}
  Put
  \[
    E_0
    =
    \widehat{\Sigma}^0
    -\Sigma_{\mca{T}_{k_0}},
    \qquad
    E_+=\Sigma_{\mca{T}_{k_+}}-\Sigma.
  \]
  For each scale \(k\), let \(E_k\) be the symmetric matrix satisfying
  \(E_k[B]=E_B\) and \(E_k[B^\T]=E_B^\T\) for every
  \(B\in\mca{B}_k\), with all other entries set to zero.
  The dyadic construction in \cref{subsec:DyadicBlockStructure}
  partitions \(\mca{T}_{k_+}\) into the center support and the active outer
  supports, which are disjoint.
  Hence
  \[
    \widetilde{\Sigma}-\Sigma=E_0+\sum_k E_k+E_+.
  \]

  \emph{Operator loss.}
  For fixed \(k\), form the graph whose vertices are the intervals
  \(I_{k,\ell}\) and whose edges are the blocks in \(\mca{B}_k\).
  Each interval is incident to at most three increment blocks.
  Thus the graph has uniformly bounded maximum degree.
  A greedy edge coloring therefore partitions \(\mca{B}_k\) into a
  fixed number of matchings.
  The embedded errors within a matching have disjoint row and
  column supports and become block diagonal after a simultaneous
  permutation, so the norm of their sum is the largest block norm.
  Summing over the constant number of matchings gives
  \[
    \norm{E_k}
    \lesssim
    \max_{B\in\mca{B}_k}\norm{E_B}.
  \]
  Hence
  \[
    \begin{aligned}
      \norm{\widetilde{\Sigma}-\Sigma}^2
      &\lesssim
      \norm{E_0}^2+\norm{E_+}^2
      +J(k_0)\sum_k \norm{E_k}^2\\
      &\lesssim
      \norm{E_0}^2+\norm{E_+}^2
      +J(k_0)\sum_k
      \max_{B\in\mca{B}_k}\norm{E_B}^2.
    \end{aligned}
  \]
  This proves the operator contribution in
  \cref{eq:Proof_GH_OperatorAggregation}.

  \emph{Frobenius loss.}
  Disjointness of the center and active outer supports gives
  \[
    \norm{\widetilde{\Sigma}-\Sigma}_F^2
    =
    \norm{E_0}_F^2
    +
    2\sum_k \sum_{B\in\mca{B}_k}\norm{E_B}_F^2
    +
    \norm{E_+}_F^2,
  \]
  which proves \cref{eq:Proof_GH_FrobeniusAggregation}.
  The center bounds in both conclusions follow from
  \cref{lem:Proof_GH_BaseRelease}.

  \emph{Projection.}
  Spectral clipping changes operator error by at most a numerical
  factor and is nonexpansive in Frobenius norm.
\end{proof}

\begin{lemma}[Integer three-regime reduction]
  \label{lem:Proof_GH_ThreeRegimeReduction}
  For \(A,B>0\) and an integer \(k\geq1\), put
  \[
    Q_k(A,B)
    =
    \max_{1\leq r\leq k}
    \xk{
      \frac{A}{r}\wedge B r
    }.
  \]
  Then
  \begin{equation}
    Q_k(A,B)
    \asymp
    A\wedge\sqrt{AB}\wedge B k
    \asymp
    A
    \wedge
    \min_{1\leq r\leq k}
    \xk{\frac{A}{r}+B r}
    \wedge
    B k.
    \label{eq:Proof_GH_ThreeRegimeReduction}
  \end{equation}
\end{lemma}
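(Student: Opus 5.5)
The plan is to reduce both equivalences to an elementary maximization over the real interval \(r\in[1,k]\), and then to check that restricting \(r\) to integers costs at most a factor of two. Throughout, write \(f(r)=\frac{A}{r}\wedge Br\). The first summand is decreasing in \(r\) and the second is increasing. So the supremum over all real \(r>0\) is attained where the two summands cross, namely at \(r_*=\sqrt{A/B}\), where \(f(r_*)=\sqrt{AB}\).

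\emph{First equivalence.} I split into three cases according to where \(r_*\) falls.
\begin{itemize}
\item If \(r_*\leq1\), i.e.\ \(A\leq B\), then \(f\) is decreasing on \([1,k]\). Hence \(Q_k=f(1)=A\wedge B=A\). Also \(A\leq\sqrt{AB}\leq Bk\), so \(A\) is the minimum of the three-term expression.
\item If \(r_*\geq k\), i.e.\ \(A\geq Bk^2\), then \(f\) is increasing on \([1,k]\). Hence \(Q_k=f(k)=Bk\), which is at most \(\sqrt{AB}\leq A\).
\item If \(1<r_*<k\), I take the integer \(r=\lfloor r_*\rfloor\in[1,k]\), so that \(r_*/2\leq r\leq r_*\). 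Then \(A/r\geq\sqrt{AB}\) and \(Br\geq\sqrt{AB}/2\), giving \(Q_k\geq\sqrt{AB}/2\). The real supremum gives \(Q_k\leq\sqrt{AB}\). In this regime \(\sqrt{AB}\) is also the smallest of the three terms, since \(A>\sqrt{AB}\) and \(Bk>\sqrt{AB}\).
\end{itemize}
In all three cases \(Q_k\asymp A\wedge\sqrt{AB}\wedge Bk\).

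\emph{Second equivalence.} It suffices to show
\[
\min_{1\leq r\leq k}\Bigl(\frac{A}{r}+Br\Bigr)\asymp \sqrt{AB}\vee B\vee \frac{A}{k},
\]
and that taking the minimum of this with \(A\) and \(Bk\) again gives \(A\wedge\sqrt{AB}\wedge Bk\) up to constants.
\begin{itemize}
\item The continuous minimizer of \(A/r+Br\) is \(r_*\), clamped to \([1,k]\), and rounding to a nearby integer costs a factor at most two.
\item If \(r_*\in[1,k]\), the minimum is \(\asymp\sqrt{AB}\), which matches the middle term of the first case split.
\item If \(r_*<1\), the minimum is \(\asymp A+B\asymp B\), and \(A\wedge B\wedge Bk=A\).
\item If \(r_*>k\), the minimum is \(\asymp A/k+Bk\asymp A/k\). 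Since \(A/k\geq Bk\) in this regime, \(A\wedge(A/k)\wedge Bk=Bk\).
\end{itemize}
In each case the result agrees with the corresponding case of the first equivalence.

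There is no genuine obstacle here. The only point needing care is the rounding of \(r_*\) to an integer, which is handled by \(\lfloor r_*\rfloor\geq r_*/2\) whenever \(r_*\geq1\).
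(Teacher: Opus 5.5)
Your proof is correct and follows essentially the same route as the paper: for the lower bounds, clamp the continuous balance point \(\sqrt{A/B}\) to \([1,k]\) and round to an integer within a factor of two. For the matching directions, use \(A/r\leq A\), \(Br\leq Bk\) and \(\frac{A}{r}\wedge Br\leq\sqrt{AB}\), together with \(A/r+Br\geq2\sqrt{AB}\) for the second expression. Your three-case split, and the exact order \(\sqrt{AB}\vee B\vee A/k\) of the inner minimum, simply spell out what the paper states in one line via the clipped choice \(r_\star=\min\{k,\max(1,\sqrt{A/B})\}\).
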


\begin{proof}
  Clip the continuous balance to the feasible interval:
  \[
    r_\star
    =
    \min\xk{
      k,\max\xk{1,\sqrt{A/B}}
    },
  \]
  and choose an integer \(r\in[1,k]\) within a factor two of
  \(r_\star\).
  This choice gives the lower bound
  \[
    Q_k(A,B)
    \gtrsim
    A\wedge\sqrt{AB}\wedge B k.
  \]
  Conversely, for every feasible \(r\),
  \[
    \frac{A}{r}\wedge B r
    \leq
    A\wedge\sqrt{AB}\wedge B k,
  \]
  which proves the first equivalence in
  \cref{eq:Proof_GH_ThreeRegimeReduction}.
  The same clipped choice gives the upper bound for the second
  expression, while \(A/r+B r\geq2\sqrt{AB}\) gives its lower bound.
\end{proof}

\begin{lemma}[Three-branch reduction for Frobenius loss]
  \label{lem:Proof_G_FrobeniusThreeBranchReduction}
  For \(a,y>0\) and an integer \(k\geq1\), define
  \[
    T(a,k;y)
    =
    a^2
    \wedge
    \min_{1\leq r\leq k}
    \xk{
      \frac{a^2}{r}+\frac{r}{n}+y r^2
    }
    \wedge
    \xk{\frac{k}{n}+y k^2},
  \]
  and put
  \[
    U(a,k)
    =
    a^2
    \wedge
    \frac{a}{\sqrt{n}}
    \wedge
    \frac{k}{n},
    \qquad
    V(a,k;y)
    =
    a^2
    \wedge
    a^{4/3}y^{1/3}
    \wedge
    y k^2.
  \]
  Then
  \begin{equation}
    T(a,k;y)
    \lesssim
    U(a,k)+V(a,k;y).
    \label{eq:Proof_G_FrobeniusThreeBranchReduction}
  \end{equation}
  Moreover, if
  \[
    a\sqrt{n}
    \wedge
    (a^2/y)^{1/3}
    \lesssim_{\log}
    k,
  \]
  then the middle branch alone satisfies
  \begin{equation}
    \min_{1\leq r\leq k}
    \xk{
      \frac{a^2}{r}+\frac{r}{n}+y r^2
    }
    \lesssim_{\log}
      U(a,k)+V(a,k;y)+\frac{1}{n}+y.
    \label{eq:Proof_G_FrobeniusMiddleReduction}
  \end{equation}
  If \(a_k\leq C_a k^{-\alpha}\), then
  \begin{align}
    \sup_{k\leq d/4} U(a_k,k)
    &\lesssim
    n^{-\frac{2\alpha+1}{2\alpha+2}}
    \wedge\frac{d}{n},
    \label{eq:Proof_G_FrobeniusSamplingBound}\\
    \sup_{k\leq d/4} V(a_k,k;y)
    &\lesssim
    y^{\frac{2\alpha+1}{2\alpha+3}}
    \wedge y d^2.
    \label{eq:Proof_G_FrobeniusPrivacyBound}
  \end{align}
  The constants can be chosen uniformly when \(\alpha\) ranges over a
  fixed compact subset of \((0,\infty)\).
\end{lemma}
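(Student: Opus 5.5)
The plan is to prove each of the four displays by direct scalar case analysis. Everything hinges on the two balance points of the middle branch,
\[
  p=a\sqrt{n},\qquad s=(a^2/y)^{1/3}.
\]
At \(r=p\) the terms \(a^2/r\) and \(r/n\) both equal \(a/\sqrt n\). At \(r=s\) the terms \(a^2/r\) and \(yr^2\) both equal \(a^{4/3}y^{1/3}\).

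First I record the piecewise form of \(U\) and \(V\) in terms of \(p\), \(s\) and \(k\):
\begin{itemize}
  \item \(U\asymp a^2\) when \(p\leq1\), since then \(a^2\leq a/\sqrt n\) and \(a^2\leq 1/n\leq k/n\).
  \item \(U\asymp a/\sqrt n\) when \(1\leq p\leq k\).
  \item \(U\asymp k/n\) when \(p\geq k\), since \(a^2\geq k^2/n\geq k/n\).
  \item Likewise \(V\asymp a^2\), \(a^{4/3}y^{1/3}\), or \(yk^2\) according as \(s\leq1\), \(1\leq s\leq k\), or \(s\geq k\).
\end{itemize}

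For \eqref{eq:Proof_G_FrobeniusThreeBranchReduction}, I use that \(T\) is bounded by each of its three branches and pick the branch by cases.
\begin{itemize}
  \item If \(p\leq1\) or \(s\leq1\), the zero branch gives \(T\leq a^2\), which equals \(U\) or \(V\) respectively.
  \item If \(p\geq k\) and \(s\geq k\), the dense branch gives \(k/n+yk^2\asymp U+V\).
  \item Otherwise \(\min(p,s)\geq1\) and \(\min(p,s)< k\). Take an integer \(r\) within a factor two of \(r_\star=\min(p,s)\in[1,k]\) and bound each term of the middle branch.
  \begin{itemize}
    \item First, \(a^2/r_\star=\max(a^2/p,a^2/s)\). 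Here \(a^2/p=a/\sqrt n\), and \(a^2/s=a^{4/3}y^{1/3}\). Whichever of \(p,s\) is the minimum lies in \([1,k]\), so the corresponding quantity is \(U\) or \(V\). The other quantity is smaller because the corresponding balance point is larger.
    \item Second, \(r_\star/n\leq \min(p,k)/n\lesssim U\).
    \item Third, \(yr_\star^2\leq y\min(s,k)^2\lesssim V\).
  \end{itemize}
\end{itemize}
Rounding to an integer costs only constant factors.

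For \eqref{eq:Proof_G_FrobeniusMiddleReduction}, I repeat the middle-branch computation with \(r_\star=\min(p,s)\), clipped to \([1,k]\). The hypothesis \(p\wedge s\lesssim_{\log}k\) means that clipping at \(k\) loses at most a polylogarithmic factor in the \(a^2/r\) term.
\begin{itemize}
  \item If \(r_\star\geq1\), the three terms are bounded as above, by \(U+V\) up to logarithms.
  \item If clipping at \(1\) is active (\(r=1\)), the middle branch equals \(a^2+1/n+y\). Since \(p<1\) or \(s<1\), we have \(a^2\lesssim U+V\) by the piecewise description. The leftover \(1/n+y\) is exactly the additive slack in the statement.
\end{itemize}
The only delicate point is the regime where one balance point is below \(1\) and the other is above \(k\). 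There I check that the \(a^2\) coming from the small balance point is still charged to \(U\) or \(V\), and it is by the piecewise form.

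For the suprema, substitute \(a_k\leq C_ak^{-\alpha}\).
\begin{itemize}
  \item \(U\): \(\min(k^{-\alpha}/\sqrt n,\,k/n)\) is maximized at \(k^{\alpha+1}\asymp\sqrt n\), that is \(k\asymp n^{1/(2\alpha+2)}\), with value \(n^{-(2\alpha+1)/(2\alpha+2)}\). The constraint \(k\leq d/4\) caps the value by \(d/n\).
  \item \(V\): \(\min(k^{-4\alpha/3}y^{1/3},\,yk^2)\) balances at \(k\asymp y^{-1/(2\alpha+3)}\), with value \(y^{(2\alpha+1)/(2\alpha+3)}\). The constraint \(k\leq d\) caps it by \(yd^2\).
\end{itemize}
Since \(U\) and \(V\) are minima, dropping the \(a^2\) term only enlarges them. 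All constants are explicit continuous functions of \(\alpha\) and \(C_a\), which gives uniformity over compact \(\alpha\)-sets. I expect the main obstacle to be the bookkeeping in the mixed case of the middle-branch claim; the rest is routine balancing.
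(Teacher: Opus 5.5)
Your proposal is correct and follows the paper's proof essentially step for step: the same two balance indices \(a\sqrt{n}\) and \((a^2/y)^{1/3}\), the same split into zero-branch, dense-branch, and interior cases (with \(r=1\) and \(r=k\) covering the endpoint cases of the middle-branch claim), and the same crossing-point computations for the two suprema. In the case clipped at \(k\), you use the hypothesis \(a\sqrt{n}\wedge(a^2/y)^{1/3}\lesssim_{\log}k\) once more to compare \(a^2/k\) with \(k/n\) or \(yk^2\); this matches the paper's remark there and costs only another logarithmic factor.
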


\begin{proof}
  Put
  \[
    r_{\mathrm{s}}=a\sqrt{n},
    \qquad
    r_{\mathrm{p}}=(a^2/y)^{1/3},
    \qquad
    s=r_{\mathrm{s}}\wedge r_{\mathrm{p}}.
  \]
  If \(s<1\), the zero branch proves
  \cref{eq:Proof_G_FrobeniusThreeBranchReduction}: when
  \(r_{\mathrm{s}}<1\), \(U(a,k)=a^2\), and when
  \(r_{\mathrm{p}}<1\), \(V(a,k;y)=a^2\).
  Taking \(r=1\) also bounds the middle branch by
  \(a^2+n^{-1}+y\), proving
  \cref{eq:Proof_G_FrobeniusMiddleReduction} in this case.
  If \(s>k\), both balancing indices exceed \(k\), so
  \(U(a,k)=k/n\), \(V(a,k;y)=yk^2\), and the dense branch gives the
  desired bound.
  If also \(s\lesssim_{\log}k\), take \(r=k\).
  When \(s=r_{\mathrm{s}}\), \(a^2/k\lesssim_{\log}k/n\); when
  \(s=r_{\mathrm{p}}\), \(a^2/k\lesssim_{\log}yk^2\).
  Thus the same choice proves
  \cref{eq:Proof_G_FrobeniusMiddleReduction}.
  In the remaining case \(1\leq s\leq k\), take
  \(r=\ceil{s}\in[1,k]\), so \(s\leq r\leq2s\).
  When \(s=r_{\mathrm{s}}\leq r_{\mathrm{p}}\), the inequality
  \(y a n^{3/2}\leq1\) gives
  \[
    \frac{a^2}{r}+\frac{r}{n}+y r^2
    \lesssim
    \frac{a}{\sqrt{n}}.
  \]
  When \(s=r_{\mathrm{p}}\leq r_{\mathrm{s}}\), the reverse inequality
  \(y a n^{3/2}\geq1\) gives
  \[
    \frac{a^2}{r}+\frac{r}{n}+y r^2
    \lesssim
    a^{4/3} y^{1/3}.
  \]
  This proves \cref{eq:Proof_G_FrobeniusThreeBranchReduction} and
  \cref{eq:Proof_G_FrobeniusMiddleReduction},
  including the integer endpoints \(r=1,k\).

  Since \(U(a,k)\) and \(V(a,k;y)\) are nondecreasing in \(a\), it
  suffices to take \(a_k=C_a k^{-\alpha}\).
  For the sampling part,
  \[
    U(a_k,k)
    \leq
    \frac{C_a k^{-\alpha}}{\sqrt{n}}
    \wedge
    \frac{k}{n}.
  \]
  The two terms balance at
  \(k\asymp n^{1/(2\alpha+2)}\), giving the first rate in
  \cref{eq:Proof_G_FrobeniusSamplingBound}; the bound
  \(U(a_k,k)\leq k/n\leq d/n\) gives its dimension-endpoint term.
  Similarly,
  \[
    V(a_k,k;y)
    \leq
    C_a^{4/3} k^{-4\alpha/3} y^{1/3}
    \wedge
    y k^2.
  \]
  These terms balance at
  \(k\asymp y^{-1/(2\alpha+3)}\), giving the first rate in
  \cref{eq:Proof_G_FrobeniusPrivacyBound}; the bound
  \(V(a_k,k;y)\leq y k^2\leq y d^2\) gives its dimension-endpoint term.
  Both comparisons are uniform on compact smoothness intervals.
\end{proof}

\subsection{Known-smoothness upper bounds}

\subsubsection{Operator loss}

\begin{proof}[Proof of \cref{thm:DPCov_GH_Operator}]
  Recall \(x=d/(\rho n^2)\), and write
  \[
    R_{\mathrm{stat}}
    =
    n^{-2\alpha/(2\alpha+1)} \wedge\frac{d}{n},
    \qquad
    u_k=k^{-2\alpha},
    \qquad
    v_k=x k.
  \]
  Here \(u_k\) is the squared structural radius of a scale-\(k\) outer block,
  while \(v_k\) is its effective privacy cost.

  \emph{Center.}
  Let \(k_0\) and \(k_+\) be the public scales in the
  construction.
  By the standing regime, the cutoff defining \(k_+\) is no smaller than the
  cutoff defining \(k_0\) for all sufficiently large \(n\).
  Dyadic rounding changes each nontrivial cutoff by at most a factor of two.
  Hence
  \[
    \frac{k_0}{n}+x k_0^2
    \lesssim
      R_{\mathrm{stat}}+\mca{Q}_{\alpha,d}(x).
  \]
  The statistical cutoff controls \(k_0/n\), while the privacy cutoff is
  determined by the crossing of
  \(x k^2\) and \(k^{-(2\alpha+1)}\), the dense endpoint in
  \cref{eq:Rate_Q}.
  If the dimension cutoff is active, use \(k=\lfloor d/4\rfloor\)
  in that definition.
  Moreover, \(k_0\lesssim n^{1/(2\alpha+1)}=o(n)\) and
  \(\log d=O(\log n)\), so the center condition in
  \cref{lem:Proof_GH_BaseRelease} holds.

  \emph{Outer geometry and budgets.}
  Every block in \(\mca{B}_k\) lies at distance between \(k\) and
  \(4k\) from the diagonal, and hence
  \[
    \Sigma_B\in\mca{K}_k(a_{\mca{G},k})
    \quad\text{under }\mca{G}_\alpha,
    \qquad
    \Sigma_B\in\mca{E}_k(a_{\mca{H},k})
    \quad\text{under }\mca{H}_\alpha,
    \qquad
    a_{\mca{J},k}\lesssim k^{-\alpha}.
  \]
  At an active scale, \(q=3(N_{2k}-1)\) and
  \[
    \rho_k
    =
    \frac{\rho}{4J(k_0)N_{2k}}
    \asymp
    \frac{\rho k}{dJ(k_0)}.
  \]
  At every active scale,
  \(k\leq k_+/2\lesssim n^{1/(2\alpha+1)}=o(n)\).
  Hence \cref{eq:Supp_DPCov_GH_OperatorScaleCondition} holds at every active
  scale for all sufficiently large \(n\).

  \emph{Scale-wise rate reduction.}
  Put
  \[
    s_k=\frac{k+\log(3N_{2k})}{n}.
  \]
  The radius and budget relations above give
  \[
    a_{\mca{J},k}^2\lesssim u_k,
    \qquad
    \frac{k^2}{\rho_k n^2}\lesssim_{\log}v_k,
    \qquad
    \frac{a_{\mca{G},k}k}{n\sqrt{\rho_k}}
    \lesssim_{\log}
    \sqrt{u_k v_k}.
  \]
  For \(\mca{H}_\alpha\), the fixed-rank bound in
  \cref{eq:Supp_DPCov_H_GreedyPointRisk} and the public choice of rank in
  \cref{alg:DP_Cov_H_GreedyRelease} give
  \[
    \E\max_{B\in\mca{B}_k}
    \norm{\widehat{\Sigma}_B-\Sigma_B}^2
    \lesssim_{\log}
    s_k
    +
    \min_{1\leq r\leq k}
    \xk{
      \frac{u_k}{r}+v_k r
    }.
  \]
  For \(\mca{G}_\alpha\),
  \cref{eq:Supp_DPCov_G_ProfilePointRisk} instead gives
  \[
    \E\max_{B\in\mca{B}_k}
    \norm{\widehat{\Sigma}_B-\Sigma_B}^2
    \lesssim_{\log}
    s_k+\sqrt{u_k v_k}.
  \]

  \emph{Outer privacy comparison.}
  At every active scale, the two scale cutoffs imply
  \[
    \sqrt{u_k v_k}
    \vee
    \min_{1\leq r\leq k}
    \xk{
      \frac{u_k}{r}+v_k r
    }
    \lesssim
    R_{\mathrm{stat}}+Q_k(u_k,v_k).
  \]
  Choose \(C\) large enough to absorb the fixed dyadic-rounding factor.
  The terminal privacy cutoff gives \(v_k\leq Cu_k\).
  If \(u_k\leq Cv_k k^2\), then
  \[
    Q_k(u_k,v_k)
    \asymp
    u_k\wedge\sqrt{u_k v_k}\wedge v_k k
    \asymp
    \sqrt{u_k v_k}.
  \]
  The inequalities \(v_k\leq Cu_k\) and \(u_k\leq Cv_k k^2\) place the
  continuous balance \(r=(u_k/v_k)^{1/2}\) in \([1,k]\) up to fixed
  factors, so a feasible integer rank within another fixed factor gives
  \[
    \min_{1\leq r\leq k}
    \xk{
      \frac{u_k}{r}+v_k r
    }
    \lesssim
    \sqrt{u_k v_k}.
  \]
  If \(u_k>Cv_k k^2\), the privacy cutoff cannot define \(k_0\), and the
  dimension cutoff leaves no outer scale.
  Hence every active scale is controlled by the statistical cutoff.
  In that case, \(u_k/k\lesssim n^{-1}\leq R_{\mathrm{stat}}\) and
  \(Q_k(u_k,v_k)\asymp v_k k\).
  The preceding display then follows from
  \[
    \sqrt{u_k v_k}
    \leq
    \frac12\xk{\frac{u_k}{k}+v_k k}.
  \]
  Taking \(r=k\) gives the same upper bound for the rank minimum.

  The terminal scale is either statistically or dimension limited, and in
  both cases \(s_k\lesssim_{\log}R_{\mathrm{stat}}\), while
  \(Q_k(u_k,v_k)\leq\mca{Q}_{\alpha,d}(x)\).
  Consequently, for both classes,
  \[
    \sup_{k_0\leq k<k_+}
    \E\max_{B\in\mca{B}_k}
    \norm{\widehat{\Sigma}_B-\Sigma_B}^2
    \lesssim_{\log}
    R_{\mathrm{stat}}+\mca{Q}_{\alpha,d}(x).
  \]
  The assertion is vacuous when there is no active scale.

  \emph{Tail and aggregation.}
  For both \(\mca{G}_\alpha\) and \(\mca{H}_\alpha\), the row-tail bound
  gives
  \[
    \norm{\Sigma-\Sigma_{\mca{T}_{k_+}}}^2
    \lesssim k_+^{-2\alpha}.
  \]
  This term vanishes when \(k_+=d/2\); otherwise the statistical or
  outer privacy cutoff gives
  \(k_+^{-2\alpha}\lesssim_{\log}
  R_{\mathrm{stat}}+\mca{Q}_{\alpha,d}(x)\).

  Apply \cref{lem:Proof_DyadicSquareError} with center
  \(k_0\) and terminal scale \(k_+\).
  Since there are at most \(J(k_0)\) active scales, its outer term is
  bounded by
  \[
    J(k_0)\sum_k
    \E\max_{B\in\mca{B}_k}
    \norm{\widehat{\Sigma}_B-\Sigma_B}^2
    \leq
    J(k_0)^2
    \sup_k
    \E\max_{B\in\mca{B}_k}
    \norm{\widehat{\Sigma}_B-\Sigma_B}^2.
  \]
  The factor \(J(k_0)^2\) is absorbed by \(\lesssim_{\log}\).
  Combining the center, outer, and tail bounds proves
  \cref{eq:DPCov_GH_Operator}.
  The privacy guarantee follows from
  \cref{prop:Supp_DPCov_GH_GlobalPrivacy}, completing the proof.
\end{proof}

\subsubsection{Frobenius loss}

\begin{proof}[Proof of \cref{thm:DPCov_G_Frobenius}]
  Recall \(x=d/(\rho n^2)\), and let
  \[
    R_{\mathrm{stat}}
    =
    n^{-\frac{2\alpha+1}{2\alpha+2}}
    \wedge\frac{d}{n},
    \qquad
    R_{\mathrm{priv}}
    =
    x^{\frac{2\alpha+1}{2\alpha+3}}
    \wedge x d^2.
  \]

  \emph{Center.}
  By the definition of \(k_0\) and dyadic rounding,
  \begin{equation}
    \frac{k_0}{n}+x k_0^2
    \lesssim_{\log}
    R_{\mathrm{stat}}+R_{\mathrm{priv}}.
    \label{eq:Proof_G_FixedF_CenterRisk}
  \end{equation}
  Also, \(k_0\lesssim n^{1/(2\alpha+2)}=o(n)\), so the center
  condition in \cref{lem:Proof_GH_BaseRelease} holds.
  The statistical and privacy cutoffs are the crossings of \(k/n\) and
  \(x k^2\), respectively, with \(k^{-2\alpha-1}\).
  If the dimension cutoff is active, the corresponding endpoint terms are
  \(d/n\) and \(x d^2\).

  \emph{Peeling shells.}
  By the dyadic construction and the definition of \(\rho_k\),
  \begin{equation}
    \abs{\mca{B}_k}=3(N_{2k}-1),
    \qquad
    \rho_k
    =
    \frac{\rho}{4J(k_0)N_{2k}}
    \asymp
    \frac{\rho k}{dJ(k_0)}.
    \label{eq:Proof_G_FixedF_Budget}
  \end{equation}
  For every \(B\in\mca{B}_k\), the covariance cross-block \(\Sigma_B\)
  belongs to \(\mca{K}_k(a_k)\), where
  \(a_k=a_{\mca{G},k}\lesssim k^{-\alpha}\).
  In particular, \(a_k=O(1)\), as required by
  \cref{prop:Supp_DPCov_G_FrobeniusOracle}.

  \emph{Rate reduction.}
  By \cref{eq:Proof_G_FixedF_Budget},
  \(k/(\rho_k n^2)\lesssim J(k_0)x\).
  Set
  \[
    y=CJ(k_0)x,
    \qquad
    s_k
    =
    a_k\sqrt{n}
    \wedge
    \xk{\frac{a_k^2}{y}}^{1/3}.
  \]
  If the dimension cutoff defines \(k_0\), there are no outer shells.
  Otherwise, the statistical or privacy cutoff in the definition of \(k_0\)
  gives \(s_k\lesssim_{\log}k\) at every outer scale \(k\geq k_0\).
  Thus \cref{eq:Proof_G_FrobeniusMiddleReduction} applies.
  Since \(n^{-1}\lesssim R_{\mathrm{stat}}\) and
  \(y\lesssim_{\log}x\lesssim_{\log}R_{\mathrm{priv}}\), the two endpoint
  remainders in \cref{eq:Proof_G_FrobeniusMiddleReduction} are also absorbed.
  Apply \cref{prop:Supp_DPCov_G_FrobeniusOracle} jointly over all blocks at
  scale \(k\), using the publicly chosen minimizer \(r_k\).
  The middle reduction, together with
  \cref{eq:Proof_G_FrobeniusSamplingBound} and
  \cref{eq:Proof_G_FrobeniusPrivacyBound}, gives
  \begin{equation}
    \begin{aligned}
      \max_{B\in\mca{B}_k}
      \E\norm{\widehat{\Sigma}_B-\Sigma_B}_F^2
      &\lesssim_{\log}
      k\min_{1\leq r\leq k}
      \xk{
        \frac{a_k^2}{r}+\frac{r}{n}+y r^2
      }\\
      &\lesssim_{\log}
      k\xkx{R_{\mathrm{stat}}+R_{\mathrm{priv}}}.
    \end{aligned}
    \label{eq:Proof_G_FixedF_LocalRisk}
  \end{equation}

  \emph{Aggregation.}
  Apply \cref{lem:Proof_DyadicSquareError} with \(k_+=d/2\).
  Since \(\abs{\mca{B}_k}=3(N_{2k}-1)\),
  \(N_{2k} k/d=1/2\), and there are at most \(J(k_0)\) scales,
  \cref{eq:Proof_G_FixedF_LocalRisk} gives
  \[
    \frac{2}{d}
    \sum_k \sum_{B\in\mca{B}_k}
    \E\norm{\widehat{\Sigma}_B-\Sigma_B}_F^2
    \lesssim_{\log}
    R_{\mathrm{stat}}+R_{\mathrm{priv}}.
  \]
  The center contribution is absorbed by
  \cref{eq:Proof_G_FixedF_CenterRisk}.
  Since projection cannot increase the Frobenius error, this proves
  \cref{eq:DPCov_G_Frobenius}.
  The privacy guarantee follows from
  \cref{prop:Supp_DPCov_GH_GlobalPrivacy}, completing the proof.
\end{proof}
   \clearpage
  \section{Adaptive Center--Outer Dyadic Estimator}
\label{sec:Proofs_GH_Adaptive}

We now make the center--outer estimators adaptive to the unknown smoothness parameter over the pointwise-decay class \(\mca{H}_\alpha\) and the row-tail class \(\mca{G}_\alpha\).
The main idea is to use the Lepski comparison principle,
so the analysis suffices to control the private selection of candidate radii through oracle inequalities at each scale.
Aggregating the selected local releases across the dyadic scales yields the guarantees under operator and Frobenius losses in \cref{thm:DP_Cov_GH_Adaptive} and \cref{thm:DP_Cov_G_Frobenius_Adaptive}.

\subsection{Privacy guarantee}

\begin{proposition}
  \label{prop:Supp_DPCov_GH_AdaptivePrivacy}
  In each of its three modes, \cref{alg:DP_Cov_GH_Adaptive} is
  \(\rho\)-zCDP.
\end{proposition}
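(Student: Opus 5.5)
The argument is the same budget-accounting scheme as in \cref{prop:Supp_DPCov_GH_GlobalPrivacy}, now with an extra layer of composition over the radius grid. The key observation is that every tuning choice in \cref{alg:DP_Cov_GH_Adaptive} is public. The radius grid \((a_{\mca{J},k,j})_{j\le J_k}\), the grid size \(J_k\), the branch \(t_{k,j}\), the deviation radius \(D_{k,j}\), the rank or sparsity indices inside the greedy and peeling branches, and the early break for \(k>n/(\log n)^C\) depend only on \((n,d,\rho,\underline{\alpha},\overline{\alpha},k,j)\) and on the model constants. None of them depends on the data. Consequently the adaptive procedure is a fixed, data-independent collection of local release calls, followed by post-processing.

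The steps, in order, are as follows.

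\textbf{Center.} The dense center on \(\mca{T}_1\) is \(\mf{DPCovTri}\) with budget \(\rho/4\). It is therefore \(\rho/4\)-zCDP by \cref{prop:DP_Cov_BlockDiagonal_Privacy}.

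\textbf{One scale and one radius.} Fix a scale \(k\) and a radius index \(j\). The branch \(t_{k,j}\in\{0,\mathrm{str},\mathrm{dense}\}\) is fixed in advance. The zero branch releases nothing. The structured branches are the greedy, profile and peeling releases, and the dense branch is \(\mf{DPCovBlock}\). Each is run with public tuning parameters and budget \(\rho_{k,j}\) on each of the \(\abs{\mca{B}_k}=3(N_{2k}-1)\le 3N_{2k}\) blocks. By \cref{prop:Supp_DPCov_GH_LocalPrivacy}, the releases at the pair \((k,j)\) are jointly at most \(3N_{2k}\rho_{k,j}\)-zCDP. The subsequent radial clipping to \(Z_{B,k,j}\) is post-processing.

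\textbf{Summing the budget.} Sum over the \(J_k+1\) radius indices and then over the at most \(J_{\mathrm{sq}}\) dyadic scales \(k\in\{1,2,\ldots,d/4\}\). Note that \(\log_2(d/4)+1=\log_2 d-1=J_{\mathrm{sq}}\). With \(\rho_{k,j}=\rho/\{4J_{\mathrm{sq}}N_{2k}(J_k+1)\}\), adaptive composition (\cref{lem:DPMechanism_Composition}) bounds the outer cost by
\[
  \sum_{k}\sum_{j=0}^{J_k}3N_{2k}\rho_{k,j}
  =\sum_k\frac{3\rho}{4J_{\mathrm{sq}}}\le\frac{3\rho}{4}.
\]
Adding the center cost \(\rho/4\) gives a total budget of \(\rho\).

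\textbf{Post-processing.} The remaining operations are functions of the released transcript together with public quantities. These are the Lepski index \(\widehat{j}_k\) (computed from the \(Z_{B,k,j}\) and the public \(D_{k,j}\)), the assignment \(\widehat{\Sigma}[B]=Z_{B,k,\widehat{j}_k}\), the symmetric completion and the projection \(\Pi_{M_0}\). They are therefore post-processing and preserve \(\rho\)-zCDP.

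The only delicate point is the adaptivity inside the local releases. Within a block, the greedy, profile and peeling procedures choose later probes or coordinates depending on earlier private outputs, and every block reuses the same sample. This is already covered by the adaptive-composition argument in \cref{prop:Supp_DPCov_GH_LocalPrivacy}, which uses fresh randomness at each conditional release. Hence it suffices to check that nothing in the adaptive wrapper feeds raw data into a tuning choice. That check holds: the branch is chosen by minimizing the deterministic scores \(W_{k,j,t}\), and the radius \(a_j\) supplied to each branch is a grid value rather than an estimate. The argument is identical in all three modes.
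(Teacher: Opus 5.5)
Your proof is correct and follows the paper's argument. The center costs \(\rho/4\), and each nonzero branch is \(\rho_{k,j}\)-zCDP by \cref{prop:Supp_DPCov_GH_LocalPrivacy}. Composition over the \(J_k+1\) radii, the at most \(3N_{2k}\) blocks and the at most \(J_{\mathrm{sq}}\) scales costs at most \(3\rho/4\). Clipping, Lepski selection, symmetric completion and the final projection are post-processing. The only differences are that you sum in a different order and that you check explicitly that every tuning choice is public (branch, radius grid, rank or sparsity index, early break). The paper leaves that check implicit.
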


\begin{proof}
  The center release uses budget \(\rho/4\) and is
  \(\rho/4\)-zCDP by
  \cref{prop:DP_Cov_BlockDiagonal_Privacy}.
  For a fixed block and radius, the zero branch has no privacy cost, while
  every nonzero branch is \(\rho_{k,j}\)-zCDP by
  \cref{prop:Supp_DPCov_GH_LocalPrivacy}.
  Composition over the radius grid therefore costs
  \[
    (J_k+1)\rho_{k,j}
    =
    \frac{\rho}{4J_{\mathrm{sq}}N_{2k}}.
  \]
  Since \(\abs{\mca{B}_k}=3(N_{2k}-1)\leq3N_{2k}\), composition over the
  blocks costs at most \(3\rho/(4J_{\mathrm{sq}})\) at scale \(k\).
  There are at most \(J_{\mathrm{sq}}\) active scales, so the outer releases
  cost at most \(3\rho/4\).
  Together with the center, the total cost is at most \(\rho\).
  Radial clipping, Lepski selection, symmetric completion, and the final
  spectral projection are post-processing.
\end{proof}

\subsection{Scale-wise candidate selection and adaptive local oracles}

For a norm \(\norm{\cdot}_\star\), let
\[
  \Gamma_{\star,B_0}(Y)
  =
  Y\min\left\{1,\frac{B_0}{\norm{Y}_\star}\right\},
\]
with multiplier one at \(Y=0\).
Whenever \(\norm{A}_\star\leq B_0\),
\[
  \norm{\Gamma_{\star,B_0}(Y)-A}_\star
  \leq
  2\norm{Y-A}_\star.
\]
Indeed, this is immediate when \(\norm{Y}_\star\leq B_0\); otherwise,
with \(c=B_0/\norm{Y}_\star\),
\[
  \norm{cY-A}_\star
  \leq
  c\norm{Y-A}_\star+(1-c)\norm{A}_\star
  \leq
  2\norm{Y-A}_\star,
\]
because \((1-c)B_0\leq\norm{Y}_\star-B_0\leq\norm{Y-A}_\star\).

\begin{lemma}[Scale-wise bounded-candidate Lepski selector]
  \label{lem:DP_Cov_GH_PointLepski}
  Let \(A_1,\ldots,A_q\) and the candidates
  \(Z_{b,j}\), \(b\leq q\), \(0\leq j\leq J\), belong to the
  \(\norm{\cdot}_\star\)-ball of radius \(B_0\).
  Let \(D_0\geq\cdots\geq D_J\geq0\) be public and define
  \[
    \widehat{j}
    =
    \max\left\{
      j:
      \max_{b\leq q}\norm{Z_{b,j}-Z_{b,i}}_\star
      \leq D_j+D_i
      \text{ for every }i<j
    \right\}.
  \]
  Return \(\widehat{A}_b=Z_{b,\widehat{j}}\), \(b\leq q\).
  If, for some \(j_*\leq J\), an event \(\Omega_{j_*}\) satisfies
  \[
    \max_{b\leq q}\norm{Z_{b,j}-A_b}_\star
    \leq D_j,
    \qquad 0\leq j\leq j_*,
  \]
  then, on \(\Omega_{j_*}\),
  \[
    \widehat{j}\geq j_*,
    \qquad
    \max_{b\leq q}
    \norm{\widehat{A}_b-A_b}_\star
    \leq3D_{j_*}.
  \]
  Consequently, if
  \(\Pr(\Omega_{j_*}^{\complement})\leq\delta\), then
  \[
    \E\max_{b\leq q}
    \norm{\widehat{A}_b-A_b}_\star^2
    \leq
    9D_{j_*}^2+4\delta B_0^2.
  \]
\end{lemma}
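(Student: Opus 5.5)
The argument is the standard Lepski comparison. It has three short steps: show that the selected index is at least \(j_*\), convert this into an error bound using the triangle inequality, and then integrate over the good and bad events.

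\emph{Step 1: \(\widehat{j}\geq j_*\) on \(\Omega_{j_*}\).} We check that \(j_*\) belongs to the admissible set defining \(\widehat{j}\). For any \(i<j_*\) and any \(b\leq q\), the triangle inequality gives
\[
  \norm{Z_{b,j_*}-Z_{b,i}}_\star
  \leq
  \norm{Z_{b,j_*}-A_b}_\star+\norm{A_b-Z_{b,i}}_\star
  \leq
  D_{j_*}+D_i ,
\]
since both \(i\) and \(j_*\) lie in \(\{0,\dots,j_*\}\), where \(\Omega_{j_*}\) controls the deviations. Taking the maximum over \(b\) shows that \(j_*\) is admissible. The admissible set is therefore nonempty (it always contains \(j=0\) vacuously), and its maximum satisfies \(\widehat{j}\geq j_*\).

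\emph{Step 2: error bound on \(\Omega_{j_*}\).} Write \(\widehat{j}\) for the selected index. If \(\widehat{j}=j_*\), the bound \(\norm{\widehat{A}_b-A_b}_\star\leq D_{j_*}\) is immediate. Otherwise \(\widehat{j}>j_*\), and admissibility of \(\widehat{j}\) applied with \(i=j_*<\widehat{j}\) gives \(\norm{Z_{b,\widehat{j}}-Z_{b,j_*}}_\star\leq D_{\widehat{j}}+D_{j_*}\). Hence
\[
  \norm{\widehat{A}_b-A_b}_\star
  \leq
  D_{\widehat{j}}+D_{j_*}+D_{j_*}
  \leq
  3D_{j_*},
\]
using the monotonicity \(D_{\widehat{j}}\leq D_{j_*}\). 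This holds uniformly in \(b\). This monotonicity is the only delicate point, and it explains why the grid must be ordered with decreasing public radii.

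\emph{Step 3: expectation.} Split the expectation as \(\E[\bm{1}_{\Omega_{j_*}}\cdot]+\E[\bm{1}_{\Omega_{j_*}^\complement}\cdot]\).
\begin{itemize}
  \item On \(\Omega_{j_*}\), the squared maximal error is at most \(9D_{j_*}^2\) by Step 2.
  \item On the complement, both \(\widehat{A}_b\) and \(A_b\) lie in the \(\norm{\cdot}_\star\)-ball of radius \(B_0\), so \(\norm{\widehat{A}_b-A_b}_\star\leq2B_0\) deterministically. This contributes at most \(4B_0^2\Pr(\Omega_{j_*}^\complement)\leq4\delta B_0^2\).
\end{itemize}
Adding the two contributions yields the stated bound. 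No step here is a genuine obstacle; the bounded-candidate assumption (ensured in the algorithm by the radial clipping \(\Gamma_{\star,B_0}\)) is exactly what makes the bad-event contribution trivial.
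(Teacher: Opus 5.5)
Your proposal is correct and follows essentially the same argument as the paper. You show that \(j_*\) is admissible via the triangle inequality through \(A_b\), and for \(\widehat{j}>j_*\) you compare with \(Z_{b,j_*}\) and use the monotonicity \(D_{\widehat{j}}\leq D_{j_*}\) to get \(3D_{j_*}\); you then split the expectation, using the deterministic \(2B_0\) bound on \(\Omega_{j_*}^{\complement}\).
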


\begin{proof}
  On \(\Omega_{j_*}\), for every \(i<j_*\),
  \[
    \max_{b\leq q}\norm{Z_{b,j_*}-Z_{b,i}}_\star
    \leq D_{j_*}+D_i.
  \]
  Thus \(j_*\) is admissible and \(\widehat{j}\geq j_*\).
  If \(\widehat{j}=j_*\), the candidate bound gives the conclusion directly.
  Otherwise, admissibility of \(\widehat{j}\), applied with \(i=j_*\), gives
  \[
    \max_{b\leq q}\norm{Z_{b,\widehat{j}}-A_b}_\star
    \leq
    D_{\widehat{j}}+D_{j_*}+D_{j_*}
    \leq3D_{j_*}.
  \]
  On \(\Omega_{j_*}^{\complement}\), the truth and output both lie in
  the radius-\(B_0\) ball, so their distance is at most \(2B_0\).
  Splitting the expectation over the two events proves the risk bound.
\end{proof}

For the radius grid, suppose that \(a_j=2^{-j}a_0\), \(a_0=O(1)\), and
\[
  J+\log\left(\frac{e d n q}{\rho_{\mathrm{blk}}}\right)
  \lesssim
  \log n.
\]
Set \(\rho_j=\rho_{\mathrm{blk}}/(J+1)\) and take
\[
  \beta_{\mathrm{ad}}
  =
  \left(\frac{a_J}{a_0}\right)^2 n^{-3}.
\]
The resulting parameters satisfy
\[
  \sum_{j=0}^J\rho_j=\rho_{\mathrm{blk}},
  \qquad
  \log\left(\frac{e d n q}{\beta_{\mathrm{ad}}\rho_j}\right)\lesssim\log n,
  \qquad
  (J+1)\beta_{\mathrm{ad}}a_0^2\lesssim a_J^2\wedge\frac{1}{n}.
\]
The first identity is immediate. Since \(J\lesssim\log n\), the other two
bounds follow from
\((J+1)\beta_{\mathrm{ad}}a_0^2=(J+1)a_J^2n^{-3}\).

\begin{proposition}[Adaptive profile and greedy block oracles]
  \label{prop:DP_Cov_GH_AdaptiveBlock}
  Let \(a_0>\cdots>a_J>0\), where \(a_{j+1}=a_j/2\) and
  \(a_0=O(1)\), and let \(A_1,\ldots,A_q\) be \(k\times k\)
  covariance cross-blocks associated with the \(q\) blocks.
  Give each block total budget \(0<\rho_{\mathrm{blk}}\leq\rho\), set
  \(\rho_j=\rho_{\mathrm{blk}}/(J+1)\), and assume that
  \[
    J+\log\left(\frac{e d n q}{\rho_{\mathrm{blk}}}\right)
    \lesssim
    \log n
  \]
  and that \cref{eq:Supp_DPCov_GH_OperatorScaleCondition} holds.
  If \(A_b\in\mca{K}_k(a)\) for every \(b\leq q\), where the unknown radius
  satisfies \(a\in[a_J,a_0]\), then
  \begin{equation}
    \begin{aligned}
      &\E\max_{b\leq q}
      \norm{\widehat{A}_b^{\mca{G},\mathrm{ad}}-A_b}^2\\
      &\quad\lesssim_{\log}
      a^2
      \wedge
      \Biggl[
        \frac{k+\log(q+1)}{n}
        +
        \left\{
          \frac{a k}{n\sqrt{\rho_{\mathrm{blk}}}}
          \wedge
          \frac{k^3}{\rho_{\mathrm{blk}} n^2}
        \right\}
      \Biggr].
    \end{aligned}
    \label{eq:Adaptive_G_BlockOracle}
  \end{equation}
  If \(A_b\in\mca{E}_k(a)\) for every \(b\leq q\), then
  \begin{equation}
    \begin{aligned}
      &\E\max_{b\leq q}
      \norm{\widehat{A}_b^{\mca{H},\mathrm{ad}}-A_b}^2\\
      &\quad\lesssim_{\log}
      a^2
      \wedge
      \Biggl[
        \frac{k+\log(q+1)}{n}
        +
        \left\{
          \min_{1\leq r\leq k}
          \left\{
            \frac{a^2}{r}
            +
            \frac{k^2 r}{\rho_{\mathrm{blk}} n^2}
          \right\}
          \wedge
          \frac{k^3}{\rho_{\mathrm{blk}} n^2}
        \right\}
      \Biggr].
    \end{aligned}
    \label{eq:Adaptive_H_BlockOracle}
  \end{equation}
\end{proposition}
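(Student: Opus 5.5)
The plan is to instantiate the scale-wise Lepski selector of \cref{lem:DP_Cov_GH_PointLepski} with the operator norm, $B_0=a_0$, and candidates $Z_{b,j}$ produced at radius $a_j$ by the branch $t_j$ minimising the public scores $W_{0},W_{\mathrm{str}},W_{\mathrm{dense}}$. Fix the unknown $a\in[a_J,a_0]$ and let $j_*$ be the largest index with $a\le a_{j_*}$. Then $a_{j_*}\le 2a$, and the truth lies in $\mca{K}_k(a_j)$ (respectively $\mca{E}_k(a_j)$) for every $j\le j_*$, since both classes are monotone in the radius.

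\textbf{Step 1: public deviation radii.} Set $D_j^2=C(\log n)^C W_{j,t_j}$. For each $j\le j_*$ and the selected branch, the known-radius result gives a high-probability bound $\max_b\norm{Z_{b,j}-A_b}\le D_j$ with failure probability $\beta_{\mathrm{ad}}$:
\begin{itemize}
\item the zero branch is deterministic, with error at most $\norm{A_b}\le a\le a_j$;
\item the structured branch uses \cref{prop:Supp_DPCov_G_ProfileOracle} or \cref{prop:Supp_DPCov_H_GreedyOracle} with budget $\rho_j$ and failure probability $\beta_{\mathrm{ad}}$;
\item the dense branch uses \cref{prop:Supp_DPCov_GH_DenseDeviation}.
\end{itemize}
Radial clipping to the ball of radius $a_0$ at most doubles each error, because $\norm{A_b}\le a_0$; this is the bound shown just before \cref{lem:DP_Cov_GH_PointLepski}. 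The standing condition $J+\log(ednq/\rho_{\mathrm{blk}})\lesssim\log n$ keeps every logarithm inside the $C(\log n)^C$ factor. A union bound over $j\le j_*$ gives $\Pr(\Omega_{j_*}^\complement)\le (J+1)\beta_{\mathrm{ad}}$.

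\textbf{Step 2: monotonicity of the radii.} The selector needs $D_0\ge\dots\ge D_J$. Each score $W_{j,t}$ is nondecreasing in $a_j$ for fixed budget, since the budget $\rho_j$ does not depend on $j$. A minimum of nondecreasing functions is nondecreasing, so $D_j$ is monotone in $j$, as required.

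\textbf{Step 3: conclusion.} \cref{lem:DP_Cov_GH_PointLepski} gives
\[
\E\max_b\norm{\widehat A_b-A_b}^2\le 9D_{j_*}^2+4(J+1)\beta_{\mathrm{ad}}a_0^2 .
\]
The remainder is at most $a_J^2\wedge n^{-1}$ by the displayed parameter bounds. Since $a_J\le a$ and $1/n$ is dominated by $(k+\log(q+1))/n$, the remainder is absorbed into the right-hand side.

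Next, $D_{j_*}^2\lesssim_{\log}\min_t W_{j_*,t}$. Replacing $a_{j_*}$ by $a$ in the scores costs only a factor $4$. Writing $\rho_j=\rho_{\mathrm{blk}}/(J+1)$ costs factors of $J+1$ or $\sqrt{J+1}$, which are logarithmic. For the $\mca{H}$ structured score, the inner minimum over $r$ changes only by such a factor.

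The three-way minimum is then $a^2\wedge[\frac kn+\{S\wedge \frac{k^3}{\rho_{\mathrm{blk}}n^2}\}]$, where $S$ is the structured term. The claimed form has $(k+\log(q+1))/n$ instead of $k/n$. The greedy and dense bounds carry the $\log(q+1)$ term themselves, and including it in the public score only enlarges $D_j$. This yields \eqref{eq:Adaptive_G_BlockOracle} and \eqref{eq:Adaptive_H_BlockOracle}.

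\textbf{Expected obstacle.} The delicate step is Step 1 for the structured branches. The profile oracle needs $\beta_B\le n^{-2}$, and the tuning inside the profile and greedy procedures (the horizons $T_m$ and the rank $r$) is computed from the grid radius $a_j$ rather than from the true $a$. So I must check that those oracles hold for any truth in $\mca{K}_k(a')$ with $a'\le a_j$, with error expressed through $a_j$.

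For the greedy procedure this is immediate, since the proof bounds the error through $\norm{A_b}_F\le a_j$. For the profile procedure, the regret bound \cref{lem:Proof_G_ProfileRegret} uses the comparator $A_b\in\mca{K}_k(a)\subseteq\mca{K}_k(a_j)$, and the feasible set is defined with radius $a_j$. So the argument goes through with $a$ replaced by $a_j$ throughout, and I would verify this containment explicitly.
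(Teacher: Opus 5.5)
Your proposal is correct and follows essentially the same route as the paper: the Lepski selector of \cref{lem:DP_Cov_GH_PointLepski} with $B_0=a_0$, $j_*$ taken as the last grid radius above $a$, and public radii $D_j^2=C(\log n)^C W_j$ that are nonincreasing in $j$; a union bound over $j\le j_*$ combining the zero, structured and dense deviation bounds; absorption of the remainder $(J+1)\beta_{\mathrm{ad}}a_0^2\lesssim a_J^2\wedge n^{-1}$; and only logarithmic losses in passing from $\rho_j$ to $\rho_{\mathrm{blk}}$ and from $a_{j_*}$ to $a$. The containment $\mca{K}_k(a)\subseteq\mca{K}_k(a_j)$ (respectively $\mca{E}_k(a)\subseteq\mca{E}_k(a_j)$) for $j\le j_*$, which you flag as the delicate point, is exactly what the paper uses implicitly when it invokes the known-radius oracles at grid radius $a_j$, and $\beta_{\mathrm{ad}}=(a_J/a_0)^2n^{-3}\le n^{-2}$ meets the profile oracle's requirement.
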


\begin{proposition}[Adaptive row-tail Frobenius block oracle]
  \label{prop:DP_Cov_G_FrobeniusAdaptiveBlock}
  Let \(a_0>\cdots>a_J>0\), where \(a_{j+1}=a_j/2\) and
  \(a_0=O(1)\), and let \(A_1,\ldots,A_q\) be \(k\times k\)
  covariance cross-blocks associated with the \(q\) blocks.
  Suppose \(A_b\in\mca{K}_k(a)\) for every \(b\leq q\), where the unknown
  radius satisfies \(a\in[a_J,a_0]\).
  Give each block total budget \(0<\rho_{\mathrm{blk}}\leq\rho\), and set
  \(\rho_j=\rho_{\mathrm{blk}}/(J+1)\).
  If
  \[
    J+\log\left(\frac{e d n q}{\rho_{\mathrm{blk}}}\right)
    \lesssim
    \log n,
  \]
  then the adaptive scale construction satisfies
  \begin{equation}
    \begin{aligned}
      &\E\max_{b\leq q}
      \norm{
        \widehat{A}_b^{\mca{G},\mathrm{F},\mathrm{ad}}-A_b
      }_F^2\\
      &\quad\lesssim_{\log}
      k a^2
      \wedge
      k
        \min_{1\leq r\leq k}
        \left\{
          \frac{a^2}{r}
          +
          \frac{r}{n}
          +
          \frac{k r^2}{\rho_{\mathrm{blk}} n^2}
        \right\}
      \wedge
      \left(
        \frac{k^2}{n}
        +
        \frac{k^4}{\rho_{\mathrm{blk}} n^2}
      \right).
    \end{aligned}
    \label{eq:Adaptive_G_Frobenius_BlockOracle}
  \end{equation}
\end{proposition}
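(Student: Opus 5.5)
The proof mirrors the operator-mode argument and rests on the bounded-candidate Lepski selector \cref{lem:DP_Cov_GH_PointLepski}, used with the Frobenius norm as \(\norm{\cdot}_\star\) and with \(B_0=\sqrt{k}\,a_0\).

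\textbf{Candidates.} At each grid index \(j\), the three candidates are
\begin{itemize}
  \item the zero candidate, with deterministic error \(\norm{A_b}_F\leq\sqrt{k}\,a\);
  \item the peeling candidate of \cref{alg:DP_Cov_G_FrobeniusRelease}, run with radius \(a_j\), budget \(\rho_j\), and the publicly minimizing sparsity index;
  \item the dense candidate \(\mf{DPCovBlock}(X,B;\rho_j,R_k)\).
\end{itemize}
Each is clipped to the Frobenius ball of radius \(\sqrt{k}\,a_0\). Every \(A_b\in\mca{K}_k(a)\) with \(a\leq a_0\) satisfies \(\norm{A_b}_F\leq\sqrt{k}\,\norm{A_b}\leq\sqrt{k}\,a_0\) by Schur's inequality. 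The clipping fact proved before \cref{lem:DP_Cov_GH_PointLepski} therefore shows that clipping at most doubles each candidate's error.

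\textbf{Step 1: a common high-probability event.} Let \(j_*\) be the largest index with \(a_{j_*}\geq a\). Then \(a\leq a_{j_*}\leq 2a\), and for every \(j\leq j_*\) we have \(A_b\in\mca{K}_k(a)\subseteq\mca{K}_k(a_j)\). Apply \cref{prop:Supp_DPCov_G_FrobeniusDeviation} to the peeling branch and \cref{prop:Supp_DPCov_GH_DenseDeviation} (Frobenius part, which needs no scale condition) to the dense branch. Use failure probability \(\beta_{\mathrm{ad}}\) for each \(j\) and take a union bound over \(j\leq J\). This gives an event \(\Omega_{j_*}\) on which \(\max_b\norm{Z_{b,j}-A_b}_F\leq D_j\) for all \(j\leq j_*\). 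Here \(D_j^2\) is the public radius \(C(\log n)^C\,W_{j,t_j}\) for the branch \(t_j\) minimizing the three scores \(k a_j^2\), \(k\min_r\{a_j^2/r+r/n+kr^2/(\rho_j n^2)\}\) and \(k^2/n+k^4/(\rho_j n^2)\). For the zero branch the bound \(\norm{A_b}_F^2\leq k a^2\leq k a_j^2\) is deterministic. Since \(J\lesssim\log n\), the condition \(\log(ednq/(\beta_{\mathrm{ad}}\rho_j))\lesssim\log n\) keeps all constants polylogarithmic.

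\textbf{Step 2: monotonicity of the radii.} \cref{lem:DP_Cov_GH_PointLepski} requires \(D_0\geq\cdots\geq D_J\). Each of the three scores is nondecreasing in \(a_j\), and \(\rho_j\) does not depend on \(j\). Hence the minimum over branches, and so \(D_j\), is nonincreasing in \(j\).

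\textbf{Step 3: conclusion.} The lemma gives
\[
  \E\max_b\norm{\widehat{A}_b-A_b}_F^2\leq 9D_{j_*}^2+4(J+1)\beta_{\mathrm{ad}}\,k\,a_0^2 .
\]
By the parameter display preceding \cref{prop:DP_Cov_GH_AdaptiveBlock}, the remainder is at most \(k\bigl(a_J^2\wedge n^{-1}\bigr)\). This is dominated by the right side of \cref{eq:Adaptive_G_Frobenius_BlockOracle}: each of the three terms there is at least of order \(k a^2\wedge k/n\) (the middle term is at least \(k/n\), since it contains \(r/n\) with \(r\geq1\)), and \(a\geq a_J\). Finally, \(a_{j_*}\leq 2a\) and \(\rho_j=\rho_{\mathrm{blk}}/(J+1)\) turn \(D_{j_*}^2\) into the stated three-way minimum, at a polylogarithmic cost.

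\textbf{Main obstacle.} The delicate point is the expectation. \cref{lem:DP_Cov_GH_PointLepski} needs a uniform bound \(B_0\) on all candidates, and the peeling and dense deviation statements are high-probability bounds. Both issues are handled by the clipping step together with the choice of \(\beta_{\mathrm{ad}}\); the remaining work is to check that the clipping radius \(\sqrt{k}\,a_0\) genuinely dominates \(\norm{A_b}_F\).
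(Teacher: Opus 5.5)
Your proposal is correct and follows the paper's proof essentially step for step. The shared steps are: the bounded-candidate Lepski selector in Frobenius norm with \(B_0=\sqrt{k}\,a_0\); per-radius control of the publicly selected branch, deterministic for zero and via the peeling and dense Frobenius deviation bounds otherwise, with clipping costing a factor two; the index \(j_*=\max\{j:a_j\geq a\}\); monotonicity of \(D_j\); and absorption of the remainder \(k(a_J^2\wedge n^{-1})\) because every score is at least \(k a^2\wedge k/n\).
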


\begin{proof}[Proofs of \cref{prop:DP_Cov_GH_AdaptiveBlock} and
  \cref{prop:DP_Cov_G_FrobeniusAdaptiveBlock}]
  For each mode, let \(W_j\) be the minimum of the zero, structured, and
  dense scores in the table following \cref{alg:DP_Cov_GH_Adaptive},
  evaluated at \(a_j\) and \(\rho_j\), and set
  \(D_j^2=C(\log n)^C W_j\).
  Each score, and hence \(D_j\), is nonincreasing in \(j\).

  Combining the deterministic zero-branch bound with the deviation bounds in
  \cref{prop:Supp_DPCov_G_ProfileOracle},
  \cref{prop:Supp_DPCov_H_GreedyOracle},
  \cref{prop:Supp_DPCov_G_FrobeniusDeviation}, and
  \cref{prop:Supp_DPCov_GH_DenseDeviation} shows that, after allocating the
  failure probability across the three branches, the selected candidate at
  radius \(a_j\) has error at most \(D_j\) simultaneously over the \(q\)
  blocks, except with probability \(\beta_{\mathrm{ad}}\).
  Here \(\beta_{\mathrm{ad}}\leq n^{-2}\), as required by the profile
  oracle, and the factor \((\log n)^C\) absorbs the sampling term
  \(\log(q+1)/n\).
  Radial clipping changes these bounds by at most a constant factor:
  the truth lies in the radius-\(a_0\) operator ball under either
  \(\mca{K}_k(a)\), by Schur's inequality, or \(\mca{E}_k(a)\), by the
  Frobenius bound, and in the radius-\(\sqrt{k}a_0\) Frobenius ball under
  \(\mca{K}_k(a)\).

  Let \(j_*=\max\{j:a_j\geq a\}\), so that
  \(a\leq a_{j_*}<2a\).
  A union bound gives simultaneous candidate control for
  \(0\leq j\leq j_*\) except with probability at most
  \((J+1)\beta_{\mathrm{ad}}\).
  Applying \cref{lem:DP_Cov_GH_PointLepski}, with
  \(B_0=a_0\) in the operator modes and
  \(B_0=\sqrt{k}a_0\) in the Frobenius mode, yields
  \[
    \E\max_{b\leq q}\norm{\widehat A_b-A_b}_\star^2
    \lesssim
    D_{j_*}^2+(J+1)\beta_{\mathrm{ad}}B_0^2.
  \]
  Because \(\rho_j=\rho_{\mathrm{blk}}/(J+1)\) and
  \(J+1\lesssim\log n\), replacing \(\rho_j\) by the total block budget
  \(\rho_{\mathrm{blk}}\) in the three scores costs only a logarithmic
  factor.

  In the operator modes, the grid bounds above give
  \[
    (J+1)\beta_{\mathrm{ad}}a_0^2
    \lesssim
    a_J^2\wedge\frac{1}{n}
    \leq
    a^2\wedge\frac{k}{n}.
  \]
  In the Frobenius mode, the corresponding remainder is at most
  \(k(a_J^2\wedge n^{-1})\); it is bounded by the zero score and by the
  structured and dense scores, which are at least \(k/n\).
  Evaluating the three scores at \(j_*\) now gives
  \cref{eq:Adaptive_G_BlockOracle},
  \cref{eq:Adaptive_H_BlockOracle}, and
  \cref{eq:Adaptive_G_Frobenius_BlockOracle}.
\end{proof}

\subsection{Global adaptive profile, greedy, and peeling estimators}

Put \(x=d/(\rho n^2)\), \(J_{\mathrm{sq}}=\log_2 d-1\), and, at a
dyadic increment scale \(k\), \(q=3(N_{2k}-1)\).
By \cref{alg:DP_Cov_GH_Adaptive}, the total budget for one block at scale
\(k\) is
\[
  \rho_{\mathrm{blk}}
  :=
  (J_k+1)\rho_{k,j}
  =
  \frac{\rho}{4J_{\mathrm{sq}}N_{2k}}
  \asymp
  \frac{\rho k}{dJ_{\mathrm{sq}}}.
\]
The radius grid satisfies \(J_k\lesssim\log(2k)\), while the standing
regime gives
\[
  J_k+
  \log\left(\frac{e d n q}{\rho_{\mathrm{blk}}}\right)
  \lesssim
  \log n.
\]
Thus the radius-grid bounds apply at every scale.
For the operator procedures, the radius grid bound above gives
\(\log(q+1)\lesssim\log n\).
Choosing the constant in the global operator rule sufficiently large therefore
ensures \cref{eq:Supp_DPCov_GH_OperatorScaleCondition} for all sufficiently
large \(n\).

The following lemma controls the sampling branch and the zero estimator at
inactive operator scales.

\begin{lemma}[Sampling and inactive operator scale bounds]
  \label{lem:Proof_GH_ScaleBounds}
  Fix \(\alpha>0\).
  Uniformly over the dyadic scales \(1\leq k\leq d/4\),
  \begin{equation}
    k^{-2\alpha}
    \wedge
    \frac{k+\log(3N_{2k})}{n}
    \lesssim_{\log}
    n^{-\frac{2\alpha}{2\alpha+1}}
    \wedge\frac{d}{n}.
    \label{eq:Proof_GH_SamplingBound}
  \end{equation}
  Every scale excluded by the global operator rule in
  \cref{alg:DP_Cov_GH_Adaptive} satisfies
  \begin{equation}
    k^{-2\alpha}
    \lesssim_{\log}
    n^{-\frac{2\alpha}{2\alpha+1}}
    \wedge\frac{d}{n}.
    \label{eq:Proof_GH_InactiveScale}
  \end{equation}
  The implicit constants and logarithmic powers can be chosen uniformly
  when \(\alpha\) ranges over a fixed compact subset of \((0,\infty)\).
\end{lemma}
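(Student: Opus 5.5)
The first bound \eqref{eq:Proof_GH_SamplingBound} is a scalar balancing argument, and I would handle it by a case split on \(k\) against the statistical cutoff \(k_s=n^{1/(2\alpha+1)}\). Since \(N_{2k}\leq d\) and \(d=o(n^2)\) in the standing regime, we have \(\log(3N_{2k})\lesssim\log n\). The left side is therefore at most \(k^{-2\alpha}\wedge (k+\log n)/n\).

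For the first term of the target \(n^{-2\alpha/(2\alpha+1)}\wedge d/n\), split on \(k\) as follows.
\begin{itemize}
\item If \(k\leq k_s\), bound the minimum by \((k+\log n)/n\lesssim \log n\,(k_s/n)\), which equals \(\log n\cdot n^{-2\alpha/(2\alpha+1)}\).
\item If \(k>k_s\), bound it by \(k^{-2\alpha}\leq k_s^{-2\alpha}=n^{-2\alpha/(2\alpha+1)}\).
\end{itemize}
For the second term, note that every dyadic scale satisfies \(k\leq d/4\). Hence the minimum is at most \((k+\log n)/n\leq (d+\log n)/n\lesssim d/n\), using the standing assumption \(d\gtrsim\log n\). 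Taking the smaller of the two bounds gives \eqref{eq:Proof_GH_SamplingBound}. The only \(\alpha\)-dependence is through \(k_s\), and the constants are monotone in \(\alpha\), so uniformity over a compact interval \([\underline\alpha,\overline\alpha]\subset(0,\infty)\) is immediate.

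For \eqref{eq:Proof_GH_InactiveScale}, I would read off what the global operator rule in \cref{alg:DP_Cov_GH_Adaptive} excludes. In the operator modes, a scale is skipped exactly when \(k>n/(\log n)^C\). For such a scale,
\[
  k^{-2\alpha}\leq (\log n)^{2\alpha C}n^{-2\alpha}.
\]
Since \(2\alpha\geq 2\alpha/(2\alpha+1)\), this is at most \((\log n)^{2\alpha C} n^{-2\alpha/(2\alpha+1)}\). For the \(d/n\) endpoint, \(k\leq d/4\) gives \(n/(\log n)^C<d/4\), so \(d/n\gtrsim(\log n)^{-C}\). Because \(k^{-2\alpha}\leq 1\), it follows that \(k^{-2\alpha}\lesssim (\log n)^C\, d/n\). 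Combining the two gives \eqref{eq:Proof_GH_InactiveScale}, with logarithmic power at most \(2\overline\alpha C+C\), which is uniform over the compact range.

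Once the \(\log(3N_{2k})\) term is absorbed, nothing here is delicate. The one point to get right is bookkeeping: the polylog exponent must be chosen uniformly in \(\alpha\), which holds because \(\alpha\) only enters through \(2\alpha C\leq 2\overline\alpha C\). I would also check that the break threshold uses the same constant \(C\) as the one fixed in the preceding paragraph to guarantee \eqref{eq:Supp_DPCov_GH_OperatorScaleCondition}. Any fixed \(C\) works for this lemma.
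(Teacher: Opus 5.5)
Your proposal is correct and takes essentially the same route as the paper. You absorb the \(\log(3N_{2k})\) term into a logarithmic factor and balance \(k^{-2\alpha}\) against \(k/n\) at \(k=n^{1/(2\alpha+1)}\); for the excluded scales you use \(n/(\log n)^C<k\le d/4\). The only cosmetic differences are that you prove both endpoint bounds and take their minimum instead of splitting on \(d\) versus \(n^{1/(2\alpha+1)}\), and you track the polylog exponent \(2\overline{\alpha}C\) more explicitly than the paper does.
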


\begin{proof}
  \emph{Sampling scales.}
  The functions \(k^{-2\alpha}\) and \(k/n\) cross at
  \(k=n^{1/(2\alpha+1)}\).
  Optimizing their minimum over \(1\leq k\leq d/4\) gives
  \[
    n^{-\frac{2\alpha}{2\alpha+1}}
    \wedge\frac{d}{n}
  \]
  up to constants depending only on \(\alpha\).
  Moreover,
  \[
    k^{-2\alpha}
    \wedge
    \frac{k+\log(3N_{2k})}{n}
    \leq
    \left(k^{-2\alpha}\wedge\frac{2k}{n}\right)
    +
    \left(k^{-2\alpha}\wedge\frac{C\log d}{n}\right).
  \]
  The second term is bounded both by \(C\log d/n\leq C d/n\) and, up to a
  logarithmic factor, by \(n^{-2\alpha/(2\alpha+1)}\), proving
  \cref{eq:Proof_GH_SamplingBound}.

  \emph{Inactive operator scales.}
  At an inactive scale, the global operator rule gives
  \(k\gtrsim n/(\log n)^C\).
  If \(d\geq n^{1/(2\alpha+1)}\), use
  \(k^{-2\alpha}\lesssim
  (\log n)^C n^{-2\alpha/(2\alpha+1)}\).
  If \(d<n^{1/(2\alpha+1)}\), the existence of such a \(k\leq d\)
  implies \((\log n)^C d/n\gtrsim1\), so
  \(k^{-2\alpha}\lesssim_{\log}d/n\).
  This proves \cref{eq:Proof_GH_InactiveScale}.
\end{proof}

The local oracles and these scale bounds yield the global estimates below.
Since \(k_0=1\) and \(\log d=O(\log n)\), the center condition in
\cref{lem:Proof_DyadicSquareError} holds for both constructions.
All constants and logarithmic factors below are uniform over
\(\alpha\in[\underline{\alpha},\overline{\alpha}]\).

\begin{proof}[Proof of \cref{thm:DP_Cov_GH_Adaptive}]
  Fix \(\mca{J}\in\{\mca{G},\mca{H}\}\) and
  \(\alpha\in[\underline{\alpha},\overline{\alpha}]\).
  Put \(a_k=C_{\mca{J}}k^{-\alpha}\).
  Every scale-\(k\) square belongs to \(\mca{K}_k(a_k)\) for
  \(\mca{J}=\mca{G}\), or to \(\mca{E}_k(a_k)\) for
  \(\mca{J}=\mca{H}\), and the radius grid brackets \(a_k\) within a
  factor two.

  \emph{Local scale risk.}
  At an active operator scale, apply the local oracle with
  \(\rho_{\mathrm{blk}}\asymp\rho k/(dJ_{\mathrm{sq}})\) in
  \cref{eq:Adaptive_G_BlockOracle} and
  \cref{eq:Adaptive_H_BlockOracle}.
  Using
  \[
    u\wedge(v+w)
    \leq
    (u\wedge 2v)+(u\wedge 2w),
  \]
  and \cref{lem:Proof_GH_ThreeRegimeReduction}, the expected maximum
  squared block error \(R_{\mca{J},k}\) satisfies
  \begin{equation}
    R_{\mca{J},k}
    \lesssim_{\log}
    \left\{
      k^{-2\alpha}
      \wedge
      \frac{k+\log(3N_{2k})}{n}
    \right\}
    +Q_k(k^{-2\alpha},xk).
    \label{eq:Proof_GH_AdaptiveLocalRisk}
  \end{equation}

  \emph{Uniform scale risk.}
  By \cref{eq:Rate_Q}, the supremum of the second term in
  \cref{eq:Proof_GH_AdaptiveLocalRisk} is at most
  \(\mca{Q}_{\alpha,d}(x)\).
  At an inactive operator scale, the zero estimator has squared error at most
  \(a_k^2\lesssim k^{-2\alpha}\).
  The two conclusions of \cref{lem:Proof_GH_ScaleBounds} therefore
  control the first term at active operator scales and the zero error at
  inactive operator scales.
  Uniformly over the fixed smoothness interval,
  \begin{equation}
    \sup_k R_{\mca{J},k}
    \lesssim_{\log}
    n^{-\frac{2\alpha}{2\alpha+1}}
    \wedge\frac{d}{n}
    +
    \mca{Q}_{\alpha,d}(x).
    \label{eq:Proof_GH_AdaptiveUniformRisk}
  \end{equation}

  \emph{Aggregation.}
  Apply \cref{lem:Proof_DyadicSquareError} with \(k_+=d/2\).
  The statistical term \(n^{-2\alpha/(2\alpha+1)}\wedge d/n\) is at least
  \(n^{-1}\).
  Moreover, the standing regime gives \(d\geq4\) and \(x<1\) for all
  sufficiently large \(n\); taking \(k=r=1\) in \cref{eq:Rate_Q}
  gives \(\mca{Q}_{\alpha,d}(x)\geq x\).
  Thus the center term with \(k_0=1\) is absorbed by
  \cref{eq:Proof_GH_AdaptiveUniformRisk}.
  The privacy guarantee follows from
  \cref{prop:Supp_DPCov_GH_AdaptivePrivacy}.
\end{proof}

\begin{proof}[Proof of \cref{thm:DP_Cov_G_Frobenius_Adaptive}]
  \emph{Uniform scale risk.}
  The Frobenius block oracles apply at every scale, so no operator scale
  truncation is needed.
  At every dyadic increment scale, put
  \(a_k=C_{\mca{G}}k^{-\alpha}\).
  The row-tail condition places every square block in
  \(\mca{K}_k(a_k)\), and the radius grid brackets \(a_k\) within a
  factor two.
  Applying \cref{prop:DP_Cov_G_FrobeniusAdaptiveBlock} with
  \(q=3(N_{2k}-1)\) and
  \(\rho_{\mathrm{blk}}\asymp\rho k/(dJ_{\mathrm{sq}})\), and then using
  \cref{lem:Proof_G_FrobeniusThreeBranchReduction} with
  \(y=CJ_{\mathrm{sq}}x\), gives
  \begin{equation}
    \max_{B\in\mca{B}_k}
    \E\norm{\widehat{\Sigma}_B-\Sigma_B}_F^2
    \lesssim_{\log}
    k\left\{
      n^{-\frac{2\alpha+1}{2\alpha+2}}
      \wedge\frac{d}{n}
      +
      x^{\frac{2\alpha+1}{2\alpha+3}}
      \wedge x d^2
    \right\}.
    \label{eq:Proof_G_AdaptiveFrobeniusUniformRisk}
  \end{equation}
  The auxiliary bounds \cref{eq:Proof_G_FrobeniusSamplingBound} and
  \cref{eq:Proof_G_FrobeniusPrivacyBound} remain valid with
  \(y=CJ_{\mathrm{sq}}x\): since \(J_{\mathrm{sq}}\lesssim\log n\),
  this replacement changes both rate terms by only a uniform logarithmic
  factor over the fixed smoothness interval.

  \emph{Aggregation.}
  Apply \cref{lem:Proof_DyadicSquareError} with \(k_+=d/2\).
  Since \(N_{2k} k/d=1/2\), its scale sum only contributes a
  logarithmic factor.
  The statistical term \(n^{-(2\alpha+1)/(2\alpha+2)}\wedge d/n\) is again at
  least \(n^{-1}\).
  Since \(x<1\) for all sufficiently large \(n\),
  \[
    x
    \leq
    x^{\frac{2\alpha+1}{2\alpha+3}}
    \wedge x d^2
  \]
  because \(d\geq1\).
  Thus the center term with \(k_0=1\) is absorbed by
  \cref{eq:Proof_G_AdaptiveFrobeniusUniformRisk}.
  The privacy guarantee follows from
  \cref{prop:Supp_DPCov_GH_AdaptivePrivacy}.
\end{proof}
   \clearpage

\section{Proof of the DP van Trees Inequality}
\label{sec:VanTrees}

In this section, we prove the DP van Trees inequality in \cref{thm:VanTrees__zCDP}.
The key ingredient is the information contraction bound in \cref{lem:VanTrees__zCDP_FisherInfoBound}, which limits the information provided by any zCDP mechanism.
Combining this bound with the multivariate van Trees inequality yields the desired result.

We first introduce the following regularity assumption.

\begin{assumption}[DQM and prior regularity]
  \label{assu:VanTreesRegularity}
  Let $\Theta\subseteq\R^p$ be compact with piecewise smooth boundary, and let $(P_\theta)_{\theta\in\Theta}$ be a dominated model on a standard Borel sample space.
  The model is differentiable in quadratic mean at every interior point of $\Theta$, with a jointly measurable score $s(x;\theta)$ satisfying
  \[
    \E_\theta s(X;\theta)=0,
    \qquad
    I_x(\theta)=\E_\theta\zk{s(X;\theta)s(X;\theta)^\top},
    \qquad
    \int_\Theta\norm{I_x(\theta)}\dd\pi(\theta)<\infty.
  \]
  The density map $\theta\mapsto p_\theta$ is continuously differentiable as an $L^1$-valued map on a neighborhood of $\Theta$, with derivative $p_\theta s(\cdot;\theta)$.
  The prior $\pi$ has a continuously differentiable density, vanishes on the boundary of $\Theta$, and has finite Fisher information matrix
  \[
    J_\pi
    =
    \E_\pi\zk{
      \nabla_\theta\log\pi(\theta)
      \nabla_\theta\log\pi(\theta)^\top
    }.
  \]
\end{assumption}

Differentiability in quadratic mean gives the score and Fisher information used below; see \citet[Chapter~7]{vaart1998_AsymptoticStatistics}.
It is preserved by any parameter-independent Markov kernel $T$ from the sample space to a standard Borel output space, and the induced output experiment has score
\begin{equation}
  \label{eq:VanTrees_OutputScore}
  s_T(T;\theta)
  =
  \E_\theta\zk{s(X;\theta)\mid T}.
\end{equation}
These facts follow from the information-loss theorem of \citet[Section~7, Proposition~4]{lecam1988_PreservationLocal}.
Applied to an i.i.d. sample, \cref{eq:VanTrees_OutputScore} holds with the product score $s(X;\theta)=\sum_{i=1}^n s(x_i;\theta)$.
Thus, the score identity applies to discrete, continuous, and mixed outputs without requiring the mechanism to have an output density.

\begin{lemma}[Multivariate van Trees inequality]
  \label{lem:VanTreesInequality}
  Let $\theta \in \Theta \subseteq \R^p$ be a random vector with prior distribution $\pi$.
  Let $\hat{\theta}(T)$ be an estimator based on $T=T(X)$; in particular, $T$ may be the estimator itself.
  Under \cref{assu:VanTreesRegularity}, we have
  \begin{equation}
    \E_{\pi} \E_{X|\theta} \norm{(\hat{\theta}(T(X)) - \theta)}^2_2
    \geq \frac{p^2}{\int \Tr I_T(\theta) \dd \pi(\theta) + \Tr J_\pi}.
  \end{equation}
\end{lemma}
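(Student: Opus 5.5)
This is the classical Bayesian Cramér--Rao (van Trees) argument applied to the output experiment $T$. We use the score $s_T(T;\theta)$ from \cref{eq:VanTrees_OutputScore}, which exists under \cref{assu:VanTreesRegularity}, and the prior score $\nabla_\theta\log\pi(\theta)$. Let $q(t,\theta)$ denote the joint law of $(T,\theta)$. Define the joint score
\[
  S(T,\theta)=s_T(T;\theta)+\nabla_\theta\log\pi(\theta)\in\R^p,
\]
which is the $\theta$-gradient of $\log\{\pi(\theta)\,\mathrm{d}P^T_\theta/\mathrm{d}\mu\}$, interpreted in the $L^1$/DQM sense so that no output density is needed.

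The key identity is
\[
  \E\bigl[(\hat\theta_i(T)-\theta_i)S_j(T,\theta)\bigr]=\delta_{ij}.
\]
To prove it, fix $t$-measurable bounded test functions first and approximate $\hat\theta$ by truncation. Write
\[
  \E[\hat\theta_i(T)S_j]=\int\partial_{\theta_j}\bigl\{\pi(\theta)\E_\theta\hat\theta_i(T)\bigr\}\,\mathrm{d}\theta .
\]
This vanishes by the divergence theorem, since $\pi$ vanishes on $\partial\Theta$ and $\theta\mapsto\E_\theta \hat\theta_i(T)$ is $C^1$. That regularity holds because $p_\theta$ is $C^1$ as an $L^1$-valued map, so the output law is too, with derivative $\E_\theta[\hat\theta_i s_T]$. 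Similarly,
\[
  \E[\theta_i S_j]=\int \theta_i\partial_j\pi\,\mathrm{d}\theta+\int\theta_i\pi\,\E_\theta s_T\,\mathrm{d}\theta=-\delta_{ij}+0,
\]
by integration by parts and the fact that $\E_\theta s_T=0$.

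Summing the identity over $i=j$ gives
\[
  p=\E\langle\hat\theta-\theta,S\rangle\le\bigl(\E\|\hat\theta-\theta\|^2\bigr)^{1/2}\bigl(\E\|S\|^2\bigr)^{1/2}
\]
by Cauchy--Schwarz. Next, $\E\|S\|^2=\int\Tr I_T\,\mathrm{d}\pi+\Tr J_\pi$: the cross term is
\[
  \E\bigl[\langle s_T,\nabla\log\pi\rangle\bigr]=\int\langle\E_\theta s_T,\nabla\pi\rangle\,\mathrm{d}\theta=0 .
\]
Squaring yields the claim. If $\E\|\hat\theta-\theta\|^2=\infty$ there is nothing to prove; otherwise the truncation $\hat\theta\mapsto\chi_R(\hat\theta)$ reduction followed by letting $R\to\infty$ handles unbounded estimators. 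A side benefit of truncation is that bounded estimators make the differentiation under the integral valid.

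The main obstacle is justifying $\partial_{\theta_j}\E_\theta g(T)=\E_\theta[g(T)s_{T,j}(T;\theta)]$ for bounded measurable $g$ and general (possibly densityless) outputs. For this I would use that $\E_\theta g(T)=\int K g(x)\,p_\theta(x)\,\mathrm{d}\mu(x)$, with $Kg$ the kernel-averaged function, which is bounded. The $C^1$ $L^1$-differentiability of $p_\theta$ then gives the derivative $\int Kg\,p_\theta s=\E_\theta[g(T)\E_\theta(s\mid T)]$, matching \cref{eq:VanTrees_OutputScore}. Integrability of $\|S\|^2$ follows from the finiteness assumptions together with $\Tr I_T\le\Tr I_x^{(n)}$, which holds by conditional Jensen.
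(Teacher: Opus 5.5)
Your argument is correct and is essentially the paper's route: the paper does not reprove the lemma but cites Gill and Levit (1995, Theorem~1). Their proof is exactly your integration-by-parts identity $\E[(\hat\theta_i-\theta_i)S_j]=\delta_{ij}$ followed by Cauchy--Schwarz, specialized here to identity weight matrices. Your handling of outputs without densities, via the bounded kernel-averaged function $Kg$ and the conditional score $\E_\theta(s\mid T)$, matches the paper's appeal to Le Cam's output-score identity. The truncation step is also sound; in fact, for $R\ge\sup_{\Theta}\|\theta\|$ the map $\chi_R$ is a nonexpansive projection onto a ball containing $\theta$, so the bound follows without taking any limit.
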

This is the trace form of the multivariate van Trees inequality of \citet[Theorem~1]{gill1995_ApplicationsVan}.
It remains valid if the output Fisher information is singular, with the risk interpreted as an extended nonnegative expectation when necessary.

The following key lemma shows that the information provided by a $\rho$-zCDP mechanism is restricted by the privacy.

\begin{lemma}[Fisher Information Contraction under zCDP]
  \label{lem:VanTrees__zCDP_FisherInfoBound}
  Under \cref{assu:VanTreesRegularity}, let $T$ be an arbitrary randomized $\rho$-zCDP mechanism based on $n$ i.i.d.\ samples from $P_\theta$, with a standard Borel output space.
  Then,
  \begin{equation}
    \Tr I_T(\theta) \leq (e^{2\rho}-1) n^2 \norm{I_{x}(\theta)}.
  \end{equation}
\end{lemma}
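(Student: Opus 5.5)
We bound $v^\top I_T(\theta)v$ uniformly over unit vectors $v\in\R^p$ and then pass to the trace. Throughout we use the score identity \cref{eq:VanTrees_OutputScore} with the product score $s(X;\theta)=\sum_{i=1}^n s(x_i;\theta)$.

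\emph{Step 1: reduction to one coordinate at a time.} By \cref{eq:VanTrees_OutputScore}, $s_T=\sum_{i=1}^n\E_\theta[s(x_i;\theta)\mid T]$. Fix a unit $v$ and put $g(t)=v^\top s_T(t)$, so that $v^\top I_T v=\E_\theta g(T)^2$. Write $h_i(t)=\E_\theta[v^\top s(x_i;\theta)\mid T=t]$. Then
\[
  \E g(T)^2=\sum_{i=1}^n\E\bigl[g(T)\,v^\top s(x_i;\theta)\bigr]=\sum_{i=1}^n\E\bigl[g(T)h_i(T)\bigr].
\]
It therefore suffices to bound each term $\E[g(T)\,v^\top s(x_i;\theta)]$.

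\emph{Step 2: the replace-one coupling and zCDP.} Let $x_i'$ be an independent copy of $x_i$, and let $T_i'$ be the mechanism run on the data with $x_i$ replaced by $x_i'$. Since $x_i'$ is independent of $T(X)$ and the score has mean zero, $\E[g(T_i')\,v^\top s(x_i;\theta)]=0$. Hence
\[
  \E\bigl[g(T)\,v^\top s(x_i)\bigr]=\E\bigl[(g(T)-g(T_i'))\,v^\top s(x_i)\bigr].
\]
Condition on all data, including $x_i$ and $x_i'$. For any square-integrable $f$, the $\chi^2$ bound gives
\[
  \bigl|\E f(T)-\E f(T')\bigr|\le\sqrt{\chi^2(T\|T')\,\mathrm{Var}_{T'}f}.
\]
The $\alpha=2$ zCDP condition gives $\chi^2=e^{D_2}-1\le e^{2\rho}-1$. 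Apply this with $f=g$, using the adjacent datasets $(X,X^{(i)})$. Then Cauchy--Schwarz over the data yields
\[
  \bigl|\E[g(T)v^\top s(x_i)]\bigr|\le\sqrt{e^{2\rho}-1}\;\sqrt{\E\,\mathrm{Var}(g(T_i')\mid \text{data})}\;\sqrt{\E(v^\top s(x_i))^2}.
\]
The last factor is at most $\sqrt{\norm{I_x(\theta)}}$. By the law of total variance, the middle factor is at most $\sqrt{\E g(T)^2}$, because $T_i'$ has the same distribution as $T$.

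\emph{Step 3: assembly.} Summing over $i$ gives
\[
  \E g^2\le n\sqrt{(e^{2\rho}-1)\norm{I_x}}\sqrt{\E g^2},
\]
so $v^\top I_T v\le(e^{2\rho}-1)n^2\norm{I_x}$. Taking the trace introduces a factor of $p$; this is the main obstacle. The directional argument above only controls $\norm{I_T}$. To obtain the trace directly, I would redo Step 2 with the vector-valued $g=s_T$. Then
\[
  \Tr I_T=\E\norm{s_T}^2=\sum_i\E\bigl\langle s_T(T)-s_T(T_i'),\,s(x_i)\bigr\rangle .
\]
Conditionally on the data, write this inner product as $\E_T[\langle s_T(T),w\rangle]-\E_{T'}[\cdot]$ with the fixed vector $w=s(x_i)$. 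The $\chi^2$ bound then gives
\[
  \sqrt{e^{2\rho}-1}\,\sqrt{\mathrm{Var}_{T'}\langle s_T,w\rangle}\le\sqrt{e^{2\rho}-1}\;\norm{w}\sqrt{\norm{\mathrm{Cov}(s_T\mid\text{data})}}.
\]
Taking expectations, the factor $\E\norm{w}^2=\Tr I_x$ appears. To get the operator norm instead, I would take the expectation over $x_i$ before applying the $\chi^2$ bound. That is, I would bound $\E_{x_i,x_i'}$ of the difference by viewing $u\mapsto\E[\langle s_T,u\rangle\ldots]$ as a linear functional, and apply the variational characterization $\sup_{\norm{v}=1}\E(v^\top s(x_i))^2=\norm{I_x}$ to the conditional expectation $\E[s(x_i)\mid T, T_i']$, paired against $s_T$.

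I expect this trace-versus-operator-norm bookkeeping to be the delicate point. I would try to close it via $\E\langle A,B\rangle\le\sqrt{\E\norm{A}^2}\sqrt{\norm{\text{cov of }B\text{ projected}}}$ type inequalities, absorbing $\sqrt{\Tr I_T}$ on the left to conclude $\Tr I_T\le(e^{2\rho}-1)n^2\norm{I_x}$. The remaining measure-theoretic justification, namely the existence of conditional expectations and the standard Borel output space, follows from \cref{assu:VanTreesRegularity}.
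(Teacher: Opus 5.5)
There is a genuine gap at the main point. Steps 1--3 are sound, but they only give the directional bound $v^\top I_T(\theta)v\le(e^{2\rho}-1)n^2\,v^\top I_x(\theta)v$. That yields either $\norm{I_T(\theta)}\le(e^{2\rho}-1)n^2\norm{I_x(\theta)}$, or, after summing over a basis, $\Tr I_T\le(e^{2\rho}-1)n^2\Tr I_x$. The lemma asserts the mixed bound, with the trace on the left and the operator norm on the right, and this mixed form is what produces the dimension dependence in the lower bounds. Your vector-valued attempt does not close the gap. The step $\mathrm{Var}_{T'}\langle s_T,w\rangle\le\norm{w}^2\norm{\mathrm{Cov}(s_T\mid\text{data})}$ separates $s(x_i)$ from the released score, so it necessarily produces $\E\norm{s(x_i)}^2=\Tr I_x(\theta)$. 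The repair you then sketch is not an argument.

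The missing idea is short: never split the scalar $A_i'=\langle s_T(T_i'),s(x_i)\rangle$ into its two factors.

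\emph{Step (a).} Conditionally on the data, $A_i=\langle s_T(T),s(x_i)\rangle$ and $A_i'$ are the same post-processing of adjacent outputs. Apply the $\chi^2$ mean bound to these scalars using the conditional second moment:
\[
\bigl|\E[A_i-A_i'\mid X,X']\bigr|^2\le(e^{2\rho}-1)\,\E\bigl[(A_i')^2\mid X,X'\bigr].
\]
\emph{Step (b).} Apply Jensen over the data, not Cauchy--Schwarz across the two factors. This gives $(\E A_i)^2=(\E A_i-\E A_i')^2\le(e^{2\rho}-1)\,\E(A_i')^2$.

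\emph{Step (c).} Use the replace-one coupling a second time. $T_i'$ is a function of $X_i'$ and the mechanism's internal randomness, so it is independent of $x_i$. Hence
\[
\E(A_i')^2=\E\bigl[s_T(T_i')^\top I_x(\theta)\,s_T(T_i')\bigr]\le\norm{I_x(\theta)}\,\E\norm{s_T(T_i')}_2^2=\norm{I_x(\theta)}\,\Tr I_T(\theta),
\]
where the last equality holds because $T_i'$ has the law of $T$.

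\emph{Step (d).} Summing over $i$ gives $\Tr I_T\le n\sqrt{(e^{2\rho}-1)\norm{I_x}\Tr I_T}$, and rearranging proves the claim. This is the paper's route, via \cref{lem:Lower__AiControl} and \cref{lem:MeanBoundViaChi2}.

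A minor slip in Step 2: the independence you need for $\E[g(T_i')\,v^\top s(x_i)]=0$ is between $x_i$ and $T_i'$, not between $x_i'$ and $T$.
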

\begin{proof}
  Let $s(X)=\sum_{i=1}^n s(x_i;\theta)$ be the product score.
  By \cref{eq:VanTrees_OutputScore}, the output score satisfies $s_T(T)=\E_\theta\{s(X)\mid T\}$,
  so
  \begin{align*}
    \Tr I_T(\theta) &= \E_{\theta} \ang{ s_T(T) ,  \E \xk{s(X) \mid T } }
    = \E_{\theta} \ang{ s_T(T) , s(X) },
  \end{align*}
  where we use the fact that
  \begin{equation*}
    \E \zk{V W} = \E \zk{\E \xk{V W | T}} = \E \zk{V \E(W|T)},\quad V = V(T).
  \end{equation*}
  Now, we can further decompose
  \begin{align*}
    \Tr I_T(\theta) &= \E_{\theta} \ang{ s_{T}(T), s(X) }
    = \sum_{i \in [n]} \E_{\theta} \ang{s_{T}(T), s(x_i)}
    \eqqcolon \sum_{i \in [n]} \E_{\theta}  A_i,
  \end{align*}
  where \( s(x_i) = \nabla_\theta \log p_{\theta}(x_i) \) is the score function of each data point.
  Using \cref{lem:Lower__AiControl} and noting that $s_{T}(T)$ is also $\rho$-zCDP by post-processing,
  we have
  \begin{equation*}
    \abs{\E_{\theta}  A_i}
    \leq \xk{(e^{2\rho}-1) \norm{I_{x}(\theta)} \cdot \E_{\theta} \norm{ s_{T}(T)}_2^2}^{\hf}
    = \xk{(e^{2\rho}-1) \norm{I_{x}(\theta)} \Tr I_T(\theta)}^{\hf}.
  \end{equation*}
  Plugging this back, we get
  \begin{equation*}
    \Tr I_T(\theta)
    \leq n \xk{(e^{2\rho}-1) \norm{I_{x}(\theta)} \Tr I_T(\theta)}^{\hf}.
  \end{equation*}
  Rearranging gives the desired result.
\end{proof}

We now restate \cref{thm:VanTrees__zCDP} under \cref{assu:VanTreesRegularity} and give its proof.

\begin{theorem}[DP van Trees inequality; restatement of \cref{thm:VanTrees__zCDP}]
  \label{thm:VanTrees__zCDP_Restated}
  Suppose $(P_\theta)_{\theta\in\Theta}$ and the prior $\pi$ satisfy \cref{assu:VanTreesRegularity}.
  Let $\hat{\theta}$ be an arbitrary randomized $\rho$-zCDP estimator computed from $n$ i.i.d.\ samples from $P_\theta$.
  \begin{equation}
    \label{eq:VanTrees__zCDP_Restated}
    \begin{aligned}
      \E_\pi\E_\theta\norm{\hat{\theta}-\theta}_2^2
      &\geq
      \frac{p^2}{I+\Tr J_\pi},\\
      I
      &=
      \left\{
        C_\rho\rho n^2
        \int_\Theta\norm{I_x(\theta)}\dd\pi(\theta)
      \right\}
      \wedge
      \left\{
        n\int_\Theta\Tr I_x(\theta)\dd\pi(\theta)
      \right\},
    \end{aligned}
  \end{equation}
  where $C_\rho=(e^{2\rho}-1)/\rho$.
\end{theorem}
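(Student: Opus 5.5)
The proof plugs two bounds on the output Fisher information into the multivariate van Trees inequality of \cref{lem:VanTreesInequality}. I take $T=\hat{\theta}$ itself. This is a parameter-independent Markov kernel applied to the i.i.d.\ sample, so by \cref{assu:VanTreesRegularity} and the score identity \cref{eq:VanTrees_OutputScore} the output experiment is DQM with score $s_T(T;\theta)=\E_\theta[\sum_i s(x_i;\theta)\mid T]$. \cref{lem:VanTreesInequality} then gives
\[
  \E_\pi\E_\theta\norm{\hat\theta-\theta}_2^2\geq\frac{p^2}{\int\Tr I_T(\theta)\dd\pi(\theta)+\Tr J_\pi}.
\]
It therefore suffices to show that $\int\Tr I_T\dd\pi\leq I$. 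Since the map $t\mapsto p^2/(t+\Tr J_\pi)$ is decreasing, I bound the integral by each member of the minimum separately.

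\emph{Non-private bound.} Conditional expectation is an $L^2$ contraction, and the per-sample scores are independent with mean zero. Hence, for every $\theta$,
\[
  \Tr I_T(\theta)=\E_\theta\norm{\E_\theta[s(X)\mid T]}_2^2\leq\E_\theta\norm{s(X)}_2^2=n\Tr I_x(\theta).
\]
Integrating against $\pi$ yields the second member $n\int\Tr I_x\dd\pi$. This is just the data-processing inequality for Fisher information, and I expect no difficulty here.

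\emph{Private bound.} \cref{lem:VanTrees__zCDP_FisherInfoBound} gives, pointwise in $\theta$,
\[
  \Tr I_T(\theta)\leq(e^{2\rho}-1)n^2\norm{I_x(\theta)}=C_\rho\rho\, n^2\norm{I_x(\theta)}.
\]
Integrating gives the first member of the minimum. Because both pointwise bounds hold for every $\theta$, the integral is at most the minimum of the two integrated bounds, which is exactly $I$. Substituting this into the van Trees display proves \cref{eq:VanTrees__zCDP_Restated}.

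The main obstacle is the contraction lemma, whose proof relies on \cref{lem:Lower__AiControl} (cited there, not stated in the text shown). That lemma bounds $|\E_\theta\langle s_T(T),s(x_i)\rangle|$ for a $\rho$-zCDP statistic of the sample. My approach would be to condition on $x_{-i}$ and replace $x_i$ by an independent copy $x_i'$. The resulting datasets are adjacent, so for $\alpha=2$ the $\chi^2$-divergence between the output laws is at most $e^{D_2}-1\leq e^{2\rho}-1$. Because $s(x_i')$ is independent of $T(x_i',x_{-i})$ and has mean zero,
\[
  \E\langle s_T,s(x_i)\rangle=\E\bigl\langle s_T(T(x))-s_T(T(x_i',x_{-i})),\,s(x_i)\bigr\rangle.
\]
I would bound this difference by Cauchy–Schwarz, taking a supremum over unit directions so that only the operator norm $\norm{I_x(\theta)}$ of $s(x_i)$ enters. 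The variational form of $\chi^2$ would then supply the factor $(e^{2\rho}-1)$, and $\E\norm{s_T}^2=\Tr I_T$ appears on the other side. Measurability and extended-value issues when $I_T$ is singular are handled by the remarks after \cref{lem:VanTreesInequality}.
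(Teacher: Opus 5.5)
Your proposal is correct and is essentially the paper's proof. You take $T=\hat\theta$, use the conditional-score identity and $L^2$ contraction to get $\Tr I_{\hat\theta}(\theta)\le n\Tr I_x(\theta)$, invoke the zCDP contraction lemma for $\Tr I_{\hat\theta}(\theta)\le(e^{2\rho}-1)n^2\|I_x(\theta)\|$, integrate, and apply the multivariate van Trees inequality. Your sketch of the contraction lemma also matches the paper's argument: condition on all the data, apply the $\chi^2$ mean bound to the fixed test function $t\mapsto\langle s_T(t),s(x_i)\rangle$, then use $\Tr(I_x\,\E s_Ts_T^\top)\le\|I_x\|\Tr I_T$. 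One small slip there: the mean-zero step needs $s(x_i)$, not $s(x_i')$, to be independent of $T(x_i',x_{-i})$.
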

\begin{proof}
  By \cref{eq:VanTrees_OutputScore}, the output experiment of $\hat{\theta}$ is DQM, with Fisher information $I_{\hat{\theta}}(\theta)$.
  The conditional-score identity and the $L^2$ contraction property of conditional expectation give
  \[
    \Tr I_{\hat{\theta}}(\theta)
    \leq
    n\Tr I_x(\theta).
  \]
  The privacy contraction bound in \cref{lem:VanTrees__zCDP_FisherInfoBound} gives
  \[
    \Tr I_{\hat{\theta}}(\theta)
    \leq
    (e^{2\rho}-1)n^2\norm{I_x(\theta)}.
  \]
  Integrating the smaller of these bounds with respect to $\pi$ and applying \cref{lem:VanTreesInequality} proves \cref{eq:VanTrees__zCDP_Restated}.
\end{proof}

\begin{lemma}
  \label{lem:Lower__AiControl}
  Suppose that $A_i = \ang{M(X), s(x_i)}$ for some $\rho$-zCDP mechanism $M(\cdot)$ and a mean-zero vector function $\E s(x_i) = 0$.
  Then,
  \begin{equation*}
    \abs{\E A_i}
    \leq
    \xk{(e^{2\rho}-1) \norm{I_{x}(\theta)} \cdot \E \norm{M(X)}_2^2}^{\hf}.
  \end{equation*}
\end{lemma}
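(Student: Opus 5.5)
The plan is a decoupling (score-attack) argument in which the zCDP constraint is converted into a bound on the change of a second moment when one record is swapped. Let \(X'=(x_1,\ldots,x_i',\ldots,x_n)\), where \(x_i'\) is an independent copy of \(x_i\). Then \(M(X')\) is independent of \(x_i\), and since \(\E s(x_i)=0\) we get \(\E\ang{M(X'),s(x_i)}=0\). This gives
\[
\E A_i=\E\ang{M(X)-M(X'),s(x_i)}.
\]
Conditionally on the data, the laws of \(M(X)\) and \(M(X')\) are the outputs of \(M\) on adjacent datasets. It therefore suffices to bound a difference of expectations of the functional \(m\mapsto\ang{m,s(x_i)}\) under two \(\rho\)-zCDP-close output distributions.

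The key step is a change-of-measure bound through the \(\chi^2\)-divergence. Write \(P=\mathcal{L}(M(X)\mid X,x_i')\) and \(Q=\mathcal{L}(M(X')\mid X,x_i')\), and fix \(g(m)=\ang{m,s(x_i)}\). By Cauchy--Schwarz,
\[
\abs{\E_P g-\E_Q g}=\abs{\E_Q[g(dP/dQ-1)]}\le\bigl(\chi^2(P\|Q)\,\E_Q g^2\bigr)^{1/2}.
\]
The \(\chi^2\)-divergence is controlled by the Rényi divergence of order \(2\): \(\chi^2(P\|Q)=e^{D_2(P\|Q)}-1\le e^{2\rho}-1\), using Definition~\ref{def:rho-zCDP} with \(\alpha=2\). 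This is exactly where the constant \(e^{2\rho}-1\) in the statement comes from.

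It remains to integrate over the data. Taking expectations and applying Cauchy--Schwarz once more gives
\[
\abs{\E A_i}\le(e^{2\rho}-1)^{1/2}\bigl(\E\ang{M(X'),s(x_i)}^2\bigr)^{1/2}.
\]
Because \(M(X')\) is independent of \(x_i\), we can condition on \(M(X')\) and compute
\[
\E\ang{M(X'),s(x_i)}^2=\E\bigl[M(X')^\T I_x(\theta)M(X')\bigr]\le\norm{I_x(\theta)}\,\E\norm{M(X')}_2^2.
\]
Finally, \(X'\) has the same distribution as \(X\), so \(\E\norm{M(X')}_2^2=\E\norm{M(X)}_2^2\). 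This yields the claim.

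The main obstacle I expect is the first step. A direct difference bound would produce \(\E_P g^2\) rather than \(\E_Q g^2\), and \(\E_P g^2\) involves \(M(X)\), which is correlated with \(x_i\). The fix is the orientation of the change of measure: apply it with \(Q\) as the reference measure, the law of the independent output \(M(X')\), so that the second moment lands on \(M(X')\) and the independence can be used. I would also verify integrability, which needs \(\E\norm{M}^2<\infty\); otherwise the bound is trivial. Measurability of the conditional laws on a standard Borel space is routine.
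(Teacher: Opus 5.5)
Your proposal is correct and is essentially the paper's own argument. It uses the same swap-one-record decoupling $A_i'=\ang{M(X_i'),s(x_i)}$ with $\E A_i'=0$. It applies the same conditional change of measure with the independent output $M(X_i')$ as the reference law, so that $\chi^2\le e^{D_2}-1\le e^{2\rho}-1$; the paper packages this step as its $\chi^2$ mean-difference lemma followed by Jensen. It finishes with the same bound $\E\ang{M(X_i'),s(x_i)}^2\le\norm{I_x(\theta)}\,\E\norm{M(X)}_2^2$.
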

\begin{proof}
  Now, let us take independent copies \( (x_1',\dots,x_n') \)
  and denote by
  \[
    X_i' = (x_1,\dots,x_{i-1}, x_i', x_{i+1},\dots,x_n)
  \]
  the dataset where we replace the $i$-th data point by its independent copy.
  We also denote
  \begin{equation*}
    A_i' = \ang{M(X_i'), s(x_i)}.
  \end{equation*}
  By the independence between \( X' \) and \( x_i \), we have
  \begin{equation*}
    \E A_i' = \E \ang{M(X'_i), s(x_i)} = 0.
  \end{equation*}

  Now, we bound the difference between \( \E A_i \) and \( \E A_i' \).
  Using Jensen's inequality, we have
  \begin{equation*}
    \xk{\E A_i - \E A_i'}^2 \leq \E \zk{\E \xk{A_i - A_i' | X,X'}}^2.
  \end{equation*}
  Now, since $M(\cdot)$ is $\rho$-zCDP, the data processing inequality implies that
  \begin{equation*}
    D_{\alpha|X,X'}(A_i \| A_i') \leq \alpha \rho,\quad \forall \alpha > 1,
  \end{equation*}
  so from the definition of chi-squared divergence, we have
  \begin{equation*}
    \chi^2_{|X,X'}(A_i \| A_i') \leq \exp(2\rho) - 1.
  \end{equation*}
  Using \cref{lem:MeanBoundViaChi2}, we have
  \begin{equation*}
    \zk{\E \xk{A_i - A_i' | X,X'}}^2
    \leq \chi^2_{|X,X'}(A_i \| A_i') \E \xk{\abs{A_i'}^2 | X,X'}
    \leq (e^{2\rho}-1) \E \xk{\abs{A_i'}^2 | X,X'}.
  \end{equation*}
  Therefore, we obtain
  \begin{equation*}
    \xk{\E A_i - \E A_i'}^2
    \leq (e^{2\rho}-1) \E \abs{A_i'}^2.
  \end{equation*}
  Finally, noticing that \( (X_i',x_i) \) and \( (X,x_i') \) have the same distribution,
  we find
  \begin{align*}
    \E \abs{A_i'}^2 &= \E \ang{M(X_i'), s(x_i)}^2
    = \E \Tr \zk{ s(x_i) s(x_i)^{\top} M(X_i') M(X_i')^{\top} } \\
    &= \Tr \zk{\E s(x_i) s(x_i)^{\top}} \zk{\E M(X_i') M(X_i')^{\top}} \\
    &= \Tr \zk{I_{x}(\theta) \E M(X_i') M(X_i')^{\top}} \\
    &\leq \norm{I_{x}(\theta)} \Tr \zk{\E M(X_i') M(X_i')^{\top}} \\
    &= \norm{I_{x}(\theta)} \cdot \E \norm{M(X)}_2^2.
  \end{align*}
  Therefore,
  \begin{equation*}
    \abs{\E A_i}
    =
    \abs{\E A_i - \E A_i'}
    \leq
    \xk{(e^{2\rho}-1) \norm{I_{x}(\theta)} \cdot \E \norm{M(X)}_2^2}^{\hf}.
  \end{equation*}
\end{proof}
   \clearpage

\section{Proof of the Minimax Lower Bounds}
\label{sec:ProofLower}

This section applies the DP van Trees inequality to obtain the minimax lower bounds for each covariance class.
The central step is to choose matrix priors that respect the distinct geometries of the pointwise-decay, row-tail, and separated-block covariance classes while keeping both the likelihood and prior Fisher information under control.
For the Frobenius lower bounds, we use an entrywise product prior supported on a band of width \(k\) around the diagonal.
For the separated-block class under Schatten loss, we place independent smooth priors supported on operator norm balls on disjoint \(k\times k\) off-diagonal blocks, whereas for the pointwise-decay and row-tail classes under operator loss, we use priors of adjustable rank constructed from disjoint pairs of adjacent coordinate blocks and \(r\) groups of shared column directions.
The final subsection constructs the smooth prior supported on an operator norm ball that is required by the separated-block argument.

\subsection{Frobenius-Loss Lower Bounds for the Pointwise-Decay and Row-Tail Classes}
\label{subsec:ProofLower__Frobenius}

It suffices to prove the lower bound due to the privacy constraint, since the non-private minimax lower bound is already known~\citep{cai2010_OptimalRates}.
We set $x \sim N(0,\Sigma)$ and use the DP van Trees inequality in \cref{thm:VanTrees__zCDP_Restated}.
Let us take the prior distribution $\Pi$ over covariance matrices, where $k < d$ is a parameter to be chosen later:
\begin{itemize}
  \item $\Sigma_{ii} = 1$.
  \item For \( 1 \leq \abs{i-j} \leq k \), set
  \[
    \Sigma_{ij} = \Sigma_{ji} = c k^{-(\alpha+1)} u_{ij},
  \]
  where \( u_{ij} \) are i.i.d.\ from density \( \cos^2(\pi t / 2) \) on \( [-1,1] \).
  \item The other entries are zero.
\end{itemize}
We denote by $\Pi_{jl}$ the marginal distribution of $\Sigma_{jl}$ and its density.
It is easy to see that \( \Sigma \in \mca{H}_\alpha \) provided that $c$ is small enough.
Moreover, $\Sigma$ is strictly diagonally dominant and $\norm{\Sigma^{-1}}$ is bounded by some constant.
Let us denote $P = \dk{(i, j) : 1 \leq j-i \leq k}$ be the upper-triangular band indices containing the free parameters,
which has the cardinality $\abs{P} \asymp d k$.

Without loss of generality, we can assume that the output of the estimator $\hat{\Sigma}$ is symmetric and $\hat{\Sigma}_{ii} = \Sigma_{ii}$, $\hat{\Sigma}_{ij} = 0$ for $\abs{i-j} > k$.
Otherwise, we can project $\hat{\Sigma}$ to such a space without increasing the error.
Then,
\begin{equation*}
  \norm{\hat{\Sigma} - \Sigma}_F^2 \geq 2 \sum_{(j,l) \in P} (\hat{\Sigma}_{jl} - \Sigma_{jl})^2,
\end{equation*}
so it suffices to lower bound the last term.

Under the standing condition \(\rho\lesssim1\), we have \(C_\rho=(e^{2\rho}-1)/\rho\lesssim1\).
Applying \cref{thm:VanTrees__zCDP_Restated} over the parameters in $P$ therefore gives
\begin{equation*}
  \sum_{(j,l) \in P} (\hat{\Sigma}_{jl} - \Sigma_{jl})^2
  \geq \frac{\abs{P}^2}{C \rho n^2 \int \norm{I_{x}(\Sigma_P)} \dd \Pi(\Sigma_P) + \Tr J_{\Pi}}.
\end{equation*}
For $a=(i,j)\in P$, let $D_a=e_ie_j^\T+e_je_i^\T$ be the derivative of $\Sigma$ with respect to the free coordinate $\Sigma_{ij}$.
The Fisher information for the free parameter vector is the pullback of the bilinear form in \cref{subsec:FisherNormal}:
\begin{equation*}
  I_x(\Sigma_P)_{ab}
  =
  \frac{1}{2}\Tr\zk{\Sigma^{-1}D_a\Sigma^{-1}D_b}.
\end{equation*}
The matrices $(D_a)_{a\in P}$ are pairwise orthogonal in the Frobenius inner product, and each satisfies $\norm{D_a}_F^2=2$.
Consequently, for $D(v)=\sum_{a\in P}v_aD_a$,
\begin{equation*}
  v^\T I_x(\Sigma_P)v
  =
  \frac{1}{2}\norm{\Sigma^{-1/2}D(v)\Sigma^{-1/2}}_F^2
  \leq
  \norm{\Sigma^{-1}}^2\norm{v}_2^2,
\end{equation*}
and hence $\norm{I_x(\Sigma_P)}\lesssim1$.
On the other hand, using $\Pi_{jl} = c k^{-(\alpha+1)} u_{ij}$, we have
\begin{equation*}
  \E_{\Pi} \zk{\frac{\Pi_{jl}'(\Sigma_{jl})}{\Pi_{jl}(\Sigma_{jl})}}^2 = c^{-2} \pi^2 k^{2(\alpha+1)} = C k^{2(\alpha+1)},
\end{equation*}
so
\begin{equation*}
  \Tr J_{\Pi} = \sum_{(j,l) \in P} \E_{\Pi} \zk{\frac{\Pi_{jl}'(\Sigma_{jl})}{\Pi_{jl}(\Sigma_{jl})}}^2 \lesssim dk\cdot k^{2(\alpha+1)} = d k^{2\alpha + 3}.
\end{equation*}
Combining the above estimates yields
\begin{equation*}
  \sum_{(j,l) \in P} (\hat{\Sigma}_{jl} - \Sigma_{jl})^2 \gtrsim \frac{d^2 k^2}{\rho n^2 + d k^{2\alpha + 3}}.
\end{equation*}
Taking \( k \asymp (\rho n^2 / d)^{1/(2\alpha + 3)} \wedge d \), we obtain the desired lower bound:
\begin{equation*}
  \frac{1}{d} \E_{\Pi} \norm{\hat{\Sigma} - \Sigma}_F^2 \gtrsim \xk{\frac{d}{\rho n^2}}^{\frac{2\alpha + 1}{2\alpha + 3}} \wedge \frac{d^3}{\rho n^2}.
\end{equation*}
This proves the bound for \(\mca{H}_\alpha\).
After a fixed reduction of the pointwise-decay radius, \cref{prop:ClassInclusions} places the same Gaussian testing family and band prior in \(\mca{P}_\alpha^{\mca{G}}\), without changing either lower-bound calculation up to constants.
This completes the proof for both classes.

\subsection{Schatten-Loss Lower Bound for the Separated-Block Class}
\label{subsec:ProofLower__Operator}

We will apply \cref{thm:VanTrees__zCDP_Restated} for both the statistical rate and the DP rate.
We set $x \sim N(0,\Sigma)$.
However, we have to construct a more delicate prior distribution over covariance matrices to fit the hypothesis class $\mca{F}_\alpha$, as the bounded operator norm condition cannot be optimally guaranteed by independent priors over each entry of $\Sigma$.
Therefore, \cref{lem:Lower__PriorDistribution} is crucial to construct such a prior distribution of the covariance matrix with Fisher information of desired order.

To construct the prior distribution, recall the blocks \(B_{k;l,1}\) defined in \cref{subsec:DyadicBlockStructure}.
Define the following prior distribution $\Pi$ over covariance matrices, where $k < d$ is a parameter to be chosen later and $c$ is a small constant:
\begin{itemize}
  \item $\Sigma_{ii} = 1$.
  \item For \(B_{k;2l,1}\), set \(\Sigma[B_{k;2l,1}]=ck^{-\alpha} W_l\) for i.i.d.\ \(W_l \sim\Upsilon_k\) in \cref{lem:Lower__PriorDistribution} and symmetrize over \(B_{k;2l,1}^{\T}\).
  \item The other entries are zero.
\end{itemize}
Denote by \(P=\bigsqcup_l B_{k;2l,1}\) the indices of the free parameters, so \(\abs{P}\asymp dk\).
We refer to \cref{fig:Lower__BlockPrior} for an illustration of the prior distribution.

\begin{figure}
  \centering
  \includegraphics[width=0.4\textwidth]{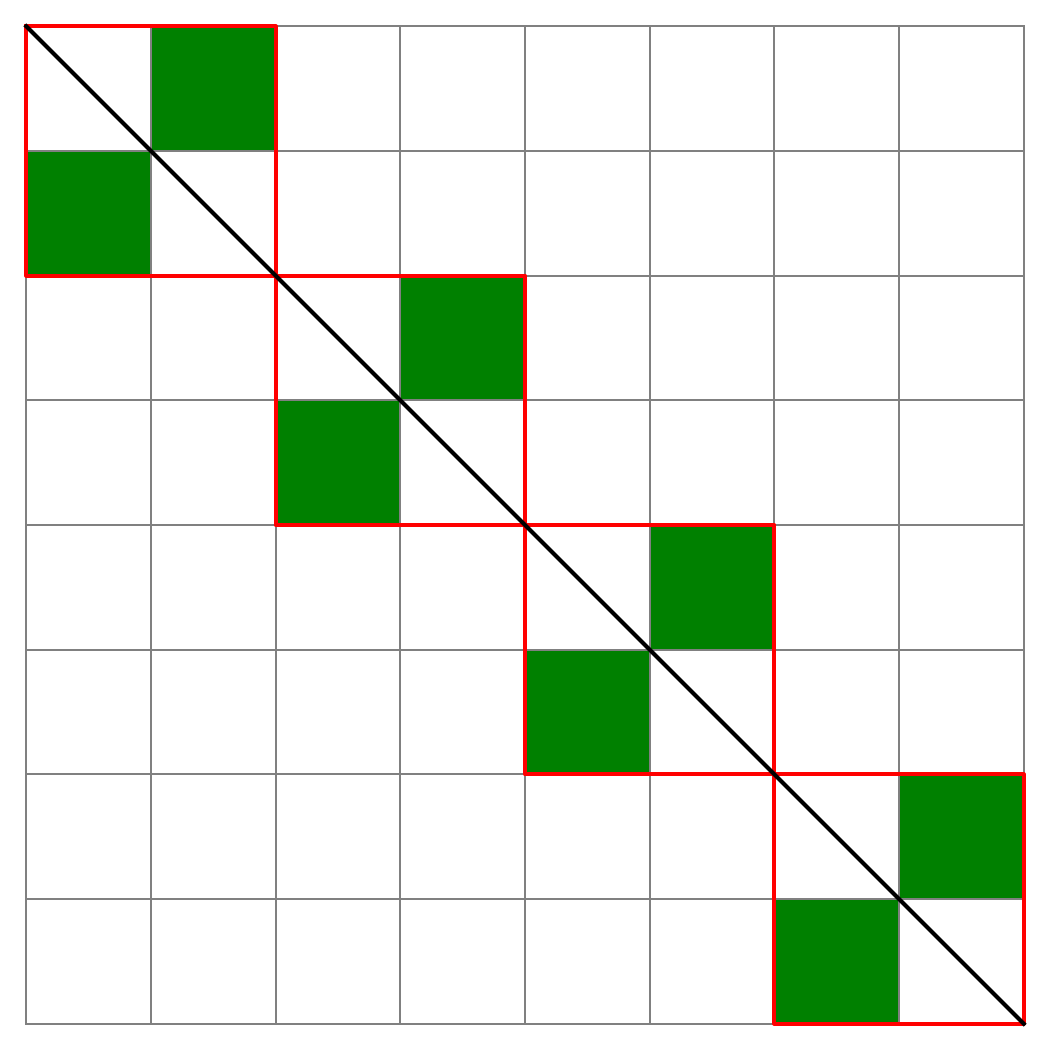}
  \caption{An illustration of the blockwise prior distribution.}
  \label{fig:Lower__BlockPrior}
\end{figure}

To show that \(\Sigma\in\mca{F}_\alpha\), note that any \(k\)-off-diagonal block \(D_k\) intersects at most one \(B_{k;2l,1}\), since these blocks have disjoint supports.
Moreover, \(D_k \cap B_{k;2l,1}\) is a sub-block of \(B_{k;2l,1}\).
Therefore,
\begin{equation*}
  \norm{\Sigma[D_k]} \leq c k^{-\alpha} \max_l \norm{W_l} \leq c k^{-\alpha}.
\end{equation*}
It is easy to see that $\Sigma \in \mca{F}_\alpha$ provided that $c$ is small enough.
Moreover, $\norm{\Sigma^{-1}}$ is also bounded by some constant.

Now, let us compute the lower bound in \cref{thm:VanTrees__zCDP_Restated}.
Since \(\Pi\) is a product of i.i.d.\ distributions over \(B_{k;2l,1}\), \cref{lem:Lower__PriorDistribution} gives
\begin{equation}
  \label{eq:Lower__Pi_PriorFisherInfo}
  \Tr J_{\Pi} \lesssim \sum_{l} k^{2\alpha} \cdot \Tr J_{\Upsilon_k} \lesssim \frac{d}{k} \cdot k^{2\alpha} \cdot k^3 = d k^{2\alpha+2}.
\end{equation}
For each free entry $a=(i,j)\in P$, let $D_a=e_ie_j^\T+e_je_i^\T$.
The same Jacobian calculation gives
\begin{equation*}
  I_x(\Sigma_P)_{ab}
  =
  \frac{1}{2}\Tr\zk{\Sigma^{-1}D_a\Sigma^{-1}D_b}.
\end{equation*}
The matrices $(D_a)_{a\in P}$ are again pairwise orthogonal in the Frobenius inner product, and each satisfies $\norm{D_a}_F^2=2$.
Together with $\norm{\Sigma^{-1}}\lesssim1$, this gives
\begin{equation*}
  \norm{I_x(\Sigma_P)}\lesssim1,
  \qquad
  \Tr I_x(\Sigma_P)
  \leq
  \abs{P}\norm{I_x(\Sigma_P)}
  \lesssim dk.
\end{equation*}
Plugging the above estimates into \cref{thm:VanTrees__zCDP_Restated} and noticing $\abs{P} \asymp dk$, we have
\begin{align*}
  \frac{1}{d}\sup_{\Sigma \in \mca{F}_\alpha}  \E_{\Sigma} \norm{\hat{\Sigma} - \Sigma}^2_F
  & \geq \frac{1}{d} \E_{\Pi} \E_{\Sigma} \norm{\hat{\Sigma}_P - \Sigma_P}^2_F
  \geq \frac{d k^2}{\rho n^2 \wedge n d k + d k^{2\alpha + 2}} \\
  & = \frac{d k^2}{\rho n^2 + d k^{2\alpha + 2}} \vee \frac{k}{n + k^{2\alpha + 1}}.
\end{align*}
Choosing
\begin{equation*}
  k_1 \asymp \xk{\frac{\rho n^2}{d}}^{\frac{1}{2\alpha + 2}} \wedge d,\quad k_2 \asymp n^{\frac{1}{2\alpha + 1}} \wedge d,
\end{equation*}
we have
\begin{equation*}
  \frac{1}{d}\sup_{\Sigma \in \mca{F}_\alpha}  \E_{\Sigma} \norm{\hat{\Sigma} - \Sigma}^2_F
  \gtrsim \xk{\frac{d}{\rho n^2}}^{\frac{\alpha}{\alpha + 1}} \wedge \frac{d^3}{\rho n^2} + n^{-\frac{2\alpha}{2\alpha + 1}} \wedge \frac{d}{n}.
\end{equation*}
Finally, using the fact that
\begin{math}
  \norm{A} \geq d^{-\frac{1}{q}} \normsch{A} \geq d^{-\frac{1}{2}} \norm{A}_F
\end{math}
yields
\begin{equation*}
  d^{-\frac{2}{q}}\sup_{\Sigma \in \mca{F}_\alpha}  \E_{\Sigma} \normsch{\hat{\Sigma} - \Sigma}^2
  \geq \frac{1}{d}\sup_{\Sigma \in \mca{F}_\alpha}  \E_{\Sigma} \norm{\hat{\Sigma} - \Sigma}^2_F
  \gtrsim \xk{\frac{d}{\rho n^2}}^{\frac{\alpha}{\alpha + 1}} \wedge \frac{d^3}{\rho n^2} + n^{-\frac{2\alpha}{2\alpha + 1}} \wedge \frac{d}{n}.
\end{equation*}

\subsection{Operator-Loss Lower Bounds for the Pointwise-Decay and Row-Tail Classes}

By \cref{prop:ClassInclusions}, we may choose a fixed \(M_{\mca{H}}>0\) such that \(C_\alpha M_{\mca{H}}\leq M\) and
\begin{equation*}
  \mca{P}_\alpha^{\mca{H}}(K,M_0,M_{\mca{H}})
  \subseteq
  \mca{P}_\alpha^{\mca{G}}(K,M_0,M).
\end{equation*}
Consequently, any lower bound established on such a pointwise-decay submodel transfers to the row-tail class under the same loss.

\begin{proposition}[Rank-interpolating Gaussian submodel]
  \label{prop:Lower__GH_RankInterpolation}
  Fix \(\alpha,K,M_0,M>0\) and integers
  \(1\leq r\leq k\leq\lfloor d/4\rfloor\).
  \begin{equation}
    \inf_{\widehat{\Sigma}\in\mca{M}_\rho}
    \sup_{P\in\mca{P}_\alpha^{\mca{H}}}
    \E_P \norm{\widehat{\Sigma}-\Sigma(P)}^2
    \gtrsim
    \min\xk{
      \frac{k^{-2\alpha}}{r},
      \frac{dkr}{\rho n^2}
    }.
    \label{eq:Lower__GH_RankInterpolation}
  \end{equation}
\end{proposition}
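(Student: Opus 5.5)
The plan is to apply the DP van Trees inequality (\cref{thm:VanTrees__zCDP_Restated}) to a Gaussian submodel \(x\sim N(0,\Sigma)\). In this submodel each off-diagonal block at distance of order \(k\) carries a perturbation of rank at most \(r\), parametrized by \(r\) shared column directions. Partition \([d]\) into intervals \(I_{k,\ell}\) and pair them as \((I_{k,2l-1},I_{k,2l})\) for \(l\lesssim d/(4k)\), so the cross-blocks \(B_l=I_{k,2l-1}\times I_{k,2l}\) have pairwise disjoint row and column supports. Split each row interval \(I_{k,2l-1}\) into \(r\) groups \(S_{l,1},\dots,S_{l,r}\) of size \(\asymp k/r\). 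Set \(\Sigma=I_d+\Delta+\Delta^\T\), where
\[
\Delta[B_l]=\sum_{g=1}^r \mathbf{1}_{S_{l,g}}v_{l,g}^\T,\qquad v_{l,g}\in\R^{k},\qquad (v_{l,g})_j=c\,k^{-(\alpha+1)}u_{l,g,j},
\]
and the \(u\)'s are i.i.d.\ with density \(\cos^2(\pi t/2)\) on \([-1,1]\). The free parameter \(\theta=(v_{l,g,j})\) has dimension \(p\asymp dr\).

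First I will verify membership in the class. Every nonzero entry of \(\Delta\) is at most \(ck^{-(\alpha+1)}\) and sits at distance at most \(2k\), so \(\Sigma\in\mca{H}_\alpha(M_0,M_{\mca{H}})\) for small \(c\). Also \(\norm{\Delta[B_l]}\leq\sqrt{k/r}\cdot\sqrt r\cdot\sqrt k\, ck^{-(\alpha+1)}\lesssim c\). Since the blocks \(B_l\) have disjoint supports, \(\norm{\Delta}\leq c\), hence \(\Sigma\succeq I/2\), \(\Sigma\preceq M_0I\) and \(\norm{\Sigma^{-1}}\lesssim1\).

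Next come the Fisher information computations. The derivative of \(\Sigma\) in \(v_{l,g,j}\) is \(D=\mathbf{1}_{S_{l,g}}e_j^\T+e_j\mathbf{1}_{S_{l,g}}^\T\). These matrices are pairwise Frobenius-orthogonal with \(\norm{D}_F^2\asymp k/r\). Hence, as in the Frobenius argument, \(\norm{I_x(\theta)}\lesssim\norm{\Sigma^{-1}}^2k/r\lesssim k/r\). The prior is a product measure, so \(\Tr J_\Pi\asymp p\,k^{2(\alpha+1)}\asymp dr\,k^{2\alpha+2}\). With \(C_\rho\lesssim1\), \cref{thm:VanTrees__zCDP_Restated} gives
\[
\E\norm{\hat\theta-\theta}_2^2\gtrsim\frac{d^2r^2}{\rho n^2 k/r+dr\,k^{2\alpha+2}}
\]
for any \(\rho\)-zCDP \(\hat\theta\).

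The key step, which I expect to be the main obstacle, is converting operator loss into this Euclidean parameter loss without losing rank factors. Given any \(\hat\Sigma\), I will define \(\hat\theta\) by post-processing (so it is still \(\rho\)-zCDP). Let \(U_l\) be the \(k\times r\) matrix with orthonormal columns \(\mathbf{1}_{S_{l,g}}/\sqrt{|S_{l,g}|}\), and set \(\hat v_{l,g}=|S_{l,g}|^{-1/2}\,\bigl(U_l^\T\hat\Sigma[B_l]\bigr)_{g}\). Writing \(E=\hat\Sigma-\Sigma\), this gives
\[
\sum_g\frac{k}{r}\norm{\hat v_{l,g}-v_{l,g}}^2\asymp\norm{U_l^\T E[B_l]}_F^2\leq r\norm{E[B_l]}^2\leq r\norm{E}^2.
\]
Summing over the \(\asymp d/k\) blocks yields \(\norm{\hat\theta-\theta}^2\lesssim d r^2k^{-2}\norm{E}^2\). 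Therefore
\[
\E\norm{\hat\Sigma-\Sigma}^2\gtrsim\frac{k^2}{dr^2}\cdot\frac{d^2r^2}{\rho n^2k/r+dr\,k^{2\alpha+2}}
\asymp\min\xk{\frac{dkr}{\rho n^2},\frac{k^{-2\alpha}}{r}}.
\]
Since the Bayes risk lower-bounds the supremum over \(\mca{P}_\alpha^{\mca{H}}\), this gives \cref{eq:Lower__GH_RankInterpolation}. The points to check carefully are the regularity of the prior (its density vanishes on the boundary, as required by \cref{assu:VanTreesRegularity}) and that the group sizes \(k/r\) can be taken to be integers up to constants.
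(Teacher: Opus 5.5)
Your proposal is correct and follows essentially the same route as the paper: a rank-$r$ Gaussian submodel on disjoint paired $k\times k$ blocks with $r$ index groups sharing directions, a product cosine prior, the privacy branch of the DP van Trees inequality, and a post-processing decoder that projects onto the group-indicator space and uses $\norm{\cdot}_F^2\leq r\norm{\cdot}^2$ (you apply this blockwise with $\norm{E[B_l]}\leq\norm{E}$ and sum, whereas the paper pinches and uses a global rank bound $2Nr$; both give the same factor). The only detail to adjust is the base matrix. Use $\lambda_0 I_d$ with a suitable fixed $\lambda_0$, as the paper does, instead of $I_d$, so that $\Sigma\preceq M_0 I_d$ and the sub-Gaussian bound $K$ hold for arbitrary fixed $M_0$ and $K$.
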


\begin{proof}
  Let \(N=\lfloor d/(2k)\rfloor\), and partition the first \(2Nk\) coordinates into disjoint adjacent pairs \(L_\ell,R_\ell\), each of size \(k\).
  Since \(k\leq d/4\), we have \(Nk\asymp d\).
  Partition the coordinates of every \(R_\ell\) into \(r\) groups of sizes between \(k/(2r)\) and \(2k/r\), and let
  \(H\in\{0,1\}^{k\times r}\) be their incidence matrix.
  Thus \(H^\T H\) is diagonal and
  \(\norm{H}\leq\sqrt{2k/r}\).

  Put
  \[
    a_k=c_0 k^{-\alpha},
    \qquad
    b_k=\frac{a_k}{k},
    \qquad
    B_\ell(\Theta_\ell)
    =
    b_k \Theta_\ell H^\T,
    \qquad
    \Theta_\ell \in[-1,1]^{k\times r}.
  \]
  Starting from \(\lambda_0 I_d\), place
  \[
    \begin{pmatrix}
      0&B_\ell(\Theta_\ell)\\
      B_\ell(\Theta_\ell)^\T&0
    \end{pmatrix}
  \]
  on \(L_\ell \cup R_\ell\), independently over \(\ell\), and denote the resulting covariance matrix by \(\Sigma_\Theta\).
  Because
  \[
    \norm{\Theta_\ell}\leq\sqrt{kr},
    \qquad
    \norm{B_\ell(\Theta_\ell)}
    \leq
    b_k \sqrt{kr}\sqrt{\frac{2k}{r}}
    =
    \sqrt2a_k,
  \]
  fixed choices of \(\lambda_0>0\) and sufficiently small \(c_0>0\), depending only on \(\alpha,K,M_0,M\), make the family uniformly positive definite and ensure the covariance and sub-Gaussian bounds.
  Each affected entry is bounded by
  \(b_k=c_0 k^{-(\alpha+1)}\) and lies within distance \(2k\), so a further fixed reduction of \(c_0\) gives pointwise-decay membership.
  Consequently, the centered Gaussian laws
  \(N(0,\Sigma_\Theta)\) belong to
  \(\mca{P}_\alpha^{\mca{H}}\).

  Give the \(m=Nkr\asymp dr\) coordinates of
  \(\Theta=(\Theta_\ell)_{\ell\leq N}\) independent density
  \[
    h(u)
    =
    \cos^2 \left(\frac{\pi u}{2}\right)
    \bm{1}\{\abs{u}\leq1\}.
  \]
  The scalar Fisher information of \(h\) is \(\pi^2\), and hence
  \(\Tr J_\pi=m\pi^2 \lesssim dr\).
  For a parameter coordinate \(u=(\ell,i,s)\), the covariance derivative is
  \[
    D_u
    =
    b_k
    \begin{pmatrix}
      0&e_i h_s^\T\\
      h_s e_i^\T&0
    \end{pmatrix},
  \]
  where \(h_s\) is the incidence vector of the \(s\)-th group.
  These derivatives are pairwise orthogonal in the Frobenius inner product and satisfy
  \[
    \norm{D_u}_F^2
    \asymp
    \frac{a_k^2}{kr}.
  \]
  For one centered Gaussian observation,
  \[
    I_x(\Theta)_{uv}
    =
    \frac{1}{2}
    \Tr\left(
      \Sigma_\Theta^{-1} D_u
      \Sigma_\Theta^{-1} D_v
    \right).
  \]
  For \(v\in\R^m\), write \(D(v)=\sum_u v_u D_u\).
  Uniform conditioning and Frobenius orthogonality give
  \[
    \begin{aligned}
      v^\T I_x(\Theta)v
      &=
      \frac{1}{2}
      \norm{\Sigma_\Theta^{-1/2} D(v)\Sigma_\Theta^{-1/2}}_F^2\\
      &\lesssim
      \norm{D(v)}_F^2
      \lesssim
      \frac{a_k^2}{kr}\norm{v}_2^2.
    \end{aligned}
  \]
  Hence
  \[
    \sup_\Theta \norm{I_x(\Theta)}
    \lesssim
    \frac{a_k^2}{kr}.
  \]
  The decoder defined below is deterministic post-processing of
  \(\widehat{\Sigma}\), so it remains \(\rho\)-zCDP.
  Under the standing condition \(\rho\lesssim1\), applying
  \cref{thm:VanTrees__zCDP_Restated} and bounding its information term by the privacy branch gives
  \begin{equation}
    \E_\pi \E_\Theta
    \norm{\widehat{\Theta}-\Theta}_F^2
    \gtrsim
    \frac{m^2}
    {\rho n^2 a_k^2/(kr)+m}.
    \label{eq:Proof_Lower__GH_Coordinate}
  \end{equation}
  The cosine prior and the uniformly conditioned Gaussian family satisfy the regularity conditions of that theorem.

  It remains to convert coordinate loss into operator loss for an arbitrary matrix estimator.
  Let \(P_H=H(H^\T H)^{-1}H^\T\) be the orthogonal projector onto the column space of \(H\).
  Starting from
  \(\operatorname{sym}(\widehat{\Sigma})-\lambda_0 I_d\), apply block-diagonal pinching to the \(N\) disjoint superblocks and off-diagonal pinching within each superblock.
  On every remaining symmetric block apply
  \[
    \begin{pmatrix}0&C\\C^\T&0\end{pmatrix}
    \longmapsto
    \begin{pmatrix}0&CP_H\\P_H C^\T&0\end{pmatrix}.
  \]
  Symmetrization and both pinching maps are operator contractions.
  The operator norm of either displayed symmetric block equals the norm of its upper-right block, so the last map is contractive because
  \(\norm{CP_H}\leq\norm{C}\).
  The true perturbation
  \(\Sigma_\Theta-\lambda_0 I_d\) is fixed by all these maps.
  Thus, if \(E^{\mathrm{pr}}\) is the projected covariance error,
  \(\norm{\widehat{\Sigma}-\Sigma_\Theta}\geq\norm{E^{\mathrm{pr}}}\).
  Let \(\widehat{\Delta}_{L_\ell,R_\ell}\) denote the resulting projected rectangular block, and decode it by
  \[
    \widehat{\Theta}_\ell
    =
    b_k^{-1}
    \widehat{\Delta}_{L_\ell,R_\ell}
    H(H^\T H)^{-1}.
  \]
  For this projected error,
  \[
    \norm{E^{\mathrm{pr}}}_F^2
    \asymp
    \frac{a_k^2}{kr}
    \norm{\widehat{\Theta}-\Theta}_F^2.
  \]
  Moreover,
  \(\operatorname{rank}(E^{\mathrm{pr}})\leq2Nr\), so \(Nk\asymp d\) yields
  \[
    \norm{E^{\mathrm{pr}}}^2
    \geq
    \frac{\norm{E^{\mathrm{pr}}}_F^2}{2Nr}
    \gtrsim
    \frac{a_k^2}{dr^2}
    \norm{\widehat{\Theta}-\Theta}_F^2.
  \]
  Combining this inequality with
  \cref{eq:Proof_Lower__GH_Coordinate},
  \(m\asymp dr\), and \(a_k^2 \asymp k^{-2\alpha}\) gives
  \[
    \E_\pi \E_\Theta
    \norm{\widehat{\Sigma}-\Sigma_\Theta}^2
    \gtrsim
    \frac{a_k^2 d}
    {\rho n^2 a_k^2/(kr)+dr}
    \asymp
    \min\xk{
      \frac{a_k^2}{r},
      \frac{dkr}{\rho n^2}
    }.
  \]
  This proves \cref{eq:Lower__GH_RankInterpolation}.
\end{proof}

\begin{proof}[Proof of \cref{thm:Lower__GH_Operator}]
  We first prove the result for \(\mca{H}_\alpha\).
  The classical centered-Gaussian testing family can be chosen in the pointwise-decay class and rescaled to meet the model bounds.
  Since \(\mca{M}_\rho\) is a subset of all estimators, this gives
  \[
    \inf_{\widehat{\Sigma}\in\mca{M}_\rho}
    \sup_{P\in\mca{P}_\alpha^{\mca{H}}}
    \E_P \norm{\widehat{\Sigma}-\Sigma(P)}^2
    \gtrsim
    n^{-\frac{2\alpha}{2\alpha+1}}
    \wedge\frac{d}{n};
  \]
  see \citet[Theorem~1 and the discussion following Equation~(30)]{cai2010_OptimalRates}.
  On the other hand,
  \cref{prop:Lower__GH_RankInterpolation} holds for every pair of integers
  \(1\leq r\leq k\leq\lfloor d/4\rfloor\), with a constant uniform in \(k,r\).
  Taking the supremum over the admissible \(k,r\) and using
  \(d/(\rho n^2)\) in \cref{eq:Rate_Q} gives exactly
  \[
    \mca{Q}_{\alpha,d} \left(\frac{d}{\rho n^2}\right).
  \]
  The maximum of the statistical and privacy lower bounds is at least half of their sum, proving the asserted rate for \(\mca{H}_\alpha\).

  Applying the class-inclusion transfer at the beginning of this subsection gives the same lower bound for \(\mca{G}_\alpha\), completing the proof.
\end{proof}

\begin{remark}[Compact-uniform lower-bound constants]
  \label{remark:Lower__GH_Uniformity}
  On a fixed interval
  \([\underline{\alpha},\overline{\alpha}]\), choose the amplitude prefactors in the classical centered-Gaussian testing family, the Frobenius band prior, and the rank-interpolating prior using the worst endpoint
  \(\overline{\alpha}\), for example
  \(c_0 \leq c_{\mathrm{amp}}2^{-(\overline{\alpha}+1)}\).
  The transfer from the pointwise-decay submodel to the row-tail class is uniform because
  \[
    2\sum_{\ell>k} \ell^{-(\alpha+1)}
    \leq
    2\underline{\alpha}^{-1} k^{-\alpha}.
  \]
  Thus the conditioning, Fisher-information, decoding, integer-rounding constants, and the threshold on \(n\) in
  \cref{thm:Lower__Frobenius} and
  \cref{thm:Lower__GH_Operator}
  can be chosen uniformly on the interval.
\end{remark}

\subsection{Operator-Norm-Ball Prior: Proof of \cref{lem:Lower__PriorDistribution}}

It suffices to construct a distribution over matrices $X$ with bounded operator norm and Fisher information of order $d^3$.
The idea is to start with a Gaussian distribution over matrices and then smoothly truncate the density to enforce the operator norm constraint.

First, let $W$ have i.i.d.\ normal entries \( w_{ij} \sim N(0, c/d) \) for some small constant $c$.
Classical random matrix theory (\cref{lem:MatrixGaussianConcentration}) shows that as long as $c$ is small enough,
\begin{equation*}
  \bbP \dk{\norm{W} \geq \frac{1}{2}} \leq 0.01.
\end{equation*}
Let $p(W)$ be the density of $W$.
It is easy to see that the trace of Fisher information
\begin{equation*}
  \Tr I_W = \sum_{i,j} \E \zk{\frac{\partial}{\partial w_{ij}} \log p(W)}^2 = \sum_{i,j} \frac{d}{c} = \frac{d^3}{c}.
\end{equation*}
However, we need to enforce the bounded operator norm constraint.

We first introduce the following smooth approximation of the squared operator norm.
For a matrix $X$, we define
\begin{equation*}
  h_{\tau}(X) = \tau \log \Tr \exp(X^\T X/\tau),\quad \tau = \frac{1}{2 \log d}
\end{equation*}
It is easy to see that
\begin{equation}
  \label{eq:Lower__SmoothMax}
  \norm{X}^2 \leq h_{\tau}(X) \leq \norm{X}^2 + \tau \log d = \norm{X}^2 + \frac{1}{2}.
\end{equation}
Moreover, its gradient is given by
\begin{equation*}
  \nabla_X h_{\tau}(X) = 2X \frac{\exp(X^\T X/\tau)}{\Tr \exp(X^\T X/\tau)}.
\end{equation*}

Now, let us define another smoothed indicator function $\varphi(s)$ as
\begin{equation*}
  \varphi(s) =
  \begin{cases}
    1, & s \leq 1,\\
    \cos^2(\frac{\pi}{2}(s-1)), & 1 < s < 2,\\
    0, & s \geq 2.
  \end{cases}
\end{equation*}
Then, $\varphi(s)$ is continuously differentiable with
\begin{equation*}
  \varphi'(s) =
  \begin{cases}
    0, & s \leq 1,\\
    -\pi \sin(\frac{\pi}{2} (s-1))\cos(\frac{\pi}{2}(s-1)), & 1 < s < 2,\\
    0, & s \geq 2.
  \end{cases}
\end{equation*}
Moreover, define the continuous extension $\psi$ of the factor associated with Fisher information by
\begin{equation*}
  \psi(s)
  =
  \begin{cases}
    0, & s \leq 1,\\
    \pi^2 \sin^2(\frac{\pi}{2}(s-1)), & 1 < s < 2,\\
    \pi^2, & s \geq 2.
  \end{cases}
\end{equation*}
Thus $\psi(s)=\xk{\varphi'(s)}^2/\varphi(s)$ for $1<s<2$, and $\psi(2)=\pi^2$ is the limiting value at the boundary; the extension is bounded by $\pi^2$.

Let us now define the prior distribution $\Upsilon$ over matrices $X$ with density
\begin{equation*}
  q(X) = \frac{1}{Z} p(X) \varphi(h_{\tau}(X)),
\end{equation*}
where $Z$ is the normalizing constant.
Then, using \cref{eq:Lower__SmoothMax} and the property of $\varphi(s)$, we notice
\begin{align*}
  q(X) &= \frac{1}{Z} p(X),\quad \text{for } \norm{X}^2 \leq 1/2, \\
  q(X) &= 0, \quad \text{for } \norm{X}^2 \geq 2,
\end{align*}
so it is supported on the operator norm ball \( \norm{X} \leq \sqrt{2} \).
Moreover, regarding the normalizing constant $Z$, we have
\begin{align*}
  Z &= \int p(X) \varphi(h_{\tau}(X)) \dd X \geq \int_{\norm{X}^2 \leq 1/2} p(X) \varphi(h_{\tau}(X)) \dd X \\
  & \geq \int_{\norm{X}^2 \leq 1/2} p(X) \dd X \\
  & = 1 - \bbP \dk{\norm{W}^2 > 1/2} \geq 0.99.
\end{align*}

Let us now compute the trace of Fisher information of this prior distribution.
Expanding the gradient, we have
\begin{equation*}
 \nabla_X \log q(X) = \nabla_X \log p(X) + \nabla_X \log \varphi(h_{\tau}(X))
\end{equation*}
so
\begin{equation*}
  T = \E \norm{\nabla_X \log q(X)}_F^2
  \leq 2 \E \norm{\nabla_X \log p(X)}_F^2 + 2 \E \norm{\nabla_X \log \varphi(h_{\tau}(X))}_F^2.
\end{equation*}
The first term is readily bounded as
\begin{align*}
  \E \norm{\nabla_X \log p(X)}_F^2
  &= \frac{1}{Z} \int \norm{\nabla_X \log p(X)}_F^2 p(X) \varphi(h_{\tau}(X)) \dd X \\
  &\leq \frac{1}{Z} \int \norm{\nabla_X \log p(X)}_F^2 p(X) \dd X \\
  &= \frac{1}{Z} \Tr I_W \lesssim d^3.
\end{align*}

For the second term, on the support of $q$, where $\varphi(h_{\tau}(X))>0$, the gradient expands as
\begin{align*}
  \nabla_X \log \varphi(h_{\tau}(X)) = \frac{\varphi'(h_{\tau}(X)) }{\varphi(h_{\tau}(X))} \nabla_X h_{\tau}(X) = 2 \frac{\varphi'(h_{\tau}(X)) }{\varphi(h_{\tau}(X))} X \frac{\exp(X^\T X/\tau)}{\Tr \exp(X^\T X/\tau)},
\end{align*}
and
\begin{align*}
  \norm{X\exp(X^\T X/\tau)}_F^2 &= \Tr \xk{X \exp(X^\T X/\tau) \exp(X^\T X/\tau) X^\T} \\
  &= \Tr \xk{X^\T X \exp(2 X^\T X/\tau)} \\
  &\leq \norm{X}^2 \Tr \exp(2 X^\T X/\tau).
\end{align*}

Hence,
\begin{equation*}
  \norm{X \frac{\exp(X^\T X/\tau)}{\Tr \exp(X^\T X/\tau)}}_F^2
  \leq \norm{X}^2 \frac{\Tr \exp(2 X^\T X/\tau)}{(\Tr \exp(X^\T X/\tau))^2} \leq \norm{X}^2,
\end{equation*}
and thus
\begin{align*}
  \norm{\nabla_X \log \varphi(h_{\tau}(X))}_F^2 \leq 4 \xk{\frac{\varphi'(h_{\tau}(X)) }{\varphi(h_{\tau}(X))}}^2 \norm{X}^2
  \leq 8 \xk{\frac{\varphi'(h_{\tau}(X)) }{\varphi(h_{\tau}(X))}}^2.
\end{align*}
Plugging this into the expectation, we have
\begin{align*}
  \E \norm{\nabla_X \log \varphi(h_{\tau}(X))}_F^2
  &= \int \norm{\nabla_X \log \varphi(h_{\tau}(X))}_F^2 q(X) \dd X \\
  &\leq \frac{8}{Z} \int \xk{\frac{\varphi'(h_{\tau}(X)) }{\varphi(h_{\tau}(X))}}^2 p(X) \varphi(h_{\tau}(X)) \dd X \\
  &\leq \frac{8}{Z} \int \psi(h_{\tau}(X)) p(X) \dd X \\
  &\leq \frac{8 \pi^2}{Z} \int \bm{1}\dk{h_{\tau}(X) \geq 1} p(X) \dd X \\
  & \leq \frac{8 \pi^2}{Z}  \int \bm{1}\dk{\norm{X}^2 \geq \frac{1}{2}} p(X) \dd X \\
  & = \frac{8 \pi^2}{Z} \bbP \dk{\norm{W}^2 \geq \frac{1}{2}} \\
  & \lesssim 1.
\end{align*}
Combining the above two bounds, we obtain \( T \lesssim d^3 \).
After a constant rescaling, we obtain the desired distribution in the lemma.
   \clearpage

\section{Proofs for Precision Matrix Estimation}
\label{sec:ProofPrecision}

The guarantees for precision matrix estimation in \cref{thm:Precision__MinimaxRates} follow from two reductions to covariance estimation.
For the upper bounds, we spectrally truncate the covariance estimator before inversion, which transfers its guarantee under operator loss and preserves privacy by post-processing.
For the lower bounds, we invert a private precision matrix estimator to obtain a private covariance estimator and then invoke the corresponding covariance minimax lower bounds.
Together, these arguments show that precision matrix estimation inherits the covariance rates over the parameter spaces considered in the main paper.

\subsection{Upper Bounds}

Let $\hat{\Sigma}$ be the DP covariance matrix estimator.
Write
\begin{equation*}
  \bar{\Sigma}
  =
  \hat{U}\mr{Diag}\xk{\max\dk{\hat{\lambda}_i,L_2^{-1}}}_{i\in[d]}\hat{U}^{\T},
  \qquad
  \hat{\Omega}=\bar{\Sigma}^{-1},
\end{equation*}
where \(L_2^{-1}\leq c_0\).
Since every eigenvalue of \(\Sigma\) is at least \(c_0\), Weyl's inequality shows that the spectral truncation moves \(\hat{\Sigma}\) by at most \(\norm{\hat{\Sigma}-\Sigma}\).
Hence,
\begin{equation*}
  \norm{\bar{\Sigma}-\Sigma}
  \leq
  2\norm{\hat{\Sigma}-\Sigma}.
\end{equation*}
Using \(\hat{\Omega}-\Omega=\bar{\Sigma}^{-1}(\Sigma-\bar{\Sigma})\Sigma^{-1}\), we obtain
\begin{equation*}
  \norm{\hat{\Omega}-\Omega}
  \leq
  2L_2c_0^{-1}\norm{\hat{\Sigma}-\Sigma}.
\end{equation*}
The estimator \(\hat{\Omega}\) is deterministic post-processing of \(\hat{\Sigma}\), so it has the same zCDP guarantee.
Applying this bound to the upper-bound part of \cref{thm:Overview_F_Rates} and to \cref{thm:DPCov_GH_Operator} gives the asserted upper bounds for \(\tilde{\mca{P}}_\alpha^{\mca{F}}\), \(\tilde{\mca{P}}_\alpha^{\mca{G}}\), and \(\tilde{\mca{P}}_\alpha^{\mca{H}}\).

\subsection{Minimax Lower Bounds}

We transfer the covariance lower bounds through matrix inversion.
Fix \(\mca{J}\in\{\mca{F},\mca{G},\mca{H}\}\), let \(\hat{\Omega}\) be any \(\rho\)-zCDP precision matrix estimator, and set
\begin{equation*}
  A=\frac{\hat{\Omega}+\hat{\Omega}^{\T}}{2}.
\end{equation*}
If \(A=U\mr{Diag}(\lambda_i)_{i\in[d]}U^{\T}\), define
\begin{equation*}
  \bar{\Omega}
  =
  U\mr{Diag}\xk{
    \min\dk{c_0^{-1},\max\dk{\lambda_i,M_0^{-1}}}
  }_{i\in[d]}U^{\T},
  \qquad
  \bar{\Sigma}=\bar{\Omega}^{-1}.
\end{equation*}
This is deterministic post-processing of \(\hat{\Omega}\), so \(\bar{\Sigma}\) is also \(\rho\)-zCDP.

For every \(P\in\tilde{\mca{P}}_\alpha^{\mca{J}}\), abbreviate \(\Sigma=\Sigma(P)\) and \(\Omega=\Omega(P)\).
The eigenvalues of \(\Omega\) lie in \([M_0^{-1},c_0^{-1}]\).
Since symmetrization is contractive and spectral truncation moves \(A\) by at most \(\norm{A-\Omega}\), we have
\begin{equation*}
  \norm{\bar{\Omega}-\Omega}
  \leq
  2\norm{A-\Omega}
  \leq
  2\norm{\hat{\Omega}-\Omega}.
\end{equation*}
Consequently,
\begin{align*}
  \norm{\bar{\Sigma}-\Sigma}
  & =
  \norm{\bar{\Omega}^{-1}(\Omega-\bar{\Omega})\Sigma} \\
  & \leq
  \norm{\bar{\Omega}^{-1}}
  \norm{\Omega-\bar{\Omega}}
  \norm{\Sigma} \\
  & \leq
  2M_0^2\norm{\hat{\Omega}-\Omega}.
\end{align*}
It follows that
\begin{equation}
  \label{eq:Precision_Lower_Reduction}
  \inf_{\hat{\Omega}\in\mca{M}_\rho}
  \sup_{P\in\tilde{\mca{P}}_\alpha^{\mca{J}}}
  \E_P\norm{\hat{\Omega}-\Omega(P)}^2
  \geq
  \frac{1}{4M_0^4}
  \inf_{\hat{\Sigma}\in\mca{M}_\rho}
  \sup_{P\in\tilde{\mca{P}}_\alpha^{\mca{J}}}
  \E_P\norm{\hat{\Sigma}-\Sigma(P)}^2.
\end{equation}

The Gaussian submodels used in the covariance lower bounds can be centered at a fixed \(\lambda_0I_d\) with \(c_0<\lambda_0<M_0\), and their perturbation amplitudes can be reduced by a fixed constant.
They are therefore uniformly well-conditioned, while their rates change only by constants.
Applying \cref{thm:Lower__Operator} proves the lower bound in \cref{thm:Precision__MinimaxRates}, while applying \cref{thm:Lower__GH_Operator} proves the analogous conclusions for \(\tilde{\mca{P}}_\alpha^{\mca{G}}\) and \(\tilde{\mca{P}}_\alpha^{\mca{H}}\).
   \clearpage

\section{Auxiliary Results}

This section collects the technical tools shared by several arguments for the upper and lower bounds.

\subsection{Covariance class inclusions}

\begin{proof}[Proof of \cref{prop:ClassInclusions}]
  If \(\Sigma\in\mca{H}_\alpha(M_0,M)\), then for every \(j\in[d]\) and \(k\geq1\),
  \[
    \sum_{i:\,\abs{i-j}>k}\abs{\Sigma_{ij}}
    \leq
    2M\sum_{r>k} r^{-(\alpha+1)}
    \lesssim_\alpha
    Mk^{-\alpha}.
  \]
  This proves \(\mca{H}_\alpha(M_0,M)\subseteq\mca{G}_\alpha(M_0,C_\alpha M)\).

  Next, let \(\Sigma\in\mca{G}_\alpha(M_0,M)\) and let \(R=I\times J\) be a \(k\)-off-diagonal block.
  For \(k\geq2\), the row-tail condition and symmetry bound every row and column sum of \(\Sigma_R\) by \(M(k-1)^{-\alpha}\lesssim_\alpha Mk^{-\alpha}\).
  When \(k=1\), split off the entries at distance one, which satisfy \(\abs{\Sigma_{ij}}\leq\sqrt{\Sigma_{ii} \Sigma_{jj}}\leq M_0\), and apply the row-tail condition to the remaining entries.
  Thus,
  \[
    \norm{\Sigma_R}_{\ell^1} \vee\norm{\Sigma_R}_{\ell^\infty}
    \lesssim_\alpha
    (M+M_0)k^{-\alpha}.
  \]
  Hence
  \[
    \norm{\Sigma_R}^2
    \leq
    \norm{\Sigma_R}_{\ell^1} \norm{\Sigma_R}_{\ell^\infty}
    \lesssim_\alpha
    (M+M_0)^2k^{-2\alpha},
  \]
  proving \(\mca{G}_\alpha(M_0,M)\subseteq\mca{F}_\alpha(M_0,C_\alpha(M+M_0))\).

  For the first reverse inclusion, let \(\Sigma\in\mca{F}_{\alpha+1/2}(M_0,M)\), fix \(j\in[d]\) and \(k\geq1\), and partition each side of the row tail into the dyadic shells
  \[
    I_m^\pm
    =
    \left\{i:\,2^m k<\pm(i-j)\leq2^{m+1} k\right\},
    \qquad m\geq0.
  \]
  Since \(\{j\}\times I_m^+\) and \(I_m^-\times\{j\}\) are \(2^m k\)-off-diagonal blocks, the definition of \(\mca{F}_{\alpha+1/2}\) and Cauchy--Schwarz yield
  \[
    \sum_{i\in I_m^\pm} \abs{\Sigma_{ij}}
    \leq
    \abs{I_m^\pm}^{1/2}
    \left(\sum_{i\in I_m^\pm} \abs{\Sigma_{ij}}^2 \right)^{1/2}
    \lesssim
    M(2^m k)^{-\alpha}.
  \]
  Summing over \(m\) and both sides gives
  \[
    \max_{j\in[d]}
    \sum_{i:\,\abs{i-j}>k}\abs{\Sigma_{ij}}
    \lesssim_\alpha
    Mk^{-\alpha},
  \]
  and therefore \(\mca{F}_{\alpha+1/2}(M_0,M)\subseteq\mca{G}_{\alpha}(M_0,C_\alpha M)\).

  Finally, let \(\Sigma\in\mca{F}_{\alpha+1}(M_0,M)\). For \(i\neq j\), take the singleton block \(R=\{i\}\times\{j\}\) and \(k=\abs{i-j}\).
  Then
  \[
    \abs{\Sigma_{ij}}
    =
    \norm{\Sigma_R}
    \leq
    M\abs{i-j}^{-(\alpha+1)},
  \]
  which proves \(\mca{F}_{\alpha+1}(M_0,M)\subseteq\mca{H}_{\alpha}(M_0,M)\).
\end{proof}

\subsection{Divergence}

The $\chi^2$ divergence between two probability measures $P$ and $Q$ is defined as
\begin{align*}
  \chi^2(P\|Q) = \E_{X \sim Q} \zk{\frac{\dd P}{\dd Q}(X)- 1}^2.
\end{align*}
The order-2 R\'enyi divergence is related to the $\chi^2$ divergence by~\citep{erven2014_RenyiDivergence}:
\begin{align}
  \label{eq:Divergence__RenyiChi2}
  D_{2}(P\|Q) = \ln(1 + \chi^2(P\|Q)).
\end{align}
We have the following fact, which bounds the difference of means via the $\chi^2$ divergence.
\begin{lemma}
  \label{lem:MeanBoundViaChi2}
  Let $X,Y$ be two random vectors taking values in $\R^d$.
  Then,
  \begin{align*}
    \norm{\E X - \E Y}^2 \leq \chi^2(X\|Y) \E \norm{Y}^2.
  \end{align*}
\end{lemma}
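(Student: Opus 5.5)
The plan is to prove this by a single application of the Cauchy--Schwarz inequality after a change of measure. We may assume $\chi^2(X\|Y)<\infty$, which forces the law $P$ of $X$ to be absolutely continuous with respect to the law $Q$ of $Y$; otherwise the right-hand side is infinite and there is nothing to prove. We may likewise assume $\E\norm{Y}^2<\infty$. Write $L=\dd P/\dd Q$, so that $\E_Q L=1$ and $\E X=\E_{Y\sim Q}[L(Y)Y]$.

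The key step is the identity
\[
  \E X-\E Y=\E_Q\zk{(L(Y)-1)\,Y}.
\]
Before using it, we must check that $\E X$ is well defined. For this, note $\E_Q\abs{L-1}\norm{Y}\leq(\chi^2\,\E\norm{Y}^2)^{1/2}<\infty$, so $L(Y)Y$ is integrable under $Q$.

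Next, fix a unit vector $v\in\R^d$. Applying Cauchy--Schwarz under $Q$ gives
\[
  \ang{v,\E X-\E Y}^2
  =\xk{\E_Q(L-1)\ang{v,Y}}^2
  \leq\E_Q(L-1)^2\cdot\E_Q\ang{v,Y}^2
  \leq\chi^2(X\|Y)\,\E\norm{Y}^2.
\]
Taking the supremum over unit vectors $v$, choosing $v$ in the direction of $\E X-\E Y$, yields the claim.

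Alternatively, one can skip the direction argument and apply Cauchy--Schwarz directly to the vector-valued integrand. This uses $\norm{\E Z}\leq\E\norm{Z}$ and then bounds $\E_Q\abs{L-1}\norm{Y}$ by the same product.

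The only real obstacle is the bookkeeping around integrability and absolute continuity; the inequality itself is immediate once the likelihood ratio is introduced. We also note that the mean of $L-1$ vanishes, although the argument above does not need this fact.
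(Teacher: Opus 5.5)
Your proof is correct. You write $\E X-\E Y=\E_Q[(L-1)Y]$ with $L=\dd P/\dd Q$ and apply Cauchy--Schwarz under $Q$, which is the standard justification of this fact; the paper records the fact without proof and uses it only in scalar, conditional form in the proof of Lemma~\ref{lem:Lower__AiControl}. One step to make explicit: integrability of $L(Y)Y$ also needs $\E_Q\norm{Y}\leq(\E\norm{Y}^2)^{1/2}<\infty$, since $L\norm{Y}\leq\abs{L-1}\norm{Y}+\norm{Y}$.
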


\subsection{Probability and geometric tools}

\begin{lemma}[Sub-Gaussian products]
  \label{lem:SubGaussianProduct}
  If \(U,V\) are sub-Gaussian, not necessarily independent, then
  \[
    \norm{UV}_{\psi_1}
    \leq
    C\norm{U}_{\psi_2}\norm{V}_{\psi_2}.
  \]
  If their sub-Gaussian norms are bounded by fixed constants, then, for every
  \(L\geq1\),
  \[
    \abs{\E\{UV-\chi_L(UV)\}}
    \leq
    C\exp(-cL).
  \]
\end{lemma}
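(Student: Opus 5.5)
The first claim I would reduce to the scalar fact \(\norm{UV}_{\psi_1}\le\norm{U}_{\psi_2}\norm{V}_{\psi_2}\), up to a constant, proved without any independence. By homogeneity I normalize \(\norm{U}_{\psi_2}=\norm{V}_{\psi_2}=1\). Young's inequality \(\abs{uv}\le (u^2+v^2)/2\) and Cauchy--Schwarz then give
\[
  \E\exp(\abs{UV})\le \E\bigl[e^{U^2/2}e^{V^2/2}\bigr]\le \bigl(\E e^{U^2}\bigr)^{1/2}\bigl(\E e^{V^2}\bigr)^{1/2}\le 2.
\]
So \(\norm{UV}_{\psi_1}\le 1\), and rescaling yields the stated bound with \(C=1\), or an absolute \(C\) if the \(\psi_1\) norm is defined slightly differently. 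This is the standard argument in \citet{vershynin2018_HighdimensionalProbability}, and dependence never enters.

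For the truncation bias, set \(Z=UV\). The radial clip \(\chi_L\) acts on a scalar, so \(Z-\chi_L(Z)=\operatorname{sign}(Z)(\abs{Z}-L)_+\). This gives
\[
  \abs{\E\{Z-\chi_L(Z)\}}\le \E(\abs{Z}-L)_+=\int_L^\infty \bbP(\abs{Z}>t)\dd t.
\]
By the first part, \(\norm{Z}_{\psi_1}\le K_1\) for a constant \(K_1\) depending only on the fixed sub-Gaussian bounds. Markov's inequality applied to \(e^{\abs{Z}/K_1}\) gives \(\bbP(\abs{Z}>t)\le 2e^{-t/K_1}\). Integrating, I obtain \(\E(\abs{Z}-L)_+\le 2K_1e^{-L/K_1}\), which is \(C\exp(-cL)\) with \(c=1/K_1\) and \(C=2K_1\); the restriction \(L\ge1\) is only cosmetic.

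No step is a real obstacle. The one point needing care is that the bias argument must not rely on independence or on mean-zero assumptions. It uses only the tail of \(\abs{UV}\) from the first part, and \(\E Z\) is finite because \(Z\) is sub-exponential, so the argument is valid as written.
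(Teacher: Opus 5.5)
Your proposal is correct and follows essentially the same route as the paper. The paper simply cites Vershynin's Lemma 2.7.7, whose proof is exactly your Young-plus-Cauchy--Schwarz argument, and it derives the truncation bound from the sub-exponential tail characterization (Proposition 2.7.1), which is your Markov bound followed by integrating \(\E(\abs{Z}-L)_+=\int_L^\infty\bbP(\abs{Z}>t)\dd t\).
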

The first bound is Lemma 2.7.7 of
\citet{vershynin2018_HighdimensionalProbability}; the second follows from
the sub-exponential tail characterization in Proposition 2.7.1 of the same
reference.

\begin{lemma}[Euclidean sphere nets]
  \label{lem:SphereNet}
  The Euclidean unit sphere in \(\R^s\) has a \(1/4\)-net
  \(\mca{N}_s\) with \(\abs{\mca{N}_s}\leq9^s\), and every
  \(A\in\R^{s\times s}\) satisfies
  \[
    \norm{A}
    \leq
    2\max_{u,v\in\mca{N}_s}\abs{u^\T Av}.
  \]
\end{lemma}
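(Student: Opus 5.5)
The statement to prove is the last lemma, \cref{lem:SphereNet}: the unit sphere in \(\R^s\) has a \(1/4\)-net of size at most \(9^s\), and every \(A\in\R^{s\times s}\) satisfies \(\norm{A}\leq2\max_{u,v\in\mca{N}_s}\abs{u^\T Av}\). The plan has two independent parts: a volumetric packing argument for the cardinality, and a perturbation argument for the norm bound.

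\emph{Cardinality.} Take \(\mca{N}_s\) to be a maximal \(1/4\)-separated subset of the unit sphere \(S^{s-1}\). By maximality, every point of the sphere lies within distance \(1/4\) of some element of \(\mca{N}_s\), so \(\mca{N}_s\) is a \(1/4\)-net. The balls of radius \(1/8\) centred at the points of \(\mca{N}_s\) are pairwise disjoint and all lie in the ball of radius \(1+1/8=9/8\). Comparing volumes gives
\[
  \abs{\mca{N}_s}(1/8)^s\leq(9/8)^s,
  \qquad\text{so}\qquad
  \abs{\mca{N}_s}\leq9^s.
\]

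\emph{Norm bound.} Pick unit vectors \(x,y\) with \(\norm{A}=x^\T Ay\), and write \(M=\max_{u,v\in\mca{N}_s}\abs{u^\T Av}\). Choose \(u,v\in\mca{N}_s\) with \(\norm{x-u}_2\leq1/4\) and \(\norm{y-v}_2\leq1/4\). Decompose
\[
  x^\T Ay
  =
  u^\T Av
  +(x-u)^\T Ay
  +u^\T A(y-v).
\]
The last two terms are each at most \(\norm{A}/4\) in absolute value, since \(\norm{y}_2=\norm{u}_2=1\). Hence
\[
  \norm{A}\leq M+\frac{\norm{A}}{2},
\]
and rearranging gives \(\norm{A}\leq2M\).

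Both steps are standard and I expect no real obstacle. The only point needing care is the decomposition: it must be arranged so that each error term has one factor of norm at most \(1/4\) and the other a unit vector, which is what keeps the total loss at \(\norm{A}/2\).
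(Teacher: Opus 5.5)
Your proof is correct and is exactly the standard argument the paper relies on: the paper gives no proof of its own but cites Corollary~4.2.13 and Exercise~4.4.3 of Vershynin's book, which are the volumetric packing bound \((1+2/\varepsilon)^s=9^s\) at \(\varepsilon=1/4\) and the two-term perturbation bound \(\norm{A}\leq(1-2\varepsilon)^{-1}\max_{u,v}\abs{u^\T Av}\) that you carry out. The only detail worth adding is that a maximal \(1/4\)-separated set exists: by your own volume comparison every such set has at most \(9^s\) elements, so a maximal one can be taken.
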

See Corollary 4.2.13 and Exercise 4.4.3 of
\citet{vershynin2018_HighdimensionalProbability}.

\subsection{Privacy tools}

\begin{lemma}[Finite exponential mechanism]
  \label{lem:FiniteExponentialMechanism}
  Let \(\mca{A}\) be finite and suppose every score \(q(D,a)\),
  \(a\in\mca{A}\), has replacement sensitivity at most \(\Delta\).
  The exponential mechanism with privacy parameter \(\epsilon\) is
  \(\epsilon\)-DP and hence \(\epsilon^2/2\)-zCDP.
  Conditionally on the database, with probability at least \(1-\eta\), its
  output \(\widehat{a}\) satisfies
  \[
    \max_{a\in\mca{A}}q(D,a)-q(D,\widehat{a})
    \leq
    \frac{2\Delta}{\epsilon}
    \left\{
      \log\abs{\mca{A}}+\log(1/\eta)
    \right\}.
  \]
\end{lemma}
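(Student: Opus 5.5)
The statement to prove is the finite exponential mechanism lemma: a privacy claim plus a conditional utility bound. I will take the exponential mechanism in its standard form. Given the database \(D\), it outputs \(a\in\mca{A}\) with probability
\[
p_D(a)=\frac{\exp\{\epsilon q(D,a)/(2\Delta)\}}{\sum_{b\in\mca{A}}\exp\{\epsilon q(D,b)/(2\Delta)\}}.
\]

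\emph{Privacy.} Fix adjacent \(D,D'\). Replacement sensitivity \(\Delta\) bounds the numerator ratio \(\exp\{\epsilon(q(D,a)-q(D',a))/(2\Delta)\}\) by \(e^{\epsilon/2}\). The same bound holds termwise, and hence for the sums, in the normalizing constants. Multiplying the two gives \(p_D(a)\leq e^{\epsilon}p_{D'}(a)\) for every \(a\). Since \(\mca{A}\) is finite, summing over any output set \(A\) yields \(\epsilon\)-DP. The \(\epsilon^2/2\)-zCDP conclusion then follows directly from the first part of \cref{lem:DPMechanism_Relation}.

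\emph{Utility.} Condition on \(D\) and write \(\mathrm{OPT}=\max_a q(D,a)\). For \(t>0\), let \(S_t=\{a:q(D,a)\leq\mathrm{OPT}-t\}\). The normalizing sum is at least the term of a maximizer, \(\exp\{\epsilon\,\mathrm{OPT}/(2\Delta)\}\). Each \(a\in S_t\) has numerator at most \(\exp\{\epsilon(\mathrm{OPT}-t)/(2\Delta)\}\). Hence
\[
\Pr\{\widehat a\in S_t\}\leq \abs{\mca{A}}\exp\{-\epsilon t/(2\Delta)\}.
\]
Choose \(t=(2\Delta/\epsilon)\{\log\abs{\mca{A}}+\log(1/\eta)\}\). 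The right-hand side then equals \(\eta\), which gives the stated bound with probability at least \(1-\eta\).

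There is no real obstacle. The only point needing care is the convention \(\epsilon/(2\Delta)\) in the exponent, which is what produces both the factor \(e^{\epsilon}\) and the constant \(2\Delta/\epsilon\). If ties or an empty \(S_t\) arise, the bound holds trivially.
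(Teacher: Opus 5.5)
Your proof is correct and reproduces the standard argument behind the results the paper cites in place of a proof: Theorem 3.10 and Corollary 3.12 of Dwork and Roth for privacy and utility, and the $(\epsilon,0)$-DP $\Rightarrow$ $\epsilon^2/2$-zCDP conversion of Bun and Steinke, which is the first part of Lemma~\ref{lem:DPMechanism_Relation}. Your $\epsilon/(2\Delta)$ normalization is the same convention used there, so the factor $e^{\epsilon}$ and the constant $2\Delta/\epsilon$ both match the statement.
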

The privacy and utility statements are Theorem 3.10 and Corollary 3.12 of
\citet{dwork2014_AlgorithmicFoundations}; the zCDP conversion is
Proposition 3.3 of \citet{bun2016_ConcentratedDifferential}.

\subsection{Concentration Inequalities}

\begin{lemma}
  \label{lem:StandardCovarianceEst}
  Let $X$ be a sub-Gaussian random vector in $\R^p$ with \( \norm{X}_{\psi_2} \leq K \) and \( X_1,\dots,X_n \) be i.i.d.\ copies of \( X \).
  Denote by $\hat{V}_n = \frac{1}{n} \sum_{i=1}^n X_i X_i^\T$ and $V = \E X X^\T$.
  Then, for any $t \geq 0$, with probability at least $1- 2 e^{-t}$,
  \begin{align}
    \norm{\hat{V}_n - V} \leq C K^2 \xk{\sqrt {\frac{p+t}{n}} + \frac{p+t}{n}},
  \end{align}
  where $C$ is an absolute constant.
  Also, we have the expectation bound
  \begin{equation}
    \zk{\E \norm{\hat{V}_n - V}^q}^{1/q} \lesssim_q K^2 \xk{\sqrt {\frac{p}{n}} + \frac{p}{n}}, \quad \forall q \geq 1.
  \end{equation}
\end{lemma}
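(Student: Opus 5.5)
Both bounds follow from the standard sub-Gaussian covariance concentration argument: discretize the operator norm with a net, apply scalar Bernstein on each net pair, and take a union bound. Since \(\hat V_n-V\) is symmetric, \cref{lem:SphereNet} gives
\[
  \norm{\hat V_n-V}\leq 2\max_{u,v\in\mca{N}_p}\abs{u^\T(\hat V_n-V)v},
\]
where the maximum runs over a \(1/4\)-net \(\mca{N}_p\) of the unit sphere in \(\R^p\) with \(\abs{\mca{N}_p}\leq 9^p\). This reduces the matrix problem to at most \(81^p\) scalar problems.

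Fix \(u,v\in\mca{N}_p\). Write
\[
  u^\T(\hat V_n-V)v=\frac1n\sum_{i=1}^n\bigl(\ang{u,X_i}\ang{v,X_i}-\E\ang{u,X}\ang{v,X}\bigr).
\]
By \cref{lem:SubGaussianProduct}, each product \(\ang{u,X_i}\ang{v,X_i}\) has \(\psi_1\)-norm at most \(CK^2\). Centering changes the \(\psi_1\)-norm only by an absolute constant. The summands are i.i.d., so Bernstein's inequality for sub-exponential variables gives, for every \(s\geq0\),
\[
  \bbP\dk{\abs{u^\T(\hat V_n-V)v}>CK^2\xk{\sqrt{s/n}+s/n}}\leq 2e^{-s}.
\]
Take \(s=t+p\log 81\) and union-bound over the \(81^p\) pairs. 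The total failure probability is at most \(2e^{-t}\). Since \(s\lesssim p+t\), this yields the high-probability bound with an absolute constant \(C\).

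For the moment bound, I would first rescale to \(K=1\). The tail bound then reads
\[
  \bbP\dk{\norm{\hat V_n-V}>C(\delta_0+\sqrt{t/n}+t/n)}\leq 2e^{-t},
  \qquad
  \delta_0=\sqrt{p/n}+p/n,
\]
because \(\sqrt{(p+t)/n}\leq\sqrt{p/n}+\sqrt{t/n}\) and \((p+t)/n=p/n+t/n\). Next I would integrate \(\E Z^q=\int_0^\infty q z^{q-1}\bbP(Z>z)\dd z\) with \(Z=\norm{\hat V_n-V}\). The contribution up to \(z\asymp\delta_0\) is \(\lesssim\delta_0^q\). Above that, the excess over \(\delta_0\) has a mixed sub-Gaussian/sub-exponential tail, with scale \(n^{-1/2}\) for \(\sqrt{t/n}\) and \(n^{-1}\) for \(t/n\). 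Its \(q\)-th moment is therefore \(\lesssim_q n^{-q/2}+n^{-q}\), and both are bounded by \(\delta_0^q\) because \(p\geq1\).

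I expect no step to be a genuine obstacle, since the argument is the textbook one (Vershynin, Theorem 4.6.1 / Exercise 4.7.3). The only care needed is in the moment integration: the threshold must be split so the \(p\)-dependent term is not lost. The estimate \(\sqrt{(p+t)/n}\leq\sqrt{p/n}+\sqrt{t/n}\) handles exactly this.
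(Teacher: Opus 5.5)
Your proposal is correct: the $1/4$-net reduction via Lemma~\ref{lem:SphereNet}, sub-exponential Bernstein with $s=t+p\log 81$, and the union bound over at most $81^p$ pairs give the tail bound. For the moment bound, enlarging the threshold to $\delta_0+\sqrt{t/n}+t/n$ before integrating leaves a remainder of order $n^{-1/2}+n^{-1}\le\delta_0$, since $p\ge1$. The paper states this lemma without proof as a standard concentration fact (the textbook net-plus-Bernstein covariance bound), so your argument is essentially the proof it implicitly relies on.
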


Let us denote by $\mr{GUE}(d)$ the distribution over $d\times d$ symmetric matrices whose upper triangular entries are i.i.d.\ $\mathcal{N}(0,1)$.
We set $M_{ji}=M_{ij}$; in particular, for disjoint index sets $J,J'$, the block $M_{J,J'}$ has independent $\mathcal{N}(0,1)$ entries and $\E\norm{M_{J,J'}}_F^2=\abs{J}\abs{J'}$.

\begin{lemma}
  \label{lem:MatrixGaussianConcentration}
  Let $J,J' \subset [d]$ be two index sets with $\abs{J} \leq k$ and $\abs{J'} \leq k$.
  There exists an absolute constant $C$ such that
  \[
    \mathbb{P}_{M \sim \mathrm{GUE}(d)} \dk{\norm{M_{J,J'}} > C (\sqrt {k} + t)} \leq 4 \exp(-t^2).
  \]
  Consequently, we have
  \begin{equation*}
    \E \norm{M_{J,J'}}^q \lesssim k^{q/2},\quad \forall q \geq 1.
  \end{equation*}
\end{lemma}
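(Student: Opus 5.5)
The final statement is \cref{lem:MatrixGaussianConcentration}, a tail and moment bound for the operator norm of an arbitrary \(k\times k\) (or smaller) submatrix \(M_{J,J'}\) of a \(\mr{GUE}(d)\) matrix. I will reduce to the case of a rectangular matrix with independent standard Gaussian entries and then combine an expectation bound with Gaussian concentration of the norm.

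\textbf{Reduction.} Set \(S=J\cap J'\), and split \(M_{J,J'}\) into two blocks: the block \(M_{S,S}\), and the remaining entries \(M[(J\times J')\setminus(S\times S)]\). Every entry of the second block is either above-or-below-diagonal with its mirror image lying outside \(J\times J'\), or has at most one of the pair \((i,j),(j,i)\) present. Because of that symmetry pairing, I will instead use a cleaner decomposition. Write \(M=U+U^\T+\mr{Diag}(M)\), where \(U\) is strictly upper triangular with i.i.d.\ \(N(0,1)\) entries. Then
\[
  \norm{M_{J,J'}}\leq\norm{U_{J,J'}}+\norm{(U^\T)_{J,J'}}+\norm{\mr{Diag}(M)_{J,J'}}.
\]
Each of the first two terms is the norm of a submatrix of a matrix with independent entries, each entry being \(N(0,1)\) or \(0\). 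The last term is at most \(\max_i\abs{M_{ii}}\) over at most \(k\) indices.

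\textbf{Tail bound for each piece.} For a \(k\times k\) matrix \(G\) with independent centered entries of variance at most one, a standard \(1/4\)-net argument gives \(\norm{G}\le C(\sqrt k+t)\) with probability at least \(1-2e^{-t^2}\). Concretely, apply \cref{lem:SphereNet} and then a union bound over \(9^{2k}\) pairs \((u,v)\). For each fixed pair, \(u^\T Gv\) is Gaussian with variance at most one. The zeroed entries cause no difficulty here: masking entries only lowers these variances.

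The diagonal piece satisfies \(\max_{i\le k}\abs{M_{ii}}\le\sqrt{2\log(2k)}+t\) with probability at least \(1-e^{-t^2/2}\), and \(\sqrt{2\log(2k)}\lesssim\sqrt k\).

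To combine the three pieces, I rescale \(t\) by a constant so that each piece fails with probability at most \(e^{-t^2}\)-type. Enlarging \(C\) absorbs these constants and yields the overall failure probability \(4\exp(-t^2)\).

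\textbf{Moments.} Write \(Y=\norm{M_{J,J'}}\) and integrate the tail bound:
\[
  \E Y^q=\int_0^\infty q s^{q-1}\bbP(Y>s)\dd s .
\]
Split the integral at \(s=2C\sqrt k\). Below this point the contribution is at most \((2C\sqrt k)^q\). Above it, substitute \(s=C(\sqrt k+t)\), which gives a contribution of order \(\int_0^\infty(\sqrt k+t)^{q-1}e^{-t^2}\dd t\lesssim_q k^{(q-1)/2}+1\). Together these give \(\E Y^q\lesssim_q k^{q/2}\).

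\textbf{Main obstacle.} The only point needing care is the symmetry of \(\mr{GUE}\): when \(J\) and \(J'\) overlap, the entries of \(M_{J,J'}\) are not independent. The triangular decomposition above is exactly what handles this.
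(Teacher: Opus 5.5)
Your proof is correct and follows essentially the paper's route: the paper simply cites Vershynin's Corollary~4.4.8 and then integrates the tail, and you reproduce both steps with the diagonal split off separately. That corollary's proof is exactly the upper/lower-triangular split of a symmetric matrix with independent entries on and above the diagonal, combined with the net bound for independent-entry matrices; here it is implicitly applied to the principal block on $J\cup J'$, which has size at most $2k$ and contains $M_{J,J'}$. Two cosmetic points: Lemma~\ref{lem:SphereNet} is stated for square matrices, so pad $M_{J,J'}$ with zeros to $k\times k$, and absorbing the prefactors into $4\exp(-t^2)$ by enlarging $C$ uses that the bound is trivial when $t^2\leq\log 4$.
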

\begin{proof}
  The first part is Corollary 4.4.8 in \citet{vershynin2018_HighdimensionalProbability}.
  The second statement follows from integrating the tail bound.
\end{proof}

\subsection{Truncation}

\begin{proposition}[Sub-Gaussian norm bound]
  \label{prop:SubGaussianNormBound}
  Let $X$ be a sub-Gaussian random vector in $\R^p$ with $\norm{X}_{\psi_2} \leq K$.
  Then, there are absolute constants $C,c > 0$ such that
  \begin{align*}
    \bbP \dk{\norm{X}_2 \geq C K\sqrt{p} + t} \leq \exp(-ct^2/K^2),\quad \forall t \geq 0.
  \end{align*}
  Consequently, there are absolute constants $C_1, c_1 > 0$ such that
  \begin{equation*}
    \bbP \dk{\norm{X}_2 \geq u} \leq \exp(-c_1 u^2 / K^2),\quad \forall u \geq C_1 K \sqrt{p}.
  \end{equation*}
\end{proposition}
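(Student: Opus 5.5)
The plan is to obtain the first tail bound by splitting $\norm{X}_2$ into its mean-type size and a Lipschitz-type fluctuation. The second bound then follows by absorbing the $K\sqrt p$ offset.

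For the first bound I would not rely on independence of coordinates, since none is assumed. Instead I would use an $\epsilon$-net of the unit sphere: $\norm{X}_2=\sup_{\norm{v}=1}\ang{X,v}$. By \cref{lem:SphereNet} with a $1/2$-net $\mca{N}$ of cardinality at most $5^p$, one has $\norm{X}_2\leq 2\max_{v\in\mca{N}}\ang{X,v}$. This is the same argument as for the operator norm: write $X=\norm{X}_2 w$, pick $v\in\mca N$ with $\norm{w-v}\le1/2$, and get $\norm X_2\le \ang{X,v}+\norm X_2/2$. For each fixed $v$, $\ang{X,v}$ is sub-Gaussian with $\norm{\ang{X,v}}_{\psi_2}\le K$, so $\bbP\{\ang{X,v}\geq s\}\leq 2\exp(-cs^2/K^2)$. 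A union bound gives
\[
  \bbP\dk{\norm{X}_2\geq 2s}\leq 2\cdot 5^p\exp(-cs^2/K^2).
\]
Taking $2s=CK\sqrt p+t$ with $C$ large, $s^2\geq C^2K^2p/8+t^2/8$. The factor $\exp(-cC^2p/8)$ then kills $2\cdot5^p$ once $C^2\ge 8(\log 10)/c$, leaving $\exp(-ct^2/(8K^2))$. Renaming constants gives the first display. The harmless factor $2$ in the sub-Gaussian tail is absorbed the same way, using $p\ge1$.

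For the consequence, set $u=CK\sqrt p+t$ with $C_1=2C$. Then $u\geq C_1K\sqrt p$ gives $t=u-CK\sqrt p\geq u/2$, so $\exp(-ct^2/K^2)\leq\exp(-cu^2/(4K^2))$, and one can take $c_1=c/4$.

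The only delicate point is the constant bookkeeping needed to absorb the net cardinality $5^p$ into the Gaussian tail. This requires choosing $C$ large relative to the absolute constant in the sub-Gaussian tail, and then checking that the leftover prefactor can be removed uniformly in $p\ge1$. Everything else is routine.
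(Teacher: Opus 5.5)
Your argument is correct and complete. The paper gives no proof of this proposition: it is stated among the auxiliary results as a standard fact. The $1/2$-net plus union-bound argument you give is the standard proof.

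One point worth confirming: the one-dimensional tail $\bbP\{\ang{X,v}\geq s\}\leq 2\exp(-s^2/K^2)$ follows from Markov's inequality applied to $\exp(\ang{X,v}^2/K^2)$ under the paper's definition of $\norm{\cdot}_{\psi_2}$. So no centering is needed, which matches the statement as written. Your constant bookkeeping ($2\cdot 5^p\leq 10^p$, $C^2\geq 8\log 10/c$, then $t\geq u/2$ for $u\geq 2CK\sqrt{p}$) is also right.

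Two cosmetic remarks:
\begin{itemize}
\item \cref{lem:SphereNet} in the paper is stated for a $1/4$-net of size $9^s$ and for bilinear forms, so it does not literally supply a $1/2$-net of size $5^p$. Either cite the covering bound $(1+2/\epsilon)^p$ directly, or use the $1/4$-net, which gives $\norm{X}_2\leq \frac{4}{3}\max_{v}\ang{X,v}$ and changes only constants. Your inline derivation already makes this step self-contained.
\item Your opening sentence about splitting $\norm{X}_2$ into a mean-type size and a Lipschitz fluctuation does not describe the net argument you actually carry out.
\end{itemize}
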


\begin{proposition}[Sub-Gaussian Vector Truncation]
  \label{cor:SubGVectorTruncation}
  Let $X$ be a sub-Gaussian random vector in $\R^p$ with $\norm{X}_{\psi_2} \leq K$.
  For any $C_0 > 0$, there exist absolute constants $B,C > 0$ such that
  \begin{equation}
    \E \norm{X}_2 \ind{\norm{X}_2 \geq B K \sqrt {p}} \leq C K \sqrt {p}\exp(-C_0 p)
  \end{equation}
\end{proposition}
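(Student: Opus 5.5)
The plan is a direct layer-cake argument on \(\norm{X}_2\). Put \(T=BK\sqrt p\), with \(B\ge C_1\) from \cref{prop:SubGaussianNormBound} and \(B\) large enough in terms of \(C_0\). The identity to start from is
\[
  \E\norm{X}_2\ind{\norm{X}_2\geq T}
  = T\,\bbP\dk{\norm{X}_2\geq T}+\int_T^\infty \bbP\dk{\norm{X}_2\geq u}\dd u .
\]
Both terms will then be controlled by the second tail bound in \cref{prop:SubGaussianNormBound}, namely \(\bbP\{\norm{X}_2\geq u\}\le \exp(-c_1u^2/K^2)\) for \(u\ge C_1K\sqrt p\). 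This range contains all \(u\ge T\), since \(B\ge C_1\).

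First step: the boundary term. The tail bound gives
\[
  T\,\bbP\dk{\norm{X}_2\geq T}\le BK\sqrt p\,\exp(-c_1B^2p).
\]
Taking \(B\) with \(c_1B^2\ge C_0\) turns this into \(BK\sqrt p\,e^{-C_0p}\).

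Second step: the integral term. Substitute \(u=T+s\) and use \((T+s)^2\ge T^2+s^2\), so that
\[
  \int_T^\infty e^{-c_1u^2/K^2}\dd u
  \le e^{-c_1T^2/K^2}\int_0^\infty e^{-c_1s^2/K^2}\dd s
  \lesssim K\,e^{-c_1B^2p}.
\]
Because \(p\ge1\), this is at most \(K\sqrt p\,e^{-C_0p}\) up to a constant, under the same choice of \(B\).

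Combining the two steps gives the claim with \(C=B+O(1)\), where \(B\) depends only on \(C_0\) and the absolute constants \(c_1,C_1\). I do not expect a real obstacle. The only point needing care is keeping the constants absolute, meaning independent of \(p\) and \(K\). This holds because the exponent \(c_1u^2/K^2\) evaluated at \(u=BK\sqrt p\) scales exactly as \(B^2p\), with \(K\) cancelling.
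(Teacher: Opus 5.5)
Your proof is correct and follows the paper's argument exactly: split via the layer-cake identity at $T=BK\sqrt{p}$ into the boundary term $T\,\bbP\{\norm{X}_2\geq T\}$ and the tail integral, then control both with the tail bound of Proposition~\ref{prop:SubGaussianNormBound}, taking $B\geq C_1$ with $c_1B^2\geq C_0$. Your substitution $u=T+s$ with $(T+s)^2\geq T^2+s^2$ just writes out the Gaussian-tail integral estimate that the paper's final line leaves implicit.
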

\begin{proof}
  Let us take \( B \geq C_1 \) large enough, where $C_1$ is the constant in \cref{prop:SubGaussianNormBound}.
  We have
  \begin{align*}
    \E \norm{X}_2 \bm{1}\dk{\norm{X}_2 \geq B K \sqrt {p}}
    &= \int_{0}^\infty \bbP \dk{\norm{X}_2 \geq \max(B K \sqrt {p}, u)} \dd u \\
    &= B K \sqrt {p} \cdot \bbP \dk{\norm{X}_2 \geq B K \sqrt {p}} + \int_{B K \sqrt {p}}^\infty \bbP \dk{\norm{X}_2 \geq u} \dd u \\
    & \leq C K \sqrt {p}\exp(-C_0 p) + \int_{B K \sqrt {p}}^\infty \exp(-c u^2/K^2) \dd u \\
    & \leq C K \sqrt {p}\exp(-C_0 p).
  \end{align*}
\end{proof}

\begin{proposition}
  \label{prop:SubGVectorCovTruncation}
  Let $X,Y$ be two sub-Gaussian random vectors in $\R^p$ with $\norm{X}_{\psi_2} \leq K$ and $\norm{Y}_{\psi_2} \leq K$.
  For any $C_0 > 0$, there exist constants $B, C > 0$ such that, with \(R=BK\sqrt p\),
  \begin{equation}
    \norm{\E\chi_R(X)\chi_R(Y)^\T-\E XY^\T}
    \leq
    CK^2 p \exp(-C_0 p).
  \end{equation}
\end{proposition}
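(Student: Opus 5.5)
The approach is to split the clipping bias into two pieces, each carrying an indicator of a rare event, and to control each piece with Cauchy--Schwarz together with the sub-Gaussian norm tail in \cref{prop:SubGaussianNormBound}. We begin with the telescoping decomposition
\[
  \chi_R(X)\chi_R(Y)^\T-XY^\T
  =
  \bigl(\chi_R(X)-X\bigr)\chi_R(Y)^\T
  +
  X\bigl(\chi_R(Y)-Y\bigr)^\T .
\]
It then suffices to bound the operator norm of the expectation of each term by \(CK^2\exp(-C_0p)\). We do this by testing against unit vectors \(u,v\in\R^p\), writing \(\norm{M}=\sup_{u,v}u^\T Mv\).

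The key pointwise fact comes from the definition of the radial clipping map. When \(\norm{X}_2\le R\) we have \(\chi_R(X)-X=0\). Otherwise \(\chi_R(X)-X=-(1-R/\norm{X}_2)X\). In either case
\[
  \abs{u^\T(\chi_R(X)-X)}\le \abs{u^\T X}\,\ind{\norm{X}_2> R}.
\]
Likewise \(\abs{v^\T\chi_R(Y)}\le\abs{v^\T Y}\). Hence, for the first term,
\[
  \abs{\E\bigl[u^\T(\chi_R(X)-X)\,\chi_R(Y)^\T v\bigr]}
  \le
  \E\bigl[\abs{u^\T X}\abs{v^\T Y}\ind{\norm{X}_2>R}\bigr].
\]
We apply Cauchy--Schwarz once to split off the indicator, and once more to separate the two factors. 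This bounds the right-hand side by
\[
  \bigl(\E(u^\T X)^4\bigr)^{1/4}\bigl(\E(v^\T Y)^4\bigr)^{1/4}\,
  \bbP\{\norm{X}_2>R\}^{1/2}.
\]
The sub-Gaussian moment bounds give \(\E(u^\T X)^4\lesssim K^4\), and similarly for \(v^\T Y\), uniformly over unit vectors.

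For the tail, take \(R=BK\sqrt p\) with \(B\ge C_1\). The second part of \cref{prop:SubGaussianNormBound} then gives
\[
  \bbP\{\norm{X}_2>R\}\le\exp(-c_1B^2p).
\]
The first term is therefore at most \(CK^2\exp(-c_1B^2p/2)\). The second term is handled symmetrically, with the roles of \(X\) and \(Y\) exchanged and \(\abs{u^\T X}\) used directly. Choosing \(B\) large enough that \(c_1B^2/2\ge C_0\) yields a bound of \(CK^2\exp(-C_0p)\), which is stronger than the claimed \(CK^2p\exp(-C_0p)\).

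There is no serious obstacle. The only points to check are the pointwise domination \(\abs{u^\T(\chi_R(X)-X)}\le\abs{u^\T X}\ind{\norm{X}_2>R}\), which holds because clipping only shrinks \(X\) along its own direction, and the uniformity of the fourth-moment bound over the unit sphere, which is immediate from \(\norm{X}_{\psi_2}\le K\).
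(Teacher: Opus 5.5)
Your proof is correct, but it reaches the bound by a different route than the paper.

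The paper writes the clipping error as $(a(X)a(Y)-1)XY^\T$ with $a(z)=1\wedge R/\norm{z}_2$. It then uses $\abs{1-a(X)a(Y)}\leq\ind{\norm{X}_2\vee\norm{Y}_2\geq R}$ together with the rank-one identity $\norm{XY^\T}=\norm{X}_2\norm{Y}_2$. Finally it evaluates $\E\norm{X}_2\norm{Y}_2\ind{\norm{X}_2\vee\norm{Y}_2\geq R}$ by a layer-cake integral split at $u=B^2K^2p$, using only the Euclidean-norm tail bound. Since $\norm{X}_2\norm{Y}_2$ lives at scale $K^2p$, this is where the factor $K^2p$ in the statement comes from.

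You instead test against fixed unit vectors, so only the scalar marginals $u^\T X$ and $v^\T Y$ enter. Cauchy--Schwarz with their fourth moments ($\lesssim K^4$) and $\bbP\{\norm{X}_2>R\}^{1/2}$ then gives $CK^2\exp(-C_0p)$ directly.

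The trade-off is this. The paper's argument actually bounds $\E\norm{\chi_R(X)\chi_R(Y)^\T-XY^\T}$, the expected norm of the clipping error. Your supremum over $u,v$ sits outside the expectation, so it controls only the norm of the bias. That is exactly what the statement asks for, and what the truncation-bias term in the block bound uses.

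Removing the factor $p$ is only cosmetic here. The constant $B$ may depend on $C_0$, and $p\,e^{-2C_0p}\lesssim_{C_0}e^{-C_0p}$.
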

\begin{proof}
  Write
  \(\chi_R(X)=a(X)X\) and \(\chi_R(Y)=a(Y)Y\), where
  \(a(z)=1\wedge R/\norm{z}_2\) for \(z\neq0\) and \(a(0)=1\).
  Since
  \[
    \abs{1-a(X)a(Y)}
    \leq
    \ind{\norm{X}\vee\norm{Y}\geq R},
  \]
  it suffices to bound
  \(\E\norm{XY^\T}\ind{\norm{X}\vee\norm{Y}\geq R}\).
  Write this expectation as
  \begin{align*}
    \E \norm{XY^\T} \ind{\norm{X} \vee \norm{Y} \geq R}
    = \int_0^{\infty} F(u) \dd u,
  \end{align*}
  where
  \begin{equation*}
    F(u) \coloneqq\bbP \dk{\norm{XY^\T} \ind{\norm{X} \vee \norm{Y} \geq R} > u}.
  \end{equation*}
  With  \cref{prop:SubGaussianNormBound}, for $t \geq C_1 K \sqrt{p}$, we have
  \begin{equation*}
    \bbP\dk{\norm{X} \vee \norm{Y} \geq t} \leq \bbP \dk{\norm{X} \geq t} + \bbP \dk{\norm{Y} \geq t}
    \leq 2 \exp(-c t^2 / K^2).
  \end{equation*}
  On one hand, we have
  \begin{equation*}
    F(u) \leq \bbP \dk{\norm{X} \vee \norm{Y} \geq B K \sqrt{p}}
    \leq 2 \exp(-cB^2 p) \leq 2 \exp(-C_0 p),
  \end{equation*}
  as long as $B \geq C_1$ is large enough.
  On the other hand, when $u \geq C_1^2 K^2 p$, we have
  \begin{align*}
    F(u)&\leq \bbP \dk{\norm{XY^\T} > u} = \bbP \dk{\norm{X} \norm{Y} > u} \\
    & \leq \bbP \dk{\norm{X} \vee \norm{Y} > \sqrt{u}} \\
    & \leq 2 \exp(-C u / K^2).
  \end{align*}
  Therefore,
  \begin{align*}
    \E \norm{XY^\T} \ind{\norm{X} \vee \norm{Y} \geq R}
    & = \int_0^{B^2 K^2 p} F(u) \dd u + \int_{B^2 K^2 p}^\infty F(u) \dd u \\
    & \leq 2 B^2 K^2 p \exp(-C_0 p) + 2 \int_{B^2 K^2 p}^\infty \exp(-c u / K^2) \dd u \\
    & \leq C K^2 p \exp(-C_0 p).
  \end{align*}
\end{proof}

\subsection{Fisher Information of Normal Distributions}
\label{subsec:FisherNormal}

Let $x \sim N(0,\Sigma)$ be a multivariate Gaussian vector in $\R^p$ with non-singular covariance matrix $\Sigma$, whose log-density is given by
\begin{equation*}
  \log f(x; \Sigma) = -\frac{p}{2} \log(2\pi) - \frac{1}{2} \log \det \Sigma - \frac{1}{2} x^\T \Sigma^{-1} x,
\end{equation*}
and the score function with respect to $\Sigma$ is given by
\begin{equation*}
  s(x;\Sigma) = \frac{1}{2} \xk{\Sigma^{-1} x x^\T \Sigma^{-1} - \Sigma^{-1}}.
\end{equation*}
We view $\Sigma$ as a parameter in the Euclidean space $\bbS^p$ of symmetric matrices, equipped with the Frobenius inner product.
For symmetric directions $H_1,H_2\in\bbS^p$, the Fisher information is the bilinear form
\begin{equation*}
  \mca{I}_\Sigma(H_1,H_2)
  =
  \frac{1}{2}\Tr\zk{\Sigma^{-1}H_1\Sigma^{-1}H_2}.
\end{equation*}
\begin{proposition}
  \label{prop:Lower__Gaussian_FisherInfoOpNorm}
  Let $x \sim N(0,\Sigma)$, and let $I_x(\Sigma)$ be the matrix of $\mca{I}_\Sigma$ in any orthonormal basis of $\bbS^p$ with respect to the Frobenius inner product.
  Then,
  \begin{equation*}
    \Tr I_x(\Sigma) =  \frac{1}{4} \zk{\Tr(\Sigma^{-2}) + \xk{\Tr \Sigma^{-1}}^2}, \quad
    \norm{I_{x}(\Sigma)} = \frac{1}{2} \norm{\Sigma^{-1}}^2.
  \end{equation*}
\end{proposition}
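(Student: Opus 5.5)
The plan is to identify the Fisher information bilinear form with a self-adjoint linear operator on $\bbS^p$, and then diagonalize that operator explicitly. Trace and operator norm of a symmetric matrix representing a self-adjoint operator do not depend on which orthonormal basis is used. So it suffices to compute the eigenvalues of the operator
\[
  \mca{L}_\Sigma(H)=\tfrac12\Sigma^{-1}H\Sigma^{-1},
\]
which represents $\mca{I}_\Sigma$ through $\mca{I}_\Sigma(H_1,H_2)=\ang{\mca{L}_\Sigma(H_1),H_2}$.

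First, I would briefly confirm the displayed form of $\mca{I}_\Sigma$ from the score. For a symmetric direction $H$,
\[
  \ang{s(x;\Sigma),H}=\tfrac12\bigl(x^\T AX - \Tr(\Sigma^{-1}H)\bigr)\quad\text{with } A=\Sigma^{-1}H\Sigma^{-1},
\]
where $AX$ is read as $Ax$. The Gaussian quadratic-form identity $\operatorname{Var}(x^\T Ax)=2\Tr(A\Sigma A\Sigma)$ then gives
\[
  \E\ang{s,H}^2=\tfrac12\Tr(\Sigma^{-1}H\Sigma^{-1}H).
\]
Polarization yields the bilinear form stated before the proposition.

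Next, write $\Sigma=U\mr{Diag}(\lambda_1,\dots,\lambda_p)U^\T$ with all $\lambda_i>0$, and let $u_i$ be the columns of $U$. Consider the orthonormal basis of $\bbS^p$ given by
\[
  G_{ii}=u_iu_i^\T,\qquad G_{ij}=\frac{u_iu_j^\T+u_ju_i^\T}{\sqrt2}\quad(i<j).
\]
A direct computation gives $\Sigma^{-1}G_{ij}\Sigma^{-1}=(\lambda_i\lambda_j)^{-1}G_{ij}$. Hence every basis element is an eigenvector of $\mca{L}_\Sigma$, with eigenvalue $1/(2\lambda_i\lambda_j)$ for $i\le j$.

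The operator norm is therefore
\[
  \max_{i\le j}\frac{1}{2\lambda_i\lambda_j}=\frac{1}{2\lambda_{\min}^2}=\tfrac12\norm{\Sigma^{-1}}^2.
\]
For the trace, I would sum the eigenvalues:
\[
  \sum_{i}\frac{1}{2\lambda_i^2}+\sum_{i<j}\frac{1}{2\lambda_i\lambda_j}
  =\tfrac12\Tr(\Sigma^{-2})+\tfrac14\Bigl[(\Tr\Sigma^{-1})^2-\Tr(\Sigma^{-2})\Bigr]
  =\tfrac14\Bigl[\Tr(\Sigma^{-2})+(\Tr\Sigma^{-1})^2\Bigr].
\]
This is the claimed formula.

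There is no real obstacle here. The only points needing care are the $\sqrt2$ normalization of the off-diagonal basis elements, which makes them Frobenius-orthonormal, and the remark that the result is independent of the chosen orthonormal basis.
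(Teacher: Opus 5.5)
Your proof is correct, but it extracts the two identities differently from the paper. The paper never diagonalizes anything.

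For the trace, it uses Parseval on $\bbS^p$ to write $\Tr I_x(\Sigma)=\E\norm{s(x;\Sigma)}_F^2$. It then evaluates this directly from the score with the Gaussian fourth-moment identity $\E(z^\T N z)^2=2\Tr(N^2)+(\Tr N)^2$, taking $N=\Sigma^{-1}$.

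For the operator norm, it computes $\E\ang{s,B}^2=\frac12\Tr[(\Sigma^{-1}B)^2]$. It then proves separately the variational fact $\sup_{M\in\bbS^p,\,\norm{M}_F\le 1}\Tr(AMAM)=\norm{A}^2$ for $A=\Sigma^{-1}\succ 0$, bounding via $\norm{A^{1/2}MA^{1/2}}_F^2$ and attaining the bound at $M=uu^\T$.

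You instead identify $\mca{I}_\Sigma$ with the self-adjoint operator $H\mapsto\frac12\Sigma^{-1}H\Sigma^{-1}$. You write down an orthonormal eigenbasis of $\bbS^p$ built from the eigenvectors of $\Sigma$, which yields the entire spectrum $\{1/(2\lambda_i\lambda_j)\}_{i\le j}$ at once. Both formulas then follow by summing and maximizing, with no fourth moments and no separate extremal argument.

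Your route gives strictly more: every spectral functional of $I_x(\Sigma)$, with basis invariance made explicit. The paper's trace computation has the mild advantage of working straight from the score without first identifying the bilinear form. The paper's maximizer $uu^\T$ is precisely your $G_{ii}$ with $\lambda_i=\lambda_{\min}$.
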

\begin{proof}

  Parseval's identity on $\bbS^p$ gives
  \begin{equation}
    \label{eq:Proof_Lower__Gaussian_FisherInfoOpNorm_1}
    \Tr I_x(\Sigma) = \E \norm{s(x;\Sigma)}_F^2,\quad
    \norm{I_x(\Sigma)} = \sup_{B\in\bbS^p,\,\norm{B}_F \leq 1} \E \ang{s(x;\Sigma), B}^2.
  \end{equation}

  For the trace, plugging in the expression of the score function and setting $A = \Sigma^{-1}$ yield
  \begin{align*}
    \Tr I_x(\Sigma) &= \frac{1}{4} \E \norm{A x x^\T A - A}_F^2 = \frac{1}{4} \E \Tr \xk{Ax x^\T A^2 x x^\T A - 2 A^2 x x^\T A + A^2 } \\
    &=\frac{1}{4} \zk{\E \Tr \xk{x^\T A^2 x x^\T A^2 x} - 2 \Tr(A^2 \E x x^\T A) + \Tr(A^2)} \\
    &= \frac{1}{4} \zk{\E (x^\T A^2 x)^2 - \Tr(A^2)} \\
    &= \frac{1}{4}\xk{2 \Tr(A^2) + \xk{\Tr(A)}^2 - \Tr(A^2)} = \frac{1}{4} \zk{\Tr(A^2) + \xk{\Tr(A)}^2}.
  \end{align*}

  Now fix $B\in\bbS^p$.
  Since $\E\ang{s(x;\Sigma),B}=0$, it suffices to compute its variance.
  Using the expression of the score function, we have
  \begin{equation*}
    \ang{s(x;\Sigma), B} = \frac{1}{2}  x^\T \Sigma^{-1} B \Sigma^{-1} x - \frac{1}{2} \Tr(\Sigma^{-1} B) \eqqcolon \frac{1}{2}(T-C),
  \end{equation*}
  where, writing $x = \Sigma^{1/2} z$ with $z \sim N(0,I_p)$ and
  \begin{equation*}
    N = \Sigma^{-1/2} B \Sigma^{-1/2},
  \end{equation*}
  we have \( T = z^\T N z \). Since \( N \) is symmetric,
  \begin{equation*}
    \E T = \Tr(N) = \Tr(\Sigma^{-1} B),\quad
    \mr{Var}(T) = 2 \Tr\zk{N^2} = 2 \Tr\zk{(\Sigma^{-1} B)^2}.
  \end{equation*}
  Hence,
  \begin{equation}
    \label{eq:Proof_Lower__Gaussian_FisherInfoOpNorm_2}
    \E \ang{s(x;\Sigma), B}^2 = \frac{1}{4} \mr{Var}(T) = \frac{1}{2} \Tr\zk{(\Sigma^{-1} B)^2}.
  \end{equation}

  Plugging \cref{eq:Proof_Lower__Gaussian_FisherInfoOpNorm_2} into \cref{eq:Proof_Lower__Gaussian_FisherInfoOpNorm_1} yields the final result:
  \begin{equation*}
    \norm{I_x(\Sigma)} =  \frac{1}{2} \sup_{B\in\bbS^p,\,\norm{B}_F \leq 1} \Tr\zk{(\Sigma^{-1} B)^2} = \frac{1}{2} \norm{\Sigma^{-1}}^2,
  \end{equation*}
  where in the last step we used the fact that for any symmetric matrix $A$,
  \begin{equation*}
    \norm{A}^2 = \sup_{M \text{ symmetric, }\norm{M}_F \leq 1} \Tr(A M A M).
  \end{equation*}
  To see this, we notice that
  \begin{equation*}
    \Tr(A M A M) = \norm{A^{1/2} M A^{1/2}}_F^2 \leq \norm{A}^2 \norm{M}_F^2 \leq \norm{A}^2,
  \end{equation*}
  and the supremum is attained at $M = u u^\T$ with $u$ being the top eigenvector of $A$, since
  \begin{equation*}
    \Tr(A u u^\T A u u^\T) = (u^\T A u)^2 = \norm{A}^2.
  \end{equation*}

\end{proof}
   \clearpage

\section{Numerical Experiments}
\label{sec:Numeric}

Because the center--outer procedures are primarily theoretical and computationally intensive, the numerical study focuses on the DP blockwise tridiagonal estimator and its adaptive version.
The experiments first illustrate the block structures produced by the two procedures and then examine how their estimation errors vary with the privacy budget, dimension, and sample size.
We also compare the adaptive procedure with blockwise tridiagonal estimators tuned to fixed decay parameters to assess the empirical cost of adaptation.
Figure~\ref{fig:CompareEstimators} first compares the true covariance matrix with the two estimates and highlights their different block structures.

\begin{figure}[h]
  \centering
  \includegraphics[width=0.9\textwidth]{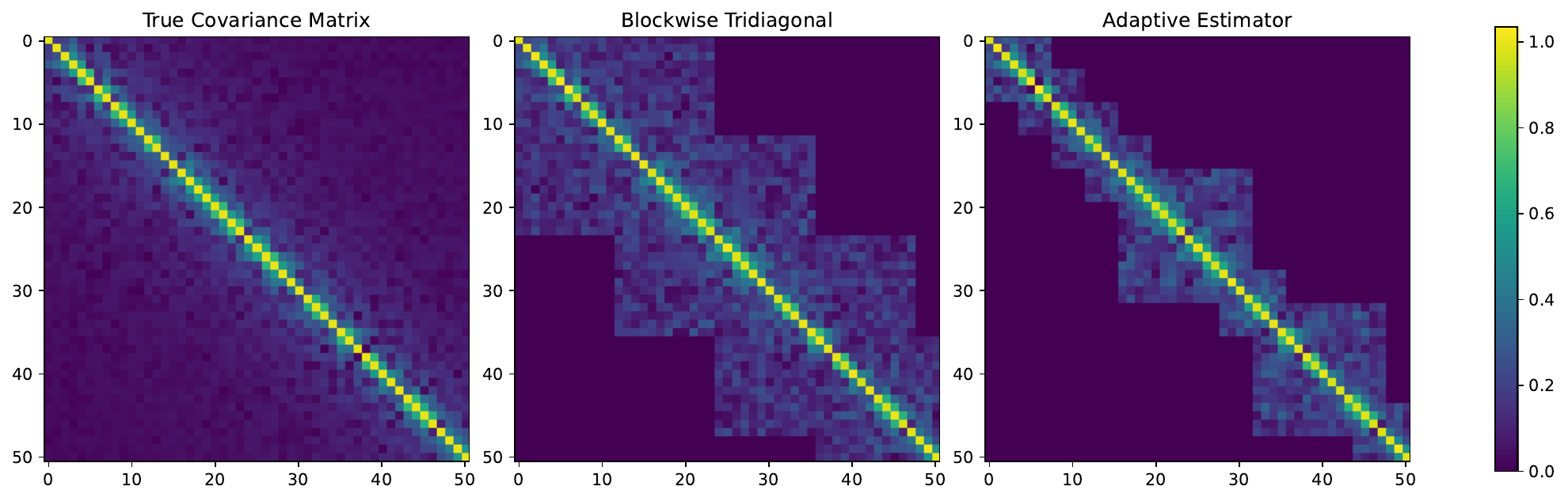}
  \caption{Comparison of the true covariance matrix and the estimators.}
  \label{fig:CompareEstimators}
\end{figure}

In the experiments, the data are generated from a multivariate normal distribution with mean zero.
Unless otherwise stated, each Monte Carlo summary uses 100 repetitions and random seed 1001.
The reported loss is the squared operator norm $\norm{\hat{\Sigma}-\Sigma}^2$; the convergence plots display its logarithm, with error bars given by the empirical standard deviation on the log scale.
As shown in the figure, the estimator with a fixed decay parameter clearly exhibits the blockwise tridiagonal structure, while the adaptive estimator displays a more flexible pattern with varying block sizes.
Moreover, the adaptive estimator tends to be slightly more conservative, reflecting the additional privacy cost incurred by the adaptation procedure.

Next, we investigate how the privacy budget $\rho$ affects the estimation error of the DP blockwise tridiagonal estimator.
We set the true covariance matrix $\Sigma$ to have entries $\Sigma_{ii}=1$ and
$\Sigma_{ij}=0.5|i-j|^{-(\alpha+1)}$ for $i\neq j$, which belongs to the class $\mca{F}_\alpha$,
and choose the decay parameter $\alpha=1$.
The sample size is fixed at $n=500$, while the privacy budget $\rho$ varies from $0.1$ to $10$ and $\infty$.
The values $\rho=10$ and $\rho=\infty$ are included only as reference curves for large privacy budgets and lie outside the primary regime $\rho\lesssim 1$.
To examine the impact of dimensionality, we consider $d=50$ and $d=500$.

The block size $k$ is selected according to the theoretical guideline
\[
k = \Big\lfloor n^{\frac{1}{2\alpha+1}} \wedge 
0.5\left(\frac{\rho n^2}{d}\right)^{\frac{1}{2\alpha+2}} \Big\rfloor,
\]
where the factor $0.5$ is introduced to better balance the error from privacy.
Figure~\ref{fig:Err_Rho} shows that the estimation error decreases as the privacy budget $\rho$ increases, which is consistent with our theoretical results.
In particular, when $\rho$ is sufficiently large, the estimation error approaches that of the estimator without privacy and becomes nearly insensitive to the dimension $d$.
In contrast, when $\rho$ is small and the dimension $d$ is large, the estimation error becomes substantially larger, illustrating the difficulty of private covariance estimation in such regimes.

\begin{figure}[htp]
  \centering

  \begin{minipage}{0.45\textwidth}
    \centering
    \includegraphics[width=1\textwidth]{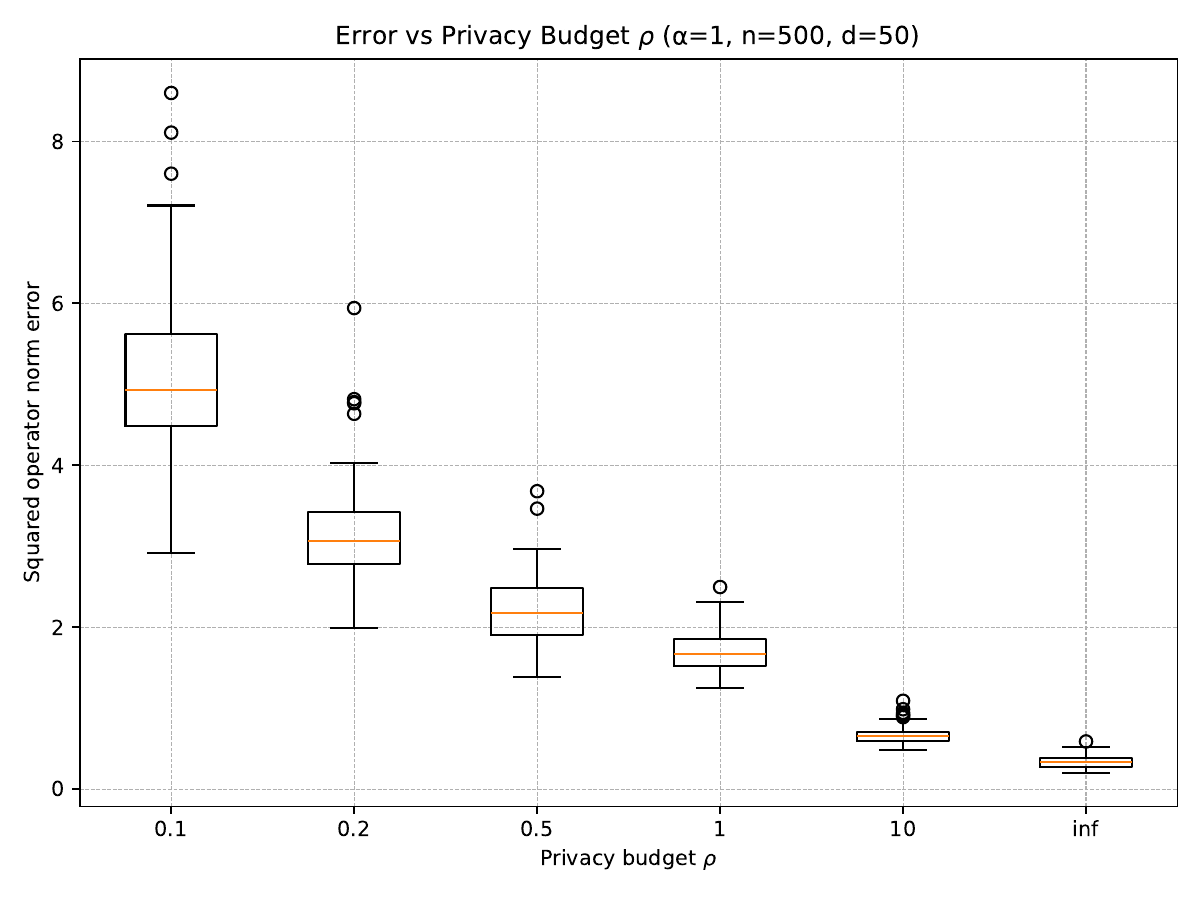}
  \end{minipage}%
  \begin{minipage}{0.45\textwidth}
    \centering
    \includegraphics[width=1\textwidth]{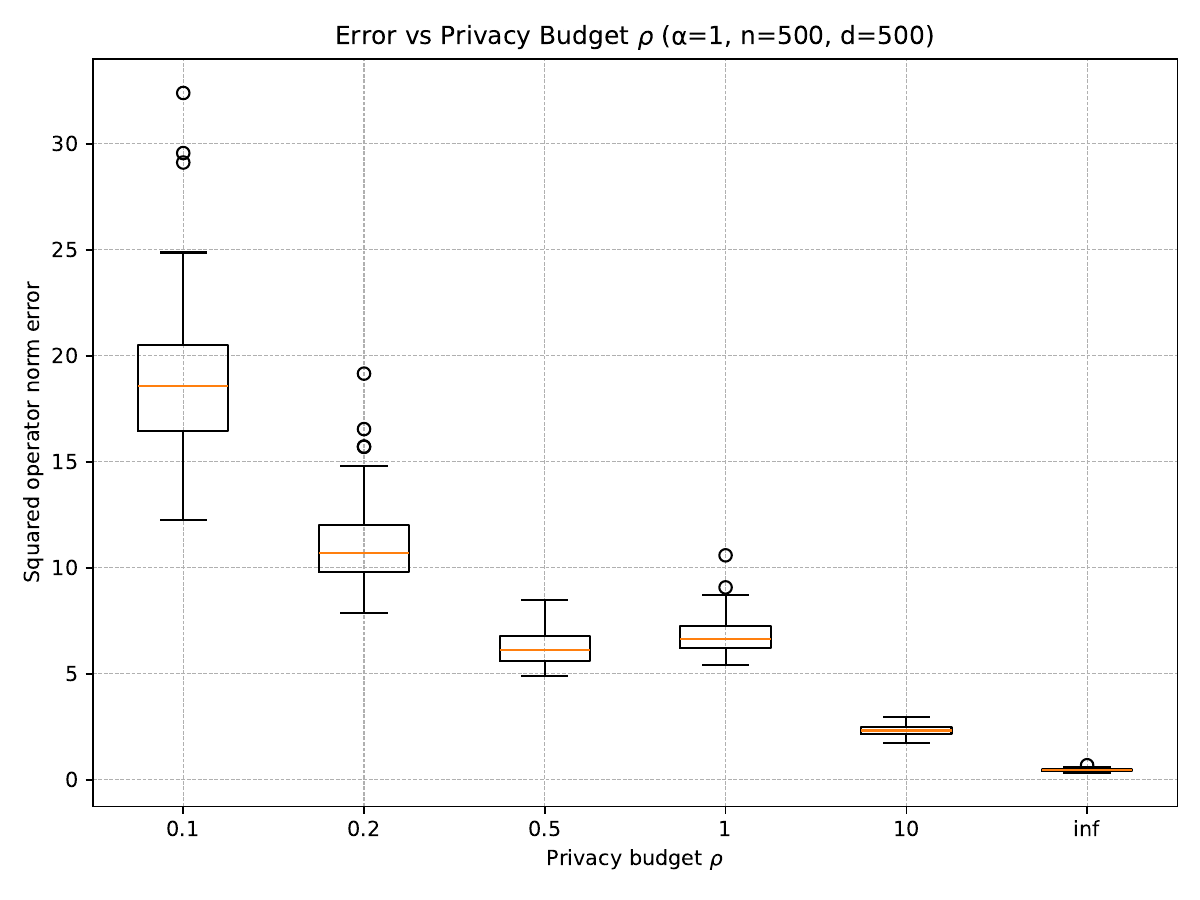}
  \end{minipage}%

  \caption{
    Squared operator norm errors of the DP blockwise tridiagonal estimator under different privacy budgets.
  }
  \label{fig:Err_Rho}
\end{figure}

To further evaluate the performance of the proposed estimators, we study their convergence behavior as the sample size $n$ increases.
Figure~\ref{fig:ConvergenceRates_NonAdaptive} presents plots on logarithmic scales of the estimation error versus the sample size for both the DP blockwise tridiagonal estimator and the adaptive estimator.
To illustrate the transition between statistical error and privacy error, we consider two asymptotic regimes: (left) $d \asymp n^{0.6}$ with constant $\rho$, where the statistical error dominates; and (right) $d \asymp n^{0.7}$ with $\rho \asymp n^{-0.3}$, where the privacy error becomes dominant.

The figures show that both estimators exhibit convergence rates that closely match the theoretical predictions in the two regimes, confirming the effectiveness of the proposed methods.
For instance, in the left panel of Figure~\ref{fig:ConvergenceRates_NonAdaptive}, the theoretical rate  \( n^{-\frac{2\alpha}{2\alpha+1}} = n^{-0.67} \) is consistent with the empirical slope of approximately $-0.67$.
In the right panel, the theoretical rate
\( (\rho n^2/d)^{-\frac{\alpha}{\alpha+1}} \asymp n^{-0.5} \) also agrees well with the observed slope of about $-0.49$.

Finally, comparing the DP blockwise tridiagonal estimator with the adaptive estimator, we observe that the adaptive estimator performs slightly worse and exhibits greater variability.
This is expected, as the adaptive procedure incurs additional cost from adaptation, whereas the estimator with a fixed decay parameter uses the true value to select the optimal block size.

\begin{figure}[htp]
  \centering

  \begin{minipage}{0.45\textwidth}
    \centering
    \includegraphics[width=1\textwidth]{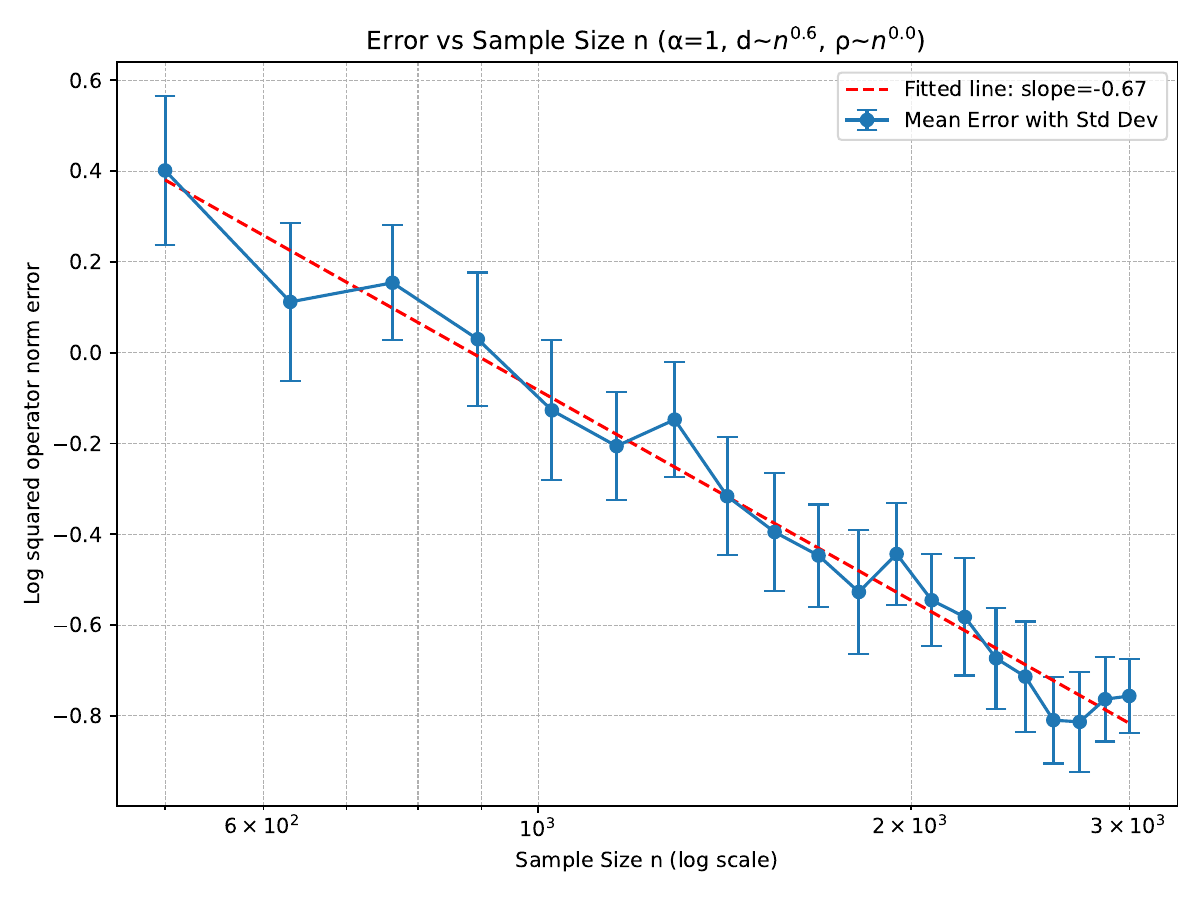}
  \end{minipage}%
  \begin{minipage}{0.45\textwidth}
    \centering
    \includegraphics[width=1\textwidth]{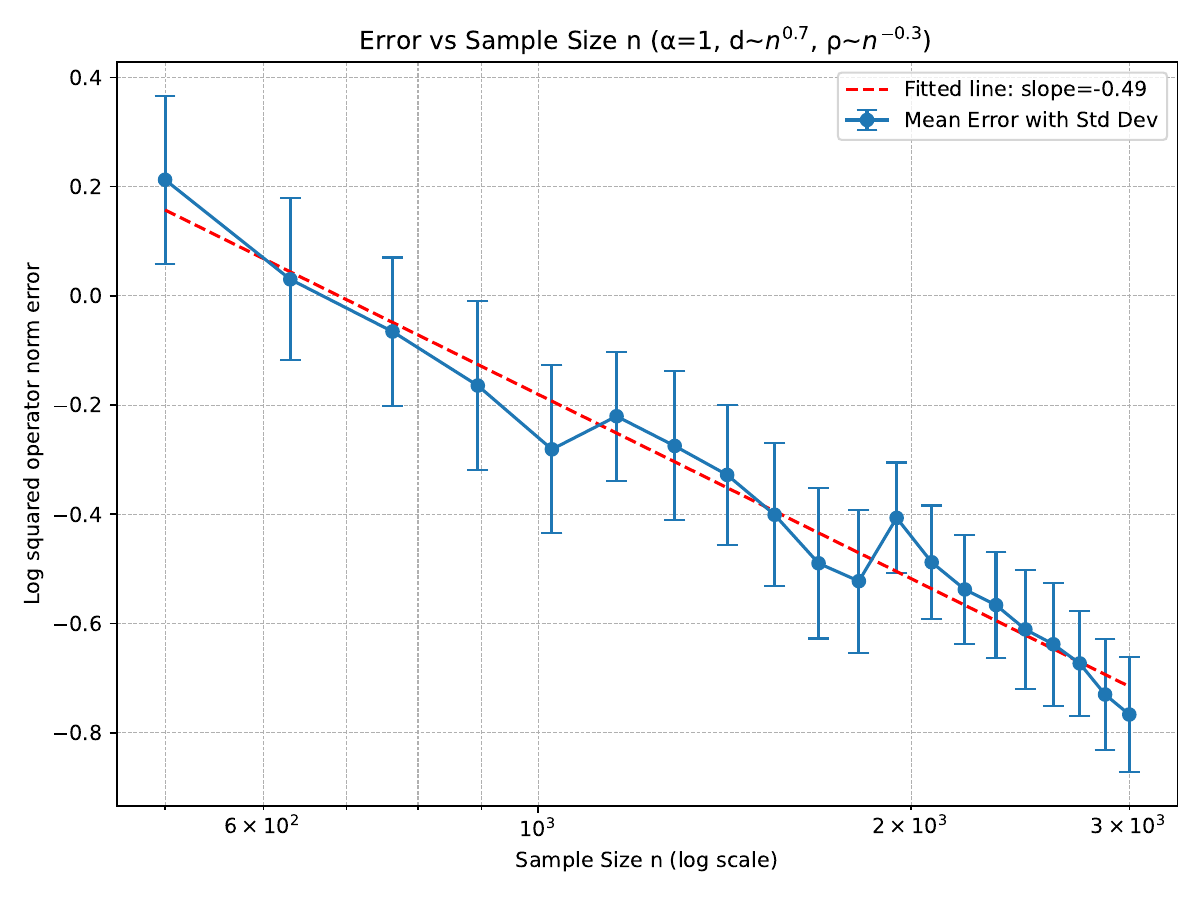}
  \end{minipage}%

  \begin{minipage}{0.45\textwidth}
    \centering
    \includegraphics[width=1\textwidth]{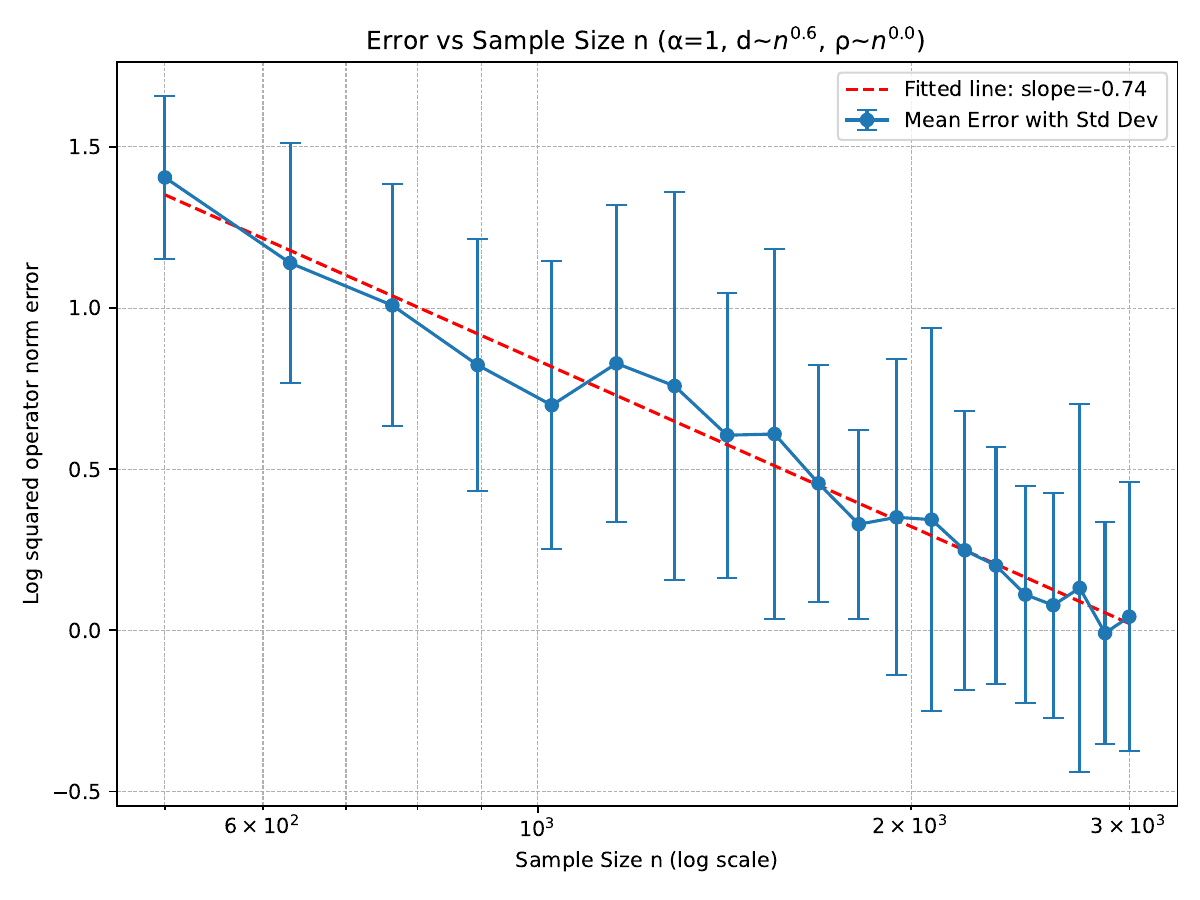}
  \end{minipage}%
  \begin{minipage}{0.45\textwidth}
    \centering
    \includegraphics[width=1\textwidth]{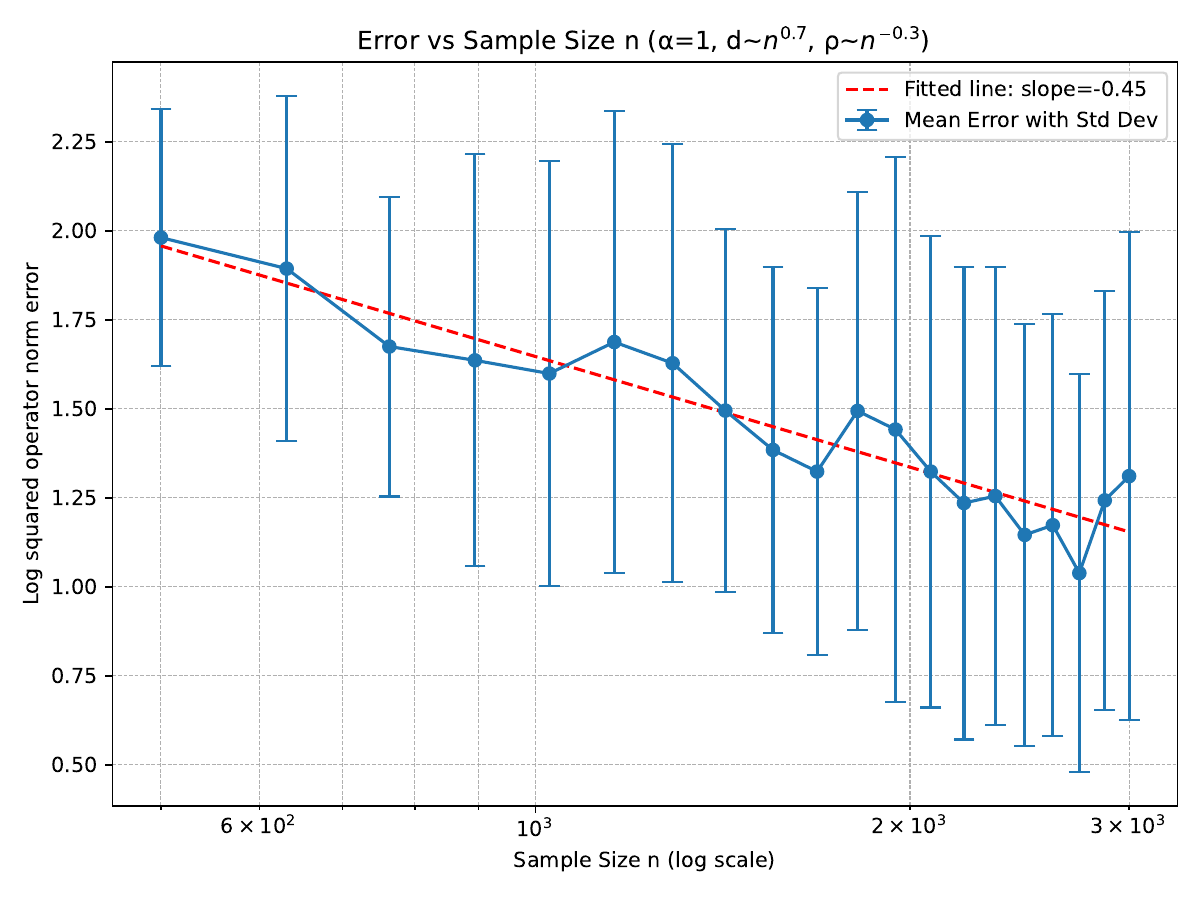}
  \end{minipage}%

  \caption{
    Log squared operator norm errors under two asymptotic regimes; the upper row uses the estimator with the true decay parameter, the lower row the adaptive estimator, and the left and right columns correspond to the regimes dominated by statistical error and privacy error, respectively. Error bars show the empirical standard deviation on the log scale.
  }
  \label{fig:ConvergenceRates_NonAdaptive}
\end{figure}

To illustrate the advantage of the adaptive estimator, we compare it with several estimators using fixed decay parameters in Figure~\ref{fig:Compare_Adaptive}.
The covariance matrix is generated as $\Sigma_{ii}=1$ and $\Sigma_{ij}=0.5|i-j|^{-(\alpha+1)}u_{ij}$ for $i>j$, where $u_{ij}$ are i.i.d.\ samples from the uniform distribution on $[0,1]$, with the true decay parameter set to $\alpha=1$.
For these estimators, we use different values of $\alpha \in \{0.25, 0.5, 1, 1.5, 2\}$ to determine the block size.

The results show that the performance of the estimator with a fixed decay parameter depends heavily on the choice of $\alpha$.
When $\alpha$ is correctly specified (i.e., $\alpha=1$), this estimator achieves the best performance.
However, when the block size is poorly tuned due to misspecification of $\alpha$, the estimation error can increase substantially.
In contrast, the adaptive estimator performs reasonably well across all settings without requiring prior knowledge of $\alpha$.

Nevertheless, the adaptive estimator typically exhibits slightly larger errors because it incurs an additional privacy cost associated with the adaptation procedure.
\begin{figure}[htp]
  \centering

  \begin{minipage}{0.45\textwidth}
    \centering
    \includegraphics[width=1\textwidth]{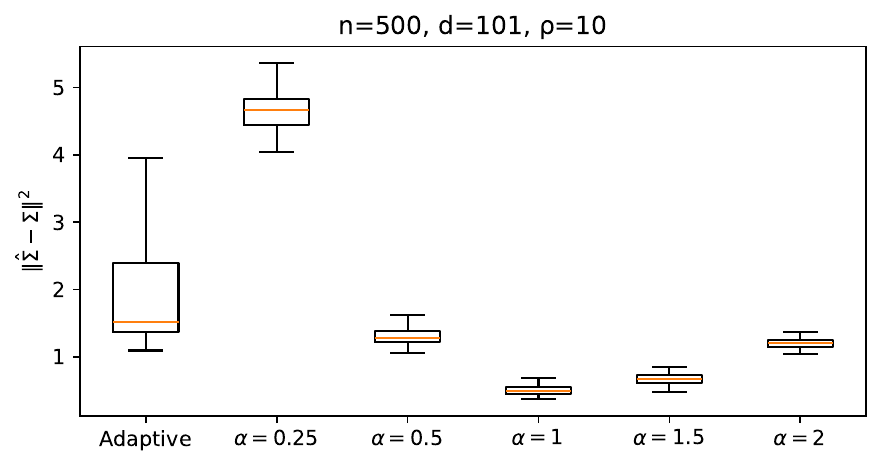}
  \end{minipage}%
  \begin{minipage}{0.45\textwidth}
    \centering
    \includegraphics[width=1\textwidth]{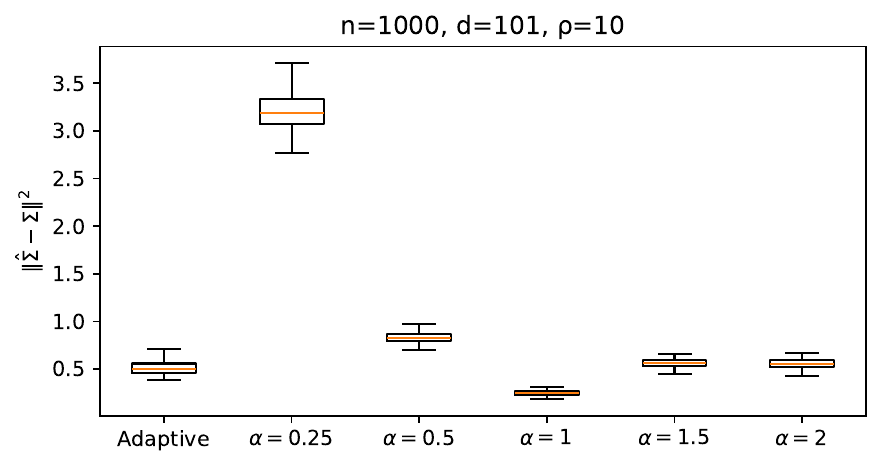}
  \end{minipage}%

  \begin{minipage}{0.45\textwidth}
    \centering
    \includegraphics[width=1\textwidth]{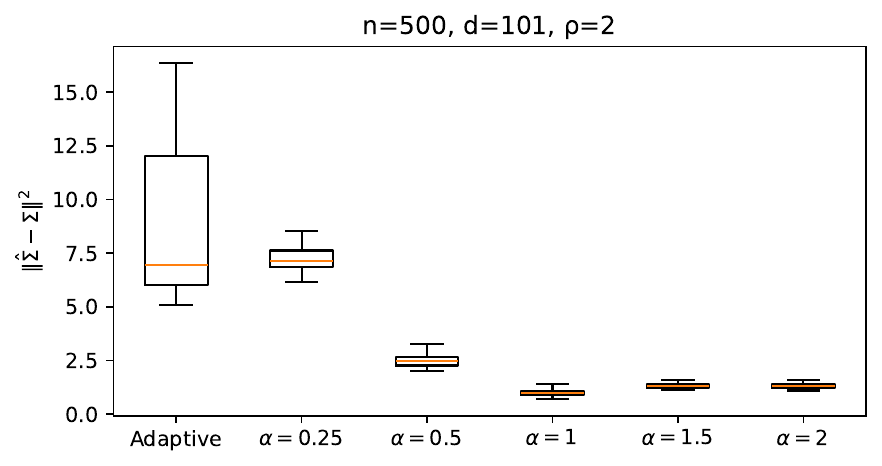}
  \end{minipage}%
  \begin{minipage}{0.45\textwidth}
    \centering
    \includegraphics[width=1\textwidth]{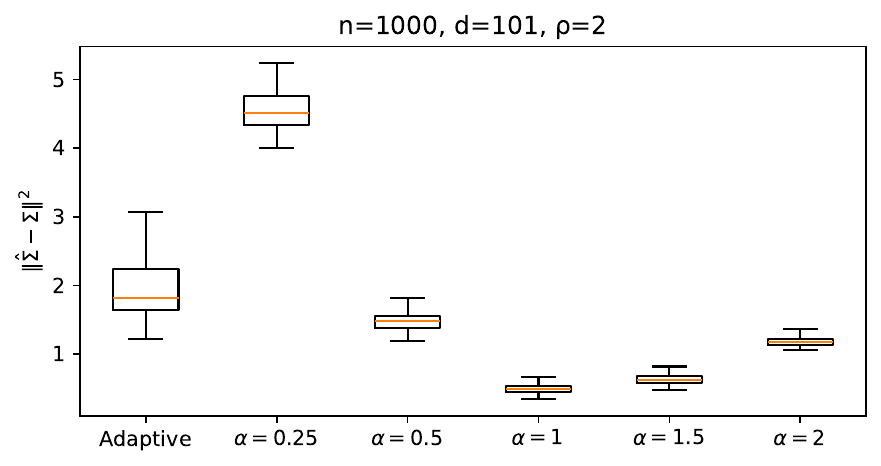}
  \end{minipage}%

  \caption{
    Squared operator norm errors for the adaptive estimator and estimators using fixed decay parameters.
  }
  \label{fig:Compare_Adaptive}
\end{figure}
 \end{appendix}

\clearpage

\bibliographystyle{imsart-nameyear} %
\bibliography{main}       %

\end{document}